\documentclass[11pt]{amsart}

%
%%%%%%%%%%%%%%%%% Packages %%%%%%%%%%%%%%%%%%%%%%%%%%%%
%\usepackage{ifthen}
\usepackage{amsmath,amssymb}
\usepackage{graphicx}
\usepackage{xcolor}

\usepackage{fullpage}
\usepackage{enumerate}
\usepackage{enumitem}
\usepackage{pgf,tikz}
\usetikzlibrary{patterns}
\usetikzlibrary{arrows}
\usetikzlibrary{positioning}
\usetikzlibrary {arrows.meta} 

\def\COMMENT#1{}
\let\COMMENT=\footnote
\def\TASK#1{}

\newdimen\margin   % needed for macros \textdisplay & \ltextdisplay
\def\textno#1&#2\par{%
    \margin=\hsize
    \advance\margin by -4\parindent
           \setbox1=\hbox{\sl#1}%
    \ifdim\wd1 < \margin
       $$\box1\eqno#2$$%
    \else
       \bigbreak
       \hbox to \hsize{\indent$\vcenter{\advance\hsize by -3\parindent
       \sl\noindent#1}\hfil#2$}%
       \bigbreak
    \fi}

\newtheorem{thm}{Theorem}[section]
\newtheorem{examp}[thm]{Example}
\newtheorem{lemma}[thm]{Lemma}

\newtheorem{claim}[thm]{Claim}
\newtheorem{fact}[thm]{Fact}

\newtheorem{col}[thm]{Corollary}

\newtheorem{prop}[thm]{Proposition}
\newtheorem{remark}[thm]{Remark}
\newtheorem*{thm*}{Theorem}
\newtheorem*{define*}{Definition}
\newtheorem*{examp*}{Example}
\newtheorem*{lem*}{Lemma}
\newtheorem*{claim*}{Claim}
\newtheorem*{fact*}{Fact}
\newtheorem*{col*}{Corollary}
\newtheorem*{conj*}{Conjecture}

\def\eps{\varepsilon}

\theoremstyle{definition}
\newtheorem{define}[thm]{Definition}

\DeclareMathOperator{\rank}{rank}
\DeclareMathOperator{\im}{im}

\DeclareMathOperator{\Sol}{Sol}
\DeclareMathOperator{\-Sol}{-Sol}
\DeclareMathOperator{\Var}{Var}
\DeclareMathOperator{\Cov}{Cov}
\newcommand{\roww}[1]{\underline{\mathbf{#1}}}

\newcommand{\qedclaim}{\phantom{.}

\hfill\scalebox{0.6}{$\blacksquare$}

\phantom{.}}

\newenvironment{proofclaim}{\removelastskip\penalty55\medskip\noindent{\it Proof of the claim. }}

\begin{document}

\title{Typical Ramsey properties of the primes, abelian groups and other discrete structures}

\author{Andrea Freschi, Robert Hancock and Andrew Treglown}

\thanks{AF: HUN-REN, Alfr\' ed R\'enyi Institute of Mathematics, Budapest, Hungary, {\tt freschi.andrea@renyi.hu}. \\
\indent RH: Mathematical Institute, University of Oxford, United Kingdom, {\tt robert.hancock@maths.ox.ac.uk}. Research supported by ERC Advanced Grant 883810 and by a Humboldt Research Fellowship at Heidelberg University.\\
\indent AT: School of Mathematics, University of Birmingham, United Kingdom, {\tt a.c.treglown@bham.ac.uk}. Research supported by EPSRC grant EP/V048287/1.\\
}

%\thanks{.}
\date{\today}
\begin{abstract}
Given a matrix $A$ with integer entries, a subset $S$ of an abelian group and $r\in\mathbb N$, we say that $S$ is $(A,r)$-Rado if any $r$-colouring of $S$ yields a monochromatic solution to the system of equations $Ax=0$. 
A classical result of Rado characterises all those matrices $A$ such that $\mathbb N$ is $(A,r)$-Rado for all $r \in \mathbb N$.
R\"odl and Ruci\'nski [Proceedings of the London Mathematical Society, 1997] and Friedgut, R\"odl and Schacht~[Random Structures \& Algorithms, 2010] proved a random version of Rado's theorem where one considers a random subset 
of $[n]:=\{1,\dots,n\}$ instead of $\mathbb N$.

In this paper, we investigate the analogous random Ramsey problem in the more general setting of abelian groups. Given a sequence $(S_n)_{n\in\mathbb N}$ of finite subsets of abelian groups, let $S_{n,p}$ be a random subset of $S_n$ obtained by including each element of $S_n$ independently with probability $p$. We are interested in determining the probability threshold $\hat p:=\hat p(n)$ such that
$$
\lim _{n \rightarrow \infty} \mathbb P [ S_{n,p} \text{ is } (A,r)\text{-Rado}]=
\begin{cases}
0 &\text{ if } p=o(\hat p); \\
1 &\text{ if } p=\omega(\hat p).
\end{cases}
$$  

Our main result, which we coin the \emph{random Rado lemma}, is a general black box to tackle problems of this type. Using this tool in conjunction with a series of supersaturation results, we determine the probability threshold for a number of different cases. A consequence of the Green--Tao theorem [Annals of Mathematics, 2008] is the \emph{van der Waerden theorem for the primes}: every finite colouring of the primes contains arbitrarily long monochromatic arithmetic progressions. Using our machinery, we obtain a random version of this result.
We also prove a novel supersaturation result for  $S_n:=[n]^d$ and use it to prove an integer lattice generalisation of the random version of Rado's theorem. Various threshold results for abelian groups are also given.

Furthermore, we prove a $1$-statement (the $p=\omega(\hat p)$ regime) and a $0$-statement (the $p=o(\hat p)$ regime) for hypergraphs that imply several of the previously known $1$- and $0$-statements in various settings, as well as our random Rado lemma.  
\end{abstract}
\maketitle
%\msc{05C35, 05C70}
%\tableofcontents

\section{Introduction}\label{sec:introduction}

Ramsey theory concerns the study of partitions of mathematical objects, and in particular, what structures one can guarantee in such partitions. For example, Ramsey's original theorem asserts that, given any $r,t \in \mathbb N$, if $n\in\mathbb N$ is sufficiently large then however one partitions the edge set of the complete graph $K_n$ into $r$ colour classes, at least one of these colour classes must contain a copy of $K_t$. 
Analogous Ramsey-type behaviour is also exhibited in arithmetic settings and Ramsey-type results for the integers have been studied since the end of the 19th Century, with early progress due to 
Hilbert~\cite{hilbert} (1892), Schur~\cite{schur} (1916) and van der Waerden~\cite{vdw} (1927). 
In 1933, Rado~\cite{rado} characterised  all those homogeneous systems of linear equations $\mathcal L$ for which every finite colouring of $\mathbb N$ yields a monochromatic solution to $\mathcal L$.
In 1975, Deuber~\cite{deuber} resolved the analogous problem where now one works in the setting of abelian groups rather than $\mathbb N$. See, e.g., \cite{berg, cam,  wolf, serra, pen} for further Ramsey-type results for groups and other arithmetic structures.

Similarly, it is natural to investigate {\it how many} copies of a given structure one can guarantee within a partition of a mathematical object. This is the  topic of a branch of Ramsey theory known as \emph{Ramsey multiplicity}, in which researchers initially focused on graphs.
For example, a seminal result of Goodman~\cite{goodman} from 1959 determines the minimum number of monochromatic triangles in a $2$-edge-colouring of the complete graph $K_n$; see, e.g., the survey~\cite{burr} for further results on the topic. More recently, there has been interest in studying Ramsey multiplicity in arithmetic structures, such as the integers (see, e.g.,~\cite{cost, dav, grr, rob, sch}) and abelian groups. In particular, after earlier work of  Cameron, Cilleruelo and Serra~\cite{cam}, a 2017 paper of Saad and Wolf~\cite{wolf} initiated the systematic study of Ramsey multiplicity for systems of linear equations in abelian groups. A number of subsequent papers have made significant progress on the topic (see, e.g.,~\cite{al, dlz, fox, kam,  ruespiegel, vest}).

In this paper, we consider yet another Ramsey-type question, this time concerning the {\it typical Ramsey properties} of mathematical objects. Broadly speaking, once it is established that any finite partition of a set $S$ yields a copy of a given structure, we are interested in whether we typically expect to witness the same Ramsey-type behaviour in subsets of $S$ of a given size. As we will see in the following sections, this problem is formalised by considering a random subset within $S$, and so we shall refer to it as the {\it random Ramsey problem}. This direction of research is part of the wider study of transferring (combinatorial) theorems into the random setting; see, e.g., the ICM survey of Conlon~\cite{conlonsurvey}. In particular,
the random Ramsey problem for graphs  and the integers 
has been well-studied,  through  the \emph{random Ramsey theorem}~\cite{random1, random2, random3} (see Theorem~\ref{randomramsey}) and the \emph{random Rado theorem~\cite{frs, random4}} (see Theorems~\ref{radores0} and~\ref{r3}), respectively. The statements of both theorems, and of random Ramsey-type results in general, break down into two parts: the so-called $1$-statement, asserting that random subsets of size above a certain threshold exhibit a Ramsey-type behaviour with high probability, and the $0$-statement, asserting that random subsets of size below the threshold do not exhibit a Ramsey-type behaviour with high probability.

The main goal of this paper is to construct a general framework to study the random Ramsey problem for abelian groups.
Our main result, which we call \emph{the random Rado lemma}
(Lemma~\ref{lem:randomRado}), provides a machine that, on input of a homogeneous system of linear equations $\mathcal L$, a sequence of `well-behaved' subsets $S_n$ of abelian groups and a Ramsey-type supersaturation result\footnote{A  Ramsey-type supersaturation result states that in any $r$-colouring of a given mathematical object, a \emph{linear proportion} of a certain class of substructures is monochromatic.} 
for the number of monochromatic solutions of $\mathcal L$ in $S_n$, outputs an appropriate $1$-statement and $0$-statement. In order to rigorously state the random Rado lemma, we need to introduce various terminology as well as recall some fundamental algebraic concepts. As such, we defer the statement of Lemma~\ref{lem:randomRado} to Section~\ref{sec:randomRado}. We give several highlight applications of the random Rado lemma. For example, we prove a random version of the \emph{van der Waerden theorem for the primes} (Theorem~\ref{rvdwp}) as well as a \emph{random Rado theorem for integer lattices} (Theorem~\ref{thm:ramseynd}).\footnote{In fact, we deduce Theorem~\ref{thm:ramseynd} from a simplified version of the random Rado lemma (Lemma~\ref{lem:easyRado}).}

Building towards the aforementioned theory for (subsets of) abelian groups, we also survey the random Ramsey problem for graphs and the integers. As explained in more detail in Sections~\ref{sec:graphsintegers}--\ref{sec:hypergraphs}, there is a well-known heuristic for what the  threshold in the $1$- and $0$-statements of a random Ramsey-type result should be. Indeed, this intuition turns out to be correct in the setting of graphs and the integers as well as for the various new results we prove. Therefore, it would be highly desirable to obtain a unifying theory which implies all previously known random Ramsey-type results. In this direction, we  prove a general $1$-statement and a general $0$-statement for hypergraphs (Theorem~\ref{thm:mainramsey}) which imply several of the previously known $1$- and $0$-statements for discrete structures. Furthermore, the random Rado lemma arises as a direct corollary of this hypergraph result. The proof of Theorem~\ref{thm:mainramsey} makes use of the 
 \emph{hypergraph container method} as well as 
 a recent argument of the 
 second and third authors~\cite{ht}.

 \begin{figure}[!h]
 \centering
 \begin{tikzpicture}[
 squarednodeblue/.style={rectangle,draw=blue!60,fill=white,very thick,minimum size=5mm,text centered,text width=3.5cm,node distance=2.5cm},
 squarednodered/.style={rectangle,draw=red!60,fill=white,very thick,minimum size=5mm,text centered,text width=3.5cm,node distance=2.5cm}]
 
\node[squarednodeblue]  (hypthm)  at (1.7,14)  {{\color{blue}The general $1$- and $0$-statement for hypergraphs (Theorem~\ref{thm:mainramsey})}};
\node[squarednodeblue]  (radolem) at  (1.7,11)      {{\color{blue}The random Rado lemma (Lemma~\ref{lem:randomRado})}};
\node[squarednodered]  (randomram) at (11.7,14)    {{\color{red}The $1$-statement of the random Ramsey theorem (Theorem~\ref{randomramsey})}};
\node[squarednodeblue]  (randomprime) at   (11.7,11)     {{\color{blue}Random van der Waerden theorem for the primes (Theorem~\ref{rvdwp})}};
\node[squarednodeblue]  (simplerado) at  (1.7,8)      {{\color{blue}Simplified random Rado lemma (Lemma~\ref{lem:easyRado})}};
\node[squarednodeblue]  (radond) at  (1.7,3)      {{\color{blue}Random Rado theorem for integer lattices (Theorem~\ref{thm:ramseynd})}};
\node[squarednodered]  (radoorigin) at  (1.7,0)      {{\color{red}Random Rado theorem (Theorems~\ref{radores0} and~\ref{r3})}};
\node[squarednodeblue]  (ramsgroup) at  (8.7,3)      {{\color{blue}A random Ramsey theorem for certain abelian groups (Theorem~\ref{thm:exponent})}};
\node[squarednodeblue]  (ramsfield) at  (11.7,8)    {{\color{blue}Random Ramsey theorems for vector spaces (Theorems~\ref{thm:fields} and~\ref{thm:fields2})}};

\draw[line width=1pt, double distance=3pt,arrows = {-Latex[length=0pt 3 0]}]   (hypthm.south) -- (radolem.north);
\draw[line width=1pt, double distance=3pt,arrows = {-Latex[length=0pt 3 0]}]  (hypthm.east) -- (randomram.west);
\draw[line width=1pt, double distance=3pt,arrows = {-Latex[length=0pt 3 0]}]  (radolem.east) -- (randomprime.west);
\draw[line width=1pt, double distance=3pt,arrows = {-Latex[length=0pt 3 0]}]   (radolem.south) -- (simplerado.north);
\draw[line width=1pt, double distance=3pt,arrows = {-Latex[length=0pt 3 0]}]   (simplerado.south) -- (radond.north);
\draw[line width=1pt, double distance=3pt,arrows = {-Latex[length=0pt 3 0]}]   (radond.south) -- (radoorigin.north);
\draw[line width=1pt, double distance=3pt,arrows = {-Latex[length=0pt 3 0]}]  (simplerado.east) -- (ramsfield.west);
\draw[line width=1pt, double distance=3pt,arrows = {-Latex[length=0pt 3 0]}]  (simplerado.south east) -- (ramsgroup.north west);

\node at (6.7,14.4) {{\color{red}$+$ supersaturation for graphs}}; 
%NOT Theorem~\ref{thm:fgrsupersat}
\node at (6.7,11.4) {{\color{red}$+$ supersaturation Theorem~\ref{thm:supersatprimes}}}; 
\node at (6.7,8.8) {{\color{red}$+$ supersaturation}};
\node at (6.7,8.4) {{\color{red}Theorem~\ref{thm:fieldsupersat}}};
\node at (0,5.9) {{\color{blue}$+$ supersaturation}};
\node at (0,5.5) {{\color{blue}Theorem~\ref{thm:supersat-d}}};
\node at (7.7,5.9) {{\color{red}$+$ supersaturation}};
\node at (7.7,5.5) {{\color{red}Theorem~\ref{thm:exponentsupersat}}};

\node at (8.7,0.4) {Key: \, {\color{red} red $=$ known results}};
\node at (9.0,0.0) {{\color{blue} blue $=$ new results}};

\end{tikzpicture}
\caption{A visualisation of the main new (blue) and previously known (red) results covered in this paper.}
\label{fig:visualisation}
\end{figure}
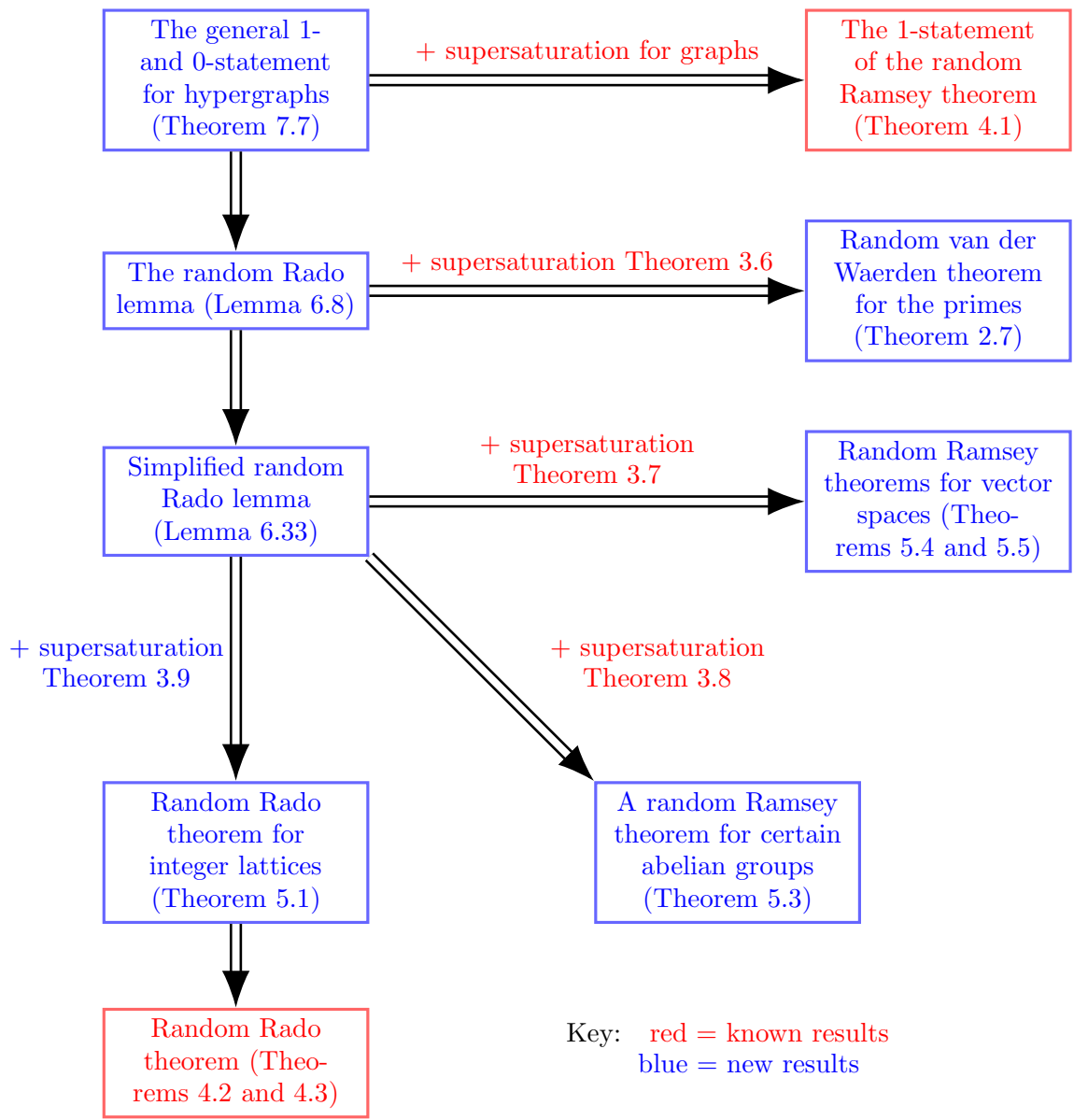

\subsection{Organisation of the paper}
One of the most significant applications of our theory is to arithmetic progressions in the primes; we present our main contribution to this topic  (Theorem~\ref{rvdwp}) in Section~\ref{sec:primes}.  
Sections~\ref{sec:classical} and~\ref{sec:graphsintegers} 
serve as a gentle introduction to some of the key results in the area. 
Namely, in Section~\ref{sec:classical} we briefly cover some of the classical themes in Ramsey theory, specifically partition regularity and supersaturation; in Section~\ref{sec:graphsintegers} we discuss the random Ramsey problem for graphs and the integers.
In Section~\ref{sec:easyapplications}, we highlight the power of the random Rado lemma by presenting various novel results which we obtain from this black box. 
In Sections~\ref{sec:randomRado} and~\ref{sec:hypergraphs}, we formally state the random Rado lemma and our general $1$- and $0$-statements for hypergraphs, respectively. 
Note that
Figure~\ref{fig:visualisation} illustrates how these black box results are connected, and some of their implications.

The rest of the paper is devoted to the technical proofs of our results. In Section~\ref{sec:proofmainramsey}, we prove the $1$- and $0$-statements for hypergraphs and in Section~\ref{sec:proofblackbox} we explain how to deduce the random Rado lemma from them. In Section~\ref{sec:proofapplications}, we give the proofs of all the applications of the random Rado lemma that appear in Sections~\ref{sec:primes}~and ~\ref{sec:easyapplications}. 
For these proofs, we apply a couple of new  results that may be of independent interest: (i) a 
Ramsey-type supersaturation result for $[n]^d$ (Theorem~\ref{thm:supersat-d})
and (ii) an upper bound on the number of $k$-term arithmetic progressions in the primes that contain a given prime number (Lemma~\ref{lem:qAPbound}). 
The former generalises a well-known supersaturation result of Frankl, Graham and R\"odl~\cite{fgr}.
We prove these two results in Section~\ref{sec:aux}.

In Section~\ref{sec:conclusion}, we explain how, with the random Rado lemma at hand, one can attack further related problems. Indeed, we state further consequences of the method as well as 
 discuss potential future research directions.

\section{Arithmetic progressions in the primes}\label{sec:primes}
In this section, we highlight an application of the random Rado lemma to arithmetic progressions in the primes. We defer stating our other applications of the random Rado lemma until
Section~\ref{sec:easyapplications} as they require the introduction of a little more notation and a few definitions.

\subsection{The Green--Tao theorem}
One of the most celebrated results of the 21st Century is the following theorem of Green and Tao~\cite{greentao}. Here we write $\mathbf P$ for the set of prime numbers and, given $n \in \mathbb N$, we write $\mathbf P_n$ for the set of all prime numbers less than or equal to $n$. Recall the prime number theorem asserts that $|\mathbf P_{ n}| \sim n/\log  n$, where throughout this paper we write $\log$ for the natural logarithm. We abbreviate $k$-term arithmetic progressions by $k$-APs.
\begin{thm}[Green and Tao~\cite{greentao}]\label{gt}
Let $k \geq 2$.
The set of primes $\mathbf P$ contains arithmetic progressions of arbitrary length. Moreover, given any subset $S \subseteq \mathbf P_n$ of size $|S|=\Omega (|\mathbf P_n|) $, the number of $k$-APs in $S$ is 
$\Theta (n^2/\log ^k n)$.\footnote{Note that the moreover part of Theorem~\ref{gt} is implicit in~\cite{greentao}. An explicit proof of (a strengthening of) this part of Theorem~\ref{gt} can be found in, e.g.,~\cite[Theorem~1.2]{cmt}.}
\end{thm}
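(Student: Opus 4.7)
The plan is to reduce everything to the counting (``moreover'') part, since the first assertion follows by applying the counting bound with $S = \mathbf P_n$ and invoking the prime number theorem. The upper bound $O(n^2/\log^k n)$ in the counting statement is the easier direction: a standard upper-bound sieve (e.g.\ Selberg) already gives that the number of $k$-APs lying entirely in $\mathbf P_n$ is $O(n^2/\log^k n)$, and this estimate is trivially inherited by any subset $S \subseteq \mathbf P_n$. The real content is therefore the matching lower bound on the $k$-AP count of a positive-density subset $S \subseteq \mathbf P_n$.

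For the lower bound I would follow the Green--Tao transference strategy. First, after the standard \emph{$W$-trick} (restricting to a residue class $b$ coprime to $W := \prod_{p \leq w(n)} p$ for some slowly growing $w(n)$), I would construct, via the truncated Goldston--Y{\i}ld{\i}r{\i}m divisor sum, a non-negative majorant $\nu$ on $[n]$ which dominates $c \log n \cdot \mathbf{1}_{\mathbf P_n}$ on the relevant residue class and satisfies the \emph{linear forms} and \emph{correlation} conditions that make $\nu$ a $k$-pseudorandom measure in the sense of~\cite{greentao}. The normalised indicator of $S$ (times $\log n$) is then a non-negative function $f$ satisfying $f \leq \nu$ pointwise and $\mathbb E f = \Omega(1)$ after the $W$-trick renormalisation.

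Next I would apply a \emph{counting} relative Szemer\'edi theorem: for any $f:[n]\to[0,\infty)$ bounded pointwise by a $k$-pseudorandom measure $\nu$ with $\mathbb E f \geq \delta$, the weighted $k$-AP count of $f$ is at least $c(\delta) n^2$. Unwinding the $\log n$ normalisation then produces $\Omega(n^2/\log^k n)$ genuine $k$-APs in $S$, completing the lower bound. The main obstacle, and the heart of the argument, is precisely this counting step: the original Green--Tao proof only produces a single AP, and upgrading it to a Varnavides-type supersaturation statement requires strengthening the dense model theorem together with the generalised von Neumann inequality, so that the AP count of $f$ is controlled by that of a genuine dense model $\tilde f$ defined on all of $[n]$. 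This refinement is carried out in~\cite[Theorem~1.2]{cmt}, so in practice I would cite it directly rather than reprove it from scratch.
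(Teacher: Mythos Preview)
The paper does not prove this theorem; it is quoted as a result from the literature (Green--Tao~\cite{greentao}, with the supersaturation clause attributed to~\cite[Theorem~1.2]{cmt} in the footnote) and used as a black box in the proofs of Theorems~\ref{gt1}, \ref{thm:supersatprimes}, and~\ref{rvdwp}. There is thus no ``paper's own proof'' to compare your proposal against.

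That said, your sketch is a faithful summary of the standard route: upper bound via an upper-bound sieve (indeed the paper itself uses a Selberg-type sieve in Lemma~\ref{lem:qAPbound} for a related count), and lower bound via the $W$-trick, the Goldston--Y{\i}ld{\i}r{\i}m pseudorandom majorant, and a transference/relative-Szemer\'edi argument upgraded to the counting (Varnavides) form. Your identification of the key obstacle---that the original Green--Tao argument yields one AP rather than $\Omega(n^2/\log^k n)$ many, and that one needs a dense-model theorem strong enough to control the full AP count---is correct, and citing~\cite{cmt} for that step is exactly what the paper does too. So your proposal is appropriate as a high-level outline, but be aware that in this paper the statement is simply imported, not reproved.
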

The Green--Tao theorem therefore tells us that any linear size subset of $\mathbf P_n$ is rich in arithmetic progressions, for any large $n \in \mathbb N$.
It is natural to ask about sparser subsets of $\mathbf P_n$: given some $p=p(n)$, does a typical subset $S$ of $\mathbf P_n$ of size approximately $p|\mathbf P_n|$ contain $k$-APs?
Let $\mathbf P_{n,p}$ denote the subset of $\mathbf P_{n}$ obtained by retaining each element of $\mathbf P_{n}$ with probability $p$, independently of all other elements. 
\begin{thm}[Random Green--Tao theorem]\label{gt1}
Let $k \geq 3$. 
$$\lim _{n \rightarrow \infty} \mathbb P [\, \mathbf P_{n,p} \text{ contains a } k\text{-AP}]=\begin{cases}
0 &\text{ if } p=o(n^{-2/k}\log n); \\
 1 &\text{ if } p=\omega ( n^{-2/k}\log n). \end{cases} $$
\end{thm}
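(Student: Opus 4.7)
The plan is to deduce Theorem~\ref{gt1} from the random Rado lemma (Lemma~\ref{lem:randomRado}) applied in the trivial colouring case $r=1$, where a $k$-AP is encoded as the solution set of the usual matrix equation $Ax=0$ with $k-2$ common-difference equations. Before invoking the machinery, it is worth confirming the threshold heuristically: by Theorem~\ref{gt}, $\mathbf P_n$ contains $\Theta(n^2/\log^k n)$ $k$-APs, and each individual $k$-AP survives in $\mathbf P_{n,p}$ with probability $p^k$. So the expected number of $k$-APs in $\mathbf P_{n,p}$ is $\Theta(p^k n^2/\log^k n)$, which is of constant order exactly when $p = \Theta(n^{-2/k}\log n)$.

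The $0$-statement is then immediate from Markov's inequality: if $p=o(n^{-2/k}\log n)$ then the expected number of $k$-APs in $\mathbf P_{n,p}$ is $o(1)$, so $\mathbf P_{n,p}$ is $k$-AP-free with probability $1-o(1)$. (Alternatively, one can feed this into the $0$-statement part of Lemma~\ref{lem:randomRado} with $r=1$ and obtain the same conclusion.)

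For the $1$-statement, the supersaturation input required by the random Rado lemma is precisely the moreover part of Theorem~\ref{gt}: every linear-sized subset $S \subseteq \mathbf P_n$ contains $\Omega(n^2/\log^k n)$ $k$-APs, a constant fraction of all $k$-APs in $\mathbf P_n$. Feeding this (or the stronger Ramsey-type statement Theorem~\ref{thm:supersatprimes}) into Lemma~\ref{lem:randomRado} with $r=1$ yields, for $p=\omega(n^{-2/k}\log n)$, that $\mathbf P_{n,p}$ contains a $k$-AP with high probability.

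The main obstacle is verifying that the sequence $(\mathbf P_n)_{n\in\mathbb{N}}$ is ``well-behaved'' in the sense demanded by the hypotheses of Lemma~\ref{lem:randomRado}. Concretely, any container- or deletion-style argument packaged inside the random Rado lemma requires good degree control, namely an upper bound on the number of $k$-APs in $\mathbf P_n$ passing through a fixed prime $q$. Because the primes have density roughly $1/\log n$ rather than being uniformly distributed, such a bound does not follow in a routine way from Theorem~\ref{gt} and must be established separately; this is exactly the role played by Lemma~\ref{lem:qAPbound}. Once that estimate is plugged in, the random Rado lemma applies as a black box and the $1$-statement follows.
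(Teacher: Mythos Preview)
Your $0$-statement via Markov's inequality is correct and matches the paper. However, your $1$-statement approach via the random Rado lemma does not work, for two related reasons.

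First, Lemma~\ref{lem:randomRado} requires $r\ge 2$ in its hypothesis, so it cannot be invoked with $r=1$; indeed the paper explicitly notes after Theorem~\ref{gt1} that ``since Theorem~\ref{gt1} is not a Ramsey-type result, we do not use the random Rado lemma to prove it.'' More importantly, even if one ignored the restriction $r\ge 2$, the threshold the random Rado lemma produces is $\hat p(A,\mathbf P_n)$, which for $k$-APs in the primes equals $\Theta(n^{-1/(k-1)}\log n)$ (see~\eqref{eq:hatp} in the proof of Theorem~\ref{rvdwp}). This is the \emph{Ramsey} threshold, not the containment threshold $n^{-2/k}\log n$ claimed in Theorem~\ref{gt1}; for $k\ge 3$ we have $n^{-1/(k-1)}\log n\gg n^{-2/k}\log n$, so even granting everything else, the $1$-statement you would obtain only covers $p\ge Cn^{-1/(k-1)}\log n$, far too weak. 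Your heuristic in the first paragraph already correctly identifies $n^{-2/k}\log n$ as the point where the expected number of $k$-APs is of constant order, so you should have noticed the mismatch with $\hat p(A,\mathbf P_n)$, which is calibrated to where the expected number of $k$-APs equals the expected number of \emph{vertices}.

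The paper instead proves the $1$-statement by a direct second moment argument: with $X$ the number of $k$-APs in $\mathbf P_{n,p}$, one shows $\Var(X)=o(\mathbb E(X)^2)$ when $p=\omega(n^{-2/k}\log n)$ and applies Chebyshev. The variance is controlled by bounding the number of pairs of $k$-APs in $\mathbf P_n$ sharing at least one prime; Lemma~\ref{lem:qAPbound} is exactly what handles the dominant contribution from pairs sharing a single prime.
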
 
Theorem~\ref{gt1} tells us that typical subsets of $\mathbf P_{n}$ of size significantly above $|\mathbf P_{n}| n^{-2/k}\log n \sim n^{(k-2)/k} $ 
contain  $k$-APs; conversely, typical subsets of $\mathbf P_{n}$ of size significantly smaller than $n^{(k-2)/k} $  do not. Note that, since Theorem~\ref{gt1} is not a Ramsey-type result, we do not use the random Rado lemma to prove it. Instead, we make use of the following result.

\begin{lemma}\label{lem:qAPbound}
Let $k\in\mathbb N$ with $k\ge2$ and let $\ell\in[k]$. For every prime $q\in\mathbf P_n$, the number of $k$-APs in $\mathbf P_n$ such that the $\ell$th term is equal to $q$ is $O(n/\log^{k-1}n)$.
\end{lemma}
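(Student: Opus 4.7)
The plan is to parametrize the relevant $k$-APs by their common difference and then apply a standard upper-bound sieve. Suppose $a, a+d, \ldots, a+(k-1)d$ is a $k$-AP in $\mathbf{P}_n$ whose $\ell$th term equals $q$; then $a = q - (\ell-1)d$, so the AP is exactly $\{q + jd : j \in I\}$, where $I := \{1-\ell, 2-\ell, \ldots, k-\ell\}$. Hence each such AP is determined by its (nonzero) common difference $d$, and so the quantity to be bounded is at most
$$\#\bigl\{d \in \mathbb{Z}\setminus\{0\} : q + jd \in \mathbf{P}_n \text{ for every } j \in J\bigr\},$$
where $J := I \setminus \{0\}$ satisfies $|J| = k-1$ and $|j| \leq k-1$ for all $j \in J$. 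The constraint $2 \leq q + jd \leq n$ for every $j \in J$ forces $d$ to lie in an interval of length $O(n)$, with implicit constant depending only on $k$ and $\ell$.

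Next I would apply a standard upper-bound sieve---for instance the Selberg sieve or the Brun sieve, as developed in Halberstam--Richert or Iwaniec--Kowalski---to the family of $k-1$ linear forms $\{d \mapsto q + jd\}_{j \in J}$. For $d$ ranging over an interval of length $X$, such a sieve yields
$$\#\bigl\{d : q + jd \in \mathbf{P} \text{ for every } j \in J\bigr\} = O\!\left(\mathfrak{S}(q, J) \cdot \frac{X}{\log^{k-1} X}\right),$$
where $\mathfrak{S}(q,J)$ denotes the singular series associated to these forms and the implicit constant depends only on $k$. Substituting $X = O(n)$ would deliver the desired $O(n/\log^{k-1} n)$ bound, provided $\mathfrak{S}(q,J)$ is controlled uniformly in $q$.

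The main (and essentially only nontrivial) step is establishing this uniformity. Let $\omega_p$ be the number of residue classes $d \pmod{p}$ for which at least one form $q + jd$ is divisible by $p$. For any prime $p$ dividing neither $q$ nor any element of $J$, each congruence $q + jd \equiv 0 \pmod{p}$ is satisfied on a unique residue class of $d$, and so $\omega_p \leq |J| = k-1$ independently of $q$. The remaining primes are $p = q$ together with the finitely many primes $p \leq k-1$ dividing some $j \in J$; for each of these, $\omega_p$ is bounded trivially. Consequently the Euler product defining $\mathfrak{S}(q,J)$ is uniformly bounded by a constant depending only on $k$ and $\ell$, which completes the argument.
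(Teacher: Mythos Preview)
Your proposal is correct and follows essentially the same route as the paper's proof: parametrize the $k$-APs through $q$ by their common difference, then apply an upper-bound sieve to the $k-1$ linear forms $d \mapsto q+jd$, the key point being that for each prime $p \ge k$ with $p \neq q$ exactly $k-1$ residue classes of $d$ are excluded, and the exceptional primes contribute only a bounded factor. The paper cites a concrete sieve lemma from Tao's lecture notes and handles explicitly the $k$-APs containing a prime at most $\sqrt{n}$, whereas you invoke the Selberg/Brun sieve as a black box and absorb that step into it; both are standard and lead to the same bound.
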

We prove Lemma~\ref{lem:qAPbound} in Section~\ref{aux1} via a sieve theory argument.
With Lemma~\ref{lem:qAPbound} and Theorem~\ref{gt} at hand, 
the proof of Theorem~\ref{gt1} is just a simple application of the second moment method. For completeness, a full proof is given in Appendix~\ref{appendix:2ndmoment}.

Given $k \geq 3$ and $\delta >0$, we say a finite set $X  \subseteq \mathbb N$ is \emph{$(\delta,k)$-Szemer\'edi} if every subset of $X$ of size at least $\delta |X|$ contains a $k$-AP. The following result tells us that not too sparse subsets of  $\mathbf P_{n}$ are  typically 
$(\delta,k)$-Szemer\'edi.

\begin{thm}[Balogh, Liu and Sharifzadeh~\cite{bls}]\label{thm:bls}
For any $\delta ,\gamma >0$ and $k \geq 3$, if $p\geq n^{-\frac{1}{k-1}+\gamma}$ then
$$
\lim _{n \rightarrow \infty} \mathbb P [\, \mathbf P_{n,p} \text{ is $(\delta,k)$-Szemer\'edi} ] =1.
$$
\end{thm}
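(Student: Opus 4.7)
My plan is to apply the \emph{hypergraph container method} to the $k$-uniform hypergraph $H_n$ on vertex set $\mathbf{P}_n$ whose edges are the $k$-APs contained in $\mathbf{P}_n$. The crucial observation is that $\mathbf{P}_{n,p}$ fails to be $(\delta,k)$-Szemer\'edi precisely when some $k$-AP-free (equivalently, $H_n$-independent) set $I \subseteq \mathbf{P}_n$ satisfies $|I \cap \mathbf{P}_{n,p}| \geq \delta |\mathbf{P}_{n,p}|$. The strategy is thus to cover all $k$-AP-free subsets of $\mathbf{P}_n$ by a small collection $\mathcal{C}$ of `containers', each of size much smaller than $|\mathbf{P}_n|$, and then conclude via Chernoff and a union bound.

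To invoke the container theorem of Saxton--Thomason (or Balogh--Morris--Samotij), one needs to control the co-degrees of $H_n$. The moreover part of Theorem~\ref{gt} gives $e(H_n) = \Theta(n^2/\log^k n)$; Lemma~\ref{lem:qAPbound} gives $\Delta_1(H_n) = O(n/\log^{k-1} n)$, matching the average degree; and for $j \geq 2$ one has $\Delta_j(H_n) = O(1)$, since a pair of primes together with a choice of their positions in a progression already determines the entire $k$-AP. A short computation shows that the balanced degree condition is met for $\tau := C n^{-1/(k-1)} \log n$ with $C$ sufficiently large, the binding constraint arising at $j = k$.

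Applying the container theorem and iterating it a bounded number of times, while using the relative density Green--Tao theorem for the primes (again implicit in the moreover part of Theorem~\ref{gt}) to ensure that any container with few internal edges is already small, I obtain a family $\mathcal{C}$ of subsets of $\mathbf{P}_n$ such that: (i) every $k$-AP-free subset of $\mathbf{P}_n$ lies inside some $C \in \mathcal{C}$; (ii) $|C| \leq (\delta/3)|\mathbf{P}_n|$ for every $C \in \mathcal{C}$ and every large $n$; and (iii) $\log |\mathcal{C}| = O(\tau |\mathbf{P}_n| \log(1/\tau)) = O(n^{(k-2)/(k-1)} \log n)$.

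To finish, Chernoff gives $|\mathbf{P}_{n,p}| \geq \tfrac{1}{2} p |\mathbf{P}_n|$ with high probability, so if $\mathbf{P}_{n,p}$ is not $(\delta,k)$-Szemer\'edi then some $C \in \mathcal{C}$ must satisfy $|C \cap \mathbf{P}_{n,p}| \geq (\delta/2) p |\mathbf{P}_n|$; since $\mathbb{E}[|C \cap \mathbf{P}_{n,p}|] \leq (\delta/3) p |\mathbf{P}_n|$, a further Chernoff bound shows this event has probability at most $\exp(-c(\delta)\, p |\mathbf{P}_n|)$ for each fixed $C$. The hypothesis $p \geq n^{-1/(k-1)+\gamma}$ yields $p|\mathbf{P}_n| = \omega(n^{(k-2)/(k-1)} \log n)$, which dominates $\log|\mathcal{C}|$, so the union bound finishes the argument. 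I expect the main obstacle to be the container-theoretic setup: verifying the co-degree hypotheses (for which Lemma~\ref{lem:qAPbound} is the non-trivial input) and iterating the container theorem in a way that forces the containers to have size $o(|\mathbf{P}_n|)$ rather than merely $(1-\varepsilon)|\mathbf{P}_n|$.
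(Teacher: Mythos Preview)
Your container-based argument is correct and is essentially the approach one would expect. Note, however, that the paper does not give its own proof of Theorem~\ref{thm:bls}: the result is simply quoted from~\cite{bls}. What the paper does instead is prove the strengthening Theorem~\ref{bls-strengthening} (whose $r=1$ case is exactly the $(\delta,k)$-Szemer\'edi property, but with the sharper threshold $p\geq Cn^{-1/(k-1)}\log n$, i.e.\ with the $\gamma$ removed). That proof is also container-based, but uses the fingerprint formulation of Proposition~\ref{prop:rcontainers} rather than the ``small containers plus union bound'' version you describe: one records, for each independent set, a fingerprint $S$ of size $O(\tau|\mathbf P_n|)$ and a container $f(S)$, and bounds the probability that $S\subseteq\mathbf P_{n,p}\subseteq S\cup f(S)$ (or the resilience analogue thereof). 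This refinement is precisely what buys the extra $\log$ factor: in your version $\log|\mathcal C|=O(n^{(k-2)/(k-1)}\log n)$ while $p|\mathbf P_n|=\Theta(n^{(k-2)/(k-1)})$ at the sharp threshold, so the naive union bound fails there and you genuinely need the $n^{\gamma}$ slack. For Theorem~\ref{thm:bls} as stated your argument is fine; if you want to match Theorem~\ref{bls-strengthening} you would need to switch to the fingerprint version.
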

Note that the bound on $p$ in Theorem~\ref{thm:bls} is significantly higher than the corresponding bound in the $1$-statement of Theorem~\ref{gt1}. Further,
Theorem~\ref{thm:bls} is tight up to the $\gamma$ term. 
Indeed, it is not hard to see that if $p =o(n^{-\frac{1}{k-1}}\log n)$ then
$$
\lim _{n \rightarrow \infty} \mathbb P [\, \mathbf P_{n,p} \text{ is $(\delta,k)$-Szemer\'edi} ] =0.
$$
This follows as for such $p$ (suitably bounded away from $0$), the expected number of $k$-APs is much less than the expected number of elements in $\mathbf P_{n,p}$. Hence, with high probability, greedily deleting a number from each $k$-AP produces a linear size subset of $\mathbf P_{n,p}$ containing no $k$-AP. In Section~\ref{sec:res} we give a significant strengthening of Theorem~\ref{thm:bls}; see Theorem~\ref{bls-strengthening}.

\subsection{Monochromatic arithmetic progressions in the primes}
The following seminal result is one of the first proven in Ramsey theory. 

\begin{thm}[van der Waerden~\cite{vdw}]\label{thm:vdw}
Any finite colouring of $\mathbb N$ yields a monochromatic arithmetic progression of arbitrary length.
\end{thm}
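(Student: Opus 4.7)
The plan is to prove the equivalent finitary statement: for every $r,k \in \mathbb{N}$ there exists $W=W(r,k)\in\mathbb{N}$ such that any $r$-colouring of $[W]$ contains a monochromatic $k$-AP. The infinite version of Theorem~\ref{thm:vdw} then follows immediately, since any $r$-colouring of $\mathbb{N}$ restricts to an $r$-colouring of $[W(r,k)]$ for each desired length $k$. I would prove the finitary statement by induction on $k$, with $r$ arbitrary throughout. The base cases $k=1,2$ are trivial: a singleton is a $1$-AP, and $W(r,2)=r+1$ by pigeonhole.

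For the inductive step from $k$ to $k+1$, I would use the classical \emph{colour focusing} technique. Say a family of $k$-APs $P_1,\dots,P_s$ with common differences $d_1,\dots,d_s$ is \emph{focused at $b$} if each $P_i$, extended by one term, has its $(k+1)$th term equal to $b$; the family is \emph{colour-focused} under a given colouring if each $P_i$ is monochromatic and the $s$ colours used by $P_1,\dots,P_s$ are pairwise distinct. The crucial observation is that, if there is an $r$-tuple of monochromatic $k$-APs that is colour-focused at a point $b$, then the colour of $b$ must match that of some $P_i$, and then $P_i\cup\{b\}$ is a monochromatic $(k+1)$-AP.

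It thus suffices to prove, by a secondary induction on $s$, that for every $s\le r$ there exists $N(r,k,s)$ such that any $r$-colouring of $[N(r,k,s)]$ contains either a monochromatic $(k+1)$-AP or an $s$-tuple of colour-focused monochromatic $k$-APs. The base case $s=1$ is exactly the inductive hypothesis $W(r,k)$ of the primary induction. For the step $s-1\to s$, I would take the ambient interval long enough to be subdivided into many consecutive blocks of length $M:=N(r,k,s-1)$. By the induction hypothesis, each block either already contains a monochromatic $(k+1)$-AP (in which case we are done) or contains a colour-focused $(s-1)$-tuple of monochromatic $k$-APs. Since there are only $r^M$ possible colourings of a block of length $M$, among $r^M+1$ blocks two must be identically coloured; the offset between them supplies the common difference of a new monochromatic $k$-AP that, together with the $(s-1)$-tuple suitably shifted, yields a colour-focused $s$-tuple at a new common focus point — unless its colour coincides with one of the existing $s-1$ colours, in which case we already obtain a monochromatic $(k+1)$-AP.

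The main obstacle is the bookkeeping inside the secondary inductive step: one needs to choose the block count so that the shifts align to produce a \emph{single} common focus point for all $s$ APs, while simultaneously ensuring the new $k$-AP constructed from the inter-block offset is monochromatic in a colour either matching (finishing the argument) or distinct from the previous $s-1$ (extending the configuration). This is precisely the mechanism that yields the notorious tower-type bounds on $W(r,k)$; sharper bounds require much more powerful density tools such as Szemer\'edi's or Gowers's theorem. Since Theorem~\ref{thm:vdw} is a century-old classical benchmark, I expect the authors simply to cite it here as background motivating their novel random and prime-arithmetic strengthenings (such as Theorem~\ref{rvdwp}), rather than reproving it.
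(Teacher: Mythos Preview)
Your expectation is correct: the paper does not prove Theorem~\ref{thm:vdw} at all. It is stated purely as classical background, attributed to van der Waerden~\cite{vdw}, and then immediately used to motivate the Green--Tao corollary (Theorem~\ref{vdwp}) and the paper's own random analogue (Theorem~\ref{rvdwp}). So there is nothing to compare against.

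That said, your colour-focusing sketch has a genuine gap in the secondary inductive step $s-1\to s$. Finding two identically coloured blocks via pigeonhole is not enough: to manufacture a new monochromatic $k$-AP of focus points (in a potentially new colour), you need an arithmetic progression of $k$ identically coloured blocks, not just two. This is obtained by applying the \emph{primary} induction hypothesis $W(r^M,k)$ to the sequence of blocks viewed as single elements coloured with one of $r^M$ ``super-colours''. The offsets between consecutive blocks in that $k$-AP then give the common difference of the new $k$-AP of foci. Your pigeonhole version would only produce a $2$-AP of blocks, which suffices for the step $k=2\to3$ but not in general. With this correction the argument goes through and yields the standard Ackermann-type bound on $W(r,k)$.
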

One beautiful consequence of the Green--Tao theorem is the following generalisation of van der Waerden's theorem.\footnote{In fact, the Green--Tao theorem implies a supersaturation version of Theorem~\ref{vdwp}, see Theorem~\ref{thm:supersatprimes}.}
\begin{thm}[van der Waerden theorem for the primes~\cite{greentao}]\label{vdwp}
Any finite colouring of $\mathbf P$ yields a monochromatic arithmetic progression of arbitrary length.
\end{thm}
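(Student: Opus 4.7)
The plan is to derive Theorem~\ref{vdwp} as an essentially immediate corollary of the moreover part of the Green--Tao theorem (Theorem~\ref{gt}) via a pigeonhole argument. Fix $r \in \mathbb N$ and $k \ge 2$, and suppose we are given an $r$-colouring $c: \mathbf P \to [r]$. We want to find a monochromatic $k$-AP.

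First, for any $n \in \mathbb N$, restrict $c$ to $\mathbf P_n$ to obtain an $r$-colouring of $\mathbf P_n$. By the pigeonhole principle, there exists a colour class $S \subseteq \mathbf P_n$ with
\[
|S| \;\ge\; \frac{|\mathbf P_n|}{r} \;=\; \Omega(|\mathbf P_n|),
\]
where the implicit constant depends only on $r$. Now apply the moreover part of Theorem~\ref{gt}: provided $n$ is sufficiently large (depending on $r$ and $k$), every subset of $\mathbf P_n$ of size $\Omega(|\mathbf P_n|)$ contains at least one $k$-AP (in fact $\Theta(n^2/\log^k n)$ of them). In particular, $S$ contains a $k$-AP, which is monochromatic in the original colouring $c$. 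Since $k$ was arbitrary, this proves the theorem.

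There is essentially no obstacle at this stage of the argument, since the heavy lifting is entirely absorbed into the Green--Tao theorem: the supersaturation statement in Theorem~\ref{gt} is precisely designed to convert a positive density statement (pigeonhole) into the existence of a $k$-AP. The only delicate point worth noting is that one should phrase the pigeonhole step uniformly in $n$, so that the threshold for ``$n$ sufficiently large'' in Green--Tao can be met; this is automatic because the density $1/r$ of the largest colour class is a constant independent of $n$. It is also worth remarking that, as footnoted after the statement of Theorem~\ref{vdwp}, this argument in fact yields a quantitative supersaturation version of van der Waerden's theorem for the primes (Theorem~\ref{thm:supersatprimes}), since Green--Tao provides $\Theta(n^2/\log^k n)$ monochromatic $k$-APs in $\mathbf P_n$, not merely one.
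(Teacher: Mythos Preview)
Your proof is correct and matches the paper's treatment: the paper does not give a standalone proof of Theorem~\ref{vdwp} but simply presents it as an immediate consequence of the Green--Tao theorem (Theorem~\ref{gt}), exactly via the pigeonhole-plus-density argument you give. Your closing remark about the supersaturation version is also precisely what the paper records in the footnote and in Theorem~\ref{thm:supersatprimes}.
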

Given $k\geq 3$ and $ r\ge2$, we say a subset $X \subseteq \mathbb N$ is \emph{$(r,k)$-van der Waerden} if whenever $X$ is $r$-coloured there is a monochromatic $k$-AP.
One of the main applications of our machinery is the following random analogue
of Theorem~\ref{vdwp}.
\begin{thm}[Random van der Waerden theorem for the primes]\label{rvdwp}
Let $k \geq 3$ and $r \geq 2$.
There are constants $c,C>0$ such that
$$\lim _{n \rightarrow \infty} \mathbb P [ \, \mathbf P_{n,p} \text{ is } (r,k)\text{-van der Waerden}]=\begin{cases}
0 &\text{ if } p \leq cn^{-\frac{1}{k-1}}\log n; \\
 1 &\text{ if } p \geq Cn^{-\frac{1}{k-1}}\log n.\end{cases}$$
\end{thm}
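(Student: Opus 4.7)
The plan is to derive Theorem~\ref{rvdwp} directly from the random Rado lemma (Lemma~\ref{lem:randomRado}), applied to the sequence $S_n := \mathbf{P}_n$ together with the partition-regular linear system encoding $k$-term arithmetic progressions (the matrix $A$ whose rows are $x_i - 2x_{i+1} + x_{i+2} = 0$ for $i = 1, \dots, k-2$). Van der Waerden's theorem guarantees partition regularity, so the black box is available; the entire proof is then the verification of its two quantitative inputs for $\mathbf{P}_n$, followed by a short threshold calibration.

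For the $1$-statement I would supply the Ramsey-type supersaturation of Theorem~\ref{thm:supersatprimes}: every $r$-colouring of $\mathbf{P}_n$ contains $\Omega(n^2/\log^k n)$ monochromatic $k$-APs, which follows from the quantitative Green--Tao theorem by averaging over the largest colour class. For the $0$-statement I would supply the degree bound of Lemma~\ref{lem:qAPbound}: every prime $q \leq n$ lies in $O(n/\log^{k-1} n)$ many $k$-APs in $\mathbf{P}_n$. Combined with $|\mathbf{P}_n| \sim n/\log n$ from the prime number theorem, these are the only quantitative hypotheses I expect Lemma~\ref{lem:randomRado} to demand of the host sequence.

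The threshold emerges from the standard vertex-degree heuristic: one picks $p$ so that the expected number of $k$-APs in $\mathbf{P}_{n,p}$ through a fixed surviving prime is $\Theta(1)$, i.e.\ $p^{k-1}\cdot n/\log^{k-1}n \asymp 1$, which gives $p \asymp n^{-1/(k-1)}\log n$, precisely the threshold in the statement. Feeding the supersaturation result, the degree bound, and this calibration into Lemma~\ref{lem:randomRado} then yields constants $c, C > 0$ (depending on $k$ and $r$) such that below $cn^{-1/(k-1)}\log n$ there is whp a valid $r$-colouring of $\mathbf{P}_{n,p}$ avoiding monochromatic $k$-APs, and above $Cn^{-1/(k-1)}\log n$ every $r$-colouring of $\mathbf{P}_{n,p}$ contains one whp. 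The main obstacles lie in the two external inputs rather than in the deduction itself: Lemma~\ref{lem:qAPbound} requires a sieve-theoretic argument (handled in Section~\ref{aux1}) to count $k$-APs in the primes with one term pinned, and Lemma~\ref{lem:randomRado} itself rests on the hypergraph container method together with the Hancock--Treglown argument of Section~\ref{sec:hypergraphs}. Once these are granted, Theorem~\ref{rvdwp} reduces to a routine verification of hypotheses in the black box.
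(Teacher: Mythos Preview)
Your proposal is correct and follows essentially the same route as the paper: apply Lemma~\ref{lem:randomRado} to $S_n=\mathbf P_n$ with the $k$-AP matrix, using Theorem~\ref{thm:supersatprimes} for (A2), Lemma~\ref{lem:qAPbound} together with the Green--Tao count and the prime number theorem for (A3)--(A4), and then checking that $\hat p(A,\mathbf P_n)=\Theta(n^{-1/(k-1)}\log n)$. The only point you leave implicit is that (A5) and (A6) also require short explicit computations (the paper takes $X_n=[k]$ for (A5)), but as you say these are routine once the solution counts $|\Sol_{\mathbf P_n}^A(Y)|=\Theta(n^2/\log^k n)$ are in hand.
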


Theorem~\ref{rvdwp} tells us that typical subsets of $\mathbf P_{n}$ of size significantly above $|\mathbf P_{n}| n^{-\frac{1}{k-1}} \log n \sim n^{\frac{k-2}{k-1}} $ are $(r,k)$-van der Waerden whilst 
 typical subsets of $\mathbf P_{n}$ of size significantly smaller than $n^{\frac{k-2}{k-1}} $  are not. 
 
 Theorem~\ref{rvdwp} follows from the random Rado lemma combined with Theorem~\ref{gt} and Lemma~\ref{lem:qAPbound}. The proof is postponed to Section~\ref{sec:proofapplications}.

\section{Partition regularity and supersaturation}\label{sec:classical}

In this section, we give a brief survey of Ramsey-type results for homogeneous systems of linear equations. 
Partition regularity is covered in the first subsection while supersaturation, including a new result for integer lattices, is covered in the second.

\subsection{Partition regularity}
An integer matrix $A$ is called {\it partition regular} if every finite colouring of $\mathbb N$ yields a monochromatic solution to $Ax=0$. A classical result of Rado~\cite{rado} characterises  all those matrices that are partition regular. In order to state Rado's theorem, and other results in this section, we require the following definition.

\begin{define}[The columns condition over $R$]
Let $R$ be a commutative ring and $A$ be a matrix with entries in $R$. We say $A$ satisfies the {\it columns condition over $R$} if there exists a partition $C_0, C_1, ..., C_m$ of the columns of $A$ such that the following holds. Let $c_j$ denote the $j$th column of $A$ and set 
$$s_i:=\sum_{c_j\in C_i} c_j$$
for every $0\le i\le m$. Then 
\begin{itemize}
\item $s_0=0$ and 
\item $s_i$ can be written as a linear combination of the elements $\{c_j:c_j\in C_k, \, k<i\}$ over $R$, for every $1\le i\le m$.
\end{itemize}
\end{define}

\begin{thm}[Rado~\cite{rado}]
An integer matrix $A$ is partition regular if and only if $A$ satisfies the columns condition over $\mathbb{Q}$.
\end{thm}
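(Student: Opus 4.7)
The plan is to prove both directions separately. Necessity (partition regular implies columns condition over $\mathbb{Q}$) is a contrapositive coloring argument, while sufficiency (columns condition implies partition regular) is the main technical obstacle and goes via $(m,p,c)$-sets and Deuber's theorem.

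For the necessity direction, I would argue by contrapositive: assuming $A$ fails the columns condition over $\mathbb{Q}$, construct an explicit finite coloring of $\mathbb{N}$ with no monochromatic solution to $Ax=0$. The right coloring to use is the base-$p$ ``lowest nonzero digit'' coloring for a large prime $p$: write each $n\in\mathbb{N}$ as $n = p^k(a_0+a_1p+\cdots)$ with $0<a_0<p$, and color $n$ by the pair $(k \bmod r,\ a_0 \bmod r)$ for an appropriate $r$. Given a monochromatic solution $x$, one inspects the $p$-adic valuations of its entries, groups the coordinates of $x$ by the level at which they attain the common minimum valuation, and reads off the equation $Ax=0$ modulo successive powers of $p$. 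The resulting cascade of equations gives a partition of the columns of $A$ satisfying the columns condition over $\mathbb{F}_p$. Choosing $p$ larger than all minors of $A$ ensures this condition lifts to $\mathbb{Q}$, contradicting our assumption.

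For the sufficiency direction, the plan has two pieces. First, an algebraic step: show that if $A$ satisfies the columns condition over $\mathbb{Q}$, then one can choose parameters $m,p,c$ (depending only on $A$) so that every $(m,p,c)$-set in $\mathbb{N}$ contains a solution to $Ax=0$. Here an $(m,p,c)$-set is the set of all values $\sum_{i=1}^m \lambda_i x_i$ for a prescribed family of coefficient tuples $(\lambda_i)$ with $|\lambda_i|\le p$ and a fixed leading coefficient $c$, generated by positive integers $x_1,\dots,x_m$. The construction of the solution is iterative: the partition $C_0,\dots,C_m$ and the witnesses $s_i$ from the columns condition tell us exactly how to express each successive block of the desired solution as a rational combination of earlier blocks; clearing denominators yields integer coefficients bounded by a constant depending only on $A$, which determines the required $m$, $p$ and $c$. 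Second, a Ramsey step: invoke Deuber's theorem, which says that for any $r$ and any $m,p,c$ there is $N=N(r,m,p,c)$ such that every $r$-coloring of $[N]$ contains a monochromatic $(m,p,c)$-set. Combining the two steps shows that any $r$-coloring of $\mathbb{N}$ yields a monochromatic solution to $Ax=0$.

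The main obstacle is Deuber's theorem, whose proof is an intricate double induction on $m$ and $p$, with van der Waerden's theorem (Theorem~\ref{thm:vdw}) serving as the base case. The inductive step uses a product-coloring argument: apply induction to obtain a long monochromatic $(m-1,p,c)$-set, then inside a suitably long arithmetic progression extending this set (guaranteed by a strengthening of van der Waerden), use a Hales--Jewett or compactness style pigeonhole to select the next generator $x_m$ so that all required combinations $\sum \lambda_i x_i$ land in the same color class. The combinatorial bookkeeping here is delicate but entirely standard; by contrast, the algebraic step is a direct linear-algebra calculation, and the necessity direction is a compact $p$-adic argument. Putting all three together establishes the equivalence.
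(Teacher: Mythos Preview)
The paper does not prove this theorem; it is stated as a classical result with a citation to Rado's original paper~\cite{rado} and then used as background. There is therefore no proof in the paper to compare your proposal against.

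That said, your outline is the standard modern textbook approach (as in Graham--Rothschild--Spencer): the $p$-adic ``lowest nonzero digit'' coloring for necessity, and Deuber's $(m,p,c)$-set machinery for sufficiency. This is correct in broad strokes, though note that it is not Rado's original 1933 argument; the $(m,p,c)$-set formulation is due to Deuber and postdates Rado by several decades. One small point: in your necessity sketch, the coloring by $(k \bmod r,\ a_0 \bmod r)$ is not quite right as stated --- the usual argument colors by the value of $a_0$ itself (so $p-1$ colors), not by a residue of it, and the grouping of coordinates is by their exact valuation rather than its residue class. But the shape of the argument is correct.
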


There have been several works investigating the analogous problem for other algebraic structures. 
In general, given a ring $R$ acting on an abelian group $G$, a subset $X\subseteq G$ and $r\in\mathbb N$, we say an $\ell\times k$ matrix $A$ with entries in $R$ is {\it $r$-partition regular in $X$} if every $r$-colouring of $X$ yields a monochromatic solution to $Ax=0$, where $0$ denotes the identity of $G^\ell$. Furthermore, $A$ is {\it partition regular in $X$} if it is $r$-partition regular in $X$ for every $r\in\mathbb N$.    
Bergelson, Deuber, Hindman and Lefmann~\cite{berg} gave some sufficient conditions for partition regularity in the case where $G=R$ and $R$ is a commutative ring that acts on itself via ring multiplication. 
In a recent paper, Byszewski and Krawczyk~\cite{bysz} proved several interesting extensions of Rado's theorem for integral domains, noetherian rings and modules.

In this paper, we are interested in monochromatic solutions within abelian groups. 
We mainly focus on  matrices with integer entries since there is an obvious action from $\mathbb Z$ to abelian groups: given an arbitrary abelian group $(G,+)$, for $g\in G$ and $n\in\mathbb N$ we write $ng$ to denote $g+\dots+g$. If $n$ is a negative integer, we write $ng$ to denote the inverse of $|n|g$ in $G$. For the rest of the paper, $\mathbb Z$ will act on abelian groups as we just described. 

Given any abelian group $G$, Deuber~\cite{deuber} provided a necessary and sufficient condition for an integer matrix $A$ to be partition regular in  $G\setminus\{0\}$. 
For \emph{finite} $G$ though it is not interesting as it simply states that 
 $A$ is partition regular in $G\setminus \{0\}$ if and only if there exists $x \in G \setminus \{0\}$ such that $A (x,\dots,x)^T=0$, i.e., there is a \emph{trivial} solution. Indeed, if the number of colours $r$ is larger than $|G\setminus\{0\}|$ then colouring every element of $G\setminus\{0\}$ differently prevents any monochromatic solution other than trivial ones. 

Therefore, in order to ensure a non-trivial monochromatic solution, one must insist that the size of the finite arithmetic structure is large compared to the number of colours. In this direction, Bergelson, Deuber and Hindman~\cite{berg2} proved the following analogue of Rado's theorem for finite vector spaces.
\begin{thm}[Bergelson, Deuber and Hindman \cite{berg2}]\label{thm:bdh} 
Let $\mathbb F$ be a finite field and $A$ be an $\ell \times k$ matrix with entries from $\mathbb F$. The following statements are equivalent:
\begin{itemize}
\item for all $r \in \mathbb{N}$, there exists $n_0=n_0(r, |\mathbb F|, k)\in\mathbb N$ such that for all $n \geq n_0$ every $r$-colouring of $\mathbb F^n\setminus\{0\}$ yields a monochromatic solution to $Ax = 0$ in $\mathbb F^n\setminus\{0\}$;
\item the matrix $A$ is partition regular in $\mathbb{F}^{\mathbb{N}}\setminus \{0\}$;
\item the matrix $A$ satisfies the columns condition over $\mathbb F$.
\end{itemize}
\end{thm}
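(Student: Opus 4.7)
The plan is to prove the cycle of implications $(1)\Rightarrow(2)\Rightarrow(3)\Rightarrow(1)$. The first implication is immediate: given $r\in\mathbb N$ and an $r$-colouring of $\mathbb F^{\mathbb N}\setminus\{0\}$, restricting along the canonical embedding $\mathbb F^{n_0}\hookrightarrow\mathbb F^{\mathbb N}$ (which pads a vector with zeros) yields an $r$-colouring of $\mathbb F^{n_0}\setminus\{0\}$ for the $n_0=n_0(r,|\mathbb F|,k)$ supplied by (1); the resulting monochromatic solution in $\mathbb F^{n_0}\setminus\{0\}$ lifts to one in $\mathbb F^{\mathbb N}\setminus\{0\}$.

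For $(2)\Rightarrow(3)$ I would argue the contrapositive via a ``leading coordinate'' colouring adapted to finite fields, in the spirit of Rado's original argument. Each $v\in\mathbb F^{\mathbb N}\setminus\{0\}$ has finite support, so let $L(v)\in\mathbb N$ denote the largest index at which $v$ is non-zero; colour $v$ by $v_{L(v)}\in\mathbb F^*$, using $|\mathbb F|-1$ colours. Suppose a monochromatic solution $(v^{(1)},\dots,v^{(k)})$ to $Ax=0$ exists with common leading value $c\in\mathbb F^*$. Setting $M_0=\max_j L(v^{(j)})$ and $C_0=\{j\,:\,L(v^{(j)})=M_0\}$, evaluating the $s$th row of $Ax=0$ at coordinate $M_0$ gives $c\sum_{j\in C_0}a_{sj}=0$, so $s_0=\sum_{j\in C_0}c_j=0$. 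The standard Rado iteration---tracking how the lower coordinates of the variables $v^{(j)}$ with $j\in C_0$ propagate into lower levels---then produces the blocks $C_1,\dots,C_m$ together with explicit $\mathbb F$-linear expressions for each $s_i$ in terms of columns from earlier blocks, establishing the columns condition over $\mathbb F$.

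The substantive implication is $(3)\Rightarrow(1)$, which splits into an algebraic step and a Ramsey step. Algebraically, the columns condition supplies scalars $\lambda^{(i')}_j\in\mathbb F$ so that setting
$$x_j\;=\;y_i\;-\;\sum_{i'>i}\lambda^{(i')}_j\,y_{i'}\quad\text{for each }j\in C_i$$
with free parameters $y_0,\dots,y_m\in\mathbb F^n$ automatically yields $Ax=0$; this is a direct computation using $s_0=0$ and the linear expressions for the $s_i$'s. It then suffices to find, within any $r$-colouring of $\mathbb F^n\setminus\{0\}$, a monochromatic $(m+1)$-dimensional linear subspace $U$: choosing $y_0,\dots,y_m$ as a basis of $U$ makes each $x_j$ a non-trivial $\mathbb F$-linear combination of the $y_i$'s (with coefficient $1$ on $y_i$ when $j\in C_i$), so $x_j\in U\setminus\{0\}$ and the entire $k$-tuple is monochromatic. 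The existence of such a monochromatic subspace for $n$ sufficiently large in terms of $r,m,|\mathbb F|$ follows from the Graham--Leeb--Rothschild theorem on partition regularity of subspaces of $\mathbb F^n$. The main obstacle is precisely this Ramsey input: the algebraic parametrisation is essentially forced by the columns condition, but simultaneously guaranteeing non-triviality of every $x_j$ and a common colour for the whole configuration requires a genuine partition regularity result for linear subspaces of $\mathbb F^n$.
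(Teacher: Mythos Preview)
The paper does not give its own proof of Theorem~\ref{thm:bdh}; the result is simply quoted from Bergelson, Deuber and Hindman~\cite{berg2} and used as a black box. So there is nothing in the present paper to compare your argument against.

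Regarding the sketch itself: the implication $(1)\Rightarrow(2)$ is fine, and your leading-coordinate colouring for $(2)\Rightarrow(3)$ is the right idea (reading the system $Ax=0$ coordinate-by-coordinate from the top does produce the blocks $C_0,C_1,\dots$ and the linear expressions for the $s_i$, exactly as you indicate).

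However, your argument for $(3)\Rightarrow(1)$ has a genuine gap. The assertion that every $r$-colouring of $\mathbb F^n\setminus\{0\}$ contains a monochromatic $(m+1)$-dimensional \emph{linear} subspace is false whenever $|\mathbb F|>2$. Indeed, the very colouring you used for $(2)\Rightarrow(3)$ --- colour $v$ by its leading non-zero coordinate --- is a counterexample: for any $v\ne 0$ and any $\alpha\in\mathbb F^*\setminus\{1\}$ the vectors $v$ and $\alpha v$ receive different colours, so not even a $1$-dimensional subspace is monochromatic. The Graham--Leeb--Rothschild theorem concerns colourings of $t$-dimensional \emph{subspaces}, not of the non-zero vectors, and it does not yield the statement you invoke.

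What is actually true, and what Bergelson--Deuber--Hindman establish, is the partition regularity of the specific ``triangular'' configurations arising from your parametrisation: sets of the form $\{\,y_i+\sum_{i'>i}\alpha_{i'}y_{i'} : 0\le i\le m,\ \alpha_{i'}\in\mathbb F\,\}$, where each element carries a leading coefficient $1$ on some $y_i$. This is a vector-space analogue of Deuber's $(m,p,c)$-sets, proved by an inductive Hales--Jewett argument. Your $x_j$'s all lie in such a system, so this weaker (and correct) Ramsey input suffices; but as written, the reduction to a monochromatic linear subspace does not go through.
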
 

Note that Rado's theorem itself may be restated in a style similar to Theorem~\ref{thm:bdh} by using  the following well-known fact.
\begin{fact}\label{fact}
For an integer matrix $A$, the following statements are equivalent:
\begin{enumerate}[label=(\Alph*)]
    \item $A$ is partition regular;\label{item:case1}
    \item for every $r\in\mathbb N$, there exists $n_0\in\mathbb N$ such that for all $n\ge n_0$ every $r$-colouring of $[n]:=\{1,2,\dots,n\}$ yields a monochromatic solution to $Ax=0$ in $[n]$. \qed \label{item:case2}
\end{enumerate}
\end{fact}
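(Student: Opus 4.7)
The plan is to prove both implications of Fact~\ref{fact} directly, with the forward direction being almost trivial and the reverse direction being a standard compactness argument.

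For \ref{item:case2}$\Rightarrow$\ref{item:case1}, the argument is essentially a restriction. Let $\chi:\mathbb N\to[r]$ be any finite colouring of $\mathbb N$ using some number $r$ of colours. Take the threshold $n_0=n_0(r)$ provided by~\ref{item:case2} and restrict $\chi$ to $[n_0]$. By hypothesis this restriction yields a monochromatic solution to $Ax=0$ inside $[n_0]\subseteq\mathbb N$, so $A$ is partition regular.

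For the reverse direction \ref{item:case1}$\Rightarrow$\ref{item:case2}, I would argue by contrapositive. Suppose \ref{item:case2} fails, so there exists an integer $r$ and an infinite set $N\subseteq\mathbb N$ such that for each $n\in N$ there is an $r$-colouring $\chi_n:[n]\to[r]$ without a monochromatic solution to $Ax=0$ in $[n]$. The idea is to stitch the $\chi_n$'s together into a single bad colouring of $\mathbb N$ via a diagonal argument (equivalently, by applying K\"onig's lemma to the finitely-branching tree of partial colourings). Concretely, for each $m\in\mathbb N$ there are only $r^m$ possible restrictions $\chi_n|_{[m]}$ among those $n\in N$ with $n\geq m$, so by iterated pigeonhole one can extract maps $\psi_m:[m]\to[r]$ that are compatible, in the sense that $\psi_{m+1}|_{[m]}=\psi_m$, and such that each $\psi_m$ is realised by $\chi_n|_{[m]}$ for infinitely many $n\in N$. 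Setting $\chi(k):=\psi_k(k)$ then defines an $r$-colouring of $\mathbb N$.

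If $\chi$ admitted a monochromatic solution $a_1,\dots,a_s$ to $Ax=0$, then letting $m:=\max_i a_i$ and selecting any $n\in N$ with $n\geq m$ and $\chi_n|_{[m]}=\psi_m$, we would obtain the same monochromatic solution inside $[n]$ under $\chi_n$, contradicting the choice of $\chi_n$. Hence $\chi$ is a finite colouring of $\mathbb N$ with no monochromatic solution to $Ax=0$, so $A$ is not partition regular, proving the contrapositive. The only mildly delicate point is the compactness/diagonal construction of $\chi$; everything else is a direct unpacking of the definitions.
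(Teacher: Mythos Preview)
Your proof is correct and follows exactly the approach indicated in the paper: \ref{item:case2}$\Rightarrow$\ref{item:case1} is the trivial restriction, and \ref{item:case1}$\Rightarrow$\ref{item:case2} is the standard compactness argument, which the paper only names but you have spelled out carefully via the diagonal/K\"onig's lemma construction.
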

Statement~\ref{item:case2} trivially implies statement~\ref{item:case1} while the converse can be proved using a standard compactness argument.

\subsection{Supersaturation}\label{subsec:supersat}
As mentioned in the introduction, the topic of  Ramsey multiplicity concerns how many  copies of a given structure one can guarantee within a partition of a mathematical object. We are interested in a particular class of results in this area which we refer to as {\it supersaturation} results. In the context of this paper, a supersaturation result essentially states that within any finite colouring of an arithmetic structure, a {\it linear proportion} of the total number of solutions of a system of linear equations are monochromatic. As a clarifying example, consider the following  theorem of 
Frankl, Graham and R\"odl~\cite{fgr}.
\begin{thm}[Frankl, Graham and R\"odl~\cite{fgr}]\label{thm:fgrsupersat}
Let $r \in \mathbb{N}$ and let $A$ be a partition regular $\ell \times k$ integer matrix of rank $\ell$.
There exist $\delta,n_0>0$ such that the following holds: for all $n \ge n_0$, 
every $r$-colouring of  $[n]$ yields at least $\delta n^{k-\ell}$ monochromatic solutions to $Ax=0$ in $[n]$.
\end{thm}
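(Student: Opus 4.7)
My plan is to combine Rado's theorem with a Varnavides-style averaging argument. The main subtlety is that the solution space of $Ax=0$ is $(k-\ell)$-dimensional, so a one-parameter averaging is not enough; one needs a $(k-\ell)$-parameter family of embedded copies of a ``Rado structure'' to obtain the claimed exponent.

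First, by Rado's theorem applied to $A$ (which is partition regular by hypothesis), there is a constant $N=N(A,r)$ such that every $r$-colouring of $[N]$ contains at least one monochromatic solution to $Ax=0$. Second, since $A$ has rank $\ell$, the rational kernel of $A$ has dimension $k-\ell$, so one may fix a full-column-rank $B\in\mathbb{Z}^{k\times(k-\ell)}$ with $AB=0$; the integer solutions in $[n]^k$ then form a set of size $\Theta(n^{k-\ell})$. Third, one applies Rado to a rich family of embedded copies of $[N]$ inside $[n]$. Exploiting the dilation invariance $A(dx)=d\cdot Ax$, each scaled copy $d\cdot[N]\subseteq[n]$ inherits the $r$-colouring and admits a monochromatic solution by the base case. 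To reach the full $n^{k-\ell}$ count, however, one must average over a $(k-\ell)$-parameter family of such substructures. In the translation-invariant case $A\mathbf{1}=0$ this is a standard Varnavides argument using both shifts and dilations. For general partition regular $A$, the columns condition provides the required algebraic structure: from the partition $C_0,\dots,C_m$ one extracts fundamental rational solutions $v^{(0)},\dots,v^{(m)}$ (with $v^{(i)}$ supported on $C_i$ and the earlier classes, using the coefficients witnessing the condition), and integer linear combinations of these parameterise a multi-dimensional family of ``solution-preserving'' embedded copies of $[N]$ inside $[n]$.

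A careful count of monochromatic solutions, with appropriate bounds on the multiplicity with which each is counted, then yields the desired $\Omega(n^{k-\ell})$ lower bound, and hence $\delta n^{k-\ell}$ for some $\delta=\delta(A,r)>0$ and all sufficiently large $n\ge n_0$. The main obstacle is identifying this $(k-\ell)$-parameter family of substructures — each being a solution-preserving embedded copy of $[N]$ — so that the total parameter count is $\Theta((n/N)^{k-\ell})$, and then verifying that each monochromatic solution is counted only $O(1)$ times. This is where the columns condition is used in an essential way, beyond merely serving as the hypothesis of Rado's theorem: it is precisely this algebraic structure that ``matches'' the dimension of the solution space and allows the parameter family to have the correct cardinality.
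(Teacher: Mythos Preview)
The paper does not give its own proof of this theorem; it is quoted from Frankl--Graham--R\"odl~\cite{fgr} and used as a black box (in particular as the base case $c=0$ in the proof of Theorem~\ref{thm:supersat-c}). So there is no paper proof to compare against, and I will comment only on the soundness of your sketch.

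You correctly isolate the crux: naive dilation averaging produces only $\Theta(n)$ monochromatic solutions, and one needs a $(k-\ell)$-parameter family to reach $n^{k-\ell}$. But the resolution you propose has two genuine gaps. First, the number $m+1$ of classes in a columns-condition partition is in general strictly smaller than $k-\ell$. For the non-translation-invariant matrix $A=(1,\,1,\,-2,\,1)$ one has $k-\ell=3$, yet no two columns sum to zero, so any valid partition has $|C_0|\ge 3$ and hence $m+1\le 2$; your fundamental solutions $v^{(0)},\dots,v^{(m)}$ cannot span the kernel here. Second, and more structurally, the phrase ``solution-preserving embedded copy of $[N]$'' is doing work it cannot do. A map $\phi:[N]\to[n]$ that sends every solution of $Ax=0$ to a solution is, up to trivialities, a dilation (together with a shift only when $A\mathbf{1}=0$); there is no $(k-\ell)$-parameter family of such embeddings for general $A$. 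Adding kernel vectors $\sum c_i v^{(i)}$ to a fixed solution $y$ does produce new solutions, but it is not an embedding of $[N]$ and carries no colour information from the Rado base case --- you cannot invoke ``this copy has a monochromatic solution'' for these translates.

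The actual Frankl--Graham--R\"odl argument does not look for many embedded copies of $[N]$. Rather, it passes through a stronger Ramsey statement to locate a single monochromatic structure rich enough to contain $\Theta(n^{k-\ell})$ solutions on its own (via Deuber's $(m,p,c)$-sets, or equivalently a product-colouring trick that forces many dilates to be simultaneously monochromatic for a common scale pattern). The extra $k-\ell-1$ degrees of freedom come from parameters internal to that one monochromatic object, not from a family of embeddings of $[N]$.
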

It is easy to see that there are at most $n^{k-\ell}$ solutions to $Ax=0$ in $[n]$ (see, e.g., case (ii) of Lemma~\ref{lem:keycounting}). Thus, Theorem~\ref{thm:fgrsupersat} states that in any $r$-colouring of $[n]$, a linear proportion of the total number of solutions are monochromatic. 

Our motivation for considering supersaturation results is that they are a prerequisite for understanding the typical Ramsey properties of subsets of a given size within an arithmetic structure. Specifically, supersaturation results can be used in combination with the hypergraph container method to give an upper bound on the probability that a Ramsey property is not satisfied, thereby yielding a proof for a $1$-statement. 
See Section~\ref{subsec:1-statement-unordered} for a more detailed sketch of how this calculation works.
Note that supersaturation results were employed to prove $1$-statements of Ramsey-type results even before the introduction of the hypergraph container method. In particular, in~\cite{frs}, Theorem~\ref{thm:fgrsupersat} was used in the proof of the $1$-statement of the random Rado theorem (Theorem~\ref{r3} in the next section).

As we will see in Section~\ref{sec:randomRado}, supersaturation is one of the key conditions to check in the random Rado lemma (Lemma~\ref{lem:randomRado}).
In some cases, supersaturation can be  easily obtained from existing results. For example, 
Theorem~\ref{gt} immediately implies the following supersaturation result for the primes.

\begin{thm}\label{thm:supersatprimes}
Given any $r, k\in \mathbb N$ with $k \geq 3$,
any $r$-colouring of $\mathbf P_n$ yields $\Theta(n^2/\log^k n)$ monochromatic $k$-APs in $\mathbf P_n$. \qed
\end{thm}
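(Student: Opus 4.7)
The plan is to deduce Theorem~\ref{thm:supersatprimes} as an immediate corollary of the moreover part of Theorem~\ref{gt} via a one-line pigeonhole argument. The upper bound is essentially free: applying Theorem~\ref{gt} with the set $S=\mathbf P_n$ (which certainly satisfies $|S|=\Omega(|\mathbf P_n|)$) tells us that the \emph{total} number of $k$-APs in $\mathbf P_n$ is $O(n^2/\log^k n)$, so a fortiori the number of monochromatic ones under any colouring is at most $O(n^2/\log^k n)$.

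For the lower bound, I would fix an arbitrary $r$-colouring of $\mathbf P_n$. By the pigeonhole principle, some colour class $S\subseteq\mathbf P_n$ has size at least $|\mathbf P_n|/r$, and since $r$ is treated as a constant (it does not depend on $n$), this gives $|S|=\Omega(|\mathbf P_n|)$. Applying the moreover part of Theorem~\ref{gt} to this $S$, the number of $k$-APs lying entirely inside $S$ is $\Theta(n^2/\log^k n)$; every such $k$-AP is monochromatic by construction, yielding at least $\Omega(n^2/\log^k n)$ monochromatic $k$-APs. Combined with the upper bound, this gives the claimed $\Theta(n^2/\log^k n)$.

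There is really no obstacle to overcome here — the entire substance of the statement is bundled into Theorem~\ref{gt}, and the colouring aspect is dispatched by pigeonhole. The only minor point worth flagging is ensuring that the implicit constants in the $\Theta$-notation may depend on $r$ and $k$ (which they do, since pigeonhole loses a factor of $r$ and Green--Tao's implicit constants depend on $k$); this is compatible with the statement, which quantifies over \emph{fixed} $r$ and $k$ before letting $n$ grow.
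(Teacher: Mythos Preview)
Your proposal is correct and matches the paper's approach: the theorem is stated there with an immediate \qed, preceded only by the remark that it follows directly from Theorem~\ref{gt}. Your pigeonhole-plus-Green--Tao argument is exactly the intended one-line deduction.
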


One can similarly obtain a supersaturation result for finite abelian groups and translation-invariant matrices. We say an $\ell \times k$ integer matrix  $A$ is \emph{translation-invariant} with respect to a group $G$ if $A(g,\dots,g)^T=0$ for every $g\in G$.\footnote{For example, any integer matrix $A$ whose columns sum to $0$ is translation-invariant with respect to any abelian group. This includes integer matrices for which the solutions to $Ax=0$ in $\mathbb Z$ correspond to $k$-APs, for fixed $k\ge3$.}
Consider such a translation-invariant matrix $A$ with respect to an abelian group $G$.
As observed by Saad and Wolf~\cite[page 2]{wolf},
a generalised version of Szemer\'edi's theorem for finite abelian groups~\cite{green} yields that  for any linear sized subset $S$ of $G$, provided $|G|$ is sufficiently large, a constant proportion of all solutions to $Ax=0$ in $G$ are fully contained in $S$. This immediately implies that, given any $r \in \mathbb N$, any $r$-colouring of  $G$ is such that 
a linear proportion of all solutions to $Ax=0$ in $G$ are monochromatic, provided $|G|$ is sufficiently large.

We finish this  section by highlighting three further supersaturation results that we require for our applications of the random Rado lemma appearing in Section~\ref{sec:easyapplications}.
First, Serra and Vena~\cite[Theorem~2.1]{serra} proved an analogue of Theorem~\ref{thm:fgrsupersat} for finite vector spaces.
This can be seen as a quantitative analogue of Theorem~\ref{thm:bdh}.

\begin{thm}[Serra and Vena~\cite{serra}]\label{thm:fieldsupersat}
Let $r \in \mathbb{N}$, let $\mathbb F$ be a finite field and let $A$ be an $\ell \times k$ matrix with entries in $\mathbb F$ and rank $\ell$ which satisfies the columns condition over $\mathbb F$.
There exist $\delta,n_0>0$ such that, for all $n \geq n_0$, every $r$-colouring of  $\mathbb F^n\setminus\{0\}$ yields at least $\delta|\mathbb F|^{n(k-\ell)}$ monochromatic solutions to $Ax=0$ in $\mathbb F^n\setminus\{0\}$.
\end{thm}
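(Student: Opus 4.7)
The plan is to bootstrap the qualitative Bergelson--Deuber--Hindman statement (Theorem~\ref{thm:bdh}) into the required linear-proportion supersaturation via a random-embedding / double-counting argument in the spirit of Frankl, Graham and R\"odl's proof of Theorem~\ref{thm:fgrsupersat}.

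By Theorem~\ref{thm:bdh}, I first fix a dimension $m = m(r,|\mathbb F|,k)$ for which every $r$-colouring of $\mathbb F^{m}\setminus\{0\}$ admits a monochromatic solution to $Ax=0$. Given $n \gg m$ and an $r$-colouring $c$ of $\mathbb F^{n}\setminus\{0\}$, I then consider the family $\mathcal T$ of injective $\mathbb F$-linear maps $T:\mathbb F^{m}\to\mathbb F^{n}$, of size $(1-o(1))|\mathbb F|^{nm}$. For each $T\in\mathcal T$, the pullback $c\circ T$ is a well-defined $r$-colouring of $\mathbb F^{m}\setminus\{0\}$ and so, by the choice of $m$, admits a monochromatic solution $(x_1,\dots,x_k)$; its push-forward $(Tx_1,\dots,Tx_k)$ is a monochromatic solution in $(\mathbb F^{n}\setminus\{0\})^{k}$.

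Let $\mathcal S$ be the set of monochromatic solutions to $Ax=0$ in $(\mathbb F^{n}\setminus\{0\})^{k}$ and partition $\mathcal S=\bigsqcup_{d=1}^{k-\ell}\mathcal S_{d}$ according to $d=\dim_{\mathbb F}\mathrm{span}(y_1,\dots,y_k)$. A direct algebraic count shows that each $y\in\mathcal S_d$ arises as a push-forward from
\begin{equation*}
\prod_{j=0}^{d-1}(|\mathbb F|^{m}-|\mathbb F|^{j})\cdot\prod_{j=d}^{m-1}(|\mathbb F|^{n}-|\mathbb F|^{j})=\Theta(|\mathbb F|^{nm-d(n-m)})
\end{equation*}
pairs $(T,x)$, where the first product counts basis preimages in $\mathbb F^m$ of a basis of $\mathrm{span}(y)$ and the second counts injective extensions of $T$ to a complement. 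Double counting the set of all such pairs against the lower bound $|\mathcal T|$ then yields
\begin{equation*}
\sum_{d=1}^{k-\ell}|\mathcal S_{d}|\cdot|\mathbb F|^{nm-d(n-m)}\;\geq\;(1-o(1))\,|\mathbb F|^{nm}.
\end{equation*}

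To extract $|\mathcal S|\geq\delta|\mathbb F|^{n(k-\ell)}$ from this inequality I would isolate the generic term $d=k-\ell$, using the crude a priori estimate $|\mathcal S_d|\leq O(|\mathbb F|^{d(n+k-\ell-d)})$ on the total number of solutions of span dimension $d$ in $(\mathbb F^{n})^{k}$: this bounds each degenerate contribution by $O(|\mathbb F|^{nm + d(m+k-\ell-d)})$, an amount independent of $n$. The hard part will then be showing that these degenerate contributions cannot swallow the right-hand side. The single-solution input from Theorem~\ref{thm:bdh} alone is too weak for this, so I would strengthen it by iterating the argument inside $\mathbb F^m$ itself (applying Theorem~\ref{thm:bdh} across all $m_0$-dimensional sub-subspaces of $\mathbb F^m$ and a further double count) to produce $\Omega(|\mathbb F|^{m(k-\ell)})$ monochromatic solutions per pullback $T$. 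Upon choosing $m$ sufficiently large in terms of $k,\ell$ and $|\mathbb F|$, the degenerate terms become a vanishing fraction of the enhanced right-hand side, the generic term $d=k-\ell$ must carry a constant fraction of it, and rearranging yields $|\mathcal S|\geq|\mathcal S_{k-\ell}|\geq\delta|\mathbb F|^{n(k-\ell)}$ for some $\delta=\delta(r,|\mathbb F|,k)>0$, as required.
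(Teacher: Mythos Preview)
The paper does not prove Theorem~\ref{thm:fieldsupersat}; it is quoted as a known result of Serra and Vena~\cite{serra}, so there is no in-paper proof to compare against. Serra and Vena's own argument goes via a removal lemma for linear systems over $\mathbb F^n$ (derived from the hypergraph removal lemma): if a set $S\subseteq\mathbb F^n$ carries $o(|\mathbb F|^{n(k-\ell)})$ solutions then one can delete $o(|\mathbb F|^n)$ elements to make it solution-free, and combining this with Theorem~\ref{thm:bdh} yields supersaturation directly.

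Your FGR-style averaging is a natural alternative, and the double count up to and including the display $\sum_d |\mathcal S_d|\,|\mathbb F|^{nm-d(n-m)}\ge (1-o(1))|\mathbb F|^{nm}$ is correct. The gap is exactly where you flag it. With only the single-solution input, the degenerate contribution for each $d<k-\ell$ is at most $|\mathbb F|^{nm+d(m+k-\ell-d)}$, which for every $d\ge 1$ already \emph{exceeds} the right-hand side $|\mathbb F|^{nm}$; so one learns nothing about $|\mathcal S_{k-\ell}|$. Your proposed remedy is to first show that every pullback colouring of $\mathbb F^m$ has $\Omega(|\mathbb F|^{m(k-\ell)})$ monochromatic solutions by ``iterating the argument inside $\mathbb F^m$'' over $m_0$-dimensional subspaces. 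But this is precisely the theorem you are trying to prove, with $m$ in place of $n$: the inner double count over $m_0$-subspaces faces the identical degeneracy obstruction (the weight $|\mathbb F|^{mm_0-d(m-m_0)}$ is again largest for small $d$), so the recursion never gets off the ground. Concretely, the bootstrap needs $f(m)\gg |\mathbb F|^{(k-\ell-1)m}$ as input, whereas Theorem~\ref{thm:bdh} only supplies $f(m)\ge 1$, and no finite number of nested averagings upgrades the exponent. This is a genuine missing idea rather than a detail; the removal-lemma route in~\cite{serra} is what breaks the circularity.
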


One can show that there are at most $|\mathbb F|^{n(k-\ell)}$ solutions to $Ax=0$ in $\mathbb F^n$ and so the lower bound in Theorem~\ref{thm:fieldsupersat} is linear in the total number of solutions.

The works of Serra and Vena~\cite[Theorem 1.3]{serra} and Vena~\cite{vena} extend Theorem~\ref{thm:fieldsupersat} to finite abelian groups.
To state (a special case of) their result, we need a couple of definitions.
We say an abelian group $(G,+)$ has {\it exponent} $s$ if $s$ is the smallest positive integer such that $sg=g+\dots +g=0$ for every $g\in G$.
We say an integer matrix $A$ satisfies the {\it $s$-columns condition} if $A$ satisfies the columns condition over the ring $\mathbb Z_s$ (where the entries of $A$ are treated as elements of $\mathbb Z_s$, i.e., residues modulo $s$). Finally, $S(A,G)$ denotes the set of solutions to $Ax=0$ in $G$.

\begin{thm}[Serra and Vena~\cite{serra}, Vena~\cite{vena}]\label{thm:exponentsupersat}
Let $r \in \mathbb{N}$, let $G$ be a finite abelian group with exponent $s$ and 
let $A$ be an $\ell \times k$ integer matrix which satisfies the $s$-columns condition.
There exist $\delta,n_0>0$ such that, for all $n \geq n_0$, every $r$-colouring of $G^n\setminus\{0\}$ yields at least $\delta|S(A,G^n)|$ monochromatic solutions to $Ax=0$ in $G^n\setminus\{0\}$.
\end{thm}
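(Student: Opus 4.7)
The plan is to deduce Theorem~\ref{thm:exponentsupersat} from the finite vector space case (Theorem~\ref{thm:fieldsupersat}) via a Frattini-quotient argument, after first reducing to the case where $s$ is a prime power.

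For the preliminary reduction, note that by the structure theorem for finite abelian groups, $G$ decomposes as the direct sum of its Sylow $p_i$-subgroups, $G = \bigoplus_i G_{p_i}$, with each $G_{p_i}$ having exponent $p_i^{a_i}$ where $s = \prod_i p_i^{a_i}$. Moreover, under the Chinese Remainder isomorphism $\mathbb{Z}_s \cong \prod_i \mathbb{Z}_{p_i^{a_i}}$, the $s$-columns condition for $A$ implies (via the same partition) the $p_i^{a_i}$-columns condition over each factor. I would then reduce to the prime-power case by peeling off one Sylow subgroup at a time, using a pigeonhole over the fibres of the projection $G^n \twoheadrightarrow G^n / G_{p_i}^n$ to transfer the colouring from $G^n \setminus \{0\}$ to a typical $G_{p_i}^n$-translate.

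Assume now that $s = p^a$. The Frattini quotient $V := G/pG$ is a finite-dimensional $\mathbb{F}_p$-vector space, and $\pi : G^n \to V^n$ is a surjective group homomorphism with fibres of uniform size $|pG|^n$. The key observation is that reducing the entries of $A$ modulo $p$ sends the $p^a$-columns condition to the columns condition over $\mathbb{F}_p$, so Theorem~\ref{thm:fieldsupersat} applies to the reduced matrix on $V^n \setminus \{0\}$. Given an $r$-colouring $c$ of $G^n \setminus \{0\}$, I would define $\chi : V^n \setminus \{0\} \to [r]$ by choosing $\chi(v)$ to be any colour occupying at least a $1/r$ fraction of $\pi^{-1}(v)$ under $c$. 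Applying Theorem~\ref{thm:fieldsupersat} to $\chi$ yields $\Omega(|V|^{n(k-\ell)})$ $\chi$-monochromatic solutions, each of which lifts to a coset of $S(A,(pG)^n)$ inside $S(A,G^n)$. A coordinate-wise pigeonhole inside each fibre coset then recovers a $\gtrsim r^{-k}$ fraction of genuinely $c$-monochromatic lifts; summing and discarding the negligibly many solutions meeting $0 \in G^n$ produces the required $\delta\,|S(A, G^n)|$ bound.

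The main obstacle, in my view, is the fibre pigeonhole step: the auxiliary colour $\chi(v)$ is only guaranteed on a $1/r$ proportion of $\pi^{-1}(v)$, so a naive choice of one preimage per coordinate will typically produce $k$-tuples with mismatched colours across most of the $k$ positions. I would handle this by exploiting the product structure of the lift set --- for fixed $v_1, \dots, v_k$, the set of $(x_1, \dots, x_k) \in S(A, G^n)$ with $\pi(x_j) = v_j$ is a coset of a subgroup of $(pG)^n$ whose projections onto each coordinate are close to uniform on the corresponding fibre, so a Fubini-style averaging converts the coordinate-wise colour densities into a joint density of monochromatic lifts. It is precisely here that the full $s$-columns condition (rather than merely its modulo-$p$ reduction) becomes essential, since the existence of enough compatible lifts in $(pG)^n$ to support the averaging depends on the richer module structure over $\mathbb{Z}_s$.
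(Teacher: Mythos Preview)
The paper does not actually prove Theorem~\ref{thm:exponentsupersat}; it is quoted from the literature, and the paper explicitly notes that the proof goes through a \emph{removal lemma} for linear systems over finite abelian groups (the Kr\'al'--Serra--Vena removal lemma, later sharpened by Vena). The supersaturation statement is then the standard consequence: if some $r$-colouring had too few monochromatic solutions, one colour class would have $o(|S(A,G^n)|)$ solutions and the removal lemma would make it solution-free by deleting $o(|G^n|)$ elements, contradicting partition regularity.

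Your route is entirely different --- reduce to prime-power exponent, pass to the Frattini quotient $V=G/pG$, apply the vector-space result Theorem~\ref{thm:fieldsupersat} to an auxiliary majority colouring, then lift --- and the lifting step has a genuine gap. You correctly identify the difficulty, but the proposed ``Fubini-style averaging'' cannot close it. For a fixed $\chi$-monochromatic solution $(v_1,\dots,v_k)$ in $V^n$, the set of lifts $(x_1,\dots,x_k)\in S(A,G^n)$ with $\pi(x_j)=v_j$ is a coset of $S(A,(pG)^n)$, and even if its coordinate projections are uniform on each fibre $\pi^{-1}(v_j)$, the constraint $Ax=0$ correlates the coordinates, so marginal colour densities of $1/r$ give no lower bound on the joint density of $c$-monochromatic tuples. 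Concretely, take $k=3$ and $x_1+x_2=x_3$: choosing $x_1,x_2$ freely in their fibres determines $x_3$, and it is easy to colour so that whenever $x_1,x_2$ are red, $x_1+x_2$ is blue. No averaging over the coset repairs this; what you are missing is precisely a density statement inside $(pG)^n$ (or, equivalently, a supersaturation or removal lemma there), which is as hard as the theorem you are trying to prove. There is also a secondary issue you do not address: Theorem~\ref{thm:fieldsupersat} requires $A$ to have full row rank over $\mathbb F_p$, which can fail after reduction mod $p$ even when $A$ has full integer rank.
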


Note that in~\cite{serra}, a slightly weaker quantitative bound was obtained for Theorem~\ref{thm:exponentsupersat}. 
However, the result used a previous weaker removal lemma of Kr\'al', Serra and Vena~\cite{ksv}. 
Vena~\cite{vena} observed that by using his own removal lemma, one can obtain the stronger bound as stated in Theorem~\ref{thm:exponentsupersat}.

We conclude this section with a new supersaturation result for integer lattices.
\begin{thm}\label{thm:supersat-d}
Let $r,d\in\mathbb N$ and let $A$ be a partition regular $\ell \times k$ integer matrix of rank $\ell$.
There exist $\delta,n_0>0$ such that the following holds: for all $n \ge n_0$, every $r$-colouring of $[n]^d$ yields at least $\delta n^{d(k-\ell)}$ monochromatic solutions to $Ax=0$ in $[n]^d$.
\end{thm}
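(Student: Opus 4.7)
The plan is to generalise the strategy of Frankl--Graham--R\"odl (Theorem~\ref{thm:fgrsupersat}) from $[n]$ to $[n]^d$. The starting structural observation is that solutions to $Ax=0$ in $[n]^d$ admit a coordinate-wise decomposition: $(x_1,\dots,x_k) \in ([n]^d)^k$ is a solution if and only if for each $t \in [d]$ the vector $(x_1^{(t)},\dots,x_k^{(t)}) \in [n]^k$ is itself a solution to $Ay=0$ in $[n]$. Consequently $|S(A,[n]^d)| = |S(A,[n])|^d = \Theta(n^{d(k-\ell)})$, and the task reduces to showing that for any $r$-colouring $\chi$ of $[n]^d$ a positive proportion of these solutions are monochromatic.

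The first step would be to use the columns condition over $\mathbb{Q}$ satisfied by $A$ (guaranteed by partition regularity) to obtain a convenient explicit parameterisation of the solution space in $\mathbb{Z}^k$. A partition $C_0 \cup \dots \cup C_m$ witnessing the columns condition yields a template expressing each coordinate of a solution as an integer linear combination of $m+1$ free block parameters; after refining the partition if necessary we may take $m+1 = k-\ell$. Coordinate-wise lifting then produces an explicit family of $\Theta(n^{d(k-\ell)})$ solutions in $[n]^d$ parameterised by $d(k-\ell)$ integer parameters varying over a suitable box.

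The second step would apply Theorem~\ref{thm:fgrsupersat} to $1$-dimensional slices of this parameter family, coupled with a Varnavides-style averaging. For each fixing of all but one of the $d(k-\ell)$ parameters, varying the remaining parameter over a $\Theta(n)$-size range traces out a $1$-dimensional sub-family of solutions in $[n]^d$; the restriction of $\chi$ to the elements appearing in these solutions is an $r$-colouring of a $\Theta(n)$-size set, to which Theorem~\ref{thm:fgrsupersat} yields a linear-in-size number of monochromatic configurations. Averaging over all such slicings, together with a bound on the number of times each $d$-dimensional monochromatic solution is counted under this averaging, yields the required $\delta n^{d(k-\ell)}$ lower bound.

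The main obstacle I expect is the non-translation-invariant case, where the row sums $s_i = \sum_j A_{ij}$ are not all zero. In this case, the na\"ive approach of fixing several coordinates of $k$ points in $[n]^d$ to a common value and varying the remaining one fails, since requiring the fixed coordinate values to satisfy the system then forces $s_i c = 0$ and hence $c = 0 \notin [n]$. The columns-condition-based parameterisation circumvents this by performing the slicing in parameter space rather than in ambient space, but the careful double-counting that controls multiplicities---establishing that each monochromatic solution in $[n]^d$ is counted at most $O_{A,d,r}(1)$ times, so that the $1$-dimensional supersaturation transfers to a uniform constant $\delta = \delta(r,d,A) > 0$---is where I expect the bulk of the technical work to lie.
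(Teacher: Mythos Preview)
Your coordinate-wise decomposition is correct and you rightly identify the non-translation-invariant obstruction, but the proposed fix---slicing in parameter space---does not reduce to an instance of Theorem~\ref{thm:fgrsupersat}. When you fix all but one of the $d(k-\ell)$ parameters and vary the remaining one over $p\in[\Theta(n)]$, the $k$ points $x_1(p),\dots,x_k(p)$ of the corresponding solution move along (up to) $k$ distinct lines in $[n]^d$. Asking that the solution be $\chi$-monochromatic amounts to asking that the $k$ pulled-back colourings $g_i:=\chi\circ x_i:[\Theta(n)]\to[r]$ all agree at the single point $p$; it is \emph{not} asking for a monochromatic solution to $Ay=0$ in a single $r$-coloured interval, and Theorem~\ref{thm:fgrsupersat} gives no lower bound on the number of such $p$. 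Indeed, a single slice can contain zero monochromatic solutions: for Schur in $[n]^2$ with $\chi(y_1,y_2)$ depending only on the parity of $y_2$, any slice whose fixed second-coordinate triple $(a_2,b_2,a_2+b_2)$ is not all even---three quarters of all slices---has no monochromatic solutions at all. So step~3 of your plan is simply false, and the averaging in step~4 would then have to establish a positive lower bound on the \emph{average} over slices, which is essentially the theorem itself.

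The paper's argument is structurally different. It first observes (Proposition~\ref{prop:reduction}) that supersaturation in dimension $d$ implies it in all lower dimensions, so it suffices to treat $d=2^c$ by induction on $c$. The inductive step goes from $d$ to $2d$: given an $r$-colouring $\chi$ of $[n]^{2d}$, write each point of (a large subset of) $[(\eps n)^2]^d$ uniquely as $\roww{z}=\roww{x}+(\eps n)\roww{y}$ with $\roww{x},\roww{y}\in[\eps n]^d$, and colour $\roww{z}$ with the tuple of $\chi$-colours of the $O_{k,d}(1)$ points of $[n]^{2d}$ obtained by concatenating $\roww{y}+\roww{s}$ with $\roww{x}-(\eps n)\roww{s}$ as $\roww{s}$ ranges over a bounded box, together with the residue of $\roww{y}$ modulo a fixed integer. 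This is an $R$-colouring with $R$ independent of $n$, so the induction hypothesis (dimension $d$, $R$ colours) applies. Because solving $A\roww{z}=0$ coordinate-wise in base $\eps n$ produces only bounded carries, the recorded shift data is exactly what is needed to decode each resulting monochromatic solution in $[(\eps n)^2]^d$ back to one in $[n]^{2d}$. The crucial point is that the passage between dimensions is paid for by a \emph{bounded} colour blow-up; your slicing cannot arrange this, since turning a slice into a genuine single-colouring Rado problem would require recording $\chi$ on $n$-dependently many fibres.
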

Again, the lower bound in Theorem~\ref{thm:supersat-d} is a linear proportion of the total number of solutions.
Observe that the case $d=1$ of Theorem~\ref{thm:supersat-d} is exactly Theorem~\ref{thm:fgrsupersat}.  
The proof of Theorem~\ref{thm:supersat-d} is given in Section~\ref{sec:aux}.

{\bf Remark:} Since this paper first appeared on arXiv, we learnt that 
 Theorem~\ref{thm:supersat-d} has been proven independently by Robertson~\cite[Theorem~13]{rob2}.

\section{The random Ramsey problem for graphs and the integers}\label{sec:graphsintegers}
In this section we cover two settings where the random Ramsey problem has been extensively studied: edge-coloured graphs and the integers. Further background on these two problems can be found, e.g., in~\cite{conlonsurvey} and \cite[Section 1]{hst}. We also discuss a few known random Ramsey-type results for $\mathbb Z_n$.

\subsection{Ramsey properties of random graphs}
The \emph{random graph} model provides a framework for studying the typical Ramsey properties of graphs of a given  density. 
The \emph{random graph $G_{n,p}$}
has vertex set $[n]$ where each possible edge is present with probability $p$, independently of all other edges.
An event occurs in $G_{n,p}$ \emph{with high probability} (w.h.p.) if its probability tends to $1$ as $n \rightarrow \infty$. For many properties $\mathcal{P}$ of $G_{n,p}$ (including Ramsey properties), the probability that $G_{n,p}$ has the property exhibits a phase transition,
that is, there is a \emph{threshold} for $\mathcal{P}$: a function $\hat p :=\hat p(n)$ such that $G_{n,p} \text{ has }\mathcal{P}$ w.h.p.~when $p =\omega (\hat p)$ (the \emph{$1$-statement}), while $G_{n,p} \text{ does not have }\mathcal{P}$ w.h.p.~when $p =o(\hat p)$ (the \emph{$0$-statement}).
Bollob\'as and Thomason~\cite{bolthom} proved that every \emph{monotone} property $\mathcal{P}$ has a threshold.

R\"odl and Ruci\'nski~\cite{random1, random2, random3} proved a general Ramsey-type result for the random graph. To state their result we need a few definitions.
Recall that we write $V(H)$ and $E(H)$ for the vertex and edge sets of a graph $H$, respectively, and define $v(H):=|V(H)|$ and $e(H):=|E(H)|$.
 Given an $r\in\mathbb N$ and a graph $G$, an \emph{$r$-edge-colouring} of  $G$ is a function $\sigma:E(G) \rightarrow [r]$. Given a graph $H$,
we say that $G$ is \emph{$(H,r)$-Ramsey} if every $r$-edge-colouring of  $G$ yields a monochromatic copy of $H$ in $G$. 
Given a graph $H$, set $d_2(H):=0$ if $e(H)=0$; $d_2(H):=1/2$ when $H$ is precisely an edge and define $d_2(H) := (e(H)-1)/(v(H)-2)$ otherwise.
Then define $m_2(H) := \max_{H' \subseteq H}d_2(H')$ to be the \emph{$2$-density} of $H$.
For example, if we consider the triangle $K_3$, then $m_2(K_3)=(e(K_3)-1)/(v(K_3)-2)=2$. On the other hand, if we consider the disjoint union of $K_3$ and an edge, i.e., $H:=K_3 \cup K_2$, then we also have that $m_2(H)=2$, even though $d_2(H)=1$, as the `densest' part of $H$ is the copy of $K_3$.

\begin{thm}[The random Ramsey theorem~\cite{random1, random2, random3}]\label{randomramsey} Let $r \geq 2$ 
 be a positive integer and let $H$ be a graph that is not a forest consisting of stars and paths of length $3$.
There are positive constants $c,C$ such that
$$\lim _{n \rightarrow \infty} \mathbb P [G_{n,p} \text{ is } (H,r)\text{-Ramsey}]=\begin{cases}
0 &\text{ if } p \leq cn^{-1/m_2(H)}; \\
 1 &\text{ if } p \geq Cn^{-1/m_2(H)}.\end{cases}$$
\end{thm}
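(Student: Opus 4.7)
The plan is to treat the $1$- and $0$-statements separately, motivated by the following heuristic for the threshold $n^{-1/m_2(H)}$: if $H'\subseteq H$ achieves $d_2(H')=m_2(H)$, then the expected number of copies of $H'$ through a given edge of $G_{n,p}$ is of order $n^{v(H')-2}p^{e(H')-1}$, which is constant precisely when $p=\Theta(n^{-1/m_2(H)})$. Below this value, copies of $H'$ are so sparse that w.h.p.\ each contains an edge lying in no other copy, enabling a greedy $H$-free colouring, while above it copies overlap heavily and every colouring is forced to be monochromatic on some copy. The excluded forests are exactly those graphs for which this heuristic breaks down and the true threshold is different.

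For the $1$-statement, my approach would be the hypergraph container method of Balogh--Morris--Samotij and Saxton--Thomason, applied in the clean Nenadov--Steger style to the hypergraph $\mathcal H$ on vertex set $E(K_n)$ whose hyperedges are the edge sets of copies of $H$ in $K_n$. A supersaturation input (essentially Erd\H{o}s--Simonovits combined with standard counting of copies of $H$ in dense graphs) provides the co-degree estimates for $\mathcal H$ at the scale $p=\Theta(n^{-1/m_2(H)})$. The container theorem then extracts a family $\mathcal C$ of subgraphs of $K_n$ such that (i) every $H$-free graph on $[n]$ is contained in some $C\in\mathcal C$, (ii) $\log|\mathcal C|$ is suitably small, and (iii) each $C\in\mathcal C$ contains only $o(n^{v(H)})$ copies of $H$. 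If $G_{n,p}$ fails to be $(H,r)$-Ramsey, each of its $r$ colour classes lies in some container, so $G_{n,p}\subseteq C_1\cup\dots\cup C_r$ for some choice $(C_1,\dots,C_r)\in\mathcal C^r$. A union bound over the $|\mathcal C|^r$ choices, combined with the fact that $\mathbb P[G_{n,p}\subseteq C_1\cup\dots\cup C_r]$ decays super-polynomially for $p\geq Cn^{-1/m_2(H)}$ with $C$ sufficiently large, yields the $1$-statement.

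For the $0$-statement, the plan is to exhibit, w.h.p.\ over $G_{n,p}$, an explicit $r$-edge-colouring with no monochromatic copy of $H$ when $p\leq cn^{-1/m_2(H)}$ with $c$ small. The key step is a deletion-type lemma: one shows that w.h.p.\ there is a small edge set $E^\ast\subseteq E(G_{n,p})$ whose removal destroys every copy of the densest subgraph $H'$, and such that in the residual graph $G_{n,p}\setminus E^\ast$ every copy of a suitable $H''\subseteq H$ contains a \emph{private} edge shared with no other copy. One then colours $E^\ast$ arbitrarily and greedily assigns the $r$ colours along private edges so as to avoid a monochromatic $H$. The hypothesis on $H$ enters exactly here: it guarantees that the chosen $H'$ has enough edges and the correct density for both the deletion step and the greedy colouring to succeed, and the excluded forest cases are precisely those graphs for which no such $H'$ is available.

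I expect the $0$-statement to be the main obstacle. Given the supersaturation input, the $1$-statement is by now a fairly routine container computation; the bookkeeping is lengthy but conceptually clear. The $0$-statement, by contrast, requires an explicit constructive colouring together with delicate probabilistic control over clusters of overlapping copies of $H'$, and a case analysis tied to the structure of the extremal $H'$. The borderline role played by the exceptional forests of stars and paths of length three is genuinely subtle, which historically explains why the original proof of Theorem~\ref{randomramsey} was spread across three papers of R\"odl and Ruci\'nski.
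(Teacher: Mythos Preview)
Your $1$-statement plan is correct and is exactly the route the paper takes: it builds the $e(H)$-uniform hypergraph on $E(K_n)$ whose edges are copies of $H$, verifies the degree and supersaturation conditions (P1)--(P4), and applies the general container-based $1$-statement Theorem~\ref{thm:mainramsey}, whose proof is the Nenadov--Steger argument you describe. Note, however, that the paper does \emph{not} attempt to prove the $0$-statement at all; it explicitly observes that condition (P5) fails in the graph setting, so its machinery does not recover the $0$-statement, and it simply cites the original R\"odl--Ruci\'nski papers.

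Your $0$-statement sketch, on the other hand, has a genuine gap. The step ``colour $E^\ast$ arbitrarily'' cannot work: a copy of $H$ in $G_{n,p}$ may use edges from both $E^\ast$ and the residual, so destroying all copies of $H'$ by deleting $E^\ast$ and then colouring the residual carefully tells you nothing about such straddling copies. Deletion arguments of this type are suited to density (Tur\'an/Szemer\'edi) statements, not to Ramsey statements where every edge must be coloured. The actual R\"odl--Ruci\'nski proof is not a deletion argument but a structural one: one first shows w.h.p.\ that every subgraph of $G_{n,p}$ has density at most $m_2(H)$ in an appropriate refined sense, and then proves a purely deterministic colouring lemma asserting that any graph satisfying this density bound admits a $2$-colouring with no monochromatic $H$. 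The paper alludes to exactly this: the graph $0$-statement ``is proved by using a much stronger deterministic lemma than what we use here,'' namely that graphs of density at most $m_2(H)$ can be $2$-coloured to avoid a monochromatic $H$. That deterministic lemma is where the real work (and the case analysis ruling out the exceptional forests) lives, and it is not something a private-edge/greedy scheme can substitute for.
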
 
Thus $n^{-1/m_2(H)}$ is the threshold for the $(H,r)$-Ramsey property. Roughly speaking, the random Ramsey theorem tells us that a typical $n$-vertex graph of edge density significantly greater than $n^{-1/m_2(H)}$ is $(H,r)$-Ramsey, whilst a typical $n$-vertex graph of edge density significantly less than $n^{-1/m_2(H)}$ is not $(H,r)$-Ramsey. 

An intuition for the threshold in Theorem~\ref{randomramsey} is as follows: for small $c>0$, if $p< cn^{-1/m_2(H)}$  then there is a subgraph $H'$ of $H$ for which the expected number of copies of $H'$ in $G_{n,p}$ is much smaller than the expected number of edges in $G_{n,p}$. So intuitively this suggests the copies of $H'$ (and therefore $H$) are `spread out' in the random graph and so one may be able to colour the edges to avoid a monochromatic copy of $H$. On the other hand, for large $C>0$, if $p> C n^{-1/m_2(H)}$ then the expected number of copies of any subgraph $H''$ of $H$ in $G_{n,p}$ is much larger than the expected number of edges; so in this sense, the copies of $H$ in $G_{n,p}$ are more `densely distributed' making it harder to avoid a monochromatic copy of $H$.

In  more recent work, Nenadov and Steger~\cite{ns}  gave a short proof of Theorem~\ref{randomramsey} using the hypergraph container method.
There has also been significant effort to generalise Theorem~\ref{randomramsey} to the setting of \emph{random hypergraphs} (see, e.g.,~\cite{conlongowers, frs, asymmramsey}) and also to asymmetric Ramsey properties; that is, now the monochromatic graph one seeks can be different for each colour class  (see, e.g.,~\cite{bhh, cri, hst, hyde, kreu, ksw, lmms, msss, mns}).

\subsection{Ramsey properties of random sets of integers}\label{subsec:graphsIntegers}
Suppose that $A$ is an $\ell \times k$ integer matrix, and let $S$ be a set of elements of an abelian group (e.g., a set of integers). 
If a vector $x=(x_1,\dots,x_k) \in S^k$ satisfies $Ax=0$  and the $x_i$s are pairwise distinct, we call $x$ a \emph{$k$-distinct solution} to $Ax=0$ in $S$. 
We say that $S$ is \emph{$(A,r)$-Rado} if given any $r$-colouring of $S$, 
there is a monochromatic $k$-distinct solution $x=(x_1,\dots,x_k)$ to $Ax=0$ in $S$. 
Note that in the study of random versions of Rado's theorem authors have (implicitly) considered 
the $(A,r)$-Rado property, rather than seeking a monochromatic solution that is not necessarily
$k$-distinct (as in the original theorem of Rado); see~\cite[Section 4]{ht} for a discussion on why this has been the case.

An $\ell \times k$ integer matrix $A$ is \emph{irredundant} if there exists a $k$-distinct solution to $Ax=0$ in $\mathbb{N}$. Otherwise, $A$ is \emph{redundant}. 
The study of random versions of Rado's theorem has focused on irredundant partition regular matrices. This is natural since
for every redundant $\ell \times k$ matrix $A$ for which $Ax=0$ has solutions in $\mathbb{N}$,
there exists an irredundant $\ell '\times k'$ matrix $A'$
for some $\ell '\le\ell$ and $k'\le k$ with the
same family of solutions (viewed as sets).  See~\cite[Section 1]{random4} for a full explanation.

Index the columns of $A$ by $[k]$. For a partition $W \dot{\cup} \overline{W} = [k]$ of the columns of $A$, we denote by $A_{\overline{W}}$ the matrix obtained from $A$ by restricting to the columns indexed by $\overline{W}$. Let $\rank(A_{\overline{W}})$ be the rank of $A_{\overline{W}}$, where $\rank(A_{\overline{W}}):=0$ for $\overline{W}=\emptyset$. We set 
\begin{align}\label{m(A)def}
m(A):=\max_{\substack{W \dot{\cup} \overline{W} = [k] \\ |W|\geq 2}} \frac{|W|-1}{|W|-1+\rank(A_{\overline{W}})-\rank(A)}.
\end{align}
 The definition of $m(A)$ was introduced in~\cite{random4}. As noted there, the denominator of $m(A)$ is strictly positive provided that $A$ is irredundant and partition regular, and so $m(A)$ is well-defined in this case. We say $A$ is  \emph{strictly balanced} if the expression in (\ref{m(A)def}) is maximised precisely when $W=[k]$. If $A$ has precisely one row consisting of non-zero entries (i.e., it corresponds to a single linear equation) then $A$ is strictly balanced and so 
$$m(A)=\frac{k-1}{k-2}.$$
For example, if $Ax=0$ corresponds to the equation $x+y=z$, then $m(A)=2$.

We write $[n]_p$ for the random subset of $[n]=\{1,\dots,n\}$  where each element in $[n]$ is included with probability $p$ 
independently of all other elements. R\"odl and Ruci\'nski~\cite{random4} showed that $m(A)$ is an important parameter for determining 
whether $[n]_p$ is $(A,r)$-Rado or not. Indeed, one can view $m(A)$ as the arithmetic analogue of the $2$-density $m_2(H)$ which we considered for the graph case in the previous subsection.

\begin{thm}[R\"odl and Ruci\'nski~\cite{random4}]\label{radores0} 
For all irredundant partition regular full rank matrices $A$ and all positive integers $r\ge2$, 
there exists a constant $c>0$ such that 
$$
\lim_{n \rightarrow \infty} \mathbb{P}\left[ [n]_p \text{ is } (A,r)\text{-Rado} \right]=0 \quad\text{ if } p \leq  cn^{-1/m(A)}.
$$
\end{thm}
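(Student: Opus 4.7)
The plan is to exhibit, with high probability, an $r$-colouring of $[n]_p$ avoiding all monochromatic $k$-distinct solutions to $Ax=0$. The approach is a deletion-plus-Lov\'asz Local Lemma argument applied to an auxiliary hypergraph whose parameters are tuned by $m(A)$.

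Fix a subset $W^* \subseteq [k]$ of size $k^* \geq 2$ attaining the maximum in the definition~(\ref{m(A)def}) of $m(A)$, and abbreviate $\ell := \rank(A)$ and $\rho := \rank(A_{\overline{W^*}})$, so that $m(A) = (k^*-1)/(k^*-1+\rho-\ell)$. Let $\mathcal H$ be the $k^*$-uniform \emph{core hypergraph} on $[n]$, whose edges are the $k^*$-sets $\{x_j : j \in W^*\}$ arising as the $W^*$-projection of some $k$-distinct solution to $Ax=0$ in $[n]^k$. A standard linear-algebra count yields $|E(\mathcal H)| = O(n^{k^*-(\ell-\rho)})$, and any fixed vertex lies in $O(n^{k^*-1-(\ell-\rho)})$ edges of $\mathcal H$. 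Since every $k$-distinct solution projects to an edge of $\mathcal H$, it suffices to produce an $r$-colouring of $[n]_p$ in which no edge of $\mathcal H[[n]_p]$ is monochromatic.

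After substituting $p = cn^{-1/m(A)}$ and using the identity for $m(A)$, a first moment calculation gives $\mathbb E[|E(\mathcal H[[n]_p])|] = O(c^{k^*-1}\cdot pn)$ and expected vertex degree $O(c^{k^*-1})$ in $\mathcal H[[n]_p]$ (both collapses are forced precisely by the choice of $W^*$). By Markov and Chernoff, w.h.p.\ $\mathcal H[[n]_p]$ has at most $\delta pn$ edges, $|[n]_p| \geq pn/2$, and the set $B \subseteq [n]_p$ of vertices whose $\mathcal H[[n]_p]$-degree exceeds a threshold $D = D(r,k)$ satisfies $|B| \leq \delta pn$, where $\delta = \delta(c) \to 0$ as $c \to 0$. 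On $S := [n]_p \setminus B$ the hypergraph $\mathcal H[S]$ has maximum degree at most $D$; for $c$ sufficiently small in terms of $r$ and $k$, a uniform random $r$-colouring of $S$ satisfies the symmetric Lov\'asz Local Lemma condition (each edge is monochromatic with probability $r^{1-k^*}$, the dependency has degree $\lesssim k^* D$), producing a proper colouring of $\mathcal H[S]$.

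Finally the colouring is extended to $B$ greedily, exploiting that the core-count through any $v \in B$ blocks only a bounded number of colours. The main obstacle I foresee is this extension step: the colouring of $S$ must be extended without creating a monochromatic core involving vertices of $B$. This is where the maximality of $W^*$ in (\ref{m(A)def}) is essential, since for every proper subset $W' \subsetneq W^*$ with $|W'| \geq 2$, the bound $m(A) \geq (|W'|-1)/(|W'|-1+\rank(A_{\overline{W'}})-\ell)$ forces the relevant partial-core counts at vertices of $B$ to remain controlled, ensuring that the greedy extension never stalls. A secondary subtlety is that the rank count in the first step must handle degenerate contributions from near-boundary small values in $[n]$, which is resolved by a standard truncation argument.
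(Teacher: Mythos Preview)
Your deletion-plus-LLL approach diverges from the paper's and contains a genuine gap at the extension step.

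The paper derives Theorem~\ref{radores0} as the $d=1$ case of Theorem~\ref{thm:ramseynd}, which follows from the simplified random Rado lemma (Lemma~\ref{lem:easyRado}) and ultimately from the $0$-statement of Theorem~\ref{thm:mainramsey}. That $0$-statement is proved by a \emph{structural} argument (Lemmas~\ref{lem:detclaim} and~\ref{lem:probclaim}): any Rado-minimal subhypergraph of the random core hypergraph must contain one of a short explicit list of configurations (long simple paths, spoiled paths, fairly simple cycles with handles, faulty simple paths, bad triples, bad tight paths), and a first-moment computation then rules each of these out w.h.p.\ when $p\le c\hat p_n$. There is no Local Lemma and no deletion-plus-extension.

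Your greedy extension to $B$ cannot work in general. Take the Schur matrix $A=(1\ 1\ {-1})$, so $W^*=[3]$ and $k^*=3$, and set $r=2$. The symmetric LLL condition $e\cdot r^{1-k^*}\bigl(k^*(D-1)+1\bigr)\le 1$ then forces $D=1$, so after deletion $\mathcal H[S]$ must be a matching. Now let $v\in B$ lie in two edges $\{v,a,b\}$ and $\{v,c,d\}$ with $a,b,c,d\in S$. Nothing in your scheme prevents the colouring of $S$ from assigning $a,b$ colour~$1$ and $c,d$ colour~$2$ --- neither $\{a,b\}$ nor $\{c,d\}$ is an edge of $\mathcal H[S]$, so these pairs are entirely unconstrained --- in which case both colours are blocked for $v$. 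At $p=cn^{-1/2}$ the expected number of such intersecting pairs of core edges in $[n]_p$ is $\Theta(c^5 n^{1/2})\to\infty$, so obstructing vertices exist w.h.p., and a single one ruins the argument. Your appeal to the maximality of $W^*$ and bounds for proper $W'\subsetneq W^*$ controls a different quantity (the number of edges sharing \emph{two or more} vertices with a given edge) and says nothing about the number of nearly-monochromatic edges through a single high-degree vertex. This is precisely why the paper's proof, like R\"odl and Ruci\'nski's original, resorts to analysing the global structure of a minimal Rado hypergraph rather than a local sparsification.
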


Roughly speaking, Theorem~\ref{radores0} implies that a typical  subset of $[n]$ with 
significantly fewer than $n^{1-1/m(A)}$ elements is not $(A,r)$-Rado
for any irredundant partition regular matrix $A$.
The following theorem  of Friedgut, R\"odl and Schacht~\cite{frs} complements this result, implying that 
a typical subset of $[n]$ with significantly more than $n^{1-1/m(A)}$ elements is $(A,r)$-Rado.

\begin{thm}[Friedgut, R\"odl and Schacht~\cite{frs}]\label{r3} 
For all irredundant partition regular full rank matrices $A$ and all positive integers $r$, there exists a constant $C>0$ such that $$
\lim_{n \rightarrow \infty} \mathbb{P}\left[ [n]_p \text{ is } (A,r)\text{-Rado}\right]=1 \quad\text{ if } p \geq  Cn^{-1/m(A)}.
$$
\end{thm}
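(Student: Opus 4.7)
The plan is to prove the $1$-statement via the hypergraph container method combined with the Frankl--Graham--R\"odl supersaturation result (Theorem~\ref{thm:fgrsupersat}); this foreshadows the general strategy used later for the random Rado lemma.

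First I would construct the solution hypergraph $\mathcal{H}$ on vertex set $[n]$ whose edges are the unordered $k$-subsets of $[n]$ admitting an ordering as a $k$-distinct solution to $Ax=0$. Independent sets of $\mathcal{H}$ are exactly the solution-free subsets of $[n]$, so $[n]_p$ fails to be $(A,r)$-Rado iff $[n]_p$ can be partitioned into $r$ independent sets of $\mathcal{H}$. Using that $A$ is irredundant of full row rank $\ell$, a direct count yields $e(\mathcal{H})=\Theta(n^{k-\ell})$, and an analysis of how many solutions in $[n]$ are forced by fixing the coordinates indexed by a subset $W\subseteq[k]$ shows that the co-degrees of $\mathcal{H}$ are controlled by precisely the exponents appearing in the definition~(\ref{m(A)def}) of $m(A)$.

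Next I would invoke the hypergraph container theorem of Balogh--Morris--Samotij and Saxton--Thomason. For sufficiently small $\delta>0$, it yields a family $\mathcal{C}$ of subsets of $[n]$ with: (i)~every independent set of $\mathcal{H}$ lies in some $C\in\mathcal{C}$; (ii)~each $C\in\mathcal{C}$ spans at most $\delta n^{k-\ell}$ solutions to $Ax=0$; (iii)~$\log|\mathcal{C}|=O_{\delta}(n^{1-1/m(A)})$. If $[n]_p$ is not $(A,r)$-Rado then $[n]_p\subseteq C_1\cup\dots\cup C_r$ for some $C_i\in\mathcal{C}$. The key deterministic step is to show that every such $r$-tuple satisfies $|C_1\cup\dots\cup C_r|\le (1-\alpha)n$ for some fixed $\alpha=\alpha(\delta,r)>0$. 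Otherwise, define $\chi(x):=\min\{i:x\in C_i\}$ on $\bigcup_i C_i$ and extend arbitrarily elsewhere; at most $r\delta n^{k-\ell}+O(\alpha n^{k-\ell})$ solutions can then be monochromatic under $\chi$, since only $O(\alpha n^{k-\ell})$ solutions meet $[n]\setminus\bigcup_i C_i$ and the rest are trapped inside a single container. Choosing $\delta,\alpha$ small enough contradicts the $\Omega(n^{k-\ell})$ lower bound of Theorem~\ref{thm:fgrsupersat}.

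Hence a union bound over container $r$-tuples gives
\[
\mathbb{P}\bigl([n]_p \text{ is not } (A,r)\text{-Rado}\bigr)\le |\mathcal{C}|^{r}(1-p)^{\alpha n}\le \exp\bigl(O_{\delta,r}(n^{1-1/m(A)})-\alpha p n\bigr)\to 0
\]
provided $p\ge Cn^{-1/m(A)}$ for a sufficiently large constant $C=C(r,\alpha,\delta)$. The main technical hurdle is establishing~(iii): one must verify the co-degree hypotheses of the container lemma for $\mathcal{H}$ without incurring a spurious logarithmic factor in the fingerprint size, and a careful calculation matches the resulting exponent with the maximum appearing in~(\ref{m(A)def}). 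The irredundancy and full-rank assumptions on $A$ are precisely what make this maximum equal to $m(A)$ with strictly positive denominator, so that the container fingerprint size, the supersaturation bound, and the claimed probability exponent $1-1/m(A)$ all align.
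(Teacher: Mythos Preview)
Your overall strategy (hypergraph containers combined with the Frankl--Graham--R\"odl supersaturation of Theorem~\ref{thm:fgrsupersat}) is exactly the route the paper takes: it deduces Theorem~\ref{r3} from Lemma~\ref{lem:easyRado} applied with $S_n=[n]$, and that lemma is itself derived from Theorem~\ref{thm:mainramsey}, whose $1$-statement is proved via the container result Proposition~\ref{prop:rcontainers}. The verification of the co-degree hypotheses and the deterministic ``small union of containers'' step (your $|C_1\cup\dots\cup C_r|\le(1-\alpha)n$) correspond precisely to conditions (P3)--(P4) and to Claim~\ref{claim:Zbound} in the paper's argument.

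There is, however, a genuine gap in your final union bound. Your claim~(iii), that $\log|\mathcal{C}|=O_\delta(n^{1-1/m(A)})$, is not what the container theorem delivers: with fingerprints of size $s=O(pn)$ at $p=n^{-1/m(A)}$, the number of containers is only bounded by $\binom{n}{s}\le(e/p)^{O(pn)}=\exp\bigl(O(n^{1-1/m(A)}\log n)\bigr)$. Plugged into your inequality $|\mathcal{C}|^r(1-p)^{\alpha n}$, this extra $\log n$ forces $C$ to grow like $\log n$, not a constant. Your diagnosis that the hurdle lies in the co-degree check or the ``fingerprint size'' is misplaced; the fingerprint size \emph{is} $O(pn)$ without logs, and the log factor arises purely from counting fingerprints.

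The fix is to abandon the naive union bound over $|\mathcal{C}|^r$ container tuples and instead exploit that each independent set contains its fingerprint. One then sums $\mathbb{P}[S\subseteq[n]_p\subseteq Z(S)]\le p^{|S|}(1-p)^{\alpha n}$ over fingerprint tuples $S$, and the factor $p^{|S|}$ exactly cancels the entropy cost of enumerating $S$. This is precisely the calculation the paper carries out in~(\ref{eq:concalc}) using the signature structure of Proposition~\ref{prop:rcontainers}, and it is what makes a constant $C$ suffice.
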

So together, Theorems~\ref{radores0} and~\ref{r3} form the \emph{random Rado theorem} and show that the threshold 
for the property of being $(A,r)$-Rado is $p=n^{-1/m(A)}$.
The intuition for this threshold is similar to that for the threshold in Theorem~\ref{randomramsey}: in the case when $A$ is strictly balanced, when $p=n^{-1/m(A)}$, in expectation there are roughly the same number of elements in $[n]_p$ as there are solutions to $Ax=0$ in $[n]_p$. 
So  if $p\leq cn^{-1/m(A)}$ for some small $c>0$, then one
would expect the solutions to $Ax=0$ to be `spread out' (i.e., one would expect most elements of $[n]_p$  lie in very few solutions to $Ax=0$). Meanwhile, if $p\geq  Cn^{-1/m(A)}$ for some large $C>0$, then one would expect some clustering of solutions to $Ax=0$, making it harder to avoid monochromatic solutions.

Note that  Theorem~\ref{r3} was confirmed earlier by Graham, R\"odl and Ruci\'nski~\cite{grr} in the case where $r=2$ and $Ax=0$ corresponds to $x+y=z$, and then by R\"odl and Ruci\'nski~\cite{random4} in the case when $A$ is so-called density regular.
We remark that both Theorem~\ref{radores0} and Theorem~\ref{r3} are straightforward corollaries of the random Rado lemma (Lemma~\ref{lem:randomRado}), which in turns follows from our general $1$- and $0$-statement for hypergraphs (Theorem~\ref{thm:mainramsey}). Furthermore, the $1$-statement of the random Ramsey theorem (Theorem~\ref{randomramsey}) also follows directly from the general $1$-statement for hypergraphs 
(Theorem~\ref{thm:mainramsey}).

% In~\cite{hst, sp}, Theorem~\ref{r3} was generalised to allow one to consider matrices $A$ that are not partition regular, but for which $\mathbb N$ is 
% $(A,r)$-Rado for some choices of $r$. 
% In particular, the condition that $A$ is partition regular can be replaced with one of the following two equivalent definitions, 
%  subject to a suitable Ramsey-type supersaturation result holding.
% We say that an integer matrix $A$ \emph{satisfies property $(*)$} if 
% under Gaussian elimination $A$ does not have any row which consists of precisely two non-zero rational entries.
% The definition of \emph{abundant}, introduced in~\cite{st-online} and used in~\cite{sp} is that, for an $\ell \times k$ matrix $A$, every $\ell \times (k-2)$ submatrix of $A$ has the same rank as $A$.
% Clearly irredundant full rank matrices which satisfy $(*)$ are equivalent to irredundant full rank abundant matrices.

It is natural to consider whether partition regularity in Theorems~\ref{radores0} and~\ref{r3} can be replaced by a weaker assumption. 
Actually, both Theorems~\ref{radores0} and ~\ref{r3} hold if the condition that~$A$ is partition regular is replaced with~$[n]$ being~$(A,r)$-Rado for all~$n\in \mathbb N$ sufficiently large, or equivalently, that~$\mathbb N$ is~$(A,r)$-Rado, for some \emph{fixed} $r \geq 2$.
This is best possible in the sense that if~$\mathbb N$ is not~$(A,r)$-Rado then $\mathbb P([n]_p \text{ is } (A,r)\text{-Rado})=0$ for any~$n\in \mathbb N$. 
We state this generalisation of Theorems~\ref{radores0} and ~\ref{r3} formally.

\begin{thm}\label{thm:generalisedRR}
Let $r\ge2$ be a positive integer.
For all irredundant full rank matrices $A$ such that~$\mathbb N$ is $(A,r)$-Rado, there exist constants $c,C> 0$ such that
$$\lim _{n \rightarrow \infty} \mathbb P [[n]_p \text{ is } (A,r)\text{-Rado}]=\begin{cases}
0 &\text{ if } p\leq cn^{-1/m(A)}; \\
 1 &\text{ if } p\geq Cn^{-1/m(A)}.\end{cases}$$
\end{thm}

Since this is the first time Theorem~\ref{thm:generalisedRR} has been stated in print,
we now explain how to deduce Theorem~\ref{thm:generalisedRR} from results in the literature.
In~\cite[Theorem 4.1]{hst} it was proved that Theorem~\ref{r3} still holds if the condition that~$A$ is partition regular is replaced with one of the following two equivalent definitions, subject to a suitable Ramsey-type supersaturation result holding.
We say that an integer matrix $A$ \emph{satisfies property $(*)$} if 
under Gaussian elimination $A$ does not have any row which consists of precisely two non-zero rational entries.
The definition of \emph{abundant}, introduced in~\cite{st-online} and used in~\cite{sp} is that, for an $\ell \times k$ matrix $A$, every $\ell \times (k-2)$ submatrix of $A$ has the same rank as $A$.
Clearly irredundant full rank matrices which satisfy $(*)$ are equivalent to irredundant full rank abundant matrices.
Furthermore, if $A$ is an irredundant  matrix and $\mathbb N$ is $(A,r)$-Rado for some $r \geq 2$, then $A$ satisfies property $(*)$; see~Section~4.1 in~\cite{hst} for a justification of this.
In particular, the aforementioned result~\cite[Theorem 4.1]{hst} implies the 1-statement of
Theorem~\ref{thm:generalisedRR} since the necessary
Ramsey-type supersaturation result has now been established~\cite[Theorem~1.3]{dmpt}.
Finally, the $0$-statement follows immediately from a result of the second and third authors~\cite[Theorem~3.1]{ht}.

We remark that
Theorem~\ref{thm:generalisedRR} also follows by combining the aforementioned supersaturation result~\cite[Theorem~1.3]{dmpt} with the simplified version of our random Rado lemma (Lemma~\ref{lem:easyRado}).

\subsection{Ramsey properties of random sets of $\mathbb Z_n$}

Recently there have been a few random Ramsey-type results proven for $\mathbb Z_n$. 
The first explicit result in this area is the following
 \emph{sharp threshold} version of van der Waerden's theorem for random subsets
of $\mathbb Z_n$~\cite{sharp}.
Recalling the definition of $(r,k)$-van der Waerden from earlier, note that, for example, $(r,3)$-van der Waerden and $(A,r)$-Rado are equivalent notions if $A=(1,1,-2)$.
We write $\mathbb Z_{n,p}$ to denote the subset of $\mathbb Z_n$ obtained by including each element of $\mathbb Z_n$ independently and with probability $p$.

\begin{thm}[Friedgut,  H\`an,  Person and  Schacht~\cite{sharp}]\label{vdw}
For all $k \geq$ 3, there exist constants $c_1 , c_0 > 0$ and a function $c(n)$ satisfying
$c_0  \leq c(n) \leq c_1$ such that for every $\eps>0$ we have
$$\lim _{n \rightarrow \infty} \mathbb P [\mathbb Z_{n,p} \text{ is } (2,k)\text{-van der Waerden}]=\begin{cases}
0 &\text{ if } p\leq (1-\eps) c(n) n^{-1/(k-1)}; \\
 1 &\text{ if } p\geq (1+\eps) c(n) n^{-1/(k-1)}.\end{cases}$$
\end{thm}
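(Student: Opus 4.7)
The plan is to establish the coarse threshold $n^{-1/(k-1)}$ first and then upgrade it to a sharp threshold via Friedgut's criterion for sharp thresholds of monotone graph properties adapted to the arithmetic setting. The coarse $1$-statement can be obtained exactly as in the random Rado theorem: the matrix $A$ encoding $k$-APs is irredundant, partition regular and (one checks) satisfies $m(A)=k-1$, so existing machinery (supersaturation for $\mathbb{Z}_n$ that follows from Szemer\'edi's theorem combined with a container/Rödl--Ruci\'nski-type argument, as discussed in Section~\ref{subsec:graphsIntegers}) yields the $1$-statement up to a multiplicative constant. The coarse $0$-statement comes from a deletion argument: when $p \ll n^{-1/(k-1)}$ the expected number of $k$-APs is much smaller than $p n$, so w.h.p.\ one can greedily delete one element from each $k$-AP of $\mathbb{Z}_{n,p}$, leaving a linear-sized subset free of $k$-APs, which can then be $2$-coloured trivially to avoid monochromatic $k$-APs.

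To upgrade this to a sharp threshold, I would apply Friedgut's sharp threshold criterion for monotone symmetric properties. The property \emph{being $(2,k)$-van der Waerden} is monotone increasing and invariant under the natural action of $\mathbb{Z}_n$ on itself, so Friedgut's framework applies. The criterion says roughly that if the property has a coarse threshold, then there exists a small fixed subset $B \subseteq \mathbb{Z}_n$ (a \emph{booster}) whose addition to a typical sparse random set significantly increases the probability of being $(2,k)$-van der Waerden; concretely, adding a random translate of $B$ must be noticeably more effective than adding the same number of random elements. The goal is then to derive a contradiction by classifying such boosters.

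The core technical step, and the main obstacle, is showing that the only booster (up to a loss factor that can be absorbed) is a $k$-AP itself. To carry this out, I would argue that any hypothetical booster $B$ of size $O(1)$ satisfies: conditional on a near-threshold random set $\mathbb{Z}_{n,p}$ not already being Rado, a typical translate of $B$ together with $\mathbb{Z}_{n,p}$ must create a new monochromatic $k$-AP under every $2$-colouring. Running a local-to-global analysis using a suitable arithmetic regularity/removal lemma (e.g.\ the Green--Tao--Ziegler-type arithmetic removal lemma, or the removal lemma of Král', Serra and Vena used later in Section~\ref{subsec:supersat}) forces the booster to encode a $k$-AP structure. The concluding step is to compare the contribution of adding a random $k$-AP to the contribution of adding $k$ independent random elements; since these differ by at most an $O(1)$ factor, this is \emph{not} enough to produce a coarse threshold, contradicting the assumption.

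Once Friedgut's criterion rules out a coarse threshold, one obtains a window of width $o(\hat p)$ inside which the transition happens; defining $c(n)$ to be the midpoint of this window gives the required sharp threshold function, and the two-sided bound $c_0 \le c(n) \le c_1$ follows from the coarse threshold estimates established at the start. The hardest part by far is the booster classification, because $k$-APs are linear configurations and the natural \emph{symmetries} one would use to reduce the booster to an AP are less rigid than in the graph Ramsey case; bypassing this typically requires a careful arithmetic removal lemma together with a stability version of Szemer\'edi's theorem to identify the essentially unique extremal configuration.
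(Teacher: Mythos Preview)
The paper does not prove Theorem~\ref{vdw}; it is quoted from~\cite{sharp} as a known result and used only for context (to illustrate what a \emph{sharp} threshold looks like compared to the coarse thresholds that the paper's own machinery produces). There is therefore no proof in the paper to compare your proposal against.

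That said, your outline is broadly the shape of the argument in~\cite{sharp}: establish the coarse threshold, then rule out a coarse threshold via Friedgut's criterion by analysing boosters. A couple of points where your sketch is loose relative to what actually has to be done: the relevant version of Friedgut's theorem is not the original graph-property statement but the Bourgain appendix/hypergraph extension, and the contradiction is not obtained simply by ``comparing the contribution of adding a random $k$-AP to adding $k$ random elements''. What one actually shows is that if adding a booster $B$ makes the set $(2,k)$-van der Waerden with noticeable probability, then a positive proportion of $2$-colourings of the random set already contain many monochromatic $(k-1)$-APs that can be completed to $k$-APs through $B$; a density/removal argument then forces many monochromatic $k$-APs to exist without $B$, contradicting the assumption that the random set was not already $(2,k)$-van der Waerden. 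The booster is not ``classified as a $k$-AP'' per se; rather its effect is absorbed back into the random set.
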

Note that the threshold in Theorem~\ref{vdw} is sharp compared to, e.g., the threshold given by Theorems~\ref{radores0} and~\ref{r3}. See, e.g., \cite[Section 5.1]{conlonsurvey} for further discussion on sharp thresholds.

Recently, Theorem~\ref{vdw} has been generalised to the $(r,k)$-van der Waerden property (i.e., $r$ does not have to be $2$ now) and an analogous sharp threshold version of the random Schur theorem for $\mathbb Z_n$ (where $n$ is prime) has been obtained; see~\cite{fkssnew}.

\section{Further  applications of  the random Rado lemma}\label{sec:easyapplications}

In addition to the random van der Waerden theorem for the primes (Theorem~\ref{rvdwp}), we prove several other novel random Ramsey-type results via the random Rado lemma (Lemma~\ref{lem:randomRado}).
The proofs of all the results presented in this section are postponed to Section~\ref{sec:proofapplications}.  Given a subset $X$ of an abelian group, we write $X_p$ to
 denote the subset of $X$ obtained by including each element of $X$ independently and with probability $p$.

\smallskip

Recall that Theorems~\ref{radores0} and~\ref{r3}  determine the  threshold for the property that $[n]_p$ is $(A,r)$-Rado. The following result is a generalisation of this statement to $[n]^d_p$ for any fixed $d\in\mathbb N$.   

\begin{thm}[Random Rado theorem for integer lattices]\label{thm:ramseynd}
For all irredundant partition regular full rank matrices $A$ and all positive integers $r\ge2$ and $d\ge1$, there exist constants $C,c>0$ such that the following holds. 
$$\lim _{n \rightarrow \infty} \mathbb P [[n]_p^d \text{ is } (A,r)\text{-Rado}]=\begin{cases}
0 &\text{ if } p \leq cn^{-d/m(A)}; \\
 1 &\text{ if } p \geq Cn^{-d/m(A)}.\end{cases}$$ 
\end{thm}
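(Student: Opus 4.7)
The plan is to deduce Theorem~\ref{thm:ramseynd} directly from the simplified random Rado lemma (Lemma~\ref{lem:easyRado}) by applying it to the sequence $S_n := [n]^d$, exactly as suggested by Figure~\ref{fig:visualisation}. The two ingredients that will feed into this black box are (i) certain counting estimates that verify $[n]^d$ is a ``well-behaved'' sequence for the random Rado machinery, and (ii) the new supersaturation result Theorem~\ref{thm:supersat-d} for integer lattices, which plays the role of Theorem~\ref{thm:fgrsupersat} in the classical proof of Friedgut, R\"odl and Schacht~\cite{frs}.

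First I would set up the relevant counts. For an irredundant partition regular full rank $\ell \times k$ integer matrix $A$, the total number of solutions to $Ax=0$ with $x\in[n]^d$ is $\Theta(n^{d(k-\ell)})$: since the matrix $A$ acts independently on each coordinate, solutions in $[n]^d$ correspond to $d$-tuples of solutions in $[n]$, and each of the $d$ coordinates contributes a factor of $\Theta(n^{k-\ell})$ by the standard one-dimensional estimate. More generally, for any partition $W \dot{\cup} \overline{W}=[k]$, a product-style counting argument gives that the number of solutions extending a prescribed assignment on the columns indexed by $\overline{W}$ is $\Theta(n^{d(|W|-1+\mathrm{rank}(A_{\overline W})-\mathrm{rank}(A))})$ on average. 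In particular, the parameter $m(A)$ — which depends only on $A$ — governs the correct density threshold through the ambient size $|S_n|=n^d$, so that $|S_n|^{-1/m(A)}=n^{-d/m(A)}$, matching the claim.

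Once these counting estimates are in place, Lemma~\ref{lem:easyRado} requires a supersaturation input: any $r$-colouring of $[n]^d$ should produce a linear proportion of the total number of solutions as monochromatic ones. This is precisely the content of Theorem~\ref{thm:supersat-d}, which yields $\delta n^{d(k-\ell)}$ monochromatic solutions in any $r$-colouring. Plugging this supersaturation estimate and the counting data into Lemma~\ref{lem:easyRado} then produces both the $0$- and the $1$-statement at threshold $n^{-d/m(A)}$.

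The main obstacle I expect is the verification of the counting hypotheses of Lemma~\ref{lem:easyRado}, and in particular ensuring that the extremal partition $W \dot{\cup} \overline{W}$ realising $m(A)$ behaves the same way in $[n]^d$ as it does in $[n]$. Concretely, one needs to check that the worst-case ``densest subsystem'' that determines $m(A)$ in the one-dimensional Friedgut--R\"odl--Schacht proof also controls the clustering of solutions in $[n]^d$, so that the $0$-statement deletion argument (distributed across the random Rado black box) still goes through with the correct exponent of $n$. Modulo this coordinate-wise bookkeeping — which reduces to the product structure of $[n]^d$ — the simplified random Rado lemma does all of the heavy lifting, and Theorem~\ref{thm:ramseynd} specialises to Theorems~\ref{radores0} and~\ref{r3} when $d=1$.
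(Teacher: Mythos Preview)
Your proposal is correct and matches the paper's approach essentially line for line: the paper applies Lemma~\ref{lem:easyRado} with $S_n=[n]^d$, checking (C2) via Theorem~\ref{thm:supersat-d}, (C3) as a consequence of the same supersaturation count, (C4) via Lemma~\ref{lem:primpliesabundant}, and then (C1) and (C5) from the general machinery (Lemmas~\ref{lem:pSA} and~\ref{lem:pWXtozero}). The identification $m_{S_n}(A)=m(A)$ that you flag as the ``main obstacle'' is immediate, because $[n]^d$ is a power of a subset of the field $\mathbb{Q}$, so $\rank_{S_n}$ coincides with ordinary $\mathbb{Q}$-rank.
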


Observe that the  $d=1$ case corresponds precisely to Theorems~\ref{radores0} and~\ref{r3}. To deduce Theorem~\ref{thm:ramseynd} from the random Rado lemma, we require a corresponding supersaturation result for $[n]^d$, namely Theorem~\ref{thm:supersat-d}.

Note that the probability threshold in Theorem~\ref{thm:ramseynd} is independent of $d$
in the following sense.
As we are interested in how `sparse' a subset can be while typically retaining its Ramsey properties, the probability threshold should always be compared to the size of the initial set we are considering. In this case, the size of $[n]^d$ is $n^d$ and so we may rewrite the probability threshold $n^{-d/m(A)}$ as $|[n]^d|^{-1/m(A)}$. Indeed, this shows that the probability threshold depends exclusively on the matrix $A$. However, this phenomenon does not occur in general: for a fixed matrix $A$, the probability threshold may change depending on the underlying arithmetic structure. For instance, consider  the equation $2x+2y=2z$ and the product group $(\mathbb Z_4)^n$. 

\begin{thm}\label{thm:sec4ex2}
Let $A=(2\;\; 2\; -2)$ and $r\ge2$. There exist constants $c,C>0$ such that 
$$\lim _{n \rightarrow \infty} \mathbb P [(\mathbb Z_4)^n_p \text{ is } (A,r)\text{-Rado}]=\begin{cases}
0 &\text{ if } p \leq c\,4^{-3n/4}; \\
 1 &\text{ if } p \geq C\,4^{-3n/4}.\end{cases}$$  
\end{thm}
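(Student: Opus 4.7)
The plan is to invoke the random Rado lemma (Lemma~\ref{lem:randomRado}) with $S_n:=(\mathbb{Z}_4)^n$ and $A=(2\;\;2\;\;-2)$. The first ingredient required is a supersaturation statement, and this is supplied by Theorem~\ref{thm:exponentsupersat}: since $(\mathbb{Z}_4)^n$ has exponent $s=4$, I need only verify that $A$ satisfies the $4$-columns condition. Taking $C_0:=\{c_1,c_3\}$ gives $s_0=2+(-2)\equiv 0 \pmod 4$, and $C_1:=\{c_2\}$ gives $s_1=2=1\cdot c_1$, which is a $\mathbb{Z}_4$-linear combination of the previous columns. Thus Theorem~\ref{thm:exponentsupersat} yields the desired supersaturation: for every $r\in\mathbb{N}$ there exists $\delta>0$ such that every $r$-colouring of $(\mathbb{Z}_4)^n\setminus\{0\}$ produces at least $\delta\,|S(A,(\mathbb{Z}_4)^n)|$ monochromatic solutions.

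The next ingredient is a precise count of solutions. Coordinate-wise, $2x+2y\equiv 2z\pmod 4$ if and only if $x+y\equiv z \pmod 2$, which holds for exactly $32$ of the $64$ triples in $\mathbb{Z}_4^3$, so $|S(A,(\mathbb{Z}_4)^n)|=32^n=4^{5n/2}$. Equivalently, letting $\pi:(\mathbb{Z}_4)^n\to(\mathbb{Z}_2)^n$ denote coordinate-wise reduction modulo $2$, a triple $(x,y,z)$ is a solution precisely when $\pi(x)+\pi(y)=\pi(z)$ in $(\mathbb{Z}_2)^n$; each Schur triple in $(\mathbb{Z}_2)^n$ thereby lifts to $8^n$ solutions in $(\mathbb{Z}_4)^n$. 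This fiber structure will also control the number of partial solutions extending any fixed subset of variables, which is exactly the form of counting data I expect Lemma~\ref{lem:randomRado} to require.

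The threshold $\hat p = 4^{-3n/4}$ then arises from balancing the expected number of elements against the expected number of solutions in $(\mathbb{Z}_4)^n_p$: for $p=4^{-3n/4}$ both quantities equal $4^{n/4}$, since $4^n\cdot p=4^{n/4}$ and $4^{5n/2}\cdot p^3=4^{n/4}$. With the counting data and supersaturation above, I expect the random Rado lemma to deliver the $1$-statement directly through its container-type machinery, and the $0$-statement through a standard deletion argument showing that, when $p\le c\,4^{-3n/4}$, almost every element of $(\mathbb{Z}_4)^n_p$ lies in very few $k$-distinct solutions and hence can be greedily removed to destroy all monochromatic solutions.

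The main technical obstacle is that the naive Rado density $m(A)$ from \eqref{m(A)def}, computed over $\mathbb{Q}$, evaluates to $2$ and would predict the wrong exponent $(4^n)^{-1/2}=4^{-n/2}$. This discrepancy is due to the degeneracy of $A$ over $\mathbb{Z}_4$: because $2$ is a zero divisor there, the solution count inflates from the generic $4^{2n}$ to $4^{5n/2}$. Verifying that the hypotheses of Lemma~\ref{lem:randomRado} for $S_n=(\mathbb{Z}_4)^n$ are genuinely formulated in terms of the true solution counts in $(\mathbb{Z}_4)^n$, and confirming that its output threshold thereby evaluates to $4^{-3n/4}$ rather than to the integer-case value, will be the most delicate step, and it requires carefully tracking partial-solution counts indexed by column subsets of $A$, made tractable by the $\pi$-fiber structure noted above.
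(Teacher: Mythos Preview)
Your approach is essentially correct but more laborious than the paper's. The paper deduces the result from Theorem~\ref{thm:exponent}, which only requires verifying three simple conditions on $G=\mathbb{Z}_4$: the $4$-columns condition (which you check), $\rank_{\mathbb{Z}_4}(A)>0$, and that $(A,\mathbb{Z}_4)$ is abundant. The latter two follow immediately from the observation that $2x=0$ has exactly $2=4^{1-1/2}$ solutions in $\mathbb{Z}_4$, so $\rank_{\mathbb{Z}_4}(A)=\rank_{\mathbb{Z}_4}(A_{\overline W})=1/2$ for every $|W|=2$. The threshold $4^{-3n/4}=|G|^{-n/m_G(A)}$ then drops out from $m_{\mathbb{Z}_4}(A)=4/3$ (Example~\ref{examp:mZ4}). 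Going through Lemma~\ref{lem:randomRado} directly, as you propose, would require separately verifying conditions (A3)--(A6); the paper has already packaged exactly that verification for finite abelian groups into Theorem~\ref{thm:exponent} via the richness/abundance machinery of Section~\ref{sec:6.3}. Your $\pi$-fibre description would in effect reproduce those computations by hand.

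One point to correct: your description of how the $0$-statement is obtained is not accurate. The random Rado lemma's $0$-statement is \emph{not} proved by a deletion argument; deleting elements to kill all solutions only establishes a density-type failure (not being $(\delta,k)$-Szemer\'edi), not a Ramsey-type one. The actual proof (via Theorem~\ref{thm:mainramsey} and Lemmas~\ref{lem:detclaim}--\ref{lem:probclaim}) shows that w.h.p.\ the auxiliary hypergraph admits a $2$-colouring of its vertices with no monochromatic edge, by ruling out a finite list of small substructures. Since you are applying Lemma~\ref{lem:randomRado} as a black box, this misconception does not invalidate your approach, but the intuition you describe would not on its own yield the $0$-statement.
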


As the size of $(\mathbb Z_4)^n$ is $4^n$, we may rewrite $4^{-3n/4}$ as $|(\mathbb Z_4)^n|^{-3/4}$. On the other hand,  $m(A)=2$ and so the corresponding threshold when considering solutions to $Ax=0$ in $[n]_p$ is $n^{-1/2}$. Thus, we can have different thresholds depending on the underlying abelian group.

Note that the probability threshold $p=4^{-3n/4}$ is consistent with the heuristic that the expected size of $(\mathbb Z_4)^n_p$ should `match' the expected number of solutions to $Ax=0$ in $(\mathbb Z_4)^n_p$. Indeed, for $p=4^{-3n/4}$, the expected size of $(\mathbb Z_4)^n_p$ is $4^np=4^{n/4}$. The total number of solutions of $2x+2y=2z$ in $(\mathbb Z_4)^n$ is $4^n\cdot 4^n\cdot 2^n=4^{5n/2}$ and so the expected number of solutions in $(\mathbb Z_4)^n_p$ is $4^{5n/2}p^3=4^{n/4}$.

Observe that the total number of solutions to $2x+2y=2z$ in $[n]$ is at most $n\cdot n\cdot 1=n^2$ since for every choice of $x$ and $y$ there is at most one choice of $z$. The change of threshold in Theorem~\ref{thm:sec4ex2} (and more generally in Theorem~\ref{thm:exponent} below) depends on the fact that we have many more choices for $z$ in $(\mathbb Z_4)^n$. 

\smallskip

Theorem~\ref{thm:sec4ex2} follows easily from the following application of the random Rado lemma to abelian groups. 
The definitions of $\rank_G(A)$ and $m_G(A)$ appearing in the statement require further motivation so
we defer them to the next section; see Definitions~\ref{def:rankfinite} and~\ref{def:mGA}.
However, one may think of $m_G(A)$ as the natural generalisation of $m(A)$ which captures the intuition described for Theorem~\ref{thm:sec4ex2}.
Indeed, $m_{\mathbb{Z}_4}(A)=4/3$ for $A=(2\;\; 2\; -2)$.
For an $\ell \times k$ matrix $A$, we say that $(A,G)$ is \emph{abundant} if every $\ell \times (k-2)$ submatrix $A'$ of $A$ satisfies $\rank_G(A')=\rank_G(A)$. This naturally generalises the previous definition of abundant;  see also Definition~\ref{def:abundant}.

\begin{thm}\label{thm:exponent}
Let $G$ be a finite abelian group with exponent $s\in\mathbb N$. Let $A$ be an integer matrix which satisfies the $s$-columns condition, with $\rank_G(A)>0$ and such that $(A,G)$ is abundant. For
all $r\ge2$, there exist constants $C,c>0$ such that the following holds. 
$$\lim _{n \rightarrow \infty} \mathbb P [G^n_p \text{ is } (A,r)\text{-Rado}]=\begin{cases}
0 &\text{ if } p \leq c|G|^{-n/m_G(A)}; \\
 1 &\text{ if } p \geq C|G|^{-n/m_G(A)}.\end{cases}$$  
\end{thm}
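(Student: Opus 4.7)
The plan is to apply the random Rado lemma (Lemma~\ref{lem:randomRado}), or its simplified form Lemma~\ref{lem:easyRado}, to the sequence $S_n := G^n \setminus \{0\}$. As the diagram in Figure~\ref{fig:visualisation} suggests, once the black-box lemma is in place the proof reduces to supplying a supersaturation statement, and this is given exactly by Theorem~\ref{thm:exponentsupersat}: the $s$-columns condition of $A$ is precisely the hypothesis required there, and the conclusion guarantees that every $r$-colouring of $G^n \setminus \{0\}$ produces a linear proportion of the total number of solutions of $Ax=0$ in monochromatic form. The role of the hypothesis $\rank_G(A) > 0$ is to rule out the degenerate case where every tuple solves $Ax=0$, ensuring that the probability threshold is non-trivial.

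With supersaturation in hand, the other ingredients needed to invoke the random Rado lemma are (i) that the ambient sets $S_n = G^n\setminus\{0\}$ are ``well-behaved'' in the counting sense demanded by Lemma~\ref{lem:randomRado}, and (ii) that the abundance condition on $(A,G)$ plays the role of the property~$(*)$/abundance condition used in the $[n]_p$ setting. The parameter $m_G(A)$ should arise as the exact analogue of $m(A)$ from~(\ref{m(A)def}), namely
$$m_G(A) \;=\; \max_{\substack{W \dot\cup \overline{W} = [k] \\ |W|\ge 2}} \frac{|W|-1}{|W|-1 + \rank_G(A_{\overline{W}}) - \rank_G(A)},$$
with $\rank_G$ computed via the action $\mathbb{Z} \curvearrowright G$; the abundance of $(A,G)$ (every $\ell \times (k-2)$ submatrix has the same $G$-rank as $A$) is exactly what guarantees the denominator is strictly positive for all admissible $W$, so that $m_G(A)$ is well-defined. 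Concretely, I would then unpack the counting required by Lemma~\ref{lem:randomRado}: the number of solutions to $Ax=0$ in $G^n$ is $|G|^{n(k-\rank_G(A))}$ and, more delicately, the number of such solutions with a prescribed partial assignment on a subset $W$ of coordinates behaves like $|G|^{n(k - |W| - \rank_G(A) + \rank_G(A_{\overline{W}}))}$. Once these counts are established, the threshold $|G|^{-n/m_G(A)}$ pops out of the random Rado lemma via exactly the same ``expected edges versus expected vertices'' calculation used for Theorems~\ref{radores0},~\ref{r3} and~\ref{thm:ramseynd}, with $|G|^n$ playing the role that $n^d$ plays for the integer lattice.

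The main obstacle I expect is not the application of Lemma~\ref{lem:randomRado} itself, which should be essentially mechanical, but rather faithfully translating the $\mathbb{Q}$-linear algebra of the original random Rado theorem to the $G$-module setting. Specifically, the extension counts above rely on identifying $\ker(A_{\overline{W}}\colon G^{n|\overline{W}|} \to G^{n\ell})$ modulo the image of the fixed part, and this requires tracking how abundance interacts with the group structure coordinate-by-coordinate; one must also ensure that the contribution of \emph{trivial} solutions (arising from the kernel of the sum of columns acting on $G$) does not dominate, which is where the abundance hypothesis pays off by forcing any ``critical'' subset $W$ to have size at least $3$. Verifying these counting bounds and checking that Lemma~\ref{lem:randomRado}'s hypotheses hold at the correct scale $|G|^n$ is the main technical work; after that, Theorem~\ref{thm:sec4ex2} drops out as the special case $G=\mathbb{Z}_4$, $A=(2\ 2\ -2)$ upon computing $m_{\mathbb{Z}_4}(A)=4/3$.
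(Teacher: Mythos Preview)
Your proposal is correct and follows essentially the same route as the paper: apply the simplified random Rado lemma (Lemma~\ref{lem:easyRado}) with supersaturation supplied by Theorem~\ref{thm:exponentsupersat}, and verify the remaining conditions via the group-theoretic rank counting (equation~(\ref{eq:rank1})) together with abundance and $\rank_G(A)>0$. One small correction: the paper takes $S_n := G^n$ rather than $G^n\setminus\{0\}$, since Lemma~\ref{lem:easyRado} requires $S_n$ to be a finite abelian group (which $G^n\setminus\{0\}$ is not), and the supersaturation on $G^n\setminus\{0\}$ from Theorem~\ref{thm:exponentsupersat} transfers immediately to $G^n$.
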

Note that the condition  $\rank_G(A)>0$ in Theorem~\ref{thm:exponent} is very mild: if $A$ has dimension $\ell \times k$, $\rank_G(A)>0$ is equivalent to stating that not every ordered $k$-tuple over $G$ is a solution to $Ax=0$.

Given a finite subset $S$ of an abelian group, we say that an $\ell \times k$ matrix $A$ is \emph{irredundant with respect to $S$} if there exists a $k$-distinct solution to $Ax=0$ in $S$. So a matrix $A$ is irredundant precisely if it is irredundant with respect to $\mathbb N$.
Another application of the random Rado lemma is the following random version of the Ramsey-type result of Bergelson, Deuber and Hindman for vector spaces $\mathbb F^n$ (Theorem~\ref{thm:bdh}).
For an $\ell \times k$ matrix $A$ with entries from a field $\mathbb{F}$, we define $m_{\mathbb F}(A)$ analogously to (\ref{m(A)def}), that is
\begin{align}\label{m(A)deffield}
m_{\mathbb{F}}(A):=\max_{\substack{W \dot{\cup} \overline{W} = [k] \\ |W|\geq 2}} \frac{|W|-1}{|W|-1+\rank_{\mathbb{F}}(A_{\overline{W}})-\rank_{\mathbb{F}}(A)},
\end{align}
where we write $\rank_{\mathbb{F}}$ to denote that we calculate rank with respect to the field $\mathbb{F}$.

\begin{thm}\label{thm:fields}
Let $\mathbb F$ be a non-trivial finite field. Consider any  full rank  matrix $A$ with entries from $\mathbb{F}$ so that $A$ satisfies the columns condition over $\mathbb F$ and $A$ is also 
irredundant with respect to $\mathbb F^n$ for all $n \in \mathbb N$ sufficiently large.
Given any positive integer $r\ge2$, there exist constants $C,c>0$ such that the following holds.
$$\lim _{n \rightarrow \infty} \mathbb P [\mathbb F_p^n \text{ is } (A,r)\text{-Rado}]=\begin{cases}
0 &\text{ if } p \leq c|\mathbb F|^{-n/m_{\mathbb F}(A)}; \\
 1 &\text{ if } p \geq C|\mathbb F|^{-n/m_{\mathbb F}(A)}.\end{cases}$$ 
\end{thm}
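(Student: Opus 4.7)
The plan is to derive Theorem~\ref{thm:fields} as a direct application of the simplified random Rado lemma (Lemma~\ref{lem:easyRado}) to the sequence $S_n := \mathbb{F}^n \setminus \{0\}$, in close parallel to how Theorem~\ref{thm:ramseynd} is obtained from the same lemma: the role previously played by $[n]^d$ is taken by $\mathbb{F}^n$, and the role previously played by Theorem~\ref{thm:supersat-d} is taken by the Serra--Vena supersaturation theorem (Theorem~\ref{thm:fieldsupersat}). Concretely, the strategy is to feed the appropriate supersaturation input into Lemma~\ref{lem:easyRado}, check the associated balance/counting hypotheses for $\mathbb{F}^n$, and then match the resulting threshold to the parameter $m_{\mathbb{F}}(A)$ defined in (\ref{m(A)deffield}).

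The supersaturation input required by Lemma~\ref{lem:easyRado} is supplied by Theorem~\ref{thm:fieldsupersat}: since $A$ has full rank and satisfies the columns condition over $\mathbb{F}$, any $r$-colouring of $\mathbb{F}^n \setminus \{0\}$ has at least $\delta|\mathbb{F}|^{n(k-\ell)}$ monochromatic solutions to $Ax=0$, a linear proportion of the total number of solutions. Combining this with the irredundancy assumption on $A$ with respect to $\mathbb{F}^n$, together with a lower-dimensional exclusion argument, upgrades this to a linear proportion of $k$-\emph{distinct} monochromatic solutions, which is what the $(A,r)$-Rado property counts. For the threshold exponent, I would invoke the standard heuristic underlying (\ref{m(A)deffield}): for each $W \subseteq [k]$ with $|W| \ge 2$, fixing the coordinates indexed by $\overline{W}$ of a solution to $Ax=0$ in $\mathbb{F}^n$ leaves $|\mathbb{F}|^{n(|W| - \rank_{\mathbb{F}}(A) + \rank_{\mathbb{F}}(A_{\overline{W}}))}$ extensions, and balancing the expected number of such solutions in $\mathbb{F}_p^n$ against its expected size $p|\mathbb{F}|^n$ yields precisely the critical exponent (\ref{m(A)deffield}), with base $|\mathbb{F}|^n$ in place of the base $n$ that appears in Theorem~\ref{thm:ramseynd}.

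The main obstacle I anticipate is the bookkeeping required to pass from supersaturation for arbitrary solutions to supersaturation for $k$-distinct solutions, in a form that matches the abstract hypotheses of Lemma~\ref{lem:easyRado}. In the integer-lattice case, non-$k$-distinct solutions are negligible because solutions with repeated coordinates factor through an $\ell \times (k-1)$ submatrix of $A$ and therefore lie in a proper subvariety; the analogue over $\mathbb{F}^n$ should follow the same pattern, but it is exactly here that the hypothesis that $A$ is irredundant with respect to $\mathbb{F}^n$ for all large $n$ becomes crucial, as it rules out the pathological situation where every solution over $\mathbb{F}^n$ has some forced coincidence of coordinates. A secondary subtlety is the shift from $\mathbb{F}^n$ to $\mathbb{F}^n\setminus\{0\}$, required to align with the supersaturation statement of Theorem~\ref{thm:fieldsupersat}; since removing the origin only changes counts by lower-order terms, this causes no change in the threshold. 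Once these points are verified, Lemma~\ref{lem:easyRado} outputs both the $0$-statement and the $1$-statement at $p \asymp |\mathbb{F}|^{-n/m_{\mathbb{F}}(A)}$, yielding Theorem~\ref{thm:fields}.
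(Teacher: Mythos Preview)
Your approach is essentially the paper's: apply Lemma~\ref{lem:easyRado} with supersaturation supplied by Theorem~\ref{thm:fieldsupersat}, and identify the resulting threshold with $|\mathbb F|^{-n/m_{\mathbb F}(A)}$. Two small points of divergence are worth noting. First, the paper takes $S_n:=\mathbb F^n$ rather than $\mathbb F^n\setminus\{0\}$; since $\mathbb F^n$ is a finite abelian group, $(A,\mathbb F^n)$ is automatically $1$-rich by~(\ref{eq:rank1}), so (C3) comes for free and there is no need for your ``shift'' argument. Second, the issue you flag as the main obstacle, passing from arbitrary to $k$-distinct solutions, is already absorbed into the machinery: conditions (A3) and (A6) together with Lemma~\ref{lem:kdistinct} ensure that all but a negligible fraction of solutions are $k$-distinct, so no ad hoc argument is needed. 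What you do gloss over, and what the paper spells out, is that Lemma~\ref{lem:easyRado} is stated for \emph{integer} matrices, whereas here $A$ has entries in $\mathbb F$; one must observe that the lemmas feeding into Lemma~\ref{lem:easyRado} (in particular Lemmas~\ref{lem:keycounting}, \ref{lem:simplifiedA3}, \ref{lem:pWXtozero}, \ref{lem:pSA} and the analogue of Lemma~\ref{lem:primpliesabundant}) go through verbatim for $\mathbb F$-matrices once~(\ref{eq:rank1}) is available. The paper also makes explicit how (C4) is obtained: Theorem~\ref{thm:bdh} gives $3$-partition regularity of $A$ in $\mathbb F^n\setminus\{0\}$, which combined with irredundancy yields abundance via (the analogue of) Lemma~\ref{lem:primpliesabundant}.
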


We also obtain  the analogue of Theorem~\ref{thm:fields} for matrices $A$ which have integer entries. 
Here we must be slightly careful: as we are interested in solutions to $Ax=0$ in $\mathbb{F}^n$, one must compute the rank of the  integer matrix $A$ with respect to the underlying field $\mathbb{F}$ (see Definition~\ref{def:rankPIDinteger})
and hence the threshold depends on $m_\mathbb{F}(A)$ rather than $m(A)$.\footnote{Note that the definition of $m_\mathbb{F}(A)$ for an integer matrix $A$ is also consistent with putting $G=\mathbb{F}$ into the definition of $m_G(A)$ for groups (see Definition~\ref{def:mGA} and Fact~\ref{fact:equiv}).} 
For example, the matrix $A=\begin{pmatrix} 3 & 3 & -3 \end{pmatrix}$ usually has $\rank(A)=1$, however, in $\mathbb{Z}_3$, we have $3\equiv0 \ (\text{mod} \ 3)$ and so we have $\rank_{\mathbb{Z}_3}(A)=0$.

\begin{thm}\label{thm:fields2}
Let $\mathbb F$ be a finite field of order $q^k$, for some prime $q$ and $k \in \mathbb{N}$. Consider any full rank integer matrix $A$  so that $A$ satisfies the $q$-columns condition and $A$ is also irredundant with respect to $\mathbb F^n$ for all $n \in \mathbb N$ sufficiently large.
Given any positive integer $r\ge2$, there exist constants $C,c>0$ such that the following holds.
$$\lim _{n \rightarrow \infty} \mathbb P [\mathbb F_p^n \text{ is } (A,r)\text{-Rado}]=\begin{cases}

0 &\text{ if } p \leq c|\mathbb F|^{-n/m_{\mathbb F}(A)}; \\

1 &\text{ if } p \geq C|\mathbb F|^{-n/m_{\mathbb F}(A)}.\end{cases}$$ 
\end{thm}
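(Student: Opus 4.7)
My plan is to derive Theorem~\ref{thm:fields2} from the simplified random Rado lemma (Lemma~\ref{lem:easyRado}), combining it with the supersaturation statement for vector spaces (Theorem~\ref{thm:fieldsupersat}). The proof should closely parallel that of Theorem~\ref{thm:fields}; the only real novelty lies in translating between the integer matrix $A$ and an associated matrix with entries in $\mathbb{F}$.

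The first step is to form $\bar{A}$, the entrywise reduction of $A$ modulo $q$, viewed as a matrix with entries in the prime subfield $\mathbb{Z}_q \subseteq \mathbb{F}$. Because $\mathbb{F}$ has characteristic $q$, we have $Ax=0$ in $\mathbb{F}^n$ if and only if $\bar{A}x=0$ in $\mathbb{F}^n$; consequently all relevant quantities---the total number of solutions, the sets of $k$-distinct solutions, and the ranks $\rank_{\mathbb{F}}(A_{\overline{W}})$ of the submatrices indexed by subsets $\overline{W} \subseteq [k]$---coincide for $A$ and for $\bar{A}$. This is precisely why $m_{\mathbb{F}}(A)$ is the right parameter, and why $\bar{A}$ inherits the columns condition over $\mathbb{F}$ from the $q$-columns condition on $A$: a columns partition witnessing the $q$-columns condition is a valid columns partition over $\mathbb{Z}_q$, which in turn embeds into $\mathbb{F}$.

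With this translation set up, the second step is to feed $S_n := \mathbb{F}^n \setminus \{0\}$ and the matrix $\bar{A}$ into Lemma~\ref{lem:easyRado}. The supersaturation input comes from Theorem~\ref{thm:fieldsupersat}: since $\bar{A}$ has entries in $\mathbb{F}$, satisfies the columns condition over $\mathbb{F}$, and has the same full rank as $A$ (interpreting the hypothesis as $\rank_{\mathbb{F}}(A)=\ell$), Theorem~\ref{thm:fieldsupersat} gives that every $r$-colouring of $\mathbb{F}^n \setminus \{0\}$ yields at least $\delta|\mathbb{F}|^{n(k-\ell)}$ monochromatic solutions to $Ax=0$---a linear proportion of the total. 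The irredundancy hypothesis then ensures that asymptotically almost all of these monochromatic solutions can be taken to be $k$-distinct, since trivial and near-trivial solutions contribute a lower order term.

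The main obstacle I anticipate is the careful bookkeeping around rank: one must consistently compute $\rank_{\mathbb{F}}(A_{\overline{W}})$ (and not the integer rank) throughout, both when counting partial solutions and when invoking Lemma~\ref{lem:easyRado}, so that the exponent $1/m_{\mathbb{F}}(A)$ in the threshold emerges correctly. Once this is done, both probabilistic statements fall out of the black box: for $p \ge C|\mathbb{F}|^{-n/m_{\mathbb{F}}(A)}$, the $1$-statement follows from the container-method argument inside Lemma~\ref{lem:easyRado} applied to the supersaturation input above, while for $p \le c|\mathbb{F}|^{-n/m_{\mathbb{F}}(A)}$, the $0$-statement follows from the standard deletion-type calculation that, at this density, the expected number of $k$-distinct solutions in $\mathbb{F}_p^n$ is dominated by the expected number of points, so a colouring with no monochromatic solution can be constructed by alteration.
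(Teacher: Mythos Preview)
Your approach is correct in spirit but takes an unnecessarily long route. The paper's proof is three sentences: view the integer matrix $A$ as a matrix over $\mathbb{F}$ via $a_{ij}\mapsto a_{ij}\cdot 1_{\mathbb{F}}$; since $\langle 1_{\mathbb{F}}\rangle\cong\mathbb{Z}_q$, the $q$-columns condition on $A$ is exactly the columns condition over $\mathbb{F}$ for $\bar A$; now apply Theorem~\ref{thm:fields} directly. You do the same translation $A\mapsto\bar A$, but then re-verify the hypotheses of Lemma~\ref{lem:easyRado} from scratch, which is precisely the content of the proof of Theorem~\ref{thm:fields}. There is no gain in doing this twice; your proof simply unfolds one level of the black-box hierarchy that the paper deliberately set up.

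One point in your write-up is genuinely wrong, though it does not break the argument since you are invoking Lemma~\ref{lem:easyRado} as a black box. You describe the $0$-statement as following from ``the standard deletion-type calculation'' and alteration. Deletion gives you a large solution-free \emph{subset}, not an $r$-colouring of the whole random set with no monochromatic solution; these are different properties and the former does not obviously imply the latter. The $0$-statement inside Lemma~\ref{lem:easyRado} is proved via the deterministic/probabilistic lemma framework of Section~\ref{sec:proofmainramsey} (analysing the structure of a minimal Rado subhypergraph), which is considerably more delicate than alteration and is the reason conditions (C4)--(C5) are needed. If you intend to cite the black box, do so, but do not misattribute its mechanism.
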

Theorem~\ref{thm:fields2} is just a simple application of Theorem~\ref{thm:fields}; its proof is given in Section~\ref{sec:proofapplications}.

\section{The random Rado lemma}\label{sec:randomRado}

In this section, we rigorously state the random Rado lemma (Lemma~\ref{lem:randomRado}). As previously mentioned, the random Rado lemma serves as a general black box that outputs a random Ramsey-type result for a given sequence of finite subsets of abelian groups provided that certain conditions, most notably supersaturation, are satisfied. 

The content of this section is divided into four subsections. The first subsection contains the statement of Lemma~\ref{lem:randomRado} preceded by a list of all the required definitions. In the second subsection, we prove a general bound for the number of solutions to $Ax=0$ in $S$ when $S$ is either a finite abelian group or a (power of a) finite subset of a field. This bound is used in the third subsection to prove that several technical requirements of the random Rado lemma trivially hold, or can be relaxed, in these specific settings. In light of these results, in the fourth subsection we formulate a simplified version of the random Rado lemma (Lemma~\ref{lem:easyRado}) for finite abelian groups and (powers of) finite subsets of fields. 

We stress once more that the random Rado lemma can be straightforwardly deduced from our general $1$-statement and $0$-statement for hypergraphs (Theorem~\ref{thm:mainramsey}). The proof of Lemma~\ref{lem:randomRado} is deferred to Section~\ref{sec:proofblackbox}.

\subsection{Definitions and main statement}
Let $(S_n)_{n\in\mathbb N}$ be a sequence of finite subsets of abelian groups. We write $S_{n,p}$ to denote the subset of $S_n$ obtained by including each element of $S_n$ independently and with probability $p$.
We say an event occurs in  $S_{n,p}$
\emph{with high probability} (w.h.p.) if its probability tends to $1$ as $n \rightarrow \infty$. Let $A$ be an $\ell\times k$ integer matrix. We are interested in the probability threshold $\hat p:=\hat p(n)$ such that $S_{n,p}$ is $(A,r)$-Rado w.h.p.\ if $p=\omega(\hat p)$ and $S_{n,p}$ is not $(A,r)$-Rado w.h.p.\ if $p=o(\hat p)$. 

Of course, as each subset $S_n$ can lie in a different abelian group, it is easy to come up with a sequence $(S_n)_{n\in\mathbb N}$ for which no such probability threshold $\hat p$ exists. On the other hand, in Definition~\ref{def:threshold} we provide a candidate for 
$\hat p$ for `well-behaved' sequences  $(S_n)_{n\in\mathbb N}$.
 Given a $k$-tuple $(x_i)_{i\in[k]}$ and $W\subseteq [k]$, we write $x_W$ for the $|W|$-tuple $(x_i)_{i \in W}$. 

\begin{define}[Projected solutions]\label{def:projectedsol}
Let $S$ be a finite subset of an abelian group and $A$ be an $\ell\times k$ integer matrix. Let $W\subseteq Y\subseteq[k]$ and $w_0\in S^{|W|}$. The set of {\it projected solutions} $\Sol_S^A(w_0,W,Y)$ is the set of vectors $y\in S^{|Y|}$ such that there exists a solution  $x\in S^k$ to $Ax=0$ with $x_Y=y$ and $x_W=w_0$. For brevity, we write $\Sol_S^A(Y):=\Sol_S^A(\emptyset,\emptyset,Y)$, i.e., $\Sol_S^A(Y)$ is the set of vectors $y\in S^{|Y|}$ such that there exists a solution $x\in S^k$ to $Ax=0$ with $x_Y=y$. In particular, $\Sol_S^A([k])$ is the set of all solutions to $Ax=0$ in $S$.
\end{define}

\begin{define}[Probability threshold for abelian groups]\label{def:threshold}
Let $S$ be a finite subset of an abelian group and $A$ be an $\ell\times k$ integer matrix. For every $W\subseteq[k]$, we let
\begin{align*}
p_W(A,S) :=\left(\frac{|\Sol_S^A (W)|}{|S|}\right)^{-\frac{1}{|W|-1}}.
\end{align*}
Additionally, we let
\begin{align*}
\hat p(A,S) :=\max_{\stackrel{W \subseteq [k]}{|W| \geq 2}}p_W(A,S).
\end{align*}
\end{define}

The intuition for the choice of $\hat p(A,S)$ in Definition~\ref{def:threshold} aligns with the heuristic we discussed in Section~\ref{sec:graphsintegers}, where we considered the random Ramsey problem for the integers. Indeed, it is easy to check that for $p:=p_W(A,S)$ the expected number of projected solutions $|\Sol_S^A(W)|p^{|W|}$  equals the expected size $|S|p$ of the ground set. The probability threshold $\hat p(A,S)$ is obtained by taking the largest of these $p_W(A,S)$.

\begin{remark}
As the definition of $(A,r)$-Rado involves $k$-distinct solutions, the reader may wonder why, in Definition~\ref{def:projectedsol}, we do not insist that the solutions to $Ax=0$ must be $k$-distinct. This choice is designed to simplify later calculations. We will be able to restrict our attention to $k$-distinct solutions by imposing additional assumptions over the sequence $(S_n)_{n\in\mathbb N}$ (namely, conditions (A3) and (A6) of Lemma~\ref{lem:randomRado}).
\end{remark}

Next, we introduce four properties that the sequence $(S_n)_{n\in\mathbb N}$ must satisfy in the hypothesis of the random Rado lemma. 
The first one is supersaturation, which we introduced and gave motivation for in Section~\ref{subsec:supersat}. We will use the following formal definition.

\begin{define}[Supersaturation]\label{def:supersat}
Let $r \in \mathbb{N}$, $(S_n)_{n \in \mathbb N}$ be a sequence of finite subsets of abelian groups and  $A$ be an $\ell\times k$ integer matrix. We say that $(S_n)_{n \in \mathbb N}$ is \emph{$(A,r)$-supersaturated} if there exist an $\eps>0$ and $n_0 \in \mathbb N$ such that the following holds: 
if $n \geq n_0$ then whenever $S_n$ is $r$-coloured 
there are at least $\eps |\Sol_{S_n}^A([k])|$ monochromatic  solutions to $Ax=0$ in $S_n$. 
\end{define}

The second property involves an upper bound on the number of projected solutions. Intuitively, this forces the solutions to be uniformly distributed over the ground set. This condition is rather natural since the heuristic argument for the probability threshold relies on the assumption that the solutions to $Ax=0$ are `spread out'. 

\begin{define}[Extendability]\label{def:extendability}
Let $S$ be a finite subset of an abelian group, let $A$ be an $\ell\times k$ integer matrix and let $B \ge 1$ be a real number. We say the pair $(A,S)$ is \emph{$B$-extendable} if
for all non-empty $W \subseteq Y \subseteq [k]$ and any $w_0 \in S^{|W|}$ we have
\begin{align*}
|\Sol_S^A(w_0,W,Y)|\le B\cdot\frac{|\Sol_S^A(Y)|}{|\Sol_S^A(W)|}.   
\end{align*}
\end{define}

The third property is more involved and perhaps the most artificial out of all requirements of the random Rado lemma. This property is tied to the calculations appearing in the proof of the general $0$-statement for hypergraphs (Theorem~\ref{thm:mainramsey}) in Section~\ref{sec:proofmainramsey}.

\begin{define}[Compatibility]\label{def:compatibility}
Let $(S_n)_{n\in\mathbb N}$ be a sequence of finite subsets of abelian groups and $A$ be an $\ell\times k$ integer matrix with $k\ge3$. We say $(S_n)_{n\in\mathbb N}$ is \emph{compatible} with respect to $A$ if there exists a sequence $(X_n)_{n\in\mathbb N}$ of subsets $X_n\subseteq[k]$ with $|X_n|\ge3$ for all $n\in\mathbb N$ such that the following holds. We have $p_{X_n}(A,S_n)= \Omega(\hat p(A,S_n))$ and, for all sequences $(W_n)_{n\in\mathbb N}$ and $(W'_n)_{n\in\mathbb N}$ with $W_n,W'_n\subset  X_n$, $|W_n|=2$ and $|W'_n|\ge2$,
\begin{align}\label{eq:compatible}
\frac{|S_n|^2}{|\Sol_{S_n}^A(W_n)|}\left(\frac{p_{W'_n}(A,S_n)}{p_{X_n}(A,S_n)}\right)^{|W'_n|-1}\to0
\end{align}
as $n\to\infty$.
\end{define}

The fourth property is a mild technical requirement that will be needed in the proof of Lemma~\ref{lem:randomRado} to allow us to restrict our attention to $k$-distinct solutions (see Lemma~\ref{lem:kdistinct}).

\begin{define}[Weak compatibility]\label{def:weakcompatibility}
Let $(S_n)_{n\in\mathbb N}$ be a sequence of finite subsets of abelian groups and $A$ be an $\ell\times k$ integer matrix with $k\ge2$. We say $(S_n)_{n\in\mathbb N}$ is \emph{weakly compatible} with respect to $A$ if for every $W\subseteq[k]$ with $|W|=2$ we have 
$$\frac{|S_n|}{|\Sol_{S_n}^A(W)|}\to0$$
as $n\to\infty$.
\end{define}

We are now ready to formally state the random Rado lemma.

\begin{lemma}[The random Rado lemma]\label{lem:randomRado}
Let $r\geq 2$, $k  \geq 3$ and $\ell\geq 1$ be integers. Let $(S_n)_{n \in \mathbb N}$ be a sequence of finite subsets of abelian groups, $A$ be an $\ell \times k$ integer matrix and set $\hat p_n:=\hat p(A,S_n)$.

Suppose that 
\begin{itemize}
\item[(A1)] $\hat p_n\to 0$ and $|S_n|\hat p_n\to\infty$ as $n\to\infty$;
\item[(A2)] $(S_n)_{n \in \mathbb N}$ is $(A,r)$-supersaturated; 
\item[(A3)] there exists $B\ge1$ such that the pair $(A,S_n)$ is $B$-extendable for all sufficiently large $n\in\mathbb N$;
\item[(A4)] there exists $D>0$ such that for every $W\subset Y\subseteq[k]$ with $|W|=1$ and $w_0\in S_n$, we have $|\Sol_{S_n}^A(w_0,W,Y)|\le D\frac{|\Sol_{S_n}^A(Y)|}{|S_n|}$ for all sufficiently large $n\in\mathbb N$; 
\item[(A5)] $(S_n)_{n \in \mathbb N}$ is compatible with respect to $A$;
\item[(A6)] $(S_n)_{n \in \mathbb N}$ is weakly compatible with respect to $A$.
\end{itemize}
Then there exist constants $c,C>0$ such that
$$\lim _{n \rightarrow \infty} \mathbb P [ S_{n,p} \text{ is } (A,r)\text{-Rado}]=\begin{cases}
0 &\text{ if } p\leq {c}\hat p_n; \\
 1 &\text{ if } p\geq {C}\hat p_n.\end{cases}$$
\end{lemma}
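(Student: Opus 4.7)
The plan is to deduce Lemma~\ref{lem:randomRado} as a direct corollary of the hypergraph Theorem~\ref{thm:mainramsey}. For each $n$, I introduce the $k$-uniform auxiliary hypergraph $H_n$ with vertex set $V(H_n) = S_n$ whose hyperedges are the unordered supports $\{x_1, \dots, x_k\}$ of the $k$-distinct ordered solutions $x \in S_n^k$ to $Ax = 0$. A monochromatic $k$-distinct solution in $S_{n,p}$ corresponds exactly to a monochromatic hyperedge of the induced sub-hypergraph $H_n[S_{n,p}]$, so the $(A,r)$-Rado property translates precisely into the hypergraph Ramsey property treated by Theorem~\ref{thm:mainramsey}. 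The task reduces to verifying the hypotheses of that theorem for $H_n$ at the density parameter $\hat p_n$.

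A preliminary step is to justify working with $k$-distinct solutions only. For any two-element $W = \{i,j\}\subseteq [k]$, combining the extendability condition (A3) with weak compatibility (A6) shows that the number of solutions to $Ax=0$ in $S_n$ with $x_i = x_j$ is at most
\[
|S_n|\cdot B\cdot \frac{|\Sol_{S_n}^A([k])|}{|\Sol_{S_n}^A(W)|} \;=\; o(|\Sol_{S_n}^A([k])|),
\]
so the $k$-distinct solutions make up all but a vanishing fraction of $\Sol_{S_n}^A([k])$. In particular the supersaturation assumption (A2) transfers verbatim to the hyperedges of $H_n$, and $e(H_n) = \Theta(|\Sol_{S_n}^A([k])|)$ up to the (constant) multiplicity from re-orderings of each fixed support.

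For the $1$-statement I verify the degree/co-degree conditions demanded by Theorem~\ref{thm:mainramsey}. Extendability (A3), applied with $Y=[k]$ and $|W|=j\ge 2$, yields the uniform $j$-degree bound $\Delta_j(H_n) \le B\cdot |\Sol_{S_n}^A([k])|/|\Sol_{S_n}^A(W)|$, while the case $j=1$ is handled cleanly by (A4). Rewriting each ratio in terms of $p_W(A,S_n)$ and invoking the definition $\hat p_n = \max_{W}p_W(A,S_n)$ shows that these are exactly the balanced degree bounds powering the container-type argument behind the $1$-statement in Theorem~\ref{thm:mainramsey}; the growth conditions (A1) ensure the relevant quantities tend to infinity, and supersaturation (A2) supplies the constant $C$ and delivers the $1$-statement.

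For the $0$-statement, compatibility (A5) selects subsets $X_n\subseteq[k]$ with $|X_n|\ge 3$ and $p_{X_n}(A,S_n) = \Theta(\hat p_n)$, thereby identifying the `critical' projection that drives the threshold. The inequality~\eqref{eq:compatible} is crafted precisely so that, when one runs the deletion/second-moment argument inside the $0$-statement of Theorem~\ref{thm:mainramsey}, the contribution of pair-overlaps indexed by $W_n$ and $W'_n$ becomes $o(|S_n|\hat p_n)$ and can be absorbed; together with (A6) this allows one, for $p \le c\hat p_n$ with $c$ small, to delete a vanishing proportion of the vertices of $S_{n,p}$ to kill every hyperedge of $H_n[S_{n,p}]$ while retaining enough elements to apply any $r$-colouring to the residue, producing w.h.p.~an $r$-colouring with no monochromatic solution. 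The main obstacle, and the reason for the somewhat opaque condition (A5), is aligning the combinatorial thresholds on overlaps of solutions of every cardinality with the quantitative requirements of the hypergraph $0$-statement: whereas the $1$-statement essentially only uses supersaturation and degree bounds, the $0$-statement is sensitive to the relative growth of $|\Sol_{S_n}^A(W_n)|$, $|\Sol_{S_n}^A(W'_n)|$ and $|\Sol_{S_n}^A(X_n)|$ across all relevant projections, and~\eqref{eq:compatible} captures exactly what is needed for those error terms to vanish.
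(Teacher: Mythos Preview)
Your reduction to Theorem~\ref{thm:mainramsey} via a $k$-uniform hypergraph of $k$-distinct solutions is exactly the paper's approach, and the $1$-statement sketch is essentially right, but two genuine gaps remain. First, Theorem~\ref{thm:mainramsey} is stated for \emph{ordered} hypergraphs, and both the hypergraph threshold $\hat p(\mathcal H_n)$ and conditions (P3)--(P5) involve the restrictions $\mathcal H_{n,Y}$ for arbitrary $Y\subseteq[k]$. To match $\hat p(\mathcal H_n)$ with $\hat p(A,S_n)$ and to verify (P3)--(P5) you need $e(\mathcal H_{n,Y})=\Theta(|\Sol_{S_n}^A(Y)|)$ for \emph{every} such $Y$, not just $Y=[k]$; you only establish the latter. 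The general case (Lemma~\ref{lem:kdistinct} in the paper) follows by a short double-count --- every $k$-distinct solution projects onto some element of the $k$-distinct analogue of $\Sol^A(Y)$, and each such projection extends to at most $B\,|\Sol^A([k])|/|\Sol^A(Y)|$ full solutions by (A3) --- but without it you cannot conclude the two thresholds agree up to constants, nor translate the arithmetic bound~\eqref{eq:compatible} into the hypergraph bound~\eqref{eq:P4b} that (P5) requires.

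Second, your description of the $0$-statement mechanism is wrong and, taken literally, would fail. Deleting a vanishing proportion of $S_{n,p}$ to ``kill every hyperedge'' yields an independent residue, but the deleted vertices still have to be coloured and they still lie in hyperedges of $H_n[S_{n,p}]$, so no valid $r$-colouring of the whole of $S_{n,p}$ is produced. The $0$-statement of Theorem~\ref{thm:mainramsey} is neither a deletion nor a second-moment argument on edges: it passes to the ordered restriction $\mathcal H_{n,X_n}$, shows deterministically that any $2$-Ramsey-minimal subhypergraph must contain one of a short list of locally dense configurations (spoiled paths, handled cycles, bad triples, \ldots), and then uses (P5) to show each such configuration has $o(1)$ expected copies at $p=c\hat p_n$. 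Since you are invoking Theorem~\ref{thm:mainramsey} as a black box this misdescription is not fatal to the overall plan, but it does mean you have not actually verified (P5); the paper carries out that verification explicitly, and it is where (A5) earns its keep.
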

We find the generality of Lemma~\ref{lem:randomRado} very interesting - it confirms that for `well-behaved' sequences $(S_n)_{n \in \mathbb N}$, the probability threshold essentially depends  on the number of (projected) solutions to $Ax=0$ in $S_n$.

Note that (A4) is a similar condition to the $|W|=1$ case of the definition of $B$-extendable; we have written (A3) and (A4) as separate conditions as they correspond to conditions (P3) and (P4) in our general $1$-statement and $0$-statement for hypergraphs (Theorem~\ref{thm:mainramsey}). In fact,
the random Rado lemma is a simple corollary of Theorem~\ref{thm:mainramsey} and each of conditions (A1)--(A5)  are  adaptations of conditions (P1)--(P5) from Theorem~\ref{thm:mainramsey}. Condition (A6) is a mild requirement that we impose in order to restrict our attention to $k$-distinct solutions (see Lemma~\ref{lem:kdistinct} below).

For most natural applications, conditions (A3)--(A6) can be further simplified. Thus, in practice, condition (A2) is the crucial assumption that one needs to verify. The goal of the next three subsections is to state a simplified version of the random Rado lemma (Lemma~\ref{lem:easyRado}) which holds for sequences of finite abelian groups and sequences of (powers of) finite subsets of fields. Nonetheless, there are interesting examples where the generality of the random Rado lemma as stated in Lemma~\ref{lem:randomRado} is required. The most notable such example is the random van der Waerden theorem for the primes (Theorem~\ref{rvdwp}), proved in Section~\ref{sec:proofapplications}. 

It would be interesting to see if one can relax some of the assumptions in
 the random Rado lemma; if so, condition (A5) is the most likely candidate for where the hypothesis can be weakened. See Section~\ref{sec:ring} for a brief discussion related to this.

\smallskip

The next result states that conditions (A3) and (A6) imply a linear proportion of projected solutions are induced by $k$-distinct solutions. The proof is postponed to Section~\ref{sec:proofblackbox} but we state the result here in order to give the reader a better intuition  why we did not impose in Definition~\ref{def:projectedsol} that the solutions are $k$-distinct.

\begin{lemma}\label{lem:kdistinct}
Let $(S_n)_{n\in\mathbb N}$ be a sequence of finite subsets of abelian groups and let $A$ be an $\ell\times k$ integer matrix with $k\ge2$. Suppose that conditions (A3) and (A6) from Lemma~\ref{lem:randomRado} hold; so there is a $B\ge1$ such that $(A,S_n)$ is $B$-extendable for every $n\in\mathbb N$ sufficiently large. 

For each non-empty $Y \subseteq [k]$, let $k\-Sol_{S_n}^A(Y)$ be the set of $y\in S_n^{|Y|}$ such that there exists a $k$-distinct solution $x\in S^k$ to $Ax=0$ with $x_Y=y$. For $n$ sufficiently large, we have 
$$\frac{1}{2B}\le\frac{|k\-Sol_{S_n}^A(Y)|}{|\Sol_{S_n}^A(Y)|}\le1.$$
Furthermore, for $Y=[k]$ we have
$$\lim_{n\to\infty}\frac{|k\-Sol_{S_n}^A([k])|}{|\Sol_{S_n}^A([k])|}=1.$$
\end{lemma}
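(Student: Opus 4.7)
The upper bound $|k\text{-}\Sol_{S_n}^A(Y)| \leq |\Sol_{S_n}^A(Y)|$ is immediate from the definitions, so only the lower bound and the limit require work. The plan is to first establish the ``furthermore'' statement (the $Y=[k]$ case) via a union bound over coordinate-collisions, and then bootstrap from it to obtain the $\tfrac{1}{2B}$ bound for arbitrary $Y$.

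For the $Y=[k]$ case, note that any $x\in\Sol_{S_n}^A([k])\setminus k\text{-}\Sol_{S_n}^A([k])$ satisfies $x_i=x_j$ for some pair $i<j$, and so partitioning such $x$ by the common value $c\in S_n$ of $x_i,x_j$ gives
\[
|\Sol_{S_n}^A([k])\setminus k\text{-}\Sol_{S_n}^A([k])|\le\sum_{i<j}\sum_{c\in S_n}|\Sol_{S_n}^A((c,c),\{i,j\},[k])|.
\]
Applying the $B$-extendability condition (A3) with $W=\{i,j\}$, $Y=[k]$ and $w_0=(c,c)$ bounds each inner term by $B\cdot|\Sol_{S_n}^A([k])|/|\Sol_{S_n}^A(\{i,j\})|$, so the whole expression is at most $\binom{k}{2}B|S_n|\cdot|\Sol_{S_n}^A([k])|/|\Sol_{S_n}^A(\{i,j\})|$ (for the appropriate maximizing pair). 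The weak compatibility condition (A6) ensures $|S_n|/|\Sol_{S_n}^A(\{i,j\})|\to0$ for every pair $\{i,j\}$, so this bound is $o(|\Sol_{S_n}^A([k])|)$, which forces $|k\text{-}\Sol_{S_n}^A([k])|/|\Sol_{S_n}^A([k])|\to 1$.

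For a general non-empty $Y\subseteq[k]$, I would count $k$-distinct full solutions according to their projection onto $Y$. Every $k$-distinct $x$ projects to some $y\in k\text{-}\Sol_{S_n}^A(Y)$, and the number of $k$-distinct extensions of a fixed $y$ is at most the number of all extensions $|\Sol_{S_n}^A(y,Y,[k])|$, which by $B$-extendability (applied with $W=Y$, $w_0=y$) is at most $B\cdot|\Sol_{S_n}^A([k])|/|\Sol_{S_n}^A(Y)|$. Summing yields
\[
|k\text{-}\Sol_{S_n}^A([k])|\;\le\;|k\text{-}\Sol_{S_n}^A(Y)|\cdot B\cdot\frac{|\Sol_{S_n}^A([k])|}{|\Sol_{S_n}^A(Y)|}.
\]
Using the $Y=[k]$ limit from the previous step to guarantee $|k\text{-}\Sol_{S_n}^A([k])|\ge\tfrac{1}{2}|\Sol_{S_n}^A([k])|$ for all sufficiently large $n$, rearranging gives $|k\text{-}\Sol_{S_n}^A(Y)|/|\Sol_{S_n}^A(Y)|\ge 1/(2B)$, as required.

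There is no real obstacle here beyond bookkeeping: the only slightly delicate point is handling possibly degenerate cases where a denominator such as $|\Sol_{S_n}^A([k])|$ or $|\Sol_{S_n}^A(Y)|$ might vanish, but (A3) combined with (A6) forces these quantities to be strictly positive (and in fact large) for all sufficiently large $n$, so the manipulations above are legitimate from some $n_0$ onwards.
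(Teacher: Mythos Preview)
Your proposal is correct and follows essentially the same argument as the paper: first establish the $Y=[k]$ limit via a union bound over repeated-coordinate collisions using (A3) and (A6), then for general $Y$ count $k$-distinct full solutions fibered over their $Y$-projections and bound the fibers by $B$-extendability. The paper's proof is organized identically, with only cosmetic differences in presentation.
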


\begin{remark}\label{remark:irredundant}
Recall that given a finite subset $S$ of an abelian group, we say that an $\ell \times k$ matrix $A$ is {irredundant with respect to $S$} if there exists a $k$-distinct solution to $Ax=0$ in $S$. 
In many of the results mentioned previously in this paper (e.g.,  Theorem~\ref{thm:ramseynd}) we require that the matrix   considered is irredundant. However,
we do not explicitly state that $A$ is irredundant with respect to $S_n$ in Lemma~\ref{lem:randomRado} as conditions (A3) and (A6) combined with Lemma~\ref{lem:kdistinct} already imply that many solutions are $k$-distinct. 
\end{remark}

\subsection{Rank and counting solutions}

The notion of extendability and (weak) compatibility introduced in the previous subsection are tied to the quantity $|\Sol_S^A(W)|$. In many natural cases we are able to approximate, or at least bound, this number using standard methods from group theory and linear algebra. For example, it is well known that the size of $\Sol_{[n]}^A([k])$, i.e., the number of solutions to $Ax=0$ in $[n]$, is at most $n^{k-\rank(A)}$ (see, e.g., case (ii) of Lemma~\ref{lem:keycounting}).

The notion of rank is useful here. For instance, the parameter $m(A)$ which governs the probability threshold for the random Rado theorem for the integers is expressed in terms of $\rank(A_W)$ for $W\subseteq[k]$. We now provide an appropriate generalisation of the notion of rank for the cases where $S$ is either a finite abelian group or (a power of) a finite subset of a field.

\begin{define}[Rank for finite abelian groups]\label{def:rankfinite}
Let $S$ be a finite abelian group. Let $A$ be an $\ell\times k$ integer matrix and let $f_A:S^k\to S^\ell$ be the function defined by $f_A:x\mapsto Ax$. We define
\begin{align*}
\rank_S(A):=
\begin{cases}
\log_{|S|} |\im(f_A)| & \text{if } |S|>1;\\
0 & \text{if } |S|=1.
\end{cases}
\end{align*}
\end{define}

By the first isomorphism theorem for groups, we have $|S|^k=|\im(f_A)|\cdot|\ker(f_A)|$. In particular, the number of solutions to $Ax=b$ is equal to 
\begin{align}\label{eq:rank1}
\begin{cases}
|\ker(f_A)|=\frac{|S|^k}{|\im(f_A)|}=|S|^{k-\rank_S(A)} & \text{if } b \in \im(f_A);\\
0 & \text{otherwise.}
\end{cases}
\end{align}

\smallskip

A motivation for the choice of $\rank_S(A)$ in Definition~\ref{def:rankfinite} is that it will allow us to rewrite $p_W(A,S)$ from Definition~\ref{def:threshold} as a power of $|S|$. Indeed, in all applications of the random Rado lemma discussed in Sections~\ref{sec:primes} and~\ref{sec:easyapplications}, the probability threshold is expressed in such form.  For example, it follows immediately from equation~\eqref{eq:rank1} that if $A$ is an $\ell \times k$ integer matrix and $S$ is a finite abelian group then $|\Sol_S^A([k])|=|S|^{k-\rank_S(A)}$ and so $p_{[k]}(A,S)=|S|^{-(k-1-\rank_S(A))/(k-1)}$.

\smallskip

Next, we consider the case where $S=L^d$ for some finite subset $L$ of a field $\mathbb F$ and $d\in\mathbb N$. We are mostly interested in integer matrices, but it will be convenient to first consider matrices with entries in $\mathbb F$. We write $0_{\mathbb F}$ and $1_{\mathbb F}$ for the additive and multiplicative identities of $\mathbb F$, respectively.

\begin{define}\label{def:rankPID}
Let $S=L^d$ where $L$ is a finite subset of a field $\mathbb F$ and $d\in\mathbb N$. Let $A$ be a matrix with entries from $\mathbb F$. If $L=\{0_{\mathbb F}\}$, let $\rank_S(A):=0$. Otherwise, let $\rank_S(A)$ be the rank of $A$ with respect to $\mathbb F$.
\end{define}

The following simple fact shows that $\rank_S(A)$ in Definition~\ref{def:rankPID} is well-defined: the value of $\rank_S(A)$ is independent of the choice of $\mathbb F$.

\begin{fact}\label{fact:FsubsetF'}
Let $ \mathbb F \subseteq \mathbb F'$ be two fields. Let $A$ be a matrix whose entries lie in $\mathbb F$. Then, the rank of $A$ with respect to $\mathbb F$ is the same as the rank of $A$ with respect to $\mathbb F'$.
\end{fact}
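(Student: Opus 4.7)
The plan is to use the determinantal characterisation of rank: for any matrix $A$ with entries in a field $\mathbb{K}$, the rank of $A$ over $\mathbb{K}$ equals the largest integer $r$ such that some $r \times r$ submatrix of $A$ has non-zero determinant. This is standard linear algebra that holds verbatim over any field, with no special assumptions on $\mathbb{K}$.

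First I would fix such a matrix $A$ with entries in $\mathbb{F}$, and for each $r \geq 0$ consider an arbitrary $r \times r$ submatrix $B$ of $A$. By the Leibniz formula,
\[
\det(B) = \sum_{\sigma \in S_r} \mathrm{sgn}(\sigma) \prod_{i=1}^{r} B_{i,\sigma(i)},
\]
which involves only the field operations of addition, subtraction, and multiplication applied to elements of $\mathbb{F}$. Since $\mathbb{F}$ is a subfield of $\mathbb{F}'$, these operations yield the same element whether one evaluates them in $\mathbb{F}$ or in $\mathbb{F}'$. In particular, the resulting determinant is the same element of $\mathbb{F} \subseteq \mathbb{F}'$ in either computation, and the additive identity of $\mathbb{F}$ coincides with that of $\mathbb{F}'$, so $\det(B) = 0$ when computed over $\mathbb{F}$ if and only if $\det(B) = 0$ when computed over $\mathbb{F}'$.

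Consequently, the set of integers $r$ for which \emph{some} $r \times r$ submatrix of $A$ has non-zero determinant is identical whether one views $A$ as a matrix over $\mathbb{F}$ or over $\mathbb{F}'$. Taking the maximum of this set yields the same value in both cases, which proves $\rank_{\mathbb{F}}(A) = \rank_{\mathbb{F}'}(A)$. There is no genuine obstacle in this argument; the only point worth noting is that one should use the determinantal (rather than, say, row- or column-space) characterisation of rank, since this makes the invariance under passage to a field extension entirely transparent.
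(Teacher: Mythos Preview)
Your proof is correct and takes essentially the same approach as the paper: both use the determinantal characterisation of rank and observe that the determinant of any submatrix is computed via additions and multiplications of entries of $A$, which therefore yields the same element (and the same vanishing behaviour) whether evaluated in $\mathbb{F}$ or in $\mathbb{F}'$.
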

\begin{proof}
The rank of $A$ is the size of the largest square submatrix of $A$ with non-zero determinant. The determinant of a submatrix is computed by performing multiplications and additions using the entries of the submatrix. Let $\mathbb F_0$ be the field generated by the entries of $A$. Since the entries of $A$ lie in $\mathbb F_0\subseteq \mathbb F\subseteq \mathbb F'$ and  $\mathbb F_0$ is closed under addition and multiplication, the determinant of any submatrix of $A$ is the same whether we are working in $\mathbb F$ or $\mathbb F'$. 
\end{proof}

Definition~\ref{def:rankPID} easily extends to matrices with integer entries by treating the entries of such matrices as elements of the field $\mathbb F$.

\begin{define}[Rank for powers of finite subsets of fields]\label{def:rankPIDinteger}
Let $S=L^d$ where $L$ is a finite subset of a field $\mathbb F$ and $d\in\mathbb N$. Let $A=\{a_{ij}\}$ be an $\ell\times k$ matrix with integer entries. If $L=\{0_{\mathbb F}\}$, let $\rank_S(A):=0$. Otherwise, let $\rank_S(A)$ be the rank of $B$ with respect to $\mathbb F$, where $B:=\{b_{ij}\}$ is the $\ell\times k$ matrix with $b_{ij}:=a_{ij}\cdot 1_{\mathbb F}\in \mathbb F$.
\end{define}

Note that the entries of the matrix $B$ in Definition~\ref{def:rankPIDinteger} lie in the subfield $\langle 1_{\mathbb F}\rangle\subseteq \mathbb F$ generated by $1_{\mathbb F}$. Thus, by Fact~\ref{fact:FsubsetF'}, the rank of $B$ is the same with respect to $\mathbb F$ and $\langle 1_{\mathbb F}\rangle$. If $\mathbb F$ and $\mathbb F'$ are non-trivial fields with $L \subseteq\mathbb F$ and $L \subseteq \mathbb F'$, then $1_{\mathbb F}=1_{\mathbb F'}$ and in particular $\langle 1_{\mathbb F}\rangle=\langle 1_{\mathbb F'}\rangle$. Thus $\rank_S(A)$ from Definition~\ref{def:rankPIDinteger} is independent from different choices of $\mathbb F$ and so it is well-defined.

The following fact states that Definition~\ref{def:rankPIDinteger} is consistent with Definition~\ref{def:rankfinite}.

\begin{fact}\label{fact:equiv}
Let $A=\{a_{ij}\}$ be an $\ell\times k$ integer matrix and $S=L^d$ where $L$ is a finite subset of a field $\mathbb F$ and $d\in\mathbb N$. If $S$ is a finite abelian group, then $\rank_S(A)$ from Definition~\ref{def:rankPIDinteger} is the same as $\rank_S(A)$ from Definition~\ref{def:rankfinite}.
\end{fact}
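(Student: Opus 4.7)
\medskip

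\noindent\textbf{Proof plan for Fact~\ref{fact:equiv}.}
The plan is to reconcile the two definitions via a direct structural analysis of the group $S$. First, the case $L=\{0_{\mathbb F}\}$ is trivial since both definitions return $0$. Henceforth assume $|L|>1$. Since $S=L^d$ is asserted to be a finite abelian group (under componentwise addition inherited from $\mathbb F$, which is the operation used by the $\mathbb Z$-action $n\cdot g=g+\dots+g$), projecting onto a single coordinate exhibits $L$ as a finite additive subgroup of $\mathbb F$. A field cannot contain a non-trivial finite additive subgroup in characteristic zero, so $\mathbb F$ must have positive prime characteristic $q$, and then $L$ is automatically an $\mathbb F_q$-vector subspace of $\mathbb F$. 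Write $|L|=q^s$ so that $|S|=q^{sd}$.

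Next, unpack Definition~\ref{def:rankPIDinteger}: the matrix $B=\{a_{ij}\cdot 1_{\mathbb F}\}$ has entries in $\langle 1_{\mathbb F}\rangle=\mathbb F_q\subseteq\mathbb F$, and by Fact~\ref{fact:FsubsetF'}, $r:=\rank_S(A)$ from that definition equals $\rank_{\mathbb F_q}(B)$. The bridge to Definition~\ref{def:rankfinite} is that the integer action on $S$ factors through reduction modulo $q$: for $g\in S$ and $a\in\mathbb Z$ we have $ag=(a\cdot 1_{\mathbb F})g$, with the right-hand side denoting componentwise field multiplication. Thus the group homomorphism $f_A\colon S^k\to S^\ell$ of Definition~\ref{def:rankfinite} coincides with the $\mathbb F_q$-linear map $x\mapsto Bx$, applied componentwise over the $d$ coordinates of $L^d$.

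The final step is a dimension count. Because $B$ has entries in $\mathbb F_q$, the map $B\colon L^k\to L^\ell$ decouples along any $\mathbb F_q$-basis of $L$ into $s$ independent copies of the standard map $B\colon\mathbb F_q^k\to\mathbb F_q^\ell$, so its image has $\mathbb F_q$-dimension $sr$ and size $q^{sr}$. Acting coordinate-wise over the $d$ factors of $S=L^d$ then yields $|\im(f_A)|=q^{srd}$, and hence $\log_{|S|}|\im(f_A)|=srd/sd=r$, matching Definition~\ref{def:rankPIDinteger}. The main delicacy here is not technical but definitional: one must make explicit that the implicit group structure on $S=L^d$ is componentwise addition in $\mathbb F$ (this is not stated in the hypothesis but is forced by how the $\mathbb Z$-action on $S$ is defined), and that the characteristic $q$ of $\mathbb F$ is both the exponent of $S$ and the ground ring over which everything becomes linear.
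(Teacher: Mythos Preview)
Your proof is correct and follows essentially the same approach as the paper's own proof: both handle the trivial case, then identify that $L$ must be a finite additive subgroup of $\mathbb F$ forcing positive characteristic $q$ with $L\cong\mathbb F_q^s$, reduce the rank of $B$ to $\rank_{\mathbb F_q}(B)$ via Fact~\ref{fact:FsubsetF'}, and compute $|\im(f_A)|=q^{rsd}$ by decoupling along the $\mathbb F_q$-basis of $L$ and the $d$ coordinates of $S$. The only cosmetic difference is that the paper phrases the decoupling by introducing three maps $f_A,g_A,h_A$ and writing $|\im(f_A)|=|\im(g_A)|^d=|\im(h_A)|^{dt}$, whereas you argue directly in terms of $\mathbb F_q$-dimension.
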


\begin{proof}
As $S$ is a finite abelian group, $L$ is a finite abelian group too. If $L=\{0_{\mathbb F}\}$ then $\rank_S(A)=0$ in both Definitions~\ref{def:rankfinite} and~\ref{def:rankPIDinteger}; so suppose $L\not=\{0_{\mathbb F}\}$. 

It is well-known that every element in $(\mathbb F,+)$ other than $0_{\mathbb F}$ has (additive) order exactly $p$ where either $p=\infty$ or $p$ is a prime number. Since $L$ is a finite abelian group, we must have $L\cong\mathbb ((\mathbb Z_p)^t,+)$ for some prime $p$ and $t\in\mathbb N$. Furthermore, we have $\langle 1_{\mathbb F}\rangle\cong(\mathbb Z_p,+,\times)$. 

Let $B:=\{b_{ij}\}$ where $b_{ij}:=a_{ij}\cdot 1_{\mathbb F}\in\langle 1_{\mathbb F}\rangle\subseteq \mathbb F$ and let $r$ be the rank of $B$ with respect to $\mathbb F$. By Fact~\ref{fact:FsubsetF'}, $r$ equals the rank of $B$ with respect to $\langle 1_{\mathbb F} \rangle$. This in turn is equal to the rank of $A$ with respect to $\mathbb Z_p$, where we treat the entries of $A$ modulo $p$, by the definition of $B$ and since $\langle 1_{\mathbb F}\rangle\cong(\mathbb Z_p,+,\times)$. 

Let $f_A:S^k\to S^\ell$, $g_A:L^k\to L^\ell$, $h_A:(\mathbb Z_p)^k\to (\mathbb Z_p)^\ell$ be defined by $f_A,g_A,h_A:x\mapsto Ax$. Note that the rank of $h_A$ is precisely $r$ by the previous observations, and so $|\im(h_A)|=|\mathbb Z_p|^r=p^r$. It is easy to see that
$$|\im(f_A)|=|\im(g_A)|^d=|\im(h_A)|^{dt}.$$
In particular, $|\im(f_A)|=p^{rdt}=|S|^r$ and so $r=\log_{|S|}|\im(f_A)|$, as required.
\end{proof}

\smallskip

Next, we prove a general upper bound for the number of projected solutions for finite abelian groups and powers of finite subsets of fields. We will use this tool in Section~\ref{sec:6.3} to simplify conditions (A3)--(A6) for these settings.

\begin{lemma}[Key bounding result]\label{lem:keycounting}
Let $A$ be an $\ell\times k$ integer matrix. Suppose that either
\begin{itemize}
\item[(i)] $S$ is a finite abelian group or 
\item[(ii)] $S=L^d$ for some finite subset $L$ of a field and $d\in\mathbb N$. 
\end{itemize}
Then for all  $W \subseteq Y \subseteq [k]$ and $w_0 \in S^{|W|}$ we have
\begin{align}\label{eq:key1}
|\Sol_S^A(w_0,W,Y)|\le|S|^{|Y|-|W|-\rank_{S}(A_{\overline{W}})+\rank_{S}(A_{\overline{Y}})}.    
\end{align}
\end{lemma}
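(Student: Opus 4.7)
The plan is to treat cases (i) and (ii) separately, after a common setup. Write $Z:=Y\setminus W$, so that $\overline W=Z\sqcup\overline Y$ and $A_{\overline W}x_{\overline W}=A_Zx_Z+A_{\overline Y}x_{\overline Y}$. Fixing $x_W=w_0$ and setting $b:=-A_Ww_0$, one sees that $|\Sol_S^A(w_0,W,Y)|$ equals the number of $x_Z\in S^{|Z|}$ for which there exists $x_{\overline Y}\in S^{|\overline Y|}$ with $A_Zx_Z+A_{\overline Y}x_{\overline Y}=b$, equivalently, with $A_Zx_Z\in b-\{A_{\overline Y}u:u\in S^{|\overline Y|}\}$. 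The different algebraic structure of $S$ in the two cases dictates how this count is bounded.

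For case (i), since $S$ is a finite abelian group, the map $f_M\colon x\mapsto Mx$ is a group homomorphism for any integer matrix $M$. The count factorises as
\begin{equation*}
|\Sol_S^A(w_0,W,Y)|=|\ker f_{A_Z}|\cdot|\im f_{A_Z}\cap(b-\im f_{A_{\overline Y}})|.
\end{equation*}
By the first isomorphism theorem together with Definition~\ref{def:rankfinite}, the first factor equals $|S|^{|Z|-\rank_S(A_Z)}$. The second factor is either zero (when the intersection is empty) or the cardinality of a coset of $\im f_{A_Z}\cap\im f_{A_{\overline Y}}$, so it is at most $|\im f_{A_Z}\cap\im f_{A_{\overline Y}}|$. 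Since $\im f_{A_Z}+\im f_{A_{\overline Y}}=\im f_{A_{\overline W}}$, the second isomorphism theorem yields
\begin{equation*}
|\im f_{A_Z}\cap\im f_{A_{\overline Y}}|=\frac{|\im f_{A_Z}|\cdot|\im f_{A_{\overline Y}}|}{|\im f_{A_{\overline W}}|}=|S|^{\rank_S(A_Z)+\rank_S(A_{\overline Y})-\rank_S(A_{\overline W})}.
\end{equation*}
Multiplying the two factors, the $\rank_S(A_Z)$ contributions cancel and using $|Z|=|Y|-|W|$ gives the claimed bound.

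For case (ii), $S=L^d$ with $L\subseteq\mathbb F$, the plan is to reduce to $d=1$ and then pass to linear algebra over $\mathbb F$. Writing each element of $S^k$ as a $d$-tuple of vectors in $L^k$ (one per coordinate of $\mathbb F^d$), the equation $Ax=0$ decouples across the $d$ coordinates, so
\begin{equation*}
|\Sol_S^A(w_0,W,Y)|=\prod_{j=1}^d|\Sol_L^A(w_0^{(j)},W,Y)|,
\end{equation*}
reducing matters to $d=1$. For $d=1$, let $V_{w_0}:=\{y\in\mathbb F^{|Y|}:y_W=w_0,\ \exists\,x_{\overline Y}\in\mathbb F^{|\overline Y|}\text{ with }Ax=0\}$. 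Then $V_{w_0}$ is either empty or an affine subspace of $\mathbb F^{|Y|}$; a rank-nullity computation applied to the projections $\pi_W,\pi_Y$ restricted to $\ker A\subseteq\mathbb F^k$ shows that $\dim V_{w_0}=d':=|Y|-|W|-\rank_{\mathbb F}(A_{\overline W})+\rank_{\mathbb F}(A_{\overline Y})$. Since $\Sol_L^A(w_0,W,Y)\subseteq V_{w_0}\cap L^{|Y|}$, it suffices to prove that any affine subspace of $\mathbb F^{|Y|}$ of dimension $d'$ contains at most $|L|^{d'}$ points of $L^{|Y|}$. This follows from the elementary observation that such a subspace projects bijectively onto $\mathbb F^{d'}$ along some choice of $d'$ coordinates, so any $L^{|Y|}$-point projects into $L^{d'}$. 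Taking the product over $j\in[d]$ and invoking $\rank_S(A_\bullet)=\rank_{\mathbb F}(A_\bullet)$ from Definitions~\ref{def:rankPID}~and~\ref{def:rankPIDinteger} yields the bound.

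The main subtlety will be in case (ii): because $L$ is only a subset of $\mathbb F$ and need not be closed under addition, the clean group-theoretic argument of case (i) is unavailable, and one must exploit the ambient field structure together with the projection argument above to convert the dimension bound on $V_{w_0}$ into a cardinality bound on $V_{w_0}\cap L^{|Y|}$. The various degenerate cases ($W=\emptyset$, $Y=\emptyset$, $|S|=1$ in case (i), or $L=\{0_{\mathbb F}\}$ in case (ii)) are easily checked directly against the stated formula, with the natural conventions $\rank_S(A_{\overline{\emptyset}})=\rank_S(A)$ and $\rank_S(A)=0$ when $S$ is trivial.
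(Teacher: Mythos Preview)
Your proof is correct, and the two cases are handled somewhat differently from the paper.

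For case~(i), the paper double counts $|\Sol_S^A(w_0,W,[k])|$: once directly via~\eqref{eq:rank1} as $|S|^{k-|W|-\rank_S(A_{\overline W})}$, and once as $\sum_{y_0\in\Sol_S^A(w_0,W,Y)}|\Sol_S^A(y_0,Y,[k])|$, each summand equalling $|S|^{k-|Y|-\rank_S(A_{\overline Y})}$; dividing gives the bound (in fact with equality when the count is nonzero). Your route instead factorises the count as $|\ker f_{A_Z}|\cdot|\im f_{A_Z}\cap(b-\im f_{A_{\overline Y}})|$ and invokes the second isomorphism theorem to evaluate $|\im f_{A_Z}\cap\im f_{A_{\overline Y}}|$. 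Both are equally valid; the paper's double count is slightly shorter to write, while yours makes the subgroup structure more visible and avoids the auxiliary passage through $[k]$.

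For case~(ii), both proofs reduce to $d=1$ via the coordinate-wise product. The paper then picks explicit column index sets $I_0\subseteq\overline Y$ and $I_1\subseteq Y\setminus W$ with the columns indexed by $I_0\cup I_1$ linearly independent and spanning the column space of $A_{\overline W}$, and shows directly that fixing the coordinates outside $I_1$ determines those in $I_1$; this bounds the count by $|L|^{|Y|-|W|-|I_1|}$. Your argument instead computes $\dim V_{w_0}$ abstractly via rank--nullity and then observes that a $d'$-dimensional affine subspace of $\mathbb F^{|Y|}$ meets $L^{|Y|}$ in at most $|L|^{d'}$ points by projecting bijectively onto some $d'$ coordinates. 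These are really the same idea in different clothing: your ``choice of $d'$ coordinates'' onto which $V_{w_0}$ projects bijectively is precisely the paper's explicit set $Y\setminus I_1$. The paper's version is more hands-on; yours is cleaner but hides the mechanism slightly.
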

Note that here we define $\rank_S(A_{\overline{W}}):=0$ if 
$\overline{W}=\emptyset$.
\begin{proof}
Fix $W\subseteq Y\subseteq[k]$ and $w_0\in S^{|W|}$.
If $W=Y$ then $|\Sol_S^A(w_0,W,Y)|=1$ and (\ref{eq:key1}) holds. So we may assume that $W \subset Y$.

\smallskip

{\bf Case (i):  $S$ is a finite abelian group.} We double count the quantity $|\Sol_S^A(w_0,W,[k])|$. This is equal to the number of solutions $x$ to $A_{\overline W}x=-A_Ww_0$ which in turn is equal to either $0$ or $|S|^{k-|W|-\rank_S({A_{\overline W}})}$ by~\eqref{eq:rank1}. In the former case, $|\Sol_S^A(w_0,W,[k])|=0$ implies $|\Sol_S^A(w_0,W,Y)|=0$ and so~\eqref{eq:key1} holds. Hence, suppose that
\begin{equation}\label{eq:count1}
|\Sol_S^A(w_0,W,[k])|=|S|^{k-|W|-\rank_S({A_{\overline W}})}.
\end{equation}
In particular, we have $|\Sol_S^A(w_0,W,Y)|>0$. Let $y_0\in\Sol_S^A(w_0,W,Y)$. By the same argument as above, we have $|\Sol_S^A(y_0,Y,[k])|=|S|^{k-|Y|-\rank_S(A_{\overline Y})}$. Thus,
$$|\Sol_S^A(w_0,W,[k])|=\sum_{y_0\in\Sol_S^A(w_0,W,Y)}|\Sol_S^A(y_0,Y,[k])|=|\Sol_S^A(w_0,W,Y)|\cdot|S|^{k-|Y|-\rank_S(A_{\overline Y})}.$$ 
Combining the above with~\eqref{eq:count1} yields
$$|\Sol_S^A(w_0,W,Y)|=|S|^{|Y|-|W|-\rank_S(A_{\overline W})+\rank_S(A_{\overline Y})},$$ 
as required.

\smallskip

{\bf Case (ii): $S=L^d$ for some finite subset $L$ of a field $\mathbb F$ and $d\in\mathbb N$.} If $L=\{0_{\mathbb F}\}$ then $|\Sol_S^A(w_0,W,Y)|\le1$ and so~\eqref{eq:key1} holds. Hence, assume $L\not=\{0_{\mathbb F}\}$. For the rest of this case, we treat the entries of $A$ as elements of $\mathbb F$ (namely, we view $n\in\mathbb N$ as $n\cdot 1_{\mathbb F}$). Let $I_0\subseteq[k]$ be the indices of a maximal set of linearly independent columns in $A_{\overline Y}$ and $I_1$ be the indices of a maximal set of columns in $A_{Y\setminus W}$ such that the columns with indices $I_0\cup I_1$ are linearly independent. 
Then recalling $W \subseteq Y$, we have $\rank_S(A_{\overline{W}})=|I_0 \cup I_1|$.

\begin{claim}\label{claim:uvsols}
Let $u,v$ be two solutions $Au=Av=0$ in $\mathbb F$. If $u_{Y\setminus I_1}=v_{Y\setminus I_1}$ then $u_{I_1}=v_{I_1}$.
\end{claim}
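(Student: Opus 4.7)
The plan is to deduce the claim by passing to the difference $w := u-v$, which satisfies $Aw = 0$ and $w_{Y\setminus I_1} = 0$ by hypothesis, and then showing $w_{I_1} = 0$ using the linear independence properties built into the definition of $I_0, I_1$. Writing $c_1,\dots,c_k$ for the columns of $A$ (viewed over $\mathbb{F}$), the equation $Aw=0$ reads $\sum_{j\in[k]} w_j c_j = 0$, and the vanishing of $w$ on $Y\setminus I_1$ lets us cut this down to
\[
\sum_{j\in I_1} w_j\, c_j \;+\; \sum_{j\in\overline{Y}} w_j\, c_j \;=\; 0.
\]

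Next, I would exploit the choice of $I_0 \subseteq \overline{Y}$ as the indices of a maximal linearly independent set of columns of $A_{\overline{Y}}$. By maximality, for each $j \in \overline{Y}$ there exist scalars $\lambda_{j,i} \in \mathbb{F}$ ($i \in I_0$) with $c_j = \sum_{i \in I_0} \lambda_{j,i}\, c_i$. Substituting this into the displayed equation and collecting terms gives a relation of the form
\[
\sum_{j \in I_1} w_j\, c_j \;+\; \sum_{i \in I_0} \mu_i\, c_i \;=\; 0
\]
for some $\mu_i \in \mathbb{F}$. Because $I_1$ was chosen so that $\{c_j : j \in I_0 \cup I_1\}$ is linearly independent, every coefficient in this relation must vanish; in particular $w_j = 0$ for all $j \in I_1$, which is exactly $u_{I_1} = v_{I_1}$.

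There is no real obstacle here: the claim is a piece of routine linear algebra over $\mathbb{F}$, and its whole purpose is to package the maximality properties of $I_0$ and $I_1$ into a usable form. I expect this claim to be used immediately afterwards to bound $|\Sol_S^A(w_0,W,Y)|$ by projecting a putative solution onto the coordinates indexed by $Y \setminus I_1$: the claim implies that this projection is injective on solutions with prescribed $W$-values, so the count of projected solutions is at most $|L|^{d(|Y\setminus W| - |I_1|)} = |S|^{|Y|-|W|-|I_1|}$, and then the identity $|I_1| = \rank_S(A_{\overline W}) - \rank_S(A_{\overline Y})$ (which follows from the definitions of $I_0$ and $I_1$) will yield the desired bound in~\eqref{eq:key1}.
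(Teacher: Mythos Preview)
Your proof is correct and essentially identical to the paper's: both set $w=u-v$, reduce to a linear relation among the columns indexed by $I_1$ and $\overline{Y}$, replace the $\overline{Y}$-part using $\langle A_{\overline{Y}}\rangle = \langle A_{I_0}\rangle$, and then invoke linear independence of the columns indexed by $I_0\cup I_1$ to force $w_{I_1}=0$. Your anticipated use of the claim (injectivity of the projection onto the $Y\setminus I_1$ coordinates, together with $|I_1| = \rank_S(A_{\overline{W}}) - \rank_S(A_{\overline{Y}})$) also matches the paper.
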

\begin{proofclaim}
Since $A(u-v)=0$ and $u_{Y\setminus I_1}=v_{Y\setminus I_1}$
$$A_{I_1}(u-v)_{I_1}+A_{\overline Y}(u-v)_{\overline Y}=0,$$
and thus $A_{I_1}(u-v)_{I_1}\in\langle A_{\overline Y}\rangle$, where $\langle A_{\overline Y}\rangle$ denotes the vector space spanned by the columns of $A_{\overline Y}$. By the maximality of $I_0$ we have $\langle A_{\overline Y}\rangle=\langle A_{I_0}\rangle$ and so $A_{I_1}(u-v)_{I_1}\in\langle A_{I_0}\rangle$. Since the columns with indices $I_0\cup I_1$ are linearly independent, this implies that $(u-v)_{I_1}=0$.\qedclaim
\end{proofclaim}

Now we bound $|\Sol_S^A(w_0,W,Y)|$ from above. Recall $w_0\in S^{|W|}$ and $S=L^d$. We can therefore write $w_0$ in the form $w_0=(t_1,\dots,t_d)$ where $t_i\in L^{|W|}$. Note that $$|\Sol_S^A(w_0,W,Y)|=\prod_{i=1}^d|\Sol_L^A(t_i,W,Y)|,$$
so it suffices to prove that $|\Sol_L^A(t_i,W,Y)|\le|L|^{|Y|-|W|-\rank_S(A_{\overline W})+\rank_S(A_{\overline Y})}$ for every $i$. By Claim~\ref{claim:uvsols}, there is at most one projected solution for every choice of the entries with indices $Y\setminus I_1$. Note that the entries with indices in $W$ are already fixed and $W$ and $I_1$ are disjoint, thus   
$$|\Sol_L^A(t_i,W,Y)|\leq|L|^{|Y|-|W|-|I_1|}=|L|^{|Y|-|W|-|I_0\cup I_1|+|I_0|}=|L|^{|Y|-|W|-\rank_S(A_{\overline W})+\rank_S(A_{\overline Y})}.$$
\end{proof}

Applying inequality~\eqref{eq:key1} from Lemma~\ref{lem:keycounting} with $W=\emptyset$ and $Y=[k]$ immediately implies that there are at most $|S|^{k-\rank_S(A)}$ solutions to $Ax=0$ in $S$. Informally, we say $(A,S)$ is {\it rich} if this bound is close to tight.

\begin{define}[Richness]
Let $A$ be an $\ell\times k$ integer matrix, let $\eps>0$ and let $S$ be either a finite abelian group or a power of a finite subset of a field.  We say $(A,S)$ is {\it $\eps$-rich} if there are at least $\eps|S|^{k-\rank_S(A)}$ solutions to $Ax=0$ in $S$.
\end{define}

Intuitively, richness is a measure of how strongly the elements of $S$ are related within the arithmetic structure of $S$. For example, if $S$ is a finite abelian group then $(A,S)$ is $1$-rich by~\eqref{eq:rank1} for any integer matrix $A$. If $A$ is partition regular then
Theorem~\ref{thm:fgrsupersat} implies (in a very strong form) that
 $S=[n]$ is $\delta$-rich for some $\delta >0$ and $n \in \mathbb N$ sufficiently large. On the other hand, this is not the case for $S=\mathbf P_n$ (see, e.g., Theorem~\ref{gt}), and indeed, the primes have a very loose arithmetic structure. 

The next corollary states that if $(A,S)$ is rich then the bound given by Lemma~\ref{lem:keycounting} is close to tight for certain projected solutions.

\begin{col}\label{col:keycounting}
Let $A$ be an $\ell\times k$ integer matrix and let $0<\eps \leq 1$. Suppose that either
\begin{itemize}
\item[(i)] $S$ is a finite abelian group or 
\item[(ii)] $S=L^d$ for some finite subset $L$ of a field and $d\in\mathbb N$.  
\end{itemize}
If $(A,S)$ is $\eps$-rich, then for every non-empty $Z\subseteq[k]$ we have
\begin{align}\label{eq:key2}
\eps|S|^{|Z|-\rank_S(A)+\rank_S(A_{\overline Z})}\le|\Sol_S^A(Z)|\le|S|^{|Z|-\rank_S(A)+\rank_S(A_{\overline Z})}.
\end{align}
\end{col}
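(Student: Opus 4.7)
The proof should be a short, direct consequence of Lemma~\ref{lem:keycounting} combined with a double-counting argument that uses the $\eps$-richness hypothesis. My plan is to handle the upper and lower bounds of~\eqref{eq:key2} separately.

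For the upper bound, I would simply invoke Lemma~\ref{lem:keycounting} with $W=\emptyset$ and $Y=Z$. Since $\overline{\emptyset}=[k]$, $\rank_S(A_{\overline{\emptyset}})=\rank_S(A)$, and $\Sol_S^A(Z)=\Sol_S^A(\emptyset,\emptyset,Z)$, the lemma immediately gives
\[
|\Sol_S^A(Z)|\le |S|^{|Z|-\rank_S(A)+\rank_S(A_{\overline Z})},
\]
which is the required upper bound.

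For the lower bound, the idea is to partition $\Sol_S^A([k])$ according to the projection onto coordinates in $Z$. Namely, every solution $x\in\Sol_S^A([k])$ has a unique image $x_Z\in\Sol_S^A(Z)$, and so
\[
|\Sol_S^A([k])|\;=\;\sum_{z\in\Sol_S^A(Z)} |\Sol_S^A(z,Z,[k])|.
\]
Applying Lemma~\ref{lem:keycounting} with $W=Z$ and $Y=[k]$ (so that $\overline{Y}=\emptyset$ and $\rank_S(A_{\overline Y})=0$ by convention) yields
\[
|\Sol_S^A(z,Z,[k])|\;\le\; |S|^{k-|Z|-\rank_S(A_{\overline Z})}
\]
for every $z\in S^{|Z|}$. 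Combining these two displays gives
\[
|\Sol_S^A([k])|\;\le\;|\Sol_S^A(Z)|\cdot|S|^{k-|Z|-\rank_S(A_{\overline Z})}.
\]
Finally, using the $\eps$-richness hypothesis $|\Sol_S^A([k])|\ge \eps|S|^{k-\rank_S(A)}$ and rearranging, I obtain
\[
|\Sol_S^A(Z)|\;\ge\;\eps|S|^{|Z|-\rank_S(A)+\rank_S(A_{\overline Z})},
\]
which is the desired lower bound.

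There is no substantial obstacle; both halves of the corollary are two-line consequences of the two extremal applications (with $W=\emptyset$ and with $W=Z$) of the key bounding result, plus the richness assumption bridging them. The only minor care needed is checking that the conventions $\rank_S(A_{\overline W})=0$ when $\overline W=\emptyset$ and $\rank_S(A_{\overline{\emptyset}})=\rank_S(A)$ are consistently applied, and noting that this argument works uniformly in both cases (i) and (ii) since Lemma~\ref{lem:keycounting} covers both.
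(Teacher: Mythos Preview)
Your proposal is correct and follows essentially the same approach as the paper's proof: apply Lemma~\ref{lem:keycounting} with $W=\emptyset$, $Y=Z$ for the upper bound, and with $W=Z$, $Y=[k]$ together with the double-counting identity and $\eps$-richness for the lower bound. The paper's argument is identical in substance, only phrased slightly differently (it fixes a single $z_0\in\Sol_S^A(Z)$ before summing, rather than writing the sum directly).
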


\begin{proof}
Fix $Z\subseteq[k]$. Applying~\eqref{eq:key1} from Lemma~\ref{lem:keycounting} with $Y:=Z$ and $W:=\emptyset$ yields the upper bound
$$|\Sol_S^A(Z)|\le|S|^{|Z|-\rank_S(A)+\rank_S(A_{\overline Z})}.$$
Fix $z_0\in\Sol_S^A(Z)$. By applying~\eqref{eq:key1} with $Y:=[k]$ and $W:=Z$, we obtain that
\begin{equation}\label{eq:upperbound1}
|\Sol_S^A(z_0,Z,[k])|\le|S|^{k-|Z|-\rank_S(A_{\overline Z})}.
\end{equation}
The total number of solutions to $Ax=0$ in $S$ is 
$$|\Sol_S^A([k])|=\sum_{z_0\in\Sol_S^A(Z)}|\Sol_S^A(z_0,Z,[k])|\stackrel{\eqref{eq:upperbound1}}{\le}|\Sol_S^A(Z)|\cdot|S|^{k-|Z|-\rank_S(A_{\overline Z})}.$$
Combining the above with $|\Sol_S^A([k])|\ge\eps|S|^{k-\rank_{S}(A)}$ yields
\begin{align*}
|\Sol_S^A(Z)|&\ge\eps|S|^{k-\rank_S(A)}\cdot|S|^{-k+|Z|+\rank_S(A_{\overline Z})}=\eps|S|^{|Z|-\rank_S(A)+\rank_S(A_{\overline Z})}.
\end{align*}
\end{proof}

\subsection{Conditions (A3)--(A6) for finite groups and powers of finite subsets of fields}\label{sec:6.3}
In this subsection, we use the key bounding results (Lemma~\ref{lem:keycounting} and Corollary~\ref{col:keycounting}) from the previous subsection to investigate conditions (A3)--(A6) when the sets $S_n$ in the sequence $(S_n)_{n\in\mathbb N}$ are either finite abelian groups or powers of finite subsets of fields. We start by showing that in this case, richness implies condition (A3). 

\begin{lemma}\label{lem:simplifiedA3}
Let $A$ be an $\ell\times k$ integer matrix. Suppose that either
\begin{itemize}
\item[(i)] $S$ is a finite abelian group or 
\item[(ii)] $S=L^d$ for some finite subset $L$ of a field and $d\in\mathbb N$. 
\end{itemize}
If there exists $B>0$ such that $(A,S)$ is $(1/B)$-rich, then $(A,S)$ is $B$-extendable. 
\end{lemma}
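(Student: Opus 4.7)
The plan is to chain together the upper bound on projected solutions from Lemma~\ref{lem:keycounting} with the two-sided bound from Corollary~\ref{col:keycounting}, exploiting the fact that the exponents of $|S|$ match up exactly.

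Fix any non-empty $W \subseteq Y \subseteq [k]$ and any $w_0 \in S^{|W|}$. First, I would apply Lemma~\ref{lem:keycounting} directly to obtain the upper bound
\[
|\Sol_S^A(w_0,W,Y)| \le |S|^{|Y|-|W|-\rank_{S}(A_{\overline{W}})+\rank_{S}(A_{\overline{Y}})}.
\]
Next, since $(A,S)$ is $(1/B)$-rich, Corollary~\ref{col:keycounting} applies (in both cases (i) and (ii)), yielding the lower bound
\[
|\Sol_S^A(Y)| \ge \tfrac{1}{B}\,|S|^{|Y|-\rank_S(A)+\rank_S(A_{\overline Y})}
\]
and the upper bound
\[
|\Sol_S^A(W)| \le |S|^{|W|-\rank_S(A)+\rank_S(A_{\overline W})}.
\]

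Dividing these two estimates, the $\rank_S(A)$ terms cancel and I obtain
\[
\frac{|\Sol_S^A(Y)|}{|\Sol_S^A(W)|} \ge \frac{1}{B}\,|S|^{|Y|-|W|-\rank_S(A_{\overline{W}})+\rank_S(A_{\overline{Y}})}.
\]
Multiplying by $B$ and comparing with the bound for $|\Sol_S^A(w_0,W,Y)|$ above, the exponents of $|S|$ coincide, giving
\[
|\Sol_S^A(w_0,W,Y)| \le B\cdot\frac{|\Sol_S^A(Y)|}{|\Sol_S^A(W)|},
\]
which is exactly the definition of $B$-extendability.

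There is essentially no obstacle here: the lemma is a straightforward bookkeeping corollary of the two bounding results already proved. The only subtlety worth flagging is that Corollary~\ref{col:keycounting} requires the index set to be non-empty, which is precisely why the statement of $B$-extendability restricts to non-empty $W$ (and hence non-empty $Y$). One should also note that the case $W = Y$ is trivial since both sides equal $1$ (after cancellation), so the argument above is really only needed for $W \subsetneq Y$, but the inequality of course remains valid in the degenerate case.
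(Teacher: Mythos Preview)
Your proof is correct and follows essentially the same approach as the paper: both apply Lemma~\ref{lem:keycounting} to bound $|\Sol_S^A(w_0,W,Y)|$ above and Corollary~\ref{col:keycounting} (via richness) to bound the ratio $|\Sol_S^A(Y)|/|\Sol_S^A(W)|$ below, then observe that the exponents of $|S|$ match. The paper compresses all of this into a single chain of inequalities, but the logic is identical.
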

\begin{proof}
Let $W\subseteq Y\subseteq[k]$ and $w_0\in S^{|W|}$. It follows from Lemma~\ref{lem:keycounting} and Corollary~\ref{col:keycounting} that
\begin{align*}
\frac{B|\Sol_S^A(Y)|}{|\Sol_S^A(W)|}\stackrel{\eqref{eq:key2}}{\ge}\frac{|S|^{|Y|-\rank_S(A)+\rank_S(A_{\overline Y})}}{|S|^{|W|-\rank_S(A)+\rank_S(A_{\overline W})}}=|S|^{|Y|-|W|-\rank_S(A_{\overline W})+\rank_S(A_{\overline Y})}\stackrel{\eqref{eq:key1}}{\ge}|\Sol_S^A(w_0,W,Y)|.
\end{align*}
Hence $(A,S)$ is $B$-extendable.
\end{proof}

For finite abelian groups, we deduce the following corollary.

\begin{col}\label{col:simplifiedA3}
Let $A$ be an $\ell\times k$ integer matrix and $S$ be a finite abelian group. Then $(A,S)$ is $1$-extendable.
\end{col}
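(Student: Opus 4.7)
The plan is to simply apply Lemma~\ref{lem:simplifiedA3} with $B = 1$, so the only thing to verify is that $(A,S)$ is $1$-rich whenever $S$ is a finite abelian group. This is essentially immediate from the identity~\eqref{eq:rank1} derived earlier in the section.

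Explicitly, setting $b = 0 \in S^\ell$ in~\eqref{eq:rank1} (so trivially $0 \in \im(f_A)$), we obtain that the number of solutions to $Ax = 0$ in $S$ equals
\[
|\ker(f_A)| = |S|^{k - \rank_S(A)},
\]
where $\rank_S(A)$ is computed according to Definition~\ref{def:rankfinite}. In particular, $|\Sol_S^A([k])| = |S|^{k - \rank_S(A)}$, which means that $(A,S)$ is $\eps$-rich with $\eps = 1$.

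Since $(A,S)$ is $1$-rich (i.e., $(1/B)$-rich with $B = 1$), Lemma~\ref{lem:simplifiedA3} directly yields that $(A,S)$ is $1$-extendable, as required. There is really no obstacle here: the corollary is just the special case $B = 1$ of Lemma~\ref{lem:simplifiedA3}, combined with the fact that for finite abelian groups the bound from Lemma~\ref{lem:keycounting} is tight in the extremal case $W = \emptyset$, $Y = [k]$ by the first isomorphism theorem.
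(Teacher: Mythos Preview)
Your proof is correct and follows exactly the same approach as the paper: observe via~\eqref{eq:rank1} that $(A,S)$ is $1$-rich, then apply Lemma~\ref{lem:simplifiedA3} with $B=1$. The paper's proof is simply a two-line version of what you wrote.
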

\begin{proof}
By~\eqref{eq:rank1}, $(A,S)$ is $1$-rich. Lemma~\ref{lem:simplifiedA3} implies that $(A,S)$ is $1$-extendable.
\end{proof}

Next, we consider conditions (A4), (A5) and (A6). First, we give the definition of {\it abundance}. This notion appeared in Section~\ref{sec:graphsintegers} when we considered the random Ramsey problem for the integers.

\begin{define}[Abundancy]\label{def:abundant}
Let $A$ be an $\ell\times k$ integer matrix and $S$ be either a finite abelian group or a power of a finite subset of a field. We say $(A,S)$ is {\it abundant} if $\rank_S(A)=\rank_S(A_{\overline W})$ for every $W\subseteq[k]$ with $|W|=2$.
\end{define}

Before we show how abundancy can help simplify conditions (A4), (A5) and (A6), we show it is a mild requirement in the case of powers of finite subsets of a field $\mathbb F$. For brevity, we will write $0$ and $1$ for the additive and multiplicative identities of $\mathbb F$, respectively. The next lemma establishes that irredundancy and $3$-partition regularity imply abundancy. We remark that the field $\mathbb F$ in the statement of the lemma may be finite or infinite.

\begin{lemma}\label{lem:primpliesabundant}
Let $A$ be an $\ell \times k$ integer matrix and $S=L^d$ for some finite subset $L$ of a field $\mathbb F$ and $d\in\mathbb N$. If $A$ is irredundant with respect to $\mathbb F^d$ and $3$-partition regular in $S\setminus\{0\}^d$, then $(A,S)$ is abundant.
\end{lemma}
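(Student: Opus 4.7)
The plan is to argue by contrapositive: assume $(A,S)$ is not abundant, so there is some pair $W=\{i,j\}$ with $\rank_S(A_{\overline{W}})<\rank_S(A)$, and deduce that either $A$ fails to be irredundant with respect to $\mathbb F^d$ or $A$ fails to be $3$-partition regular in $S\setminus\{0\}^d$. The case $L=\{0\}$ is immediate since then $\rank_S$ of every submatrix vanishes and abundancy is automatic, so we may assume $L\ne\{0\}$, and by Definition~\ref{def:rankPIDinteger} treat $A$ as a matrix over $\mathbb F$ (via $\mathbb Z\to\mathbb F$) with columns $a_1,\dots,a_k$.

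The key linear-algebra observation is the following. Setting $V:=\langle a_1,\dots,a_k\rangle$ and $V':=\langle a_\ell:\ell\notin W\rangle$, the non-abundancy assumption gives $V'\subsetneq V$, while $V/V'$ is spanned by the images of just $a_i$ and $a_j$ (the other columns vanish in the quotient). Hence $\dim(V/V')\in\{1,2\}$, and for any solution $Ax=0$ in $\mathbb F^d$, projecting coordinate-wise to the quotient forces $x_i\bar a_i+x_j\bar a_j=0$ in $V/V'$. If $\dim(V/V')=2$, then $\bar a_i,\bar a_j$ are linearly independent and this relation pins every solution to $x_i=x_j=0\in\mathbb F^d$, ruling out $k$-distinct solutions and contradicting irredundancy. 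Otherwise $\dim(V/V')=1$ and I can write $\bar a_j=\beta\bar a_i$ for a unique $\beta\in\mathbb F$, so every solution satisfies the linear constraint $x_i=-\beta x_j$; the rest of the argument splits on the value of $\beta$.

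The cases $\beta=-1$ and $\beta=0$ are easy: the first forces $x_i=x_j$ for every solution and so contradicts irredundancy; the second forces $x_i=0\notin S\setminus\{0\}^d$, which means there are no solutions at all in $S\setminus\{0\}^d$ (nonempty since $L\ne\{0\}$), trivially breaking $3$-partition regularity. The substantive sub-case is $\beta\in\mathbb F\setminus\{0,-1\}$, where the plan is to construct a bad $3$-colouring explicitly: define an auxiliary graph $G$ on $S\setminus\{0\}^d$ by joining $s$ to $-\beta s$ whenever both belong to $S\setminus\{0\}^d$. Each vertex has at most two neighbours (namely $-\beta s$ and $-\beta^{-1}s$), and the hypothesis $\beta\ne-1$ together with $s\ne 0$ rules out loops, so $G$ has maximum degree at most $2$ and hence admits a proper $3$-colouring. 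In this colouring, any putative solution $x\in(S\setminus\{0\}^d)^k$ has $x_i$ and $x_j$ joined by an edge of $G$ and therefore differently coloured, so no monochromatic solution exists, contradicting $3$-partition regularity. The delicate point, and the only place where three colours rather than two are truly essential, is precisely this last sub-case: $G$ can genuinely contain odd cycles, whereas all other sub-cases are resolved either purely algebraically via irredundancy or by the observation that $S\setminus\{0\}^d$ supports no solutions at all.
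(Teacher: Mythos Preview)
Your proof is correct and takes essentially the same approach as the paper's: both derive from non-abundancy a linear constraint of the form $x_i = c\, x_j$ (or $x_i=0$) on all solutions, dispatch the degenerate values of $c$ via irredundancy or $3$-partition regularity, and handle generic $c$ by $3$-colouring an auxiliary graph of maximum degree~$2$. Your quotient-space framing via $V/V'$ is marginally cleaner than the paper's explicit $\alpha$, and the only tacit step---assuming $\bar a_i\ne 0$ when $\dim(V/V')=1$---is harmless by symmetry in $i,j$.
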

\begin{proof}
If $L=\{ 0\}$ then $\rank_S(A')=0$ for any integer matrix $A'$, and we are done. Thus, suppose that $L\ne \{ 0\}$. As before, we can view $A$ as a matrix with entries in $\mathbb F$.  Let $C_1,\dots,C_k$ denote the columns of $A$. To prove that $(A,S)$ is abundant it suffices to
  show that $C_1$ and $C_2$ are both spanned by $C_3,...,C_k$ in $\mathbb F$.  For the rest of the proof, given a solution $x$ to $Ax=0$, we write $x_i$ for the $i$th entry of $x$. 

If $C_1$ cannot be expressed as a linear combination of $C_2,\dots,C_k$, then a solution $x$ to $Ax=0$ in $\mathbb F$ satisfies $x_1=0$. In turn, this implies that a solution $x$ to $Ax=0$ in $\mathbb F^d$ satisfies $x_1=\{0\}^d$. This contradicts the assumption that $A$ is $3$-partition regular in $S\setminus\{0\}^d$.

Hence, $C_1$ is spanned by $C_2,\dots,C_k$. In particular, there exists $\alpha\in \mathbb F$ such that $C_1+\alpha C_2$ is spanned by $C_3,\dots,C_k$. If $\alpha$ is not unique then one can write $C_2$ as a linear combination of $C_3,\dots,C_k$  and thus we are done. 

Suppose therefore that $\alpha$ is unique. Thus, any solution $x$ to $Ax=0$ in $\mathbb F^d$ must satisfy $x_2=\alpha\cdot x_1$. If $\alpha=1$ then $x_1=x_2$ and $A$ is therefore not irredundant with respect to $\mathbb F^d$, a contradiction. If $\alpha=0$ then $x_2=\{0\}^d$, and so $A$ is not $3$-partition regular in $S\setminus\{0\}^d$, a contradiction. Thus, $\alpha\not=0,1$. 

Consider the graph $G$ with vertex set $V(G)=S$ and edge set $E(G)=\{ss': s'=\alpha\cdot s\}$ (here the edges are unordered pairs). Since $\alpha\not=0$, each $s\in V(G)$ is incident to at most two edges, namely $\alpha\cdot s$ and $\alpha^{-1}\cdot s$. Thus $\Delta(G)\le2$. In particular, there exists a proper $3$-vertex-colouring of $G$ which  induces a $3$-colouring of $S$. 

Recall that for any solution $x$ of $Ax=0$ in $\mathbb F^d$ we have $x_2=\alpha\cdot x_1$. Since $\alpha\not=1$, $x_1$ and $x_2$ are distinct unless $x_1=x_2=\{0\}^d$. In the former case, $x_1$ and $x_2$ are adjacent in $G$ and so they are coloured differently. It follows that there is no monochromatic solution to $Ax=0$ in $S\setminus\{0\}^d$. Hence $A$ is not $3$-partition regular in $S\setminus\{0\}^d$, a contradiction.
\end{proof}

Note that the statement of Lemma~\ref{lem:primpliesabundant} does not necessarily hold for finite abelian groups. Indeed, consider the example $A=\begin{pmatrix} 2 & 2 & 1 & 1\end{pmatrix}$ and $S=\mathbb{Z}_6^n$.  The columns of $A$ sum to zero (mod $6$), so $A$ satisfies the $6$-columns condition. 
By Theorem~\ref{thm:exponentsupersat}, for fixed $r$ and $n$ sufficiently large, every $r$-colouring of $\mathbb{Z}_6^n\setminus\{0\}^n$ yields (many) monochromatic solutions to $Ax=0$. 
Additionally $A$ is irredundant with respect to $\mathbb{Z}_6^n$. This follows easily by observing that $(2,4,1,5)$ is a $4$-distinct solution to $Ax=0$ in $\mathbb{Z}_6$. However $(A,\mathbb{Z}_6^n)$ is not abundant, since the rank decreases after deleting the last two columns of $A$.

\smallskip

Next, we show how abundancy can help simplify conditions (A4), (A5) and (A6). First, if $(A,S)$ is abundant and rich then condition (A4) trivially holds. 

\begin{lemma}\label{lem:spreadedges}
Let $A$ be an $\ell\times k$ integer matrix. Suppose that either
\begin{itemize}
\item[(i)] $S$ is a finite abelian group or 
\item[(ii)] $S=L^d$ for some finite subset $L$ of a field and $d\in\mathbb N$.
\end{itemize}
Furthermore, assume that $(A,S)$ is abundant and $\eps$-rich for some $0<\eps\leq 1$. Then, for every $W\subseteq Y\subseteq[k]$ with $|W|=1$ and $w_0 \in S$ we have $|\Sol_S^A(w_0,W,Y)|\le |\Sol_S^A(Y)|/(\eps |S|)$.
\end{lemma}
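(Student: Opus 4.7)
The plan is to combine the upper bound on $|\Sol_S^A(w_0,W,Y)|$ from Lemma~\ref{lem:keycounting} with the lower bound on $|\Sol_S^A(Y)|$ coming from Corollary~\ref{col:keycounting}, and then reduce the whole statement to a single rank equality, which should follow from abundancy.

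First I would fix $W\subseteq Y\subseteq[k]$ with $|W|=1$ and $w_0\in S$, and apply Lemma~\ref{lem:keycounting} to obtain
$$|\Sol_S^A(w_0,W,Y)|\le |S|^{|Y|-1-\rank_S(A_{\overline W})+\rank_S(A_{\overline Y})}.$$
Next, since $(A,S)$ is $\eps$-rich, Corollary~\ref{col:keycounting} applied with $Z=Y$ gives
$$\frac{|\Sol_S^A(Y)|}{\eps|S|}\ge |S|^{|Y|-1-\rank_S(A)+\rank_S(A_{\overline Y})}.$$
Comparing the two exponents, the desired inequality $|\Sol_S^A(w_0,W,Y)|\le |\Sol_S^A(Y)|/(\eps|S|)$ reduces to showing $\rank_S(A_{\overline W})\ge \rank_S(A)$ (the reverse inequality is automatic, since $A_{\overline W}$ is obtained from $A$ by deleting a column).

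The key step, and the only place where abundancy is invoked, is this rank comparison. Assuming $k\ge 2$ (which is the only case of interest, since the random Rado lemma runs with $k\ge 3$), I would extend $W$ to some $W'\subseteq[k]$ with $W\subset W'$ and $|W'|=2$, so that abundancy of $(A,S)$ yields $\rank_S(A_{\overline{W'}})=\rank_S(A)$. Since $A_{\overline W}$ is obtained from $A_{\overline{W'}}$ by reinstating one further column, I claim rank is monotone under adding columns in both settings: in case~(i) this is because appending columns can only enlarge the image of $f_{A}$, and in case~(ii) it is the standard monotonicity of field-rank under column additions. Consequently
$$\rank_S(A)=\rank_S(A_{\overline{W'}})\le \rank_S(A_{\overline W})\le \rank_S(A),$$
so $\rank_S(A_{\overline W})=\rank_S(A)$, and the lemma follows.

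I do not expect any serious obstacle; essentially all of the analytical content is already packaged in Lemma~\ref{lem:keycounting} and Corollary~\ref{col:keycounting}, and the rest is bookkeeping. The only subtle point worth double-checking is the monotonicity of $\rank_S$ under adding columns in the group setting, but this is immediate from the definition $\rank_S(A)=\log_{|S|}|\im(f_A)|$, since adding zero to the new coordinates embeds $\im(f_{A_{\overline{W'}}})$ into $\im(f_{A_{\overline{W}}})$.
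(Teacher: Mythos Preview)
Your proposal is correct and follows essentially the same approach as the paper's proof: apply Lemma~\ref{lem:keycounting} for the upper bound, Corollary~\ref{col:keycounting} for the lower bound, and then extend $W$ to a two-element set $W'$ so that abundancy forces $\rank_S(A_{\overline{W'}})=\rank_S(A)$ and hence $\rank_S(A_{\overline W})=\rank_S(A)$ by monotonicity of rank under adding columns. The paper leaves the monotonicity step implicit, whereas you spell it out; otherwise the arguments are the same.
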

\begin{proof}
By Lemma~\ref{lem:keycounting} we have 
\begin{align*}
|\Sol_S^A(w_0,W,Y)|\le|S|^{|Y|-1-\rank_{S}(A_{\overline{W}})+\rank_{S}(A_{\overline{Y}})},
\end{align*}
while by Corollary~\ref{col:keycounting} we have
\begin{align*}
|\Sol_S^A(Y)|\ge\eps|S|^{|Y|-\rank_{S}(A)
+\rank_{S}(A_{\overline{Y}})}.
\end{align*}
Combining the two inequalities above yields
\begin{align}\label{newlabel1}
|\Sol_S^A(w_0,W,Y)|\le \frac{1}{\eps} \cdot\frac{|\Sol_S^A(Y)|}{|S|}\cdot|S|^{\rank_{S}(A)
-\rank_{S}(A_{\overline{W}})}.
\end{align}
Let $W\subseteq W'$ with $|W'|=2$ and note that $\rank_S(A_{\overline{W'}})\le\rank_S(A_{\overline W})\le\rank_S(A)$. We have $\rank_S(A_{\overline{W'}})=\rank_S(A)$ since $|W'|=2$ and $(A,S)$ is abundant. Thus,
 $\rank_S(A_{\overline W})=\rank_S(A)$ and so (\ref{newlabel1}) implies that $|\Sol_S^A(w_0,W,Y)|\le|\Sol_S^A(Y)|/(\eps |S|)$.    
\end{proof}

Next, we consider conditions (A5) and (A6). If $(A,S)$ is abundant and rich then the following bound holds.

\begin{lemma}\label{lem:simplifiedA4-5}
Let $A$ be an $\ell\times k$ integer matrix. Suppose that either
\begin{itemize}
\item[(i)] $S$ is a finite abelian group or 
\item[(ii)] $S=L^d$ for some finite subset $L$ of a field and $d\in\mathbb N$.
\end{itemize}
Furthermore, assume that $(A,S)$ is abundant and $\eps$-rich for some $0< \eps\leq 1$. Then for any $W \subseteq [k]$ with $|W|=2$, we have
$$\frac{|S|^2}{|\Sol_S^A(W)|}\le\frac{1}{\eps}.$$
\end{lemma}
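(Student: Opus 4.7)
The plan is to simply invoke Corollary~\ref{col:keycounting} with $Z:=W$ and use the abundancy hypothesis to collapse the exponent on $|S|$ to $2$.

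More precisely, since $(A,S)$ is $\eps$-rich and $W\subseteq[k]$ is non-empty, Corollary~\ref{col:keycounting} (which applies in both settings (i) and (ii)) gives
\begin{align*}
|\Sol_S^A(W)| \;\ge\; \eps\,|S|^{|W|-\rank_S(A)+\rank_S(A_{\overline{W}})} \;=\; \eps\,|S|^{\,2-\rank_S(A)+\rank_S(A_{\overline{W}})}.
\end{align*}
The abundancy hypothesis is precisely that $\rank_S(A_{\overline{W}})=\rank_S(A)$ for all $W\subseteq[k]$ with $|W|=2$, so the exponent reduces to $2$, yielding $|\Sol_S^A(W)|\ge\eps|S|^2$. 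Rearranging gives $|S|^2/|\Sol_S^A(W)|\le 1/\eps$, as required.

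There is essentially no obstacle: the whole content of the lemma is that abundancy is exactly the condition needed for Corollary~\ref{col:keycounting} to give a bound of the right shape when $|W|=2$. The only thing to check is that Corollary~\ref{col:keycounting} is indeed applicable, which it is since its hypotheses (either $S$ is a finite abelian group, or $S=L^d$ for a finite subset of a field, together with $\eps$-richness) are exactly those assumed here.
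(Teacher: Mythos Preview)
Your proof is correct and essentially identical to the paper's: both apply the lower bound of Corollary~\ref{col:keycounting} with $Z=W$ and then use abundancy to reduce the exponent to $2$.
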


\begin{proof}
We have
$$\frac{|S|^2}{|\Sol_S^A(W)|}\stackrel{\eqref{eq:key2}}{\le}\frac{|S|^2}{\eps |S|^{|W|-\rank_S(A)+\rank_S(A_{\overline W})}}=\frac{|S|^2}{\eps|S|^2}=\frac{1}{\eps},$$
where the equality follows from the fact that $|W|=2$ and $\rank_S(A)=\rank_S(A_{\overline W})$ since $(A,S)$ is abundant. 
\end{proof}

Observe that if $A$ and $(S_n)_{n\in\mathbb N}$ satisfy the hypothesis of Lemma~\ref{lem:simplifiedA4-5} with the same $\eps>0$ for all $n \in \mathbb{N}$, and $|S_n|\to\infty$ as $n\to\infty$, then condition (A6) trivially holds. 
Condition (A5) is also close to being satisfied: it would suffice to additionally prove that there exists some sequence $(X_n)_{n\in\mathbb N}$ of subsets $X_n\subseteq[k]$ with $|X_n|\ge3$ for all $n\in\mathbb N$ where $p_{X_n}(A,S_n)=\Omega(\hat p(A,S_n))$ and such that 
$$\frac{p_{W'_n}(A,S_n)}{p_{X_n}(A,S_n)}\to0$$
as $n\to\infty$ for any sequence $(W'_n)_{n\in\mathbb N}$ with $W'_n\subset X_n$ and $|W'_n|\ge2$. The next result states that this is the case for powers of subsets of fields as long as we have abundancy and richness.

\begin{lemma}\label{lem:pWXtozero}
Let $0< \eps \leq 1$ and let $A$ be an $\ell\times k$ integer matrix and $S=L^d$ for some finite subset $L$ of a field and $d\in\mathbb N$. Suppose $\rank_S(A)>0$. If $(A,S)$ is $\eps$-rich  and abundant, then there exists $X\subseteq[k]$ such that $|X|\ge3$, $p_X(A,S)\ge\eps\hat p(A,S)$ and for every $W'\subset X$ with $|W'|\ge2$ we have
$$\frac{p_{W'}(A,S)}{p_X(A,S)}\le(1/\eps)|S|^{-1/k^2}.$$
\end{lemma}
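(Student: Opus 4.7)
The plan is to express each $p_W(A,S)$ as (approximately) a power of $|S|$ via $\eps$-richness, and then select $X$ as a minimum-cardinality minimizer of the resulting exponent function. Applying Corollary~\ref{col:keycounting} to the $\eps$-rich pair $(A,S)$, for every $W\subseteq [k]$ with $|W|\ge 2$ we obtain
\[
|S|^{-f(W)} \le p_W(A,S) \le \eps^{-1/(|W|-1)}|S|^{-f(W)} \le \eps^{-1}|S|^{-f(W)},
\]
where
\[
f(W) := \frac{|W|-1-\rank_S(A)+\rank_S(A_{\overline{W}})}{|W|-1},
\]
using $\eps^{-1/(|W|-1)}\le \eps^{-1}$ since $\eps\in(0,1]$ and $|W|-1\ge 1$. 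Set $f^* := \min\{f(W): W\subseteq[k],\ |W|\ge 2\}$.

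Next, I would take $X\subseteq [k]$ of \emph{minimum cardinality} among $W$ with $|W|\ge 2$ achieving $f(W)=f^*$. To see that $|X|\ge 3$, observe that abundancy forces $\rank_S(A_{\overline{W}}) = \rank_S(A)$ whenever $|W|=2$, so $f(W)=1$ for every such $W$; on the other hand, $\rank_S(A)>0$ together with abundancy (which implies $k\ge 3$) gives $f([k]) = 1 - \rank_S(A)/(k-1) < 1$. Hence $f^*<1$ and no size-$2$ subset is an argmin, so $|X|\ge 3$. The bound $p_X(A,S)\ge \eps\hat p(A,S)$ follows at once: from the displayed estimate, $\hat p(A,S)\le \eps^{-1}|S|^{-f^*}$ while $p_X(A,S)\ge |S|^{-f^*}$.

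The crux of the argument is the final ratio bound. For $W'\subsetneq X$ with $|W'|\ge 2$, the \emph{minimum cardinality} choice of $X$ ensures that $f(W')>f(X)=f^*$ strictly (an arbitrary argmin would leave room for $f(W')=f(X)$, which would sink the bound entirely). To quantify the gap, I would write $f(X)=a/b$ and $f(W')=a'/b'$ as fractions with integer numerators and denominators $b,b'\in\{1,\dots,k-1\}$; then
\[
f(W')-f(X)=\frac{a'b-ab'}{bb'}\ge\frac{1}{bb'}\ge\frac{1}{(k-1)^2}>\frac{1}{k^2},
\]
since $a'b-ab'$ is a nonzero integer. Substituting into the two-sided estimate from the first paragraph yields
\[
\frac{p_{W'}(A,S)}{p_X(A,S)}\le\eps^{-1}|S|^{f(X)-f(W')}\le(1/\eps)|S|^{-1/k^2},
\]
as required. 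The main obstacle, and the one that drives the choice of $X$, is ensuring this strict, quantitative separation of rational exponents; everything else is bookkeeping on top of Corollary~\ref{col:keycounting}.
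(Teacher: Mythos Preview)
Your proof is correct and follows essentially the same approach as the paper: both define the exponent function (you call it $f$, the paper calls it $D$), pick $X$ as a minimum-cardinality minimizer, use abundancy plus $\rank_S(A)>0$ to rule out $|X|=2$, and then exploit integrality of ranks to get the $1/k^2$ gap. The one point you pass over a little quickly is why the numerators $a,a'$ are integers: this is precisely where the hypothesis that $S=L^d$ with $L$ a subset of a field is used, since $\rank_S$ is then the usual matrix rank over the field and hence integer-valued (cf.\ Definition~\ref{def:rankPIDinteger}); for general finite abelian groups $\rank_S$ need not be an integer, so this step would fail there.
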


\begin{proof}
For every $Z\subseteq[k]$ with $|Z|\ge2$ set 
$$D(Z):=\frac{|Z|-1-\rank_S(A)+\rank_S(A_{\overline Z})}{|Z|-1}.$$
As $0<\eps \leq 1$, by definition of  $p_Z(A,S)$ and Corollary~\ref{col:keycounting}, we have  that
\begin{equation}\label{eq:pZbounds}
|S|^{-D(Z)}\stackrel{\eqref{eq:key2}}{\le} p_Z(A,S)\stackrel{\eqref{eq:key2}}{\le}(1/\eps)|S|^{-D(Z)}.
\end{equation}

Pick $X\subseteq[k]$ with $|X|\ge2$ so that the function $D(X)$ is minimised;  additionally choose $X$ so that $|X|$ is as small as possible under this assumption. We have
$$p_X(A,S)\stackrel{\eqref{eq:pZbounds}}{\ge}|S|^{-D(X)}\ge|S|^{-D(Z)}\stackrel{\eqref{eq:pZbounds}}{\ge}\eps p_Z(A,S)$$
for every $Z\subseteq[k]$ with $|Z|\ge2$. By maximising over $Z$, it follows that $p_X(A,S)\ge\eps\hat p(A,S)$, as required.
Moreover, 
 $|X|\ge3$. Indeed, for any $Z\subseteq[k]$ with $|Z|=2$, we have $\rank_S(A)=\rank_S(A_{\overline Z})$ since $(A,S)$ is abundant. In particular, we have $D(Z)=1$. On the other hand, $D([k])=(k-1-\rank_S(A))/(k-1)<1$ since $\rank_S(A)>0$. Thus, $|X|\neq 2$. 

Let $W'\subset X$ with $|W'|\ge2$. We have
$$\frac{p_{W'}(A,S)}{p_X(A,S)}\stackrel{\eqref{eq:pZbounds}}{\le}(1/\eps)|S|^{-D(W')+D(X)}.$$

By the minimality of $X$, we have $D(X)<D(W')$. Since $S=L^d$ for some finite subset $L$ of a field, $\rank_S(\cdot)$ is always an integer and so 
$$D(W')-D(X)\ge\frac{1}{(|X|-1)(|W'|-1)}\ge\frac{1}{k^2}.$$
It follows that 
$$\frac{p_{W'}(A,S)}{p_X(A,S)}\le(1/\eps)|S|^{-1/k^2}.$$
\end{proof}

\subsection{Simplified random Rado lemma}
In this subsection we state a simplified version of the random Rado lemma for finite abelian groups and powers of finite subsets of fields. This version of the random Rado lemma will be used to prove most of our applications (see Section~\ref{sec:proofapplications}). 
First, we introduce a generalisation of the parameter $m(A)$ to this setting.

\begin{define}\label{def:mGA}
Let $S$ be either a finite abelian group or a power of a finite subset of a field. Let $A$ be an $\ell\times k$ integer matrix. We set
\begin{align}\label{eq:mGAdef}
m_S(A) := \max_{\stackrel{W \subseteq [k]}{|W| \geq 2}} \frac{|W|-1}{|W|-1+\rank_S(A_{\overline{W}}) - \rank_S(A)}.
\end{align}
Here we recall that $\rank_S(A_{\overline{W}}):=0$ if 
$\overline{W}=\emptyset$.
We call $A$ \emph{strictly balanced with respect to $S$} if the expression in (\ref{eq:mGAdef}) is  maximised precisely when $W=[k]$. 
\end{define}
Note that if $S:=[n]$ then as $S$ is a subset of the field $\mathbb Q$, we have that $m_S(A)=m(A)$. Indeed, in this setting the function
$\rank_S$ is the same as the rank function appearing in the definition of $m(A)$.

\begin{remark}\label{rmk:m(a)abb}
Observe that $m_S(A)$ is not always well-defined since the denominator in~\eqref{eq:mGAdef} might be zero. However, if $(A,S)$ is abundant then for all $|W| \geq 2$ we have 
\begin{align}\label{eq:rankW2}
\rank_S(A_{\overline{W}}) \geq \rank_S(A) - |W| +2,
\end{align} 
since the rank of a matrix can decrease by at most one by deleting a column.\footnote{This fact 
follows immediately from the definition of $\rank_S(A)$ when
$S$ is the power of a subset of a field $F$. When $S$ is a finite abelian group, it can be proved using equation~\eqref{eq:rank1}.} If~(\ref{eq:rankW2}) is satisfied then the denominator in~\eqref{eq:mGAdef} is always strictly positive and so $m_S(A)$ is well-defined in this case.
\end{remark}

Before proceeding further, we provide several examples and facts about the parameter $m_S(A)$. Firstly, we show that $m_{\mathbb{Z}_4}(A)=4/3$ for $A=(2\;\; 2\; -2)$, as stated in Section~\ref{sec:easyapplications}. 

\begin{examp}\label{examp:mZ4}
Let $A=(2\;\; 2\; -2)$. The number of solutions $x=(x_1,x_2,x_3)$ to $Ax=0$ in $\mathbb Z_4$ is precisely $2\cdot4^2=4^{3-1/2}$: we have $4^2$ choices for $x_1,x_2$ and, for each  fixed choice, we have $2$ choices for $x_3$. It follows from (\ref{eq:rank1}) that $\rank_{\mathbb Z_4}(A)=1/2$. Now, let $W\subseteq[3]$. If $|W|=3$ we trivially have $\rank_{\mathbb Z_4}(A_{\overline W})=0$. If $|W|=2$ then $A_{\overline W}x=0$ is the same as $2x=0$, which has $2=4^{1-1/2}$ solutions in $\mathbb Z_4$. So we have $\rank_{\mathbb Z_4}(A_{\overline W})=1/2$ again. Thus,
$$m_{\mathbb{Z}_4}(A)=\max\left\{\frac{3-1}{3-1+0-1/2},\frac{2-1}{2-1+1/2-1/2}\right\}=4/3.$$
\end{examp}

Next, we highlight certain similarities  and differences  between the parameters $m(A)$ and $m_S(A)$. Recall that if $A$ is a $1\times k$ integer matrix with non-zero entries then $A$ is strictly balanced and $m(A)=\frac{k-1}{k-2}$. This is not always the case for $m_S(A)$, as shown in the next example.

\begin{examp}\label{examp:mSnotstrictlyblc}
Let $m\in\mathbb N$ and $A=(1\;\; m\;\; m)$. The number of solutions $x=(x_1,x_2,x_3)$ to $Ax=0$ in $\mathbb Z_{2m}$ is precisely $(2m)^2=(2m)^{3-1}$. It follows that $\rank_{\mathbb Z_{2m}}(A)=1$. 
If $W=[3]$ we trivially have $\rank_{\mathbb Z_4}(A_{\overline W})=0$. If $W=\{2,3\}$ then $A_{\overline W}x=0$ is the same as $x=0$, which has~$1=(2m)^{1-1}$ solution in $\mathbb Z_{2m}$, and so $\rank_{\mathbb Z_{2m}}(A_{\overline W})=1$. If $W=\{1,2\}$ or $W=\{1,3\}$ then $A_{\overline W}x=0$ is the same as $mx=0$, which has $m=(2m)^{1-\log_{2m}2}$ solutions in $\mathbb Z_{2m}$, and so $\rank_{\mathbb Z_{2m}}(A_{\overline W})=\log_{2m}2$. Thus,
$$m_{\mathbb Z_{2m}}(A)=\max\left\{\frac{3-1}{3-1+0-1},\frac{2-1}{2-1+1-1},\frac{2-1}{2-1+\log_{2m}2-1}\right\}=\max\{2,1,(\log_{2m}2)^{-1}\}.$$
In particular, for $m\ge2$ we have $m_{\mathbb Z_{2m}}(A)=(\log_{2m}2)^{-1}$. Furthermore, for $m>2$, the value of $m_{\mathbb Z_{2m}}(A)$ is achieved precisely by $W=\{1,2\}$ and $W=\{1,3\}$, and so $A$ is not strictly balanced with respect to~$\mathbb Z_{2m}$.       
\end{examp}

Despite the example above, by imposing some additional restrictions on the $1\times k$ matrix $A$, one can infer that $A$ is strictly balanced with respect to $S$ and, furthermore, $m_S(A)=m(A)=\frac{k-1}{k-2}$.

\begin{examp}\label{examp:mSstrictlyblc}
Let $A$ be a $1\times k$ integer matrix with non-zero entries. Let $S$ be a finite subset of an abelian group $G$. Suppose that $S\neq\{0_G\}$ and that either
\begin{itemize}
\item $S$ is a finite abelian group or
\item $S$ is a power of a finite subset of a field $\mathbb F$.
\end{itemize}
Furthermore, suppose that 
\begin{itemize}
    \item[$(\alpha)$] for every entry $a$ of $A$ and $s\in S\setminus\{0_G\}$ we have $a\cdot s\neq0$. 
\end{itemize} 
Then $A$ is strictly balanced with respect to $S$ and $m_S(A)=m(A)=\frac{k-1}{k-2}$.

Indeed, let $W\subseteq[k]$ be non-empty. If $S$ is a 
power of a finite subset of a field $\mathbb F$ then 
$(\alpha)$ implies $A_W$ is not the $1\times|W|$ zero 
vector in $\mathbb F$ and thus $A_W$ has rank $1$ 
with respect to $\mathbb F$. In particular, 
$\rank_S(A_W)=1$. Similarly, if $S$ is a finite abelian 
group then the number of solutions to $A_Wx=0$ in $S$ 
is precisely $|S|^{|W|-1}$: we can pick the first 
$|W|-1$ entries of $x$ arbitrarily and for each such 
choice we have $1$ choice for the last entry (this 
follows from $(\alpha)$). Thus, we have $\rank_S(A_W)=1$. 
In both cases, we have
$$m_S(A)=\max_{2\le w\le k-1}\left\{\frac{k-1}{k-1+0-1},\frac{w-1}{w-1+1-1}\right\}=\frac{k-1}{k-2}.$$ 
In particular, the value of $m_S(A)$ is achieved precisely when 
$W=[k]$ and so $A$ is strictly balanced with respect to $S$. 
\end{examp}

Note that if a $1\times k$ integer matrix $A$ is strictly balanced then $m(A)=\frac{k-1}{k-2}$. 
Conversely, if $A$ is strictly balanced with respect to $S$, it may not be the case that $m_S(A)=\frac{k-1}{k-2}$. For instance, in Example~\ref{examp:mZ4}, we saw $A=(2\;\; 2\; -2)$ is strictly balanced with respect to $\mathbb Z_4$ but~$m_{\mathbb{Z}_4}(A)=4/3<2$. 

Another observation is that, for $A$ fixed, $m_S(A)$ may change depending on $S$. For example, consider $A=(1\;\; 3\;\; 3)$. From Example~\ref{examp:mSnotstrictlyblc} we have that $m_{\mathbb Z_6}(A)=(\log_6 2)^{-1}\approx 2.58$, while from Example~\ref{examp:mSstrictlyblc} we have $m_{\mathbb Z_2}(A)=\frac{3-1}{2-1}=2$ (since $A$ and $\mathbb Z_2$ satisfy property $(\alpha)$).

There are  cases where $m_S(A)$ is independent of $S$, as long as $S$ is non-trivial. Indeed, let $A$ be a $1\times k$ matrix whose entries are all from $\{1,-1\}$. Then $(\alpha)$ from Example~\ref{examp:mSstrictlyblc} holds for every $S$ which is either a finite abelian group or a power of a finite subset of a field, provided that $S$ includes a non-identity element. In particular, we have $m_S(A)=\frac{k-1}{k-2}$ for any such $S$.

\smallskip

The next result states that $\hat p(A,S)$ and $|S|^{-1/m_S(A)}$ are within a constant factor of each other, as long as we are in a rich setting.

\begin{lemma}\label{lem:pSA}
Let $A$ be an $\ell\times k$ integer matrix and $0<\eps \leq 1$. Suppose that either
\begin{itemize}
\item[(i)] $S$ is a finite abelian group or 
\item[(ii)] $S=L^d$ for some finite subset $L$ of a field and $d\in\mathbb N$.
\end{itemize} 
If $m_S(A)$ is well-defined and $(A,S)$ is $\eps$-rich  then 
$$|S|^{-\frac{1}{m_S(A)}}\le\hat p(A,S)\le(1/\eps^2)|S|^{-\frac{1}{m_S(A)}}.$$
\end{lemma}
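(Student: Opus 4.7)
The plan is to derive both inequalities directly from the rich-case two-sided estimate on projected solutions in Corollary~\ref{col:keycounting}. Write $e(W) := |W|-1+\rank_S(A_{\overline{W}})-\rank_S(A)$ and $f(W) := (|W|-1)/e(W)$, so that $m_S(A)=\max_{|W|\ge 2} f(W)$; the well-definedness of $m_S(A)$ means $e(W)>0$ for all $W$ with $|W|\ge 2$, and the $\eps$-richness hypothesis together with Corollary~\ref{col:keycounting} shows $|\Sol_S^A(W)|>0$ so that $p_W(A,S)$ is well-defined. A brief rewriting of (\ref{eq:key2}) gives
\begin{equation*}
\eps\,|S|^{e(W)} \;\le\; \frac{|\Sol_S^A(W)|}{|S|} \;\le\; |S|^{e(W)},
\end{equation*}
for every non-empty $W\subseteq[k]$. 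Raising this to the power $-1/(|W|-1)$ (permissible since $|W|\ge 2$) yields
\begin{equation*}
|S|^{-1/f(W)} \;\le\; p_W(A,S) \;\le\; \eps^{-1/(|W|-1)}\,|S|^{-1/f(W)}.
\end{equation*}

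For the lower bound in the lemma, I would choose $W^\star$ with $|W^\star|\ge 2$ achieving the maximum $f(W^\star)=m_S(A)$ in (\ref{eq:mGAdef}). Then
\begin{equation*}
\hat p(A,S)\;\ge\; p_{W^\star}(A,S)\;\ge\; |S|^{-1/f(W^\star)}\;=\;|S|^{-1/m_S(A)},
\end{equation*}
as desired. For the upper bound, note that for every $W$ with $|W|\ge 2$, since $f(W)\le m_S(A)$ and $|S|\ge 1$, we have $|S|^{-1/f(W)}\le |S|^{-1/m_S(A)}$, while $\eps^{-1/(|W|-1)}\le \eps^{-1}\le \eps^{-2}$ because $\eps\le 1$ and $|W|-1\ge 1$. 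Combining these,
\begin{equation*}
p_W(A,S)\;\le\;(1/\eps^2)\,|S|^{-1/m_S(A)},
\end{equation*}
and taking the maximum over $W$ yields $\hat p(A,S)\le (1/\eps^2)|S|^{-1/m_S(A)}$.

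There is essentially no obstacle; the argument is purely bookkeeping from the bounds already established in Corollary~\ref{col:keycounting}. The one point that warrants a word of care is checking that $p_W(A,S)$ is well-defined (so that the max in $\hat p$ is taken over a non-empty set of finite positive values), which uses $\eps$-richness to rule out $|\Sol_S^A(W)|=0$. Everything else reduces to the monotonicity $f(W)\le m_S(A)\Rightarrow |S|^{-1/f(W)}\le |S|^{-1/m_S(A)}$ and the elementary inequality $\eps^{-1/(|W|-1)}\le \eps^{-2}$ for $\eps\in(0,1]$ and $|W|\ge 2$.
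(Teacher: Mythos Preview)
Your proof is correct and takes essentially the same approach as the paper: both arguments apply the two-sided bound from Corollary~\ref{col:keycounting} to each $p_W(A,S)$ and then optimise over $W$ using the definition of $m_S(A)$. In fact your upper-bound argument is slightly more direct than the paper's (you bound each $p_W$ individually rather than going via the optimiser $X$), and it actually yields the tighter constant $1/\eps$ before you weaken to $1/\eps^2$ to match the stated lemma.
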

\begin{proof}
For every $Z\subseteq[k]$ with $|Z|\ge2$ let 
$$D(Z):=\frac{|Z|-1-\rank_S(A)+\rank_S(A_{\overline Z})}{|Z|-1}.$$
As $0<\eps \leq 1$, by definition of  $p_Z(A,S)$ and Corollary~\ref{col:keycounting}, we have  that
\begin{equation}\label{eq:pZbounds2}
|S|^{-D(Z)}\stackrel{\eqref{eq:key2}}{\le} p_Z(A,S)\stackrel{\eqref{eq:key2}}{\le}(1/\eps)|S|^{-D(Z)}.
\end{equation}

Pick $X\subseteq[k]$ with $|X|\ge2$ so that the function $D(X)$ is minimised; so $m_S(A)=1/D(X)$. For any $Z\subseteq[k]$ with $|Z|\ge2$, we have 
$$p_X(A,S)\stackrel{\eqref{eq:pZbounds2}}{\ge}|S|^{-D(X)}\ge|S|^{-D(Z)}\stackrel{\eqref{eq:pZbounds2}}{\ge}\eps p_Z(A,S).$$
As $Z$ was arbitrary, it follows that 
$$p_X(A,S)\le\hat p(A,S)\le(1/\eps) p_X(A,S).$$
Combining the above with~\eqref{eq:pZbounds2} yields
$$|S|^{-\frac{1}{m_S(A)}}=|S|^{-D(X)}\stackrel{\eqref{eq:pZbounds2}}{\le}\hat p(A,S)\stackrel{\eqref{eq:pZbounds2}}{\le}(1/\eps^2)|S|^{-D(X)}=(1/\eps^2)|S|^{-\frac{1}{m_S(A)}}.$$
\end{proof}

We can now state the simplified version of the random Rado lemma.

\begin{lemma}[Simplified random Rado lemma]\label{lem:easyRado}
Let $r\ge2$, $k \ge 3$ and $\ell\geq 1$ be integers. Let $(S_n)_{n\in\mathbb N}$ be a sequence where $S_n$ is either a finite abelian group or $S_n=L^d$ for some finite subset $L$ of a field and $d\in\mathbb N$, for every $n\in\mathbb N$. 
Let $A$ be an $\ell \times k$ integer matrix and set $\hat p_n:=\hat p(A,S_n)$. Suppose that 
\begin{itemize}
\item[(C1)] $\hat p_n \to 0$ and $|S_n|\hat p_n \to \infty$ as $n \to \infty$;
\item[(C2)] $(S_n)_{n \in \mathbb N}$ is $(A,r)$-supersaturated;
\item[(C3)] there exists $0<\eps\leq 1$ such that $(A,S_n)$ is $\eps$-rich for every sufficiently large $n\in\mathbb N$; 
\item[(C4)] $(A,S_n)$ is abundant for every sufficiently large $n\in\mathbb N$;
\item[(C5)] given any  $n\in\mathbb N$ there exists $ X_n\subseteq[k]$ with $|X_n|\ge3$  such that $p_{X_n}(A,S_n)=\Omega(\hat p(A,S_n))$ and for every sequence $(W'_n)_{n\in\mathbb N}$ with $W'_n\subset X_n$ and $|W'_n|\ge2$ we have
$$\frac{p_{W'_n}(A,S_n)}{p_{X_n}(A,S_n)}\to0$$
as $n\to\infty$.
\end{itemize}
Then there exist constants $c,C>0$ such that
$$\lim _{n \rightarrow \infty} \mathbb P [ S_{n,p} \text{ is } (A,r)\text{-Rado}]=\begin{cases}
0 &\text{ if } p\leq {c}|S_n|^{-\frac{1}{m_{S_n}(A)}}; \\
 1 &\text{ if } p\geq {C}|S_n|^{-\frac{1}{m_{S_n}(A)}}.\end{cases}$$
\end{lemma}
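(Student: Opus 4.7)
The plan is to deduce Lemma~\ref{lem:easyRado} directly from the random Rado lemma (Lemma~\ref{lem:randomRado}) by verifying that conditions (C1)--(C5) imply conditions (A1)--(A6), and then by using Lemma~\ref{lem:pSA} to translate the threshold from $\hat p(A,S_n)$ to $|S_n|^{-1/m_{S_n}(A)}$. Since $S_n$ is either a finite abelian group or a power of a finite subset of a field in each case, the machinery developed in Section~\ref{sec:6.3} applies to each $S_n$ for $n$ sufficiently large.

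First I would match up the easy conditions. Condition (A1) is literally (C1), while (A2) is literally (C2). For (A3), I would invoke Lemma~\ref{lem:simplifiedA3}: the $\eps$-richness from (C3) yields that $(A,S_n)$ is $(1/\eps)$-extendable, so (A3) holds with $B:=1/\eps$. For (A4), I would invoke Lemma~\ref{lem:spreadedges}: abundance (C4) together with $\eps$-richness (C3) gives that $|\Sol_{S_n}^A(w_0,W,Y)|\le (1/\eps)\cdot|\Sol_{S_n}^A(Y)|/|S_n|$ whenever $|W|=1$, so (A4) holds with $D:=1/\eps$. For (A6), I would use Lemma~\ref{lem:simplifiedA4-5} to bound $|S_n|^2/|\Sol_{S_n}^A(W)|\le 1/\eps$ for every $|W|=2$; since (C1) implies $|S_n|\to\infty$, dividing through by $|S_n|$ establishes the required convergence to zero.

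The only slightly more delicate point is (A5). Here I would take the sequence $(X_n)$ provided by (C5); by hypothesis $|X_n|\ge 3$ and $p_{X_n}(A,S_n)=\Omega(\hat p(A,S_n))$, which is already half of the requirement. For the convergence~\eqref{eq:compatible}, given sequences $(W_n)$ and $(W'_n)$ with $W_n,W'_n\subset X_n$, $|W_n|=2$ and $|W'_n|\ge 2$, Lemma~\ref{lem:simplifiedA4-5} yields
\begin{align*}
\frac{|S_n|^2}{|\Sol_{S_n}^A(W_n)|}\left(\frac{p_{W'_n}(A,S_n)}{p_{X_n}(A,S_n)}\right)^{|W'_n|-1}\le \frac{1}{\eps}\left(\frac{p_{W'_n}(A,S_n)}{p_{X_n}(A,S_n)}\right)^{|W'_n|-1},
\end{align*}
and since $|W'_n|-1\ge 1$ and by (C5) the ratio tends to zero, the whole expression tends to zero. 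Thus (A5) holds.

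With (A1)--(A6) verified, Lemma~\ref{lem:randomRado} produces constants $c',C'>0$ such that the $(A,r)$-Rado property for $S_{n,p}$ has threshold $\hat p_n=\hat p(A,S_n)$. Finally, since (C4) ensures $m_{S_n}(A)$ is well-defined (via Remark~\ref{rmk:m(a)abb}) and (C3) gives $\eps$-richness, Lemma~\ref{lem:pSA} gives
\begin{align*}
|S_n|^{-1/m_{S_n}(A)}\le \hat p(A,S_n)\le (1/\eps^2)|S_n|^{-1/m_{S_n}(A)}
\end{align*}
for $n$ large. Hence replacing $c'$ by $c:=c'\eps^{-2}$ and $C'$ by $C:=C'$ (or vice versa, rescaling appropriately) converts the threshold from $\hat p_n$ into $|S_n|^{-1/m_{S_n}(A)}$ without affecting the statement. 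The proof contains no substantive obstacle beyond careful bookkeeping: Lemma~\ref{lem:randomRado} does all the heavy lifting, and Section~\ref{sec:6.3} has already packaged the reductions needed to discharge the technical hypotheses (A3)--(A6).
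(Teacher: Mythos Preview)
Your proof is correct and follows essentially the same route as the paper: verify (A1)--(A6) from (C1)--(C5) via Lemmas~\ref{lem:simplifiedA3}, \ref{lem:spreadedges} and~\ref{lem:simplifiedA4-5}, apply Lemma~\ref{lem:randomRado}, and then convert the threshold using Lemma~\ref{lem:pSA}. The only imprecision is the final constant bookkeeping---the correct rescaling is $c:=c'$ and $C:=C'/\eps^2$ (since $\hat p_n\ge |S_n|^{-1/m_{S_n}(A)}$ handles the $0$-statement directly, while the upper bound on $\hat p_n$ forces the extra factor into the $1$-statement constant)---but you have flagged this yourself.
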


\begin{remark}\label{rmk:easierRado}
Note that Lemma~\ref{lem:easyRado} can be further simplified if we consider finite abelian groups and powers of finite subsets of fields separately. Namely, if all sets in $(S_n)_{n\in\mathbb N}$ are finite abelian groups then condition (C3) holds since by 
(\ref{eq:rank1}) $(A,S_n)$ is $1$-rich. On the other hand, if all sets in $(S_n)_{n\in\mathbb N}$ are powers of finite subsets of fields and $\rank_{S_n}(A)>0$ for all $n\in\mathbb N$, then condition (C5) follows from conditions (C1), (C3) and (C4) combined with Lemma~\ref{lem:pWXtozero}. Furthermore, in this case,
condition (C4) can be replaced by irredundancy and $3$-partition regularity; see Lemma~\ref{lem:primpliesabundant}.
\end{remark}

If we set $S_n:=[n]$ and let $A$ be an irredundant partition regular matrix, then it is easy to check that (C1) and (C3)--(C5) hold. Theorem~\ref{thm:fgrsupersat} gives (C2), and therefore
Lemma~\ref{lem:easyRado} recovers the random Rado theorem (i.e., Theorems~\ref{radores0} and~\ref{r3}).\footnote{We formally verify this in the more general setting of integer lattices in the proof of Theorem~\ref{thm:ramseynd} in Section~\ref{sec:proofapplications}.}

Recall that Lemma~\ref{lem:randomRado}   confirms the probability threshold essentially depends  on the number of (projected) solutions to $Ax=0$ in $S_n$.
For sequences that satisfy the hypothesis of Lemma~\ref{lem:easyRado}, we see that we can express this phenomenon cleanly in terms of the parameter
$m_{S_n}(A)$. In particular,
if $(S_n)_{n \in \mathbb N}$
is a sequence as in Lemma~\ref{lem:easyRado} then,  for large $n \in \mathbb N$, a typical subset of $S_n$ of size significantly greater than $|S_n|^{1-1/m_{S_n}(A)}$ will be $(A,r)$-Rado, whilst a typical subset of size significantly smaller than $|S_n|^{1-1/m_{S_n}(A)}$ will not be $(A,r)$-Rado.

The proof of Lemma~\ref{lem:easyRado} follows easily from our original random Rado lemma, Lemma~\ref{lem:randomRado}, combined with the results from the previous subsection.

\begin{proof}[Proof of Lemma~\ref{lem:easyRado}]
Let $r\ge2$, $k\ge3$ and $\ell\geq 1$ be integers. Let $(S_n)_{n\in\mathbb N}$ be a sequence where $S_n$ is either a finite abelian group or $S_n=L^d$ for some finite subset $L$ of a field and $d\in\mathbb N$, for every $n\in\mathbb N$. Suppose that conditions (C1)--(C5) hold. First we show that conditions (A1)--(A6) hold too.

Conditions (A1) and (A2) are identical to (C1) and (C2) respectively. Condition (A3) follows from condition (C3) and Lemma~\ref{lem:simplifiedA3}. 

Pick arbitrary $W\subset Y\subseteq[k]$ with $|W|=1$. As (C3) and (C4) hold, we can apply Lemma~\ref{lem:spreadedges} and deduce that $|\Sol_{S_n}^A(w_0,W,Y)|\le |\Sol_{S_n}^A(Y)|/(\eps|S_n|)$ for every sufficiently large $n\in\mathbb N$ and every $w_0\in S_n$. This immediately implies condition (A4).

As (C3) and (C4) hold, we can apply Lemma~\ref{lem:simplifiedA4-5} to $A$ and $S_n$, for any sufficiently large $n\in\mathbb N$, to obtain the following: for any $W \subseteq [k]$ with $|W|=2$, we have
$$\frac{|S_n|^2}{|\Sol_{S_n}^A(W)|}\le\frac{1}{\eps}.$$
Combining the above with conditions (C1) and (C5) yields conditions (A6) and (A5) respectively.

\smallskip

As conditions (A1)--(A6) hold, by Lemma~\ref{lem:randomRado} there exist constants $c_0,C_0>0$ such that
\begin{equation}\label{eq:step1}
\lim _{n \rightarrow \infty} \mathbb P [ S_{n,p} \text{ is } (A,r)\text{-Rado}]=\begin{cases}
0 &\text{ if } p\leq {c_0}\cdot\hat p(A,S_n); \\
 1 &\text{ if } p\geq {C_0}\cdot\hat p(A,S_n).\end{cases}
 \end{equation}

Condition (C4) implies $m_{S_n}(A)$ is well-defined for every sufficiently large $n\in\mathbb N$ (see, e.g., Remark~\ref{rmk:m(a)abb}). Condition (C3) combined with Lemma~\ref{lem:pSA} implies that, for sufficiently large $n \in \mathbb N$,
\begin{equation}\label{eq:step2}
|S_n|^{-\frac{1}{m_{S_n}(A)}}\le\hat p(A,S_n)\le(1/\eps^2)|S_n|^{-\frac{1}{m_{S_n}(A)}}.
\end{equation}
Let $C:=(1/\eps^2)C_0$ and $c:=c_0$. Combining~\eqref{eq:step1} and~\eqref{eq:step2} yields
$$\lim _{n \rightarrow \infty} \mathbb P [ S_{n,p} \text{ is } (A,r)\text{-Rado}]=\begin{cases}
0 &\text{ if } p\leq {c}|S_n|^{-\frac{1}{m_{S_n}(A)}}; \\
 1 &\text{ if } p\geq {C}|S_n|^{-\frac{1}{m_{S_n}(A)}}.
 \end{cases}$$
\end{proof}

\section{The general $1$-statement and $0$-statement for hypergraphs}\label{sec:hypergraphs}

In this section, we formulate our general random Ramsey $1$-statement and $0$-statement for hypergraphs. We combine both statements in a single result (Theorem~\ref{thm:mainramsey}), but first we provide some definitions and motivation.

\subsection{Basic definitions for hypergraphs}

\begin{define}[Hypergraphs]
Let $k\in\mathbb N$. A {\it $k$-uniform hypergraph $\mathcal H$} is a pair $(V(\mathcal H),E(\mathcal H))$ where $V(\mathcal H)$ is a finite set and $E(\mathcal H)$ is a set of \emph{unordered} $k$-tuples \mbox{$\{v_1,\dots,v_k\} \in V(\mathcal H)^k$} with $v_i\ne v_j$ for $i\ne j$. A {\it $k$-uniform ordered hypergraph $\mathcal H$} is a pair $(V(\mathcal H),E(\mathcal H))$ where $V(\mathcal H)$ is a finite set and $E(\mathcal H)$ is a set of \emph{ordered} $k$-tuples $(v_1,\dots,v_k)\in V(\mathcal H)^k$ with $v_i\ne v_j$ for $i\ne j$.    
\end{define}

$V(\mathcal H)$ and $E(\mathcal H)$ are respectively the {\it vertex set} and the {\it edge set} of $\mathcal H$. We write $v(\mathcal H):=|V(\mathcal H)|$ and $e(\mathcal H):=|E(\mathcal H)|$. Next, we define the analogues of the notions of {\it partition regularity} and {\it supersaturation} for hypergraphs and ordered hypergraphs.

\begin{define}[$r$-Ramsey, $(r,\eps)$-supersaturated and $r$-supersaturated]\label{ramseydef}
Let $\mathcal H$ be an (ordered) hypergraph, $r\in\mathbb N$ and $\eps>0$. We say $\mathcal H$ is {\it $r$-Ramsey} if every $r$-colouring of $V(\mathcal H)$ yields a monochromatic edge in $E(\mathcal H)$. Similarly, $\mathcal H$ is {\it $(r,\eps)$-supersaturated} if every $r$-colouring of $V(\mathcal H)$ yields at least $\eps e(\mathcal H)$ monochromatic edges.
We say that a sequence $(\mathcal{H}_n)_{n \in \mathbb N}$ of (ordered) hypergraphs is \emph{$r$-supersaturated} if there exists $\eps>0$ and $n_0 \in \mathbb N$ such that for all $n \geq n_0$, $\mathcal{H}_n$ is $(r,\eps)$-supersaturated.
\end{define}

Before proceeding further, we give an informal explanation of how a Ramsey problem can be rephrased in terms of (ordered) hypergraphs. Say $\mathcal F$ is some discrete object (e.g., a graph, an abelian group etc.) over a ground set of elements. Given a colouring of the elements in $\mathcal F$, we are interested in finding a monochromatic copy of a certain structure $F$ in $\mathcal F$. We consider the (ordered) hypergraph $\mathcal H$ whose vertex set is the set of elements of $\mathcal F$ and the edge set is the set of copies of $F$ in $\mathcal F$. Clearly, the colouring of the elements of $\mathcal F$ induces a colouring of the vertices of $\mathcal H$. In particular, every monochromatic copy of $F$ in $\mathcal F$ corresponds to a monochromatic edge in $\mathcal H$ and vice versa. Consider the following example, where $\mathcal F:=[n]$ and $F$ is a $3$-distinct solution to the equation $x+y=z$. 

\begin{examp}\label{exp:hypergraph}
Any $2$-colouring of $[n]$ yields a monochromatic $3$-distinct solution to the equation $x+y=z$ as long as $n$ is sufficiently large ($A$). In fact, Theorem~\ref{thm:fgrsupersat} implies  that there exists $c>0$ so that if $n$ is sufficiently large, then  any $2$-colouring of $[n]$ yields at least $cn^2$ $3$-distinct
monochromatic solutions\footnote{While the statement of Theorem~\ref{thm:fgrsupersat}  does not insist that the solutions are $3$-distinct, since there are at most $O(n)$ solutions to $x+y=z$ which are not $3$-distinct, it is easy to obtain the result as stated here.}  ($B$).

Consider the $3$-uniform ordered hypergraph $\mathcal H$ with $V(\mathcal H)=[n]$ and $E(\mathcal H)=\{(x,y,z):x+y=z\}$. Statement $(A)$ is equivalent to $\mathcal H$ being $2$-Ramsey as long as $v(\mathcal H)$ is sufficiently large while statement ($B$) is equivalent to $\mathcal H$ being $(2,c')$-supersaturated for some constant $c'>0$, provided  $v(\mathcal H)$ is sufficiently large.
\end{examp}
\smallskip

In practice, we can usually work in the setting of \emph{unordered} hypergraphs.

\begin{define}
Given an ordered hypergraph $\mathcal H$, the {\it unordered restriction of $\mathcal H$} is the hypergraph $\mathcal H'$ obtained from $\mathcal H$ by ignoring the order of the edges in $E(\mathcal H)$. Formally, $V(\mathcal H'):=V(\mathcal H)$ and
$$E(\mathcal H'):=\{\{v_1,\dots,v_k\}:(v_1,\dots,v_k) \in E(\mathcal H)\}.$$
\end{define}
Note that we do not allow multiple edges here; that is, if two ordered edges $e_1, e_2$ become the same unordered edge $e$, then we only keep one copy of $e$.

\begin{remark}\label{remark:restriction}
By definition, a $k$-uniform ordered hypergraph $\mathcal H$ is $r$-Ramsey if and only if the unordered restriction $\mathcal H'$ of $\mathcal H$ is also $r$-Ramsey. Similarly, a sequence of $k$-uniform ordered hypergraphs $(\mathcal H_n)_{n\in\mathbb N}$ is $r$-supersaturated if and only if the sequence $(\mathcal H'_n)_{n\in\mathbb N}$ is $r$-supersaturated, where $\mathcal H'_n$ is the unordered restriction of $\mathcal H_n$.
\end{remark}

Next, we define  notions of maximum degree for (ordered) hypergraphs. Let $\mathcal H$ be a $k$-uniform ordered hypergraph and let $W\subseteq[k]$, say $W=\{i_1,i_2,\dots,i_{|W|}\}$ where $i_1<i_2<\dots <i_{|W|}$. The \emph{restriction} $\mathcal H_W$ of $\mathcal H$ is the $|W|$-uniform ordered hypergraph with $V(\mathcal H_W):=V(\mathcal H)$ and
$$E(\mathcal H_W):=\{(x_{i_1},\dots,x_{i_{|W|}}):\exists\,(y_1,\dots,y_k)\in E(\mathcal H)\text{ with $y_{i_s}=x_{i_s}$ for each $s\in\{1,\dots,|W|\}$}\}.$$
Again, we do not allow multiple edges.
Given sets $W\subseteq Y\subseteq [k]$, 
we define $\Delta_W(\mathcal{H}_Y)$ to be the maximum number of edges in $\mathcal{H}_Y$ that restrict to the same edge in $\mathcal{H}_W$. More precisely, 
given any ordered tuple $f \in E(\mathcal{H}_W)$,
let $E_{\mathcal{H}}(f,Y)$ be the set of ordered tuples
$e \in E(\mathcal{H}_Y)$
so that $f$ equals the ordered  tuple obtained from $e$ by  deleting  the entries of $e$ in the positions corresponding to $Y\setminus W$.
Then we define
$$\Delta_W(\mathcal{H}_Y):= \max_{f \in E(\mathcal{H}_W)} | E_{\mathcal{H}}(f,Y)
|.$$
Let $\mathcal H'$ be a $k$-uniform hypergraph. For $i\in[k]$, we set
$$\Delta_i(\mathcal{H'}):=\max_{f\subseteq V(\mathcal H'), \, |f|=i}|\{e\in E(\mathcal H'):f\subseteq e\}|.$$
Note that if $\mathcal H'$ is the unordered restriction of an ordered hypergraph $\mathcal H$ then
\begin{align}\label{eq:unordered}
\frac{1}{k!} \cdot \max_{\stackrel{W \subseteq [k]}{|W|=i}} \Delta_W(\mathcal{H}) \leq  
\frac{1}{(k-i)!} \cdot \max_{\stackrel{W \subseteq [k]}{|W|=i}} \Delta_W(\mathcal{H})
\leq
\Delta_i(\mathcal{H'}) \leq 
 \binom{k}{i} \cdot i! \cdot \max_{\stackrel{W \subseteq [k]}{|W|=i}} \Delta_W(\mathcal{H})
\leq
k! \cdot \max_{\stackrel{W \subseteq [k]}{|W|=i}} \Delta_W(\mathcal{H}).
\end{align}
The second inequality in (\ref{eq:unordered}) uses that, given any edge $e$ in $\mathcal H'$,
there are $(k-i)!$ ordered $k$-tuples that have $e$ as their  underlying (unordered) set  and have the entries in $i$ of their positions fixed.

For an (ordered) hypergraph $\mathcal{H}$,
we write $\mathcal{H}_p^v$ to denote the hypergraph obtained from $\mathcal{H}$ by including each \emph{vertex} of $\mathcal{H}$ with probability $p$ independently of all other vertices, while including each edge of $\mathcal{H}$ if and only if all of its vertices are kept.
We use the superscript $v$ to remind the reader that it is the vertices that we keep with probability $p$ rather than edges (as in the binomial random hypergraph).

\subsection{Heuristic for the probability threshold}\label{sec:heur}
The random Ramsey-type results we covered in 
Section~\ref{sec:primes} (for the primes), in Section~\ref{sec:graphsintegers} (for graphs and integers) and in Section~\ref{sec:easyapplications} (for  abelian groups) are formulated as follows: given a sequence $(\mathcal F_n)_{n \in \mathbb N}$ of objects that exhibit a Ramsey property in a robust way (i.e., supersaturation), there is probability threshold $\hat p=\hat p(n)$ such that, as $n$ tends to infinity, the random binomial subset $\mathcal F_{n,p}$ retains the Ramsey property w.h.p. if $p=\omega(\hat p)$ and loses it w.h.p. if $p=o(\hat p)$. 
Our general $1$-statement and $0$-statement for hypergraphs are formulated in a similar fashion. 

\begin{define}[Probability threshold for hypergraphs]\label{defpt}
Given a sequence $(\mathcal H_n)_{n\in\mathbb N}$ of $k$-uniform ordered hypergraphs and $W \subseteq [k]$, let
\begin{align}\label{eq:keyprobability}
f_{n,W}:= \left(\frac{e(\mathcal{H}_{n,W})}{v(\mathcal{H}_{n})}\right)^{-\frac{1}{|W|-1}} 
\quad \text{and} \quad
\hat p(\mathcal{H}_n) := \max_{\stackrel{W \subseteq [k]}{|W| \geq 2}} f_{n,W}.
\end{align}
\end{define}
Note that in Definition~\ref{defpt} we slightly abuse notation, writing $\mathcal{H}_{n,W}$ for the restriction $\mathcal H_{n \, W}$ of $\mathcal H_n$.

Our choice of $\hat p(\mathcal{H}_n)$ follows precisely the same heuristic as the probability threshold for the random Ramsey problem for graphs and the integers (see Section~\ref{sec:graphsintegers}). 
Namely, consider the expected number of vertices and edges in the random hypergraph $\mathcal{H}^v_{n,p}$. 
Assuming that the edges in $\mathcal{H}^v_{n,p}$ are distributed uniformly in some way, if $\mathbb E(e(\mathcal{H}^v_{n,p}))$ is much less than $\mathbb E(v(\mathcal{H}^v_{n,p}))$ then intuitively the edges in $\mathcal{H}^v_{n,p}$ should be fairly spread out. Therefore, one should be able to $2$-colour the vertices of $\mathcal{H}^v_{n,p}$ without creating monochromatic edges. 

However, similarly to the random Ramsey problem for graphs and the integers, it may be the case that the above reasoning is applicable to a `lower' level in the following sense. 
For any $W\subseteq [k]$, a $2$-colouring that does not yield monochromatic edges in $\mathcal H^v_{n,W,p}$ also does not yield monochromatic edges in $\mathcal H^v_{n,p}$. By the previous reasoning, if $\mathbb{E}(e(\mathcal H^v_{n,W,p}))$ is much less than $\mathbb E(v(\mathcal H^v_{n,W,p}))$ then one should be able to $2$-colour the vertices of $\mathcal H^v_{n,W,p}$ while avoiding any monochromatic edge in $\mathcal H^v_{n,W,p}$, and therefore in  $\mathcal H^v_{n,p}$. For any $W\subseteq[k]$ with $|W|\geq 2$, we have
$$\mathbb E(v(\mathcal H^v_{n,W,p}))=pv(\mathcal H_{n})\qquad \text{and} \qquad \mathbb E(e(\mathcal H^v_{n,W,p}))=p^{|W|}e(\mathcal H_{n,W}).$$
In particular, $\mathbb E(e(\mathcal H^v_{n,W,p}))\leq \mathbb E(v(\mathcal H^v_{n,W,p}))$ if and only if 
$$p\leq \left(\frac{e(\mathcal H_{n,W})}{v(\mathcal H_{n})}\right)^{-\frac{1}{|W|-1}}=f_{n,W}.$$
It is then natural to let $\hat p(\mathcal{H}_n)$ be equal to the largest of the $f_{n,W}$'s.

\subsection{General $1$- and $0$-statements for hypergraphs}

We are now ready to state our general $1$-statement and $0$-statement for hypergraphs.

\begin{thm}\label{thm:mainramsey}
Let  $r,k \geq 2$ be integers, and let $b>0$.
Let $(\mathcal{H}_n)_{n \in \mathbb{N}}$ be a sequence of $k$-uniform ordered hypergraphs and set $\hat p_n:=\hat p(\mathcal{H}_n)$ as given by~(\ref{eq:keyprobability}). Let (P1)--(P5) be the following properties. 
\begin{enumerate}
\item[(P1)] Probabilities: $\hat p_n \to 0$ and $\hat p_n v(\mathcal{H}_n) \to \infty$ as $n \to \infty$.
\item[(P2)] Supersaturation: $(\mathcal{H}_n)_{n \in \mathbb{N}}$ is $r$-supersaturated.
\item[(P3)] Bounded degree: 
for any $W \subseteq Y \subseteq [k]$, and any sufficiently large $n \in \mathbb N$  we have
$\Delta_W(\mathcal{H}_{n,Y}) \leq b \frac{e(\mathcal{H}_{n,Y})}{e(\mathcal{H}_{n,W})}$. 

\item[(P4)] Bounded $1$-degree: for any $W \subset Y \subseteq [k]$ with $|W|=1$ and any sufficiently large $n \in \mathbb N$, we have
$\Delta_W(\mathcal{H}_{n,Y}) \leq b \frac{e(\mathcal{H}_{n,Y})}{v(\mathcal{H}_{n})}$.
\item[(P5)] Bounded $2$-degree: there exists a sequence $(X_n)_{n\in\mathbb N}$ of subsets $X_n\subseteq[k]$ with $|X_n|\ge3$ for all $n\in\mathbb N$ such that
\begin{equation}\label{eq:P4a}
f_{n,X_n}=\Omega(\hat p_n)
\end{equation}
and 
\begin{equation}\label{eq:P4b}
\Delta_{W_n}(\mathcal{H}_{n,X_n}) \cdot \Delta_{W'_n}(\mathcal{H}_{n,X_n})\cdot e(\mathcal{H}_{n,X_n}) \cdot (\hat p_n)^{3|X_n|-2-|W'_n|} = o(1)
\end{equation} 
for any sequences $(W_n)_{n\in\mathbb N}$ and $(W'_n)_{n\in\mathbb N}$ with $W_n,W'_n\subset X_n$, $|W_n|=2$ and $|W'_n|\ge2$.
\end{enumerate}

\medskip

If (P1)--(P4) hold, then there exists a constant $C>0$ such that if $q_n \geq C\hat p_n$ then
\begin{align*}
\lim_{n \rightarrow \infty}  \mathbb{P}[\mathcal{H}^v_{n,q_n} \text{ is $r$-Ramsey}]
= 1.
\end{align*}

If (P1) and (P3)--(P5) hold, then there exists a constant $c>0$ such that if $q_n \leq c\hat p_n$ then
\begin{align*}
\lim_{n \rightarrow \infty}  \mathbb{P}[\mathcal{H}^v_{n,q_n} \text{ is $r$-Ramsey}]
= 0.
\end{align*}
\end{thm}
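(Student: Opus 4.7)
My plan is to prove the two implications by rather different techniques: the hypergraph container method for the $1$-statement, and a probabilistic construction with a local repair step for the $0$-statement.

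\textbf{1-statement (using (P1)--(P4)).} The first step is to apply an iterated hypergraph container lemma (Balogh--Morris--Samotij, Saxton--Thomason) to $\mathcal{H}_n$. Conditions (P3) and (P4) are tailored to supply the codegree bounds required by this lemma at scale $\tau \asymp \hat p_n$, producing a family $\mathcal{C}_n$ of containers with: every independent set of $\mathcal{H}_n$ contained in some $C \in \mathcal{C}_n$; $e(\mathcal{H}_n[C]) \leq \eta\, e(\mathcal{H}_n)$ for each $C$, with $\eta > 0$ to be chosen; and $\log |\mathcal{C}_n|$ of order $\hat p_n v(\mathcal{H}_n)$. Choosing $\eta$ sufficiently small compared to the supersaturation constant $\varepsilon$ from (P2) divided by $r$, I would argue that any $r$-fold union $C_1 \cup \cdots \cup C_r$ must omit at least $\delta v(\mathcal{H}_n)$ vertices---otherwise, extending this to an $r$-colouring of $V(\mathcal{H}_n)$ by giving omitted vertices arbitrary colours would produce fewer than $\varepsilon\, e(\mathcal{H}_n)$ monochromatic edges, contradicting (P2). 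Since every Ramsey-defeating colouring of $V(\mathcal{H}^v_{n,q_n})$ forces $V(\mathcal{H}^v_{n,q_n})$ to lie inside some such union, a union bound yields
\[\mathbb{P}[\mathcal{H}^v_{n,q_n}\text{ not $r$-Ramsey}]\leq|\mathcal{C}_n|^r(1-q_n)^{\delta v(\mathcal{H}_n)},\]
which tends to $0$ for $q_n \geq C \hat p_n$ with $C$ large, using (P1).

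\textbf{0-statement (using (P1) and (P3)--(P5)).} I would pass to the restriction $\mathcal{H}_{n,X_n}$, where $X_n$ is the witness set from (P5); this restriction is equivalent to $\mathcal{H}_n$ for Ramsey purposes, and $f_{n,X_n}=\Omega(\hat p_n)$. An $r$-colouring is then built in two stages. First, sample a uniformly random $r$-colouring $\sigma$ of $V(\mathcal{H}^v_{n,q_n})$: the expected number of monochromatic edges equals $r^{1-|X_n|} e(\mathcal{H}_{n,X_n}) q_n^{|X_n|}$, which by $f_{n,X_n}=\Omega(\hat p_n)$ and $q_n\leq c\hat p_n$ is $O(c^{|X_n|-1})\cdot q_n v(\mathcal{H}_n)$, i.e.\ $o(\mathbb{E}[|V(\mathcal{H}^v_{n,q_n})|])$ for small $c$. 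Second, flag one vertex of each remaining monochromatic edge and recolour it; properties (P3) and (P4) ensure that w.h.p.\ no vertex sits in too many edges of $\mathcal{H}^v_{n,q_n}$, so an unobstructed colour always exists, while the triple-count factor in (P5)---whose exponent $3|X_n|-2-|W'_n|$ precisely matches the number of random vertices needed for a triple $(e_1,e_2,e_3)$ of edges with $e_1,e_2$ overlapping in a coordinate $2$-set and $e_1,e_3$ overlapping in a $|W'_n|$-set to appear in $\mathcal{H}^v_{n,q_n}$---supplies the concentration (via Chebyshev or a Lovász local lemma style argument) needed to guarantee that the repair does not cascade. This mirrors the strategy of Hancock--Treglown~\cite{ht}, adapted from the arithmetic setting to arbitrary hypergraphs.

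\textbf{Main obstacle.} The core technical challenge lies in the $0$-statement: the naive expectation argument yields only a colouring with a sublinear but nonzero number of monochromatic edges, and repairing them can in principle spawn fresh monochromatic edges. The whole point of condition (P5) is to rule out such cascades: it bounds the expected number of overlapping triples in $\mathcal{H}^v_{n,q_n}$ that could cause a repair at one edge to damage another, and its ``$o(1)$'' is exactly the quantitative form needed to show that w.h.p.\ the repair process terminates after one pass. Matching the abstract conditions (P3)--(P5) to the precise inputs demanded by both the container lemma and the repair procedure, and carrying the constants through cleanly, is the remaining bookkeeping.
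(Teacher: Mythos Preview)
Your $1$-statement strategy is essentially the paper's: containers at scale $\hat p_n$ via (P3)--(P4), a supersaturation argument showing any $r$-tuple of containers misses a $\delta$-fraction of vertices, and a union bound. That part is fine.

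Your $0$-statement, however, diverges sharply from the paper and has a real gap. The paper does \emph{not} use random colouring plus repair. Instead, it argues structurally: if $\mathcal{H}^v_{n,X_n,q_n}$ were $2$-Ramsey, take a Rado-minimal subhypergraph $H$. A short combinatorial argument (every edge, every vertex must be ``matched'' by another edge meeting it in exactly that vertex) forces $H$ to contain one of a short finite list of configurations --- long simple paths, spoiled paths, fairly simple cycles with handles, faulty simple paths, bad triples, or bad tight paths. A first-moment calculation then shows none of these appear in $\mathcal{H}^v_{n,X_n,q_n}$ w.h.p.; condition (P5) is exactly what makes the key product $P_0\cdot P_2\cdot P_{\geq 2}=o(1)$, which is the bound needed for the configurations involving two overlapping ``bad'' edges. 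So (P5) is used for a Markov-type count of forbidden substructures in a minimal Ramsey hypergraph, not to control a repair cascade.

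Your repair scheme, by contrast, is not justified as written. After random $r$-colouring the expected number of monochromatic edges is a small constant times $q_n v(\mathcal{H}_n)$, which still tends to infinity; you must genuinely fix all of them. With $r=2$, recolouring a vertex $v$ destroys its current monochromatic edge but creates a new one through $v$ whenever some other edge at $v$ had all remaining vertices of the opposite colour --- and the number of edges at $v$ in $\mathcal{H}^v_{n,X_n,q_n}$, while $O(1)$ in expectation, is not $O(1)$ uniformly (the maximum degree is typically unbounded). A direct Lov\'asz Local Lemma needs the maximum degree in the random hypergraph to be bounded by a constant depending only on $r$ and $|X_n|$, which you have not established and which does not follow from (P3)--(P5) alone without an additional pruning step. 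Your reading of (P5) as controlling ``triples that cause repair to cascade'' is suggestive but does not match what the condition actually delivers, and the oscillation between ``Chebyshev'' and ``local lemma'' signals that the mechanism is not pinned down. The structural route in the paper sidesteps all of this: it never constructs a colouring explicitly, only shows that the absence of certain small subhypergraphs forces $2$-colourability.
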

All of our new random Ramsey-type results are essentially corollaries of Theorem~\ref{thm:mainramsey}. Indeed, we use Theorem~\ref{thm:mainramsey} to prove the random Rado lemma (Lemma~\ref{lem:randomRado}) which we have in turn used to prove the simplified random Rado lemma (Lemma~\ref{lem:easyRado}). Further, the proofs of our new results for the primes (Theorem~\ref{rvdwp}), integer lattices (Theorem~\ref{thm:ramseynd}), groups (Theorem~\ref{thm:exponent}) and finite vector spaces 
(Theorems~\ref{thm:fields} and~\ref{thm:fields2})
all apply one of these two random Rado lemmas (see Section~\ref{sec:proofapplications}).
This in turn implies that the random Rado theorem
(Theorems~\ref{radores0} and~\ref{r3}) is a consequence of Theorem~\ref{thm:mainramsey}.
As we  describe in Appendix~\ref{seca2}, the $1$-statement of the random Ramsey theorem (Theorem~\ref{randomramsey})
can also be deduced from Theorem~\ref{thm:mainramsey}.

The reader should therefore take away the following philosophy.
Theorem~\ref{thm:mainramsey} is the most general of our black box results, and its applications are not just restricted to systems of linear equations. 
On the other hand, the random Rado lemma is more readily applicable to the setting of Ramsey properties in arithmetic structures. 
The simplified random Rado lemma has the most restricted hypothesis of these three results, but has the advantage that it explicitly produces the corresponding probability threshold.

\smallskip

The proof of Theorem~\ref{thm:mainramsey} is given in Section~\ref{sec:proofmainramsey}.
The conditions required for the $1$-statement to hold, namely (P1)--(P4), are  natural. 
Condition (P1) is needed to ensure the expected value of $v(\mathcal H^v_{n,p})$ tends to infinity. In Section~\ref{subsec:supersat}, we already discussed the notion of supersaturation (condition (P2)) and how it is typically required in random Ramsey-type results.
(P3) and (P4) are recurring assumptions when applying the hypergraph container method, which 
 we indeed use to prove Theorem~\ref{thm:mainramsey}. 
They are also  natural conditions to make to ensure our heuristic described in Section~\ref{sec:heur} is valid. Indeed, (P3) and (P4) ensure that edges are fairly spread out in $\mathcal H_n$ and its associated restrictions 
 $\mathcal H_{n,Y}$.

 On the other hand, condition (P5) is more artificial: it arises naturally in the setting of abelian groups but fails to hold in general. Indeed, while the $0$-statement of the random Rado lemma (Lemma~\ref{lem:randomRado}) follows from Theorem~\ref{thm:mainramsey}, the $0$-statement for random Ramsey theorem (Theorem~\ref{randomramsey}) does not. 
In Section~\ref{sec:proofmainramsey} we explain how the $0$-statement of Theorem~\ref{thm:mainramsey} follows from the combination of a {\it deterministic lemma}, stating that certain substructures must arise in the hypergraph assuming it is $r$-Ramsey, and a {\it probabilistic lemma}, stating that such structures do not appear w.h.p. The proof of the latter consists of bounding the expected number of the forbidden structures so that the expectation tends to $0$ as $n$ goes to infinity. Condition (P5) is precisely designed to achieve these bounds.  

Returning to Theorem~\ref{randomramsey}, 
its $0$-statement is proved by using a much stronger deterministic lemma than what we use here. 
In particular, this statement says that graphs with density at most $m_2(H)$ can be $2$-coloured so to avoid a monochromatic copy of $H$. 
Using this strong result, 
one can prove a suitable corresponding probabilistic lemma without needing (P5) to be satisfied.

\smallskip

Note that the use of ordered hypergraphs in Theorem~\ref{thm:mainramsey} is only really necessary for the $0$-statement.
Indeed, we will deduce the $1$-statement of Theorem~\ref{thm:mainramsey} very quickly from a theorem for unordered hypergraphs, Theorem~\ref{thm:general1statement}. The proof of Theorem~\ref{thm:general1statement} is obtained via the method of containers, 
which was first formalised in the setting of hypergraphs by Balogh, Morris and Samotij~\cite{bms} and independently by Saxton and Thomason~\cite{st}. 
The use of containers is now standard in proofs of $1$-statements for various Ramsey-type problems, see, e.g.,~\cite{AP,bms,bckmn,sharp,fkssnew,asymmramsey,hst,mns,ns,sp}.
We emphasise that the proof of Theorem~\ref{thm:general1statement} is not particularly difficult. 
Indeed, a researcher familiar with the container method would be  able to come up with such a proof,
and in particular, our proof here is an easy adaption of the short proof of the $1$-statement of Theorem~\ref{randomramsey} given by Nenadov and Steger~\cite{ns}.
The proof of the $0$-statement of Theorem~\ref{thm:mainramsey}  adapts  a recent argument of the second and third authors~\cite{ht}.

\smallskip

There are several results in the literature of a similar flavour to Theorem~\ref{thm:mainramsey}. Indeed,
Friedgut, R\"odl and Schacht proved a similar  result to our $1$-statement;  see~\cite[Theorem 2.5]{frs}.
In their hypothesis though, they require 
the sequence $(\mathcal{H}_n)_{n \in \mathbb{N}}$ to be $R$-supersaturated for some $R$ much larger than $r$.
Furthermore, Aigner-Horev and Person~\cite[Theorem 2.8]{AP}  proved an \emph{asymmetric} version of the $1$-statement of Theorem~\ref{thm:mainramsey}, 
where one considers a sequence of $r$-tuples of hypergraphs $(\mathcal{H}_{n,1},\dots,\mathcal{H}_{n,r})_{n \in \mathbb{N}}$ on the same vertex set $V_n$ and ask for the probability threshold such that whenever the vertices are $r$-coloured, there is some $i \in [r]$ such that there is an edge in $ \mathcal{H}^v_{n,i,p}$ which is monochromatic in colour $i$.
Note that their result requires some rather technical conditions on the $\mathcal{H}_{n,i}$ which are not necessary in the symmetric setting.

Finally, in their recent paper on sharp Ramsey thresholds, Friedgut, Kuperwasser, Samotij and Schacht~\cite[Theorem 2.1]{fkssnew} gave general criteria for a sequence of hypergraphs $(\mathcal{H}_n)_{n \in \mathbb{N}}$ to have a sharp threshold for being $r$-Ramsey. 
The threshold function in their theorem is $\Theta(f_{n,[k]})$, which is the value of our $\hat p_n$ in the case where $W=[k]$ is the maximiser of the $f_{n,W}$'s: note that this is indeed the case when considering the Schur property or sets avoiding monochromatic $k$-APs in $\mathbb{Z}_n$; as mentioned at the end of Section~\ref{sec:graphsintegers}, these are two of the specific settings for which sharp thresholds are obtained in~\cite{fkssnew}.

\section{Proof of Theorem~\ref{thm:mainramsey}}\label{sec:proofmainramsey}

\subsection{Proof of the $1$-statement of Theorem~\ref{thm:mainramsey}}
In this subsection we state a general $1$-statement, Theorem~\ref{thm:general1statement} for \emph{hypergraphs}. 
We then show that the $1$-statement for ordered hypergraphs (Theorem~\ref{thm:mainramsey}) follows immediately from it. We prove Theorem~\ref{thm:general1statement} in Section~\ref{genproof}.

\subsubsection{The Ramsey property in  supersaturated hypergraphs}\label{subsec:1-statement-unordered}

\begin{thm}\label{thm:general1statement}
Let $r,k \geq 2$ be integers and let $c>0$. 
Suppose that $(\mathcal{H}_n)_{n \in \mathbb{N}}$ is a sequence of $k$-uniform hypergraphs. Let $(p_n)_{n\in\mathbb N}$ with $p_n\in(0,1)$ be such that $p_n \to 0$ and $p_nv(\mathcal{H}_n) \to \infty$ as $n \to \infty$,
and for each sufficiently large $n \in \mathbb{N}$ and each $\ell \in [k]$, we have
$$\Delta_{\ell}(\mathcal{H}_n) \leq c (p_n)^{\ell-1} \frac{e(\mathcal{H}_n)}{v(\mathcal{H}_n)}.$$
Further, suppose that $(\mathcal{H}_n)_{n \in \mathbb{N}}$ is $r$-supersaturated.
Then there exists $C \geq 1$ such that for $q_n \geq C p_n$,  
$$ \lim_{n \to \infty} \mathbb{P} [\mathcal{H}^v_{n,q_n} \text{ is $r$-Ramsey}]=1.$$
\end{thm}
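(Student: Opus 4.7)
The plan is to mimic the short proof of the random Ramsey theorem due to Nenadov and Steger~\cite{ns}, which is a textbook application of the hypergraph container method of Balogh--Morris--Samotij~\cite{bms} and Saxton--Thomason~\cite{st}. Let $\varepsilon>0$ be the constant supplied by the $r$-supersaturation hypothesis and fix $\delta:=\varepsilon/(3r)$. The co-degree bound $\Delta_\ell(\mathcal H_n)\le c\,p_n^{\ell-1}e(\mathcal H_n)/v(\mathcal H_n)$ is precisely the hypothesis the container theorem requires with parameter $\tau=Kp_n$ for some constant $K=K(c,k,\delta)$, so applying that theorem yields a family $\mathcal F$ of fingerprint--container pairs $(T,F(T))$ satisfying $T\subseteq F(T)\subseteq V(\mathcal H_n)$, $|T|\le\tau v(\mathcal H_n)$, $e(\mathcal H_n[F(T)])\le\delta e(\mathcal H_n)$, and such that every independent set $I$ of $\mathcal H_n$ contains some $T\in\mathcal F$ with $I\subseteq F(T)$.

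Next I would establish a purely deterministic lemma: for every $r$-tuple $(T_1,\dots,T_r)\in\mathcal F^r$, the uncovered set $D:=V(\mathcal H_n)\setminus\bigcup_i F(T_i)$ satisfies $|D|\ge\alpha v(\mathcal H_n)$, where $\alpha:=(\varepsilon-r\delta)/c>0$. To prove this, $r$-colour $V(\mathcal H_n)$ by sending each $v$ to the smallest $i$ with $v\in F(T_i)$, defaulting the vertices of $D$ to colour $1$. On one hand, $r$-supersaturation produces at least $\varepsilon\, e(\mathcal H_n)$ monochromatic edges; on the other hand, every monochromatic edge is either contained entirely in some $F(T_i)$ (at most $r\delta\, e(\mathcal H_n)$ such edges in total, by the container bound) or contains a vertex of $D$ (at most $c|D|e(\mathcal H_n)/v(\mathcal H_n)$ such edges, by the $\ell=1$ case of the co-degree hypothesis). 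Comparing these two estimates yields $|D|\ge\alpha v(\mathcal H_n)$.

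For the probabilistic step, suppose $\mathcal H^v_{n,q_n}$ is not $r$-Ramsey, so that $V(\mathcal H^v_{n,q_n})$ splits into $r$ sets $I_1,\dots,I_r$ which are independent in $\mathcal H_n$; containers then provide fingerprints $(T_1,\dots,T_r)$ with each $T_i\subseteq V(\mathcal H^v_{n,q_n})$ and $D\cap V(\mathcal H^v_{n,q_n})=\emptyset$. As these two constraints concern disjoint sets of vertices, their probability factors and is at most $q_n^{\sum|T_i|}(1-q_n)^{|D|}\le q_n^{\sum|T_i|}e^{-q_n\alpha v(\mathcal H_n)}$. Taking a union bound over $\mathcal F^r$ and using $\sum_{T\in\mathcal F}q_n^{|T|}\le\sum_{t\le\tau v(\mathcal H_n)}\binom{v(\mathcal H_n)}{t}q_n^t$, the inner sum is dominated by the $t=\tau v(\mathcal H_n)$ term, which is bounded by $(eq_n/\tau)^{\tau v(\mathcal H_n)}$. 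Substituting $q_n\ge Cp_n$ and $\tau=Kp_n$, the whole estimate takes the shape $\mathrm{poly}(v(\mathcal H_n))\exp(p_n v(\mathcal H_n)[rK\log(eC/K)-C\alpha])$, which tends to $0$ once $C$ is chosen large enough to make the bracketed quantity negative, since $p_n v(\mathcal H_n)\to\infty$.

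The main obstacle is the careful choice of constants: $\delta$ must be small enough for the deterministic lemma to produce a linearly large $D$, and $C$ must then be chosen large relative to the resulting $\alpha$ in order to overpower the $\log(1/p_n)$ loss that appears in the union bound. Once the constants are tuned against $\varepsilon$ and $c$, the rest is routine: the container theorem and supersaturation really do all the work.
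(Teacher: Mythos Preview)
Your proposal is correct and follows essentially the same approach as the paper's proof: both apply the hypergraph container method, use $r$-supersaturation together with the $\Delta_1$ bound to show that the union of any $r$ containers misses a linear fraction of the vertices, and finish with the identical union-bound calculation. The only cosmetic difference is that the paper invokes a pre-packaged $r$-tuple container statement (Proposition~\ref{prop:rcontainers}, quoted from~\cite{hst}) rather than applying the single-set container theorem $r$ times as you do; your deterministic lemma on $|D|$ is exactly the paper's Claim~\ref{claim:Zbound} in different notation.
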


The proof of Theorem~\ref{thm:general1statement} is in the same spirit as that of the proof of the $1$-statement of Theorem~\ref{randomramsey} given by Nenadov and Steger in~\cite{ns}.
In particular, as mentioned in Section~\ref{sec:hypergraphs}, it utilises the hypergraph container method. 

For a set $\mathcal{A}$, let $\mathcal{P}(\mathcal{A})$ denote the powerset of $\mathcal{A}$. 
For a hypergraph $\mathcal{H}$, let $\mathcal{I}_r(\mathcal{H}) $ be the collection of $r$-tuples  $(I_1,\dots,I_r)$ such that each $I_i\in  \mathcal{P}(V(\mathcal{H}))$ is an independent set in $\mathcal{H}$ and $I_i\cap I_j=\emptyset$ for every $i \ne j\in[r]$.
For a sequence of hypergraphs $(\mathcal{H}_n)_{n \in \mathbb{N}}$,
to show that w.h.p.\ $\mathcal{H}^v_{n,p}$ is $r$-Ramsey, 
it is equivalent to show that with probability tending to $0$ the vertex set of $\mathcal{H}^v_{n,p}$ can be partitioned into $r$ parts such that each part induces an independent set, i.e., it admits an $r$-partition $I \in \mathcal{I}_r(\mathcal{H}_n)$.
Bounding this probability by using a union bound over all $r$-tuples in $\mathcal{I}_r(\mathcal{H}_n)$ does not work because $|\mathcal{I}_r(\mathcal{H}_n)|$ is too large.
The power of a hypergraph container result  is that one can instead use a union bound over all `containers', since informally such a result says that every independent set lies in a container, and there are not too many containers overall.

\subsubsection{Deducing the $1$-statement of Theorem~\ref{thm:mainramsey} from Theorem~\ref{thm:general1statement}}

\begin{proof}[Proof of the $1$-statement of Theorem~\ref{thm:mainramsey}]
Let $(\mathcal{H}_n)_{n \in \mathbb{N}}$ be a sequence of $k$-uniform ordered hypergraphs satisfying  (P1)--(P4).
For all $W \subseteq [k]$ with $|W| \geq 2$ we have 
\begin{align}\label{eq:m1}
\hat p_n= \max_{\stackrel{X \subseteq [n]}{|X| \geq 2}} \left( \frac{e(\mathcal{H}_{n,X})}{v(\mathcal{H}_{n})} \right)^{\frac{-1}{|X|-1}} & \geq  \left( \frac{e(\mathcal{H}_{n,W})}{v(\mathcal{H}_{n})} \right)^{\frac{-1}{|W|-1}}; \nonumber \\
\implies e(\mathcal{H}_{n,W}) & \geq (\hat p_n)^{-(|W|-1)} \cdot v(\mathcal{H}_n)  .
\end{align}
Therefore, for such $W \subseteq [k]$ with $|W| \geq 2$, and each $n \in \mathbb N$ sufficiently large, we have
\begin{align}\label{eq:m3}
\Delta_W(\mathcal{H}_n)   \stackrel{\text{(P3)}}{\leq} b \frac{e(\mathcal{H}_n)}{e(\mathcal{H}_{n,W})}   \stackrel{(\ref{eq:m1})}{\leq} b \cdot (\hat p_n)^{|W|-1} \cdot \frac{e(\mathcal{H}_n)}{v(\mathcal{H}_n)}.
\end{align}
For any $W \subseteq [k]$ with $|W| =1$,
 and each $n \in \mathbb N$ sufficiently large,
we have
\begin{align}\label{eq:m3new}
\Delta_W(\mathcal{H}_n)   \stackrel{\text{(P4)}}{\leq} b \frac{e(\mathcal{H}_n)}{v(\mathcal{H}_{n})}  
= b \cdot (\hat p_n)^{|W|-1} \cdot \frac{e(\mathcal{H}_n)}{v(\mathcal{H}_n)}.
\end{align}

For each $n \in \mathbb{N}$, let $\mathcal{H}'_n$ be 
the unordered restriction of $\mathcal{H}_n$. We now 
check the hypothesis of 
Theorem~\ref{thm:general1statement} with respect to 
$(\mathcal H'_n)_{n\in\mathbb N}$ and $(p_n)_{n\in\mathbb N}$ where $p_n:=\hat p_n$. 
(P1) implies $p_n \to 0$ and $p_nv(\mathcal{H}'_n) \to \infty$ as $n \to \infty$.
Remark~\ref{remark:restriction} and (P2) imply $(\mathcal H'_n)_{n\in\mathbb N}$ is $r$-supersaturated.   Finally, equations~(\ref{eq:m3}), (\ref{eq:m3new}) and~(\ref{eq:unordered}) yield the required upper bound on $\Delta_\ell(\mathcal H'_n)$ where $c:=b \cdot k!$.

Now, by Remark~\ref{remark:restriction}, the conclusion of Theorem~\ref{thm:general1statement} implies the $1$-statement of Theorem~\ref{thm:mainramsey}.
\end{proof}

\subsubsection{Proof of Theorem~\ref{thm:general1statement}}\label{genproof}

First, we introduce some notation. Let $\mathcal{H}$ be a hypergraph and let $\mathcal{F}\subseteq\mathcal P(V(\mathcal H))$. We say $\mathcal F$ is {\it increasing} if $A \in \mathcal F$ and $A\subseteq B\subseteq V(\mathcal H)$ imply $B \in \mathcal F$. We write $\overline{\mathcal F}$ to denote the complement family of $\mathcal F$, i.e., the family of sets $A \in \mathcal{P}(V(\mathcal F))$ which are not in $\mathcal F$. Finally, for  $\eps \in (0,1]$, we say that $\mathcal{H}$ is $(\mathcal{F},\eps)$-dense if $e(\mathcal{H}[A]) \geq \eps e(\mathcal{H})$ for every $A \in \mathcal{F}$.

For this proof, when we consider $r$-tuples of sets, the $r$-tuples are \emph{always} ordered. 
For two $r$-tuples of sets $I=(I_1,\dots,I_r)$ and $J=(J_1,\dots,J_r)$ we write $I \subseteq J$ if $I_i \subseteq J_i$ for all $i \in [r]$;
we define $I \cup J:=(I_1 \cup J_1, \dots, I_r \cup J_r)$.
If $\mathcal{X}$ is a collection of sets, then we write $\mathcal{X}^r$ for the collection of $r$-tuples $(X_1,\dots,X_r)$ so that $X_i \in \mathcal{X}$ for all $i \in [r]$. 

We now state the following container result from~\cite{hst}, from which Theorem~\ref{thm:general1statement} will follow.
Note that Proposition~\ref{prop:rcontainers} itself  follows from the general hypergraph container theorem of Balogh, Morris and Samotij~\cite[Theorem~2.2]{bms}.

\begin{prop}
\cite[Proposition 3.2]{hst}\label{prop:rcontainers}
For every $k,r \in \mathbb N$ and all $c,\eps>0$, 
there exists $D>0$ such that the following holds. 
Let $\mathcal{H}$ be a $k$-uniform hypergraph and let $\mathcal{F} \subseteq \mathcal{P}(V (\mathcal{H}))$ be an increasing family of sets 
such that $|A| \geq \eps v(\mathcal{H})$ for all $A \in \mathcal{F}$. 
Suppose that $\mathcal{H}$ is $(\mathcal{F},\eps)$-dense and
$p \in (0,1)$ is such that, for every $\ell \in [k]$,
\begin{align*}
\Delta_{\ell}(\mathcal{H}) \leq c \cdot p^{\ell-1} \frac{ e(\mathcal{H})}{v(\mathcal{H})}.
 \end{align*}
Then there exists a family $\mathcal{S}_r \subseteq \mathcal{I}_r(\mathcal{H})$ and functions $f : \mathcal{S}_r \to (\overline{\mathcal{F}})^r$ and $g : \mathcal{I}_r(\mathcal{H}) \to \mathcal{S}_r$ such that
the following conditions hold:
\begin{enumerate}
\item If $(S_1, \dots, S_r) \in \mathcal{S}_r$ then $\sum^ r_{i=1} |S_i| \leq Dp \cdot v(\mathcal{H})$;
\item for every $(I_1,\dots,I_r) \in \mathcal{I}_r(\mathcal{H})$, we have that $S \subseteq (I_1,\dots,I_r) \subseteq S \cup f(S)$ where $S:=g(I_1,\dots,I_r)$.
\end{enumerate}
\end{prop}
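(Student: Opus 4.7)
The plan is to derive Proposition~\ref{prop:rcontainers} from the standard hypergraph container theorem of Balogh, Morris and Samotij~\cite{bms} by applying it coordinate-wise to the tuple $(I_1,\dots,I_r)$. A suitable form of the BMS theorem, given the degree condition $\Delta_\ell(\mathcal{H}) \le c \cdot p^{\ell-1} \cdot e(\mathcal{H})/v(\mathcal{H})$ for every $\ell \in [k]$, produces a constant $D_0 = D_0(k, c, \eps) > 0$, a collection $\mathcal{S} \subseteq \mathcal{P}(V(\mathcal{H}))$ of ``fingerprints'', and functions $s: \mathcal{I}(\mathcal{H}) \to \mathcal{S}$ and $h: \mathcal{S} \to \mathcal{P}(V(\mathcal{H}))$ such that for every independent set $I \in \mathcal{I}(\mathcal{H})$ one has $s(I) \subseteq I \subseteq h(s(I))$, $|s(I)| \le D_0 \cdot p \cdot v(\mathcal{H})$, and $e(\mathcal{H}[h(T)]) < \eps \cdot e(\mathcal{H})$ for every $T \in \mathcal{S}$. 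The $(\mathcal{F}, \eps)$-denseness of $\mathcal{H}$ then forces $h(T) \in \overline{\mathcal{F}}$ for every $T \in \mathcal{S}$, since any $A \in \mathcal{F}$ satisfies $e(\mathcal{H}[A]) \ge \eps \cdot e(\mathcal{H})$.

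With $s$ and $h$ in hand, I would define $g: \mathcal{I}_r(\mathcal{H}) \to \mathcal{I}_r(\mathcal{H})$ by $g(I_1, \dots, I_r) := (s(I_1), \dots, s(I_r))$ and set $\mathcal{S}_r := g(\mathcal{I}_r(\mathcal{H}))$. The image $g(I_1,\dots,I_r)$ indeed lies in $\mathcal{I}_r(\mathcal{H})$ as required: each $s(I_i) \subseteq I_i$ is an independent set in $\mathcal{H}$, and the $s(I_i)$ remain pairwise disjoint because the $I_i$ are. Next I would define $f: \mathcal{S}_r \to (\overline{\mathcal{F}})^r$ coordinate-wise by $f(T_1, \dots, T_r) := (h(T_1), \dots, h(T_r))$; this is well-defined and lands in $(\overline{\mathcal{F}})^r$ because each coordinate $T_i$ of any element of $\mathcal{S}_r$ lies in the domain $\mathcal{S}$ of $h$, and the previous paragraph shows $h(T_i) \in \overline{\mathcal{F}}$.

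Verifying conditions (1) and (2) is then routine. For (1), any $S = (S_1,\dots,S_r) = g(I_1,\dots,I_r) \in \mathcal{S}_r$ satisfies $\sum_{i=1}^r |S_i| = \sum_{i=1}^r |s(I_i)| \le r \cdot D_0 \cdot p \cdot v(\mathcal{H})$, so taking $D := r D_0$ suffices. For (2), the inclusion $S \subseteq (I_1,\dots,I_r)$ is immediate from $s(I_i) \subseteq I_i$, while $(I_1,\dots,I_r) \subseteq f(S) \subseteq S \cup f(S)$ follows from $I_i \subseteq h(s(I_i))$ applied in each coordinate.

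I expect the only real obstacle to be the bookkeeping in the application of BMS, namely locating (or lightly massaging) the statement so that it cleanly outputs deterministic functions $s$ and $h$ rather than only a family of container/fingerprint pairs, and so that the resulting constant $D_0$ depends only on $k$, $c$ and $\eps$ (and hence the overall constant $D$ depends only on $k$, $r$, $c$ and $\eps$, as claimed). Once BMS is stated in this form, the coordinate-wise packaging described above is essentially mechanical, and in particular requires no further hypotheses beyond those already assumed in Proposition~\ref{prop:rcontainers}.
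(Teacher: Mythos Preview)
Your proposal is correct and aligns with what the paper indicates: the paper does not give its own proof of Proposition~\ref{prop:rcontainers} but cites it from \cite{hst} and remarks that it follows from the Balogh--Morris--Samotij container theorem, which is precisely the coordinate-wise derivation you carry out. The details you supply (defining $g$ and $f$ componentwise via the fingerprint and container maps, checking disjointness of the $s(I_i)$, and setting $D = rD_0$) are the standard ones and are exactly what one expects.
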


\begin{proof}[Proof of Theorem~\ref{thm:general1statement}]
Note that it suffices to prove the theorem under the additional assumption that $c \geq k$.
Pick $\eps>0$ so that, for all sufficiently large
$n\in \mathbb N$, $\mathcal{H}_n$ is $(r,\eps)$-supersaturated.
Define
\begin{align}\label{eq:const}
\eps':=\frac{\eps}{3}, \quad \eps'':=\frac{\eps'k}{c} \quad \text{and} \quad \delta:=\frac{\eps}{3c}. 
\end{align}
As $c \geq k$ we have  that $\eps'' \leq \eps'$. 
Apply Proposition~\ref{prop:rcontainers} with  $\eps''$ playing the role of $\eps$ to obtain $D>0$. 
Since $p_n \to 0$ as $n \to \infty$, if $n$ is sufficiently large then 
\begin{align}\label{eq:nlarge}
c D p_n \leq \eps'.
\end{align}
Next, we pick a sufficiently large constant $C>0$ so that
$(\dagger)$ in~(\ref{eq:concalc}) below holds for all  sufficiently large $n\in \mathbb N$.

Let $n\in \mathbb N$ be sufficiently large and set
 $\mathcal{F}_n:= \{ F \subseteq V(\mathcal{H}_n) :  e(\mathcal{H}_n[F]) \geq \eps' e(\mathcal{H}_n) \}$.
Clearly $\mathcal{F}_n$ is increasing and $\mathcal{H}_n$ is $(\mathcal{F}_n,\eps')$-dense, and thus 
 also $(\mathcal{F}_n,\eps'')$-dense. 
Further, for all $F \in \mathcal{F}_n$, we have\footnote{Here $\deg_{\mathcal{H}_n[F]}(y)$ 
denotes the number of edges $y$ lies in within 
$\mathcal{H}_n[F]$.}
\begin{align*}
k \eps' e(\mathcal{H}_n) \leq k e(\mathcal{H}_n[F]) = \sum_{y \in F} \deg_{\mathcal{H}_n[F]}(y)\le|F|\Delta_1(\mathcal H_n)\le\frac{|F| c \cdot e(\mathcal{H}_n)}{v(\mathcal{H}_n)},
\end{align*}
where the last inequality uses the bound on $\Delta_1(\mathcal{H}_n)$ in the statement of Theorem~\ref{thm:general1statement}.
Rearranging, we obtain
\begin{align*}
|F| \geq \frac{\eps' k}{c} v(\mathcal{H}_n) \stackrel{(\ref{eq:const})}{=} \eps'' v(\mathcal{H}_n),
\end{align*}
for all $F \in \mathcal{F}_n$. Thus $\mathcal{H}_n$ and $\mathcal{F}_n$ satisfy the hypothesis of Proposition~\ref{prop:rcontainers} with $\eps''$ and $p_n$ playing the roles of $\eps$ and $p$ respectively. Apply Proposition~\ref{prop:rcontainers} to obtain the family 
$\mathcal{S}_r \subseteq \mathcal{I}_r(\mathcal{H}_n)$
and functions  $f : \mathcal{S}_r \to (\overline{\mathcal{F}}_n)^r$ and $g : \mathcal{I}_r(\mathcal{H}_n) \to \mathcal{S}_r$.

Now let $q_n\ge C p_n$ and let $B_{q_n}$ be the event that $\mathcal{H}^v_{n,q_n}$ is not $r$-Ramsey. 
It suffices to show that $\mathbb{P}[B_{q_n}] \to 0$ as $n \to \infty$. Since $q_n\le q'_n\le1$ implies $\mathbb{P}[B_{q'_n}]\le\mathbb{P}[B_{q_n}]$, we may assume that $q_n=Cp_n$.

Suppose that $\mathcal{H}^v_{n,q_n}$ is not $r$-Ramsey, i.e., there exists a partition $V(\mathcal{H}^v_{n,q_n})=X_1\cup\dots\cup X_r$ with $(X_1,\dots,X_r) \in \mathcal{I}_r(\mathcal{H}_n)$.
By Proposition~\ref{prop:rcontainers}, there exists 
$S:=(S_1,\dots,S_r):=g(X_1,\dots,X_r) \in \mathcal{S}_r$ such that $S \subseteq (X_1,\dots,X_r) \subseteq S \cup f(S)$ and 
$|\cup_{i \in [r]}S_i| \leq D p_n v(\mathcal{H}_n)$. For brevity, we write $f(S)=:(f(S_1),\dots,f(S_r))$, $X(S):=\cup_{i \in [r]} S_i$, $Y(S):=\cup_{i \in [r]} f(S_i)$ and $Z(S):=X(S) \cup Y(S)$.\footnote{Note the slight abuse of the $f$ notation here.} 

\begin{claim}\label{claim:Zbound}
We have
$
|Z(S)|\le(1-\delta)v(\mathcal{H}_n).
$
\end{claim}
\begin{proofclaim}
Since $\mathcal{H}_n$ is $(r,\eps)$-supersaturated, in any partition $V(\mathcal{H}_n)=V_1\cup\dots\cup V_r$ there exists $i \in [r]$ such that $e(\mathcal H_n [V_i] )\geq \eps e(\mathcal{H}_n)$.
Any vertex in $\mathcal H_n$ is contained in at most $\Delta_1(\mathcal{H}_n)$ edges, 
and thus by deleting a set $U$ of fewer than $\delta v(\mathcal{H}_n)$ vertices, 
we delete at most $|U| \Delta_1(\mathcal{H}_n) < \delta c e(\mathcal{H}_n)$ edges.
Thus, given any induced subhypergraph of $\mathcal{H}_n$ with more than $(1-\delta)v(\mathcal{H}_n)$ vertices, 
for any partition $W_1\cup\dots\cup W_r$ of it,
there exists $i \in [r]$ such that $e(\mathcal{H}_n[W_i])>(\eps-\delta c) e(\mathcal{H}_n) = 2\eps' e(\mathcal{H}_n)$ edges (where the last equality follows from~\eqref{eq:const}).

Now we bound $e(\mathcal{H}_n[S_i \cup f(S_i)])$ for each $i \in [r]$.
Since $f(S_i) \in \overline{\mathcal{F}}_n$ we have $e(\mathcal{H}_n[f(S_i)]) < \eps' e(\mathcal{H}_n)$.
The number of edges in $\mathcal{H}_n[S_i \cup f(S_i)]$ containing at least one vertex from $S_i$ is at most 
\begin{align*}
|S_i| \cdot \Delta_1(\mathcal{H}_n) \leq D p_n v(\mathcal{H}_n) \cdot c \frac{e(\mathcal{H}_n)}{v(\mathcal{H}_n)} 
\stackrel{(\ref{eq:nlarge})}{\leq} \eps' e(\mathcal{H}_n).
\end{align*}
Thus, in total we have $e(\mathcal{H}_n[S_i \cup f(S_i)]) < 2\eps' e(\mathcal{H}_n)$.
Overall we see that the set $Z(S)$ can be partitioned into $r$ sets so that each set induces fewer than $2\eps'e(\mathcal{H}_n)$ edges in $\mathcal{H}_n$.
Combining with the conclusion of the last paragraph, this implies that $|Z(S)|\le(1-\delta)v(\mathcal H_n)$ as required.\qedclaim
\end{proofclaim}

As $X_1\cup\dots\cup X_r$ is a partition of $V(\mathcal{H}^v_{n,q_n})$, we must have that $X(S)\subseteq V(\mathcal{H}^v_{n,q_n})\subseteq Z(S)$. Crucially, this event depends uniquely on $S$ and thus
$$\mathbb P[B_{q_n}]\le\sum_{S\in\mathcal S_r}\mathbb P[X(S)\subseteq V(\mathcal{H}^v_{n,q_n})\subseteq Z(S)]\le\sum_{S\in\mathcal S_r} q_n^{|X(S)|} (1-q_n)^{\delta v(\mathcal{H}_n)} \leq\sum_{S\in\mathcal S_r} q_n^{|X(S)|} e^{-\delta q_n v(\mathcal{H}_n)},$$
where the first inequality follows from the union bound, while the second inequality follows from Claim~\ref{claim:Zbound}.

Recall that for every $S=(S_1,\dots,S_r)\in \mathcal{S}_r$ we have 
$|\cup_{i \in [r]}S_i| \leq D p_n v(\mathcal{H}_n)$. 
We may bound $|\mathcal S_r|$ from above as follows: 
choose a set of $i\le Dp_nv(\mathcal{H}_n)$ vertices 
from $V(\mathcal{H}_n)$ and then take an (ordered) partition of 
these vertices into $r$ classes. Therefore we have,
\begin{align}\label{eq:concalc}
\mathbb{P}[B_{q_n}] \leq \sum_{S \in \mathcal{S}_r} q_n^{|X(S)|} e^{-\delta q_n v(\mathcal{H}_n)} 
& \leq \sum_{i=0}^{Dp_nv(\mathcal{H}_n)} \binom{v(\mathcal{H}_n)}{i} r^{i} q_n^{i} e^{-\delta q_n v(\mathcal{H}_n)} \nonumber \\
& \leq (Dp_nv(\mathcal{H}_n)+1) (r q_n)^{Dp_nv(\mathcal{H}_n)} \binom{v(\mathcal{H}_n)}{Dp_nv(\mathcal{H}_n)} e^{-\delta q_n v(\mathcal{H}_n)} \nonumber \\
& \leq (Dp_nv(\mathcal{H}_n)+1) \left(\frac{r q_n ev(\mathcal{H}_n)}{Dp_nv(\mathcal{H}_n)}\right)^{Dp_nv(\mathcal{H}_n)} e^{-\delta q_n v(\mathcal{H}_n)} \nonumber \\
& = (Dp_nv(\mathcal{H}_n)+1) \left(\frac{r eC}{D}\right)^{\frac{D}{C}q_nv(\mathcal{H}_n)} e^{-\delta q_n v(\mathcal{H}_n)} \nonumber \\
& \stackrel{(\dagger)}{\leq} e^{-\frac{\delta}{2} q_nv(\mathcal{H}_n)}.
\end{align}
Note that the equality follows since $q_n=Cp_n$.
Now using that $p_nv(\mathcal{H}_n)$ and hence $q_nv(\mathcal{H}_n)$ tends to infinity as $n \to \infty$,
we have $\mathbb{P}[B_{q_n}] \to 0$ as $n \to \infty$, as required.
\end{proof}

\subsubsection{A remark on container theorems for solutions to systems of linear equations in abelian groups}
Note that we could have used Proposition~\ref{prop:rcontainers} to obtain a container theorem for solutions to systems of linear equations in abelian groups, and then used this directly to prove the $1$-statement of Lemma~\ref{lem:randomRado}. 
Saxton and Thomason also gave a container theorem in this setting; see Theorem~4.2 in~\cite{st-online}.
However, we do not use their result, as they give an alternative definition of $m_{G}(A)$ which we now state. 
(See Definitions~1.6 and~4.1~of their paper.)
First define an $\ell\times k$ matrix $A$ to have \emph{full image} if given any $b \in G^{\ell}$, there exists $x \in G^k$ such that $Ax=b$.
Now if $G$ is a finite abelian group which is not a field, define
$m'_{G}(A):=\frac{\ell+t-1}{t-1}$, where $t$ is the maximum $j$ for which 
$A_{\overline{W}}$ has full image
for all  $W\subseteq [k]$ with $|W|=j$.\footnote{Note that Saxton and Thomason only consider $G$ for which $t \geq 2$, and so $m'_G(A)$ is well-defined in this case.}

One can show that $m_{G}(A) \leq m'_{G}(A)$ for all finite abelian groups $G$ and full image matrices $A$ such that  $(A,G)$ is abundant. 
Further, there exist examples for which this inequality is strict. 
For example, consider the matrix
\begin{align*}
A:=\begin{pmatrix} 
1 & 1 & 1 & 0 & 0 \\
0 & 1 & 1 & 1 & 1 \\
\end{pmatrix}
\end{align*}
together with any finite abelian group $G$ which is not a field, for example $\mathbb{Z}_6$.
We have that $(A,G)$ is abundant and in particular $t \geq 2$.
By deleting the first three columns we obtain a submatrix which does not have full image,
and so $t=2$ and $m'_{G}(A)=3$. 
However, under our definition, it is easy to compute that $m_G(A)=2$. (Indeed, we have that $\rank_G(A_{\overline{W}})$ is equal to the number of distinct columns in $\overline{W}$, except for when there are $3$ distinct columns, in which case it equals $2$.)
Consequently, using Theorem~4.2 from~\cite{st-online} for such an example would lead to a suboptimal $1$-statement.

\subsection{Proof of the $0$-statement of Theorem~\ref{thm:mainramsey}}

The following proof adapts an argument of the second and third authors~\cite{ht} (which in turn builds on work from~\cite{random4}). Suppose $(\mathcal{H}_{n})_{n \in \mathbb{N}}$ is a sequence of $k$-uniform ordered hypergraphs that satisfies conditions (P1) and (P3)--(P5) of Theorem~\ref{thm:mainramsey}.
In particular, (P5) yields a sequence $(X_n)_{n \in \mathbb N}$.

Let $c>0$ be sufficiently small and pick $q_n\le c\hat p(\mathcal H_n)$. Our aim is to show that w.h.p. there is a red-blue colouring of the vertices of $\mathcal{H}^v_{n,X_n,q_n}$ so that there are no monochromatic edges. By definition of $\mathcal{H}^v_{n,X_n,q_n}$,
 such a vertex colouring  also does not produce a monochromatic edge in $\mathcal{H}^v_{n,q_n}$, so this would prove the $0$-statement of Theorem~\ref{thm:mainramsey}. We may further assume that $q_n=c\hat p(\mathcal H_n)$.

We say an (ordered) hypergraph is {\it Rado} if it has the property that however its vertices are red-blue coloured, there is always a monochromatic edge.\footnote{So being Rado is equivalent to being $2$-Ramsey.}
Furthermore, a Rado hypergraph is {\it Rado minimal}
if it is no longer Rado under the deletion of any edge.
If $\mathcal{H}^v_{n,X_n,q_n}$ is Rado, fix a spanning subhypergraph $H$ of $\mathcal{H}^v_{n,X_n,q_n}$ which is Rado minimal. 
Otherwise set $H:=\emptyset$. 
So it suffices to prove that w.h.p. $H=\emptyset$. We will do this by combining two lemmas;
\begin{itemize}
\item \emph{A deterministic lemma}, which states that if $H$ is Rado then it must contain one of a finite list of subhypergraphs.
\item \emph{A probabilistic lemma}, which states that w.h.p. $\mathcal{H}^v_{n,X_n,q_n}$, and hence $H$, does not contain any of these subhypergraphs.
\end{itemize}
Before we state these two lemmas, we provide some motivation and state some required definitions.

The first step is to build some structure in $H$.
The following claim is an analogue of 
Proposition~7.4 from~\cite{random4} and Claim~1 from~\cite{ht}.
The proof of the claim follows in the same manner.
\begin{claim}\label{claim:determ}
Suppose $H$ is non-empty, i.e., $H$ is a Rado minimal ordered hypergraph. 
Then for every edge $a$ of $H$ and every vertex $v \in a$, there exists an edge $b$ in $H$ such that $a \cap b =v$. 
\end{claim}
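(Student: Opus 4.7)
The plan is to use the Rado-minimality of $H$ to produce, for the given edge $a$, a two-colouring whose \emph{unique} monochromatic edge is $a$; then a small local modification of this colouring at the vertex $v$ will force a second monochromatic edge which must be the desired $b$.

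More precisely, first I would invoke Rado-minimality: since removing the edge $a$ destroys the Rado property, the hypergraph $H - a$ admits a red--blue colouring $\chi$ of $V(H)$ with no monochromatic edge. Adding $a$ back, $\chi$ must itself make $a$ monochromatic (otherwise $H$ would not be Rado), say red, and crucially $a$ is the \emph{only} monochromatic edge of $H$ under $\chi$. (Here I am using that the Rado property depends only on the unordered vertex sets of the edges, as recorded in Remark~\ref{remark:restriction}, so it is legitimate to talk about a colouring making an ordered edge monochromatic.)

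Next I would flip the colour of the chosen vertex $v \in a$ to obtain a new colouring $\chi'$. Under $\chi'$ the edge $a$ is no longer monochromatic, since $v$ is now blue while the other vertices of $a$ remain red. But $H$ is Rado, so $\chi'$ must create at least one monochromatic edge $b$. This edge $b$ was not monochromatic under $\chi$ (since $a$ was the unique monochromatic edge of $\chi$, and $b \neq a$), and the only vertex whose colour changed is $v$, so $v \in b$. Finally, under $\chi'$ the edge $b$ is blue (because $v$ is blue), whereas every vertex of $a\setminus\{v\}$ is red under $\chi'$; hence $b\cap(a\setminus\{v\})=\emptyset$, which together with $v\in a\cap b$ gives $a\cap b=\{v\}$, as required.

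I do not expect any serious obstacle: the argument is a short and standard Rado-minimality trick, and the only point worth flagging is the mild bookkeeping needed to pass between ordered and unordered edges so that ``monochromatic'' makes sense, which is handled by Remark~\ref{remark:restriction}.
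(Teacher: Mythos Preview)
Your proof is correct and takes essentially the same approach as the paper's: both use Rado-minimality to obtain a colouring whose unique monochromatic edge is $a$, flip the colour of $v$, and analyse the resulting monochromatic edge. The only cosmetic difference is that the paper phrases this as a proof by contradiction (assume every edge through $v$ meets $a$ in a second vertex, then the flip produces no monochromatic edge, contradicting Rado), whereas you argue directly; the underlying idea is identical.
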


\begin{proof}
Let $a$ be an edge, and let $v \in a$ be such that for all edges $b$ such that $v \in b$, 
there exists another vertex $w \in a$ such that $w \in b$. 
Since $H$ is Rado minimal, 
it is possible to red-blue colour the vertices of $H-a$ so that there are no red or blue edges. 
Thus once we add $a$ back, it must be the case that $a$ is (w.l.o.g.) red since $H$ is Rado.
But then change the colour of $v$ to blue. 
If there is a monochromatic edge now, it must be a blue edge which contains $v$. 
However all edges containing $v$ also contain another vertex from $a$ which is red, thus we obtain a contradiction.  
\end{proof}

Using the above claim, one can build up more and more structure in $H$. 
Eventually, one can obtain enough structure so that $H$ contains one of the subhypergraphs listed in Lemma~\ref{lem:detclaim} below. 
We now define some hypergraph notation in order to state it. In what follows, $t$ is a positive integer.
Further, in these definitions we just view edges as sets of vertices (i.e., we are not concerned about the order of an edge).

\begin{itemize}
\item A \emph{simple path of length $t$} consists of edges $e_1,\dots,e_t$ 
such that $|e_i \cap e_j|=1$ if $j=i+1$, and $|e_i \cap e_j|=0$ if $j>i+1$. 
\item A \emph{fairly simple cycle of length $t$} $(t\ge2)$ consists of a simple path $e_1,\dots,e_t$ and an edge $e_0$ such that $|e_0 \cap e_1|=1$;  $|e_0 \cap e_i|=0$ for $2 \leq i \leq t-1$; $|e_0 \cap e_t|=s \geq 1$. 
\item A \emph{simple cycle} is a fairly simple cycle with $s=1$.
\item A simple path $P=e_1,\dots,e_t$ in $H$ is called \emph{spoiled} if there exists an edge $e^* \in E(H)$ such that 
$e^* \not \in E(P)$, $e^* \subseteq V(P)$ and $|e^* \cap e_1|=1$ where the vertex $e^*\cap e_1$ does not lie in $e_2$.  
\item A subhypergraph $H_0$ of $H$ is said to have a \emph{handle} if there is an edge $e^*$ in $H$ such that $|e^*| > |e^* \cap V(H_0)| \geq 2$.
\item A \emph{faulty simple path of length $t$} ($t \geq 3$) is a simple path $e_1,\dots,e_t$ together with two edges $e_x$ and $e_z$ such that 
$e_1,e_2,e_x$ form a simple cycle with $|e_x \cap e_i|=0$ for $i \geq 3$;
$e_{t-1},e_t,e_z$ form a simple cycle with $|e_z \cap e_i|=0$ for $i \leq t-2$;
 each edge has size $3$; the edges $e_x$ and $e_z$ may or may not be disjoint.

\item  A \emph{bad triple} is a set of three edges $e_1, e_x, e_y$, 
where $e_1 \cap e_x=\{x\}$, $e_1 \cap e_y=\{y\}$, $x \not=y$, and $|e_x \cap e_y| \geq 2$.

\item A \emph{bad tight path} is a set of three edges $e_1, e_2, e_3$ each of size $3$ such that 
$|e_1 \cap e_2|=2$, $|e_1 \cap e_3|=1$ and $|e_2 \cap e_3|=2$.

\end{itemize}
We also require the following definition in the proof of Lemma~\ref{lem:detclaim}.
\begin{itemize}
    \item A \emph{Pasch configuration} is a set of four edges $e_1, e_2, e_3, e_4$ of size $3$ such that $v_{ij}=e_i \cap e_j$ is a distinct vertex for each pair $i<j$.
\end{itemize}
Write $N:=n \cdot q_n = \mathbb E(v(\mathcal{H}^v_{n,X_n,q_n}))$ for brevity. 
\begin{lemma}[Deterministic lemma]\label{lem:detclaim}
If $H$ is non-empty then it contains at least one of the following structures:
\begin{itemize}
\item[(i)]  A simple path of length at least $\log N$.
\item[(ii)] A spoiled simple path of length at most $\log N$. 
\item[(iii)] A fairly simple cycle of length at most $\log N$, with a handle. 
\item[(iv)] A faulty simple path of length at most $\log N$.
\item[(v)] A bad triple.
\item[(vi)] A bad tight path.
\end{itemize}
\end{lemma}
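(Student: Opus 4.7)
The proof strategy is to iteratively apply Claim~\ref{claim:determ} to build up structure in $H$, case-splitting on how newly produced edges interact with edges already constructed. Starting from an arbitrary edge $e_1 \in E(H)$, I would greedily construct a simple path $P = e_1,\ldots,e_t$ of maximum possible length using Claim~\ref{claim:determ}: given the current endpoint $e_i$, pick a vertex $v \in e_i \setminus e_{i-1}$ (or any $v \in e_1$ when $i=1$), and let $e_{i+1}$ be an edge with $e_i \cap e_{i+1} = \{v\}$, chosen, if possible, to avoid $V(e_1 \cup \cdots \cup e_{i-1})$. If this procedure yields $t \geq \log N$ we are directly in case (i); otherwise $P$ is a maximal simple path of length $t < \log N$, and the rest of the argument consists of analysing the obstructions to further extension.

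By maximality, for any free vertex $v$ of an endpoint $e_t$, the edge $e^{*}$ produced by Claim~\ref{claim:determ} with $e_t \cap e^{*} = \{v\}$ must meet some earlier $e_j$; otherwise $e^{*}$ would extend $P$ to length $t+1$, contradicting maximality. I would then classify outcomes by the intersection pattern of $e^{*}$ with $V(P)$. If $e^{*} \not\subseteq V(P)$ and $|e^{*} \cap e_j|=1$ for a single $j \leq t-1$, then $e_j,\ldots,e_t,e^{*}$ forms a (fairly) simple cycle of length at most $\log N$; either a further application of Claim~\ref{claim:determ} inside this cycle produces a handle (case (iii)), or the construction can be iterated. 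Combining the cycle-closing edges produced at both endpoints of $P$ (when both ends of $P$ attach back into $P$ via three-edge cycles) yields a faulty simple path of length at most $\log N$ (case (iv)). If instead $e^{*} \subseteq V(P)$ with $|e^{*} \cap e_1|=1$ and $e^{*} \cap e_1 \not\subseteq e_2$, we directly obtain a spoiled simple path (case (ii)). Finally, when an auxiliary edge produced by Claim~\ref{claim:determ} from a short initial configuration shares two or more vertices with a previously produced Claim~\ref{claim:determ}-edge anchored at the same edge, we land in a bad triple (case (v)) or, when the edges in question have size three, in a bad tight path (case (vi)).

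The main obstacle is the combinatorial case analysis: one must carefully enumerate the possible intersection patterns of $e^{*}$ with $V(P)$ (and with the auxiliary edges coming from the opposite endpoint of $P$), and verify that every configuration which is not itself a long simple path falls into exactly one of (ii)--(vi). A secondary difficulty is handling the base cases $t \in \{1,2,3\}$, where the structures (v) and (vi) naturally arise from two applications of Claim~\ref{claim:determ} at a single edge $e_1$ without needing a long ambient path, and one must also check that iterating the extension process terminates correctly, which is guaranteed by the hard length cap $\log N$ together with the monotone growth of $V(P)$.
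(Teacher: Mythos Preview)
Your overall plan---take a maximal simple path, apply Claim~\ref{claim:determ} at an endpoint, and case-split on how the resulting edges meet $V(P)$---is exactly the paper's approach. Two points in your sketch need sharpening, however.

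First, your mechanism for producing a handle is not right. You write ``a further application of Claim~\ref{claim:determ} inside this cycle produces a handle, or the construction can be iterated,'' but Claim~\ref{claim:determ} only guarantees an edge meeting the cycle in a \emph{single} specified vertex; it may have all its other vertices outside the cycle, so it need not be a handle, and ``iterating'' gives no termination bound. The paper instead picks \emph{two} free vertices $x,y\in e_1$ simultaneously (possible since $|X_n|\geq 3$) and obtains edges $e_x,e_y$; the fairly simple cycle $e_1,\dots,e_{i_x},e_x$ is then tested against $e_y$ as a candidate handle, and symmetrically. If neither is a handle one forces $|e_x\cap e_y|\ge 1$, which either gives a bad triple or pins down $i_x=i_y$, $|e_x\cap e_y|=1$ and ultimately $|X_n|=3$.

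Second, the endgame you gloss over is where most of the work lives. When $|X_n|=3$ and $i_x=2$, the edges $e_1,e_2,e_x,e_y$ form a Pasch configuration; repeating the argument at the other end of $P$ (this is where taking the \emph{longest} path, not merely one maximal at one end, matters) gives another Pasch configuration, and one must check: if $t\ge 3$ one gets a faulty simple path; if $t=2$ the two Pasch configurations either differ (giving (iii) or (vi)) or coincide, and in the latter case one uses that a Pasch configuration is not Rado to find a fifth edge $e_5$, whose intersection with the configuration is then analysed case by case. Your sketch of (iv)--(vi) as arising from ``short initial configurations'' is directionally right but does not capture this reduction.
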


\begin{lemma}[Probabilistic lemma]\label{lem:probclaim}
If $c>0$ is sufficiently small then w.h.p.
$\mathcal{H}^v_{n,X_n,q_n}$ does not contain any of the hypergraphs (i)--(vi) listed in Lemma~\ref{lem:detclaim}.
\end{lemma}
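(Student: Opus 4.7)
The plan is to apply the first moment method to each of the structures (i)--(vi) in turn, showing that for $c>0$ chosen sufficiently small, the expected number of copies tends to $0$ as $n\to\infty$. Writing $k_n := |X_n|$ and $N := nq_n$, observe that a labelled subhypergraph of $\mathcal{H}_{n,X_n}$ with $v$ vertices survives in $\mathcal{H}^v_{n,X_n,q_n}$ with probability exactly $q_n^v$, so it is enough to count copies in $\mathcal{H}_{n,X_n}$ and multiply by $q_n^v$. By~\eqref{eq:P4a} fix a constant $c_0>0$ with $f_{n,X_n}\ge c_0\hat p_n$, whence $q_n/f_{n,X_n}\le c/c_0$ uniformly in $n$; this provides the crucial slack exploited by making $c$ small.

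For long simple paths (structure (i)), one builds an ordered path greedily: the first edge has at most $e(\mathcal{H}_{n,X_n})$ choices, and each subsequent edge is fixed by specifying a vertex of the previous edge ($k_n$ choices) and an edge through it (at most $\Delta_1\le b f_{n,X_n}^{-(k_n-1)}$ choices by (P4)). A path of length $t$ has $t(k_n-1)+1$ vertices, so using $e(\mathcal{H}_{n,X_n})=n f_{n,X_n}^{-(k_n-1)}$, the expected number of such paths is at most
\[
N\cdot (k_n b)^{t-1}\,(q_n/f_{n,X_n})^{t(k_n-1)}\;\le\;N\cdot\bigl(k_n b (c/c_0)^{k_n-1}\bigr)^{t-1}(c/c_0)^{k_n-1}.
\]
For $c$ small enough the base of the $(t-1)$th power is below $1/2$, so summing over $t\ge\log N$ gives $o(1)$.

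For the augmented structures (ii)--(iv), the underlying graph decomposes as a simple path or cycle of length at most $\log N$ together with one or two additional edges, each of which must meet the existing structure in at least two vertices. The contribution of the underlying path/cycle is bounded uniformly in its length by the estimate above (with the geometric factor absorbing the $\log N$ summation), while each additional edge contributes a factor of the form $\Delta_{W'}(\mathcal{H}_{n,X_n})\,q_n^{k_n-|W'|}$, where $W'\subset X_n$ with $|W'|\ge 2$ encodes the intersection pattern. Bounding $\Delta_{W'}$ via (P3) and rewriting in terms of $f_{n,W'}$, one checks that the product is controlled by (a bounded multiple of) the left-hand side of~\eqref{eq:P4b}, and hence is $o(1)$. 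For the three-edge structures (v) and (vi), count directly: choose a base edge ($e(\mathcal{H}_{n,X_n})$ choices), then an edge sharing a specified $W\subset X_n$ of size $2$ with it ($\Delta_W$ choices), and finally an edge sharing a specified $W'\subset X_n$ of size $\ge 2$ with the previous two ($\Delta_{W'}$ choices). The total vertex count is $3k_n-2-|W'|$, so the expectation is at most
\[
O\bigl(e(\mathcal{H}_{n,X_n})\cdot\Delta_W(\mathcal{H}_{n,X_n})\cdot\Delta_{W'}(\mathcal{H}_{n,X_n})\cdot q_n^{3k_n-2-|W'|}\bigr),
\]
which is $o(1)$ by~\eqref{eq:P4b}.

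The main technical obstacle is the bookkeeping for (ii)--(iv): because the underlying path can have length up to $\log N$, one must verify that the $\log N$ factor from summing over path lengths is fully absorbed by the geometric decay coming from the $c/c_0$ slack, even in the presence of one or two non-tree edges whose intersection patterns have to be tracked to invoke~\eqref{eq:P4b}. This is a careful but routine calculation following the template developed in~\cite{ht}.
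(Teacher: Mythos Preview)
Your overall strategy is right and matches the paper's: process the edges of each forbidden configuration in some order, pay $P_0=e(\mathcal H_{n,X_n})q_n^{k_n}$ for the first edge, a factor $\le\sqrt c$ for each ``normal'' edge (one old vertex), and a factor of the shape $\Delta_{W'}q_n^{k_n-|W'|}$ for each ``bad'' edge ($\ge 2$ old vertices). Cases (i), (v), (vi) go through exactly as you sketch.

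The gap is in case~(ii). A spoiled simple path consists of a path $e_1,\dots,e_t$ together with an edge $e^\ast$ \emph{all} of whose vertices lie in $V(P)$. If you build the path first, the spoiling edge $e^\ast$ adds \emph{no} new vertices, so your claimed factor $\Delta_{W'}q_n^{k_n-|W'|}$ with $W'\subsetneq X_n$ simply does not apply: the correct factor is $\Delta_{X_n}\cdot q_n^{0}=1$, and the only saving is the crude combinatorial bound $|\{e^\ast\subseteq V(P)\}|\le O((tk)^{k_n})$. Consequently your estimate for spoiled paths of length $t$ is at most $O(t^{k})$ times the expected number of simple paths of length $t$, and summing over $t\le\log N$ gives only
\[
O\Bigl(\sum_{t\le\log N} t^{k}\,P_0\,P_1^{\,t-1}\Bigr)=O(P_0)=O\bigl(e(\mathcal H_{n,X_n})q_n^{k_n}\bigr)=\Theta(N)\to\infty,
\]
not $o(1)$. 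Put differently, your ``path $+$ one extra edge'' decomposition furnishes only a single $\Delta$-factor, but the left-hand side of~\eqref{eq:P4b} requires \emph{two}.

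The paper obtains the missing second factor by a different edge ordering: take $e^\ast$ as the initial edge and then trace the path. Since $|X_n|\ge 3$ and $|e^\ast\cap e_1|=1$, at least two further vertices of $e^\ast$ lie among $e_2,\dots,e_t$. In the ordering $e^\ast,e_i,e_{i-1},\dots,e_1,e_{i+1},\dots,e_t$ one finds that $e_1$ is exactly $2$-bad (one old vertex from $e_2$, one from $e^\ast$) and at least one further path edge is bad (absorbing the remaining vertices of $e^\ast$). This yields the product $P_0\cdot P_2\cdot P_{\ge 2}$, which is $o(1)$ by (P5). A related caution applies in case~(iii) when $s\ge 2$: your ordering (path, then closing edge $e_0$, then handle) may produce two bad edges neither of which has \emph{exactly} two old vertices, whereas~\eqref{eq:P4b} demands $|W_n|=2$; the paper's ordering (start from $e_0$, finish at $e_1$) guarantees that $e_1$ is always exactly $2$-bad.
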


These two results  immediately prove the $0$-statement of Theorem~\ref{thm:mainramsey}.

\subsubsection{Proof of Lemma~\ref{lem:detclaim}}
Suppose for a contradiction that $H$ is non-empty but does not contain any of the structures defined in (i)--(vi). Note that $H$ is an $|X_n|$-uniform ordered hypergraph, and $|X_n|\ge 3$ by (P5).

Let $P=e_1, \dots, e_t$ be the longest simple path in $H$.
As we have no simple paths of length at least $\log N$, we have $t \leq \log N$.
By Claim~\ref{claim:determ}, $t\geq 2$.
Let $x,y$ be two vertices which belong only to $e_1$ in $P$, 
and let $e_x$ and $e_y$ be two edges of $H$ such that $e_z \cap e_1 = \{z\}$ for $z=x,y$ (such $e_x,e_y$ exist by Claim~\ref{claim:determ}). 
By the maximality of $P$, 
we have $h_z:=|V(P) \cap e_z| \geq 2$ for $z=x,y$. 

If $h_z=|X_n|$ for some $z=x,y$, then $P$ together with $e_z$ is a spoiled simple path, a contradiction. Otherwise, let $i_z:=\min\{i \geq 2: e_z \cap e_i \not= \emptyset\}$ for $z=x,y$, 
and assume without loss of generality that $i_y \leq i_x$. 
We know $e_1, \dots, e_{i_x}, e_x$ must not form a fairly simple cycle for which $e_y$ is a handle. 
This implies $e_y \subseteq e_1\cup \dots \cup e_{i_x}\cup e_x$. 
In particular, this means $e_x$ must contain all those vertices in $e_y$ which do not lie on $P$. If $|e_y\cap e_x|\ge2$ then $e_1$, $e_x$ and $e_y$ form a bad triple, a contradiction. Thus, $e_y\cap e_x$ consists of precisely one vertex $v_{xy}$ (and $v_{xy}$ lies outside of $P$). 
Now consider $e_1,\dots, e_{i_y}, e_{y}$. 
This is a fairly simple cycle that $e_x$ intersects in at least two vertices (namely $x$ and $v_{xy}$).
 Thus, we obtain a fairly simple cycle with a handle unless all the vertices in $e_x$ lie in $e_1,\dots, e_{i_y}, e_{y}$. 
 In particular, $e_x \subseteq (e_1 \cup e_{i_y} \cup e_y)$ as $i_y \leq i_x$. 
 This in turn implies $e_{i_y}=e_{i_x}$. 
 Indeed, otherwise $e_x$ must contain one vertex from $e_1$ and $|X_n|-1\geq 2$ vertices from $e_y$, 
 a contradiction as we already observed that $e_x$ only intersects $e_y$ in one vertex.
 
In summary, we have that $i_x=i_y$ and $e_x$ and $e_y$ intersect in a single vertex $v_{xy}$ outside of $P$.
As mentioned in the last paragraph, we must have $e_x \subseteq (e_1 \cup e_{i_y} \cup e_y)$. 
Similarly, we have that $e_1,\dots,e_{i_x},e_x$ form a fairly simple cycle for which $e_y$ is a handle (a contradiction)
unless $e_y \subseteq (e_1 \cup e_{i_x} \cup e_x)$.

The two previous inequalities imply $e_x\setminus\{x,v_{xy}\}\subseteq e_{i_y}$ and $e_y\setminus\{y,v_{xy}\}\subseteq e_{i_x}$. Recall here that $e_{i_x}=e_{i_y}$. We must have that $|e_{i_x} \setminus (e_x \cup e_y)| \geq 1$; 
indeed, otherwise either $e_x$ or $e_y$ contains the vertex $e_{i_x}\cap e_{i_x-1}$, which either 
contradicts the minimality of $i_x$ or $i_y$. Note that $e_x\setminus\{x,v_{xy}\}$ and $e_y\setminus\{y,v_{xy}\}$ are disjoint, they have size $|X_n|-2$ 
each, and their union is a strict subset of $e_{i_x}$. Thus $2(|X_n|-2)\le |e_{i_x}|-1=|X_n|-1$. This implies $|X_n|\le3$, and so $|X_n|=3$.

If $i_x \geq 3$ then $e_1, e_x, e_y$ form a fairly simple cycle for which $e_{i_x}$ is a handle (since $|e_{i_x} \setminus (e_x \cup e_y)| \geq 1$ and $e_1\cap e_{i_x}=\emptyset$), a contradiction. 
Thus we have that $i_x=2$, and so $e_1, e_x, e_y, e_{i_x}$ form a Pasch configuration.

Now repeat the maximal path process which we did for $e_1$ to find $e_x$ and $e_y$, except from the other end of the path.
That is, there must exist edges $e_z$ and $e_w$ such that $e_z \cap e_t = \{z\}$, $e_w \cap e_t = \{w\}$, 
where $z,w$ are vertices in $e_t$ that are not in $e_{t-1}$. 
By repeating the previous case analysis, we arrive at the conclusion that 
$e_{t-1}, e_t, e_z, e_w$ must also form a Pasch configuration where $e_z \cap e_w$ is a vertex $v_{zw}$ outside of $P$. If $t \geq 3$, then $e_1,\dots,e_t, e_x, e_z$ together form a faulty simple path.
Hence we must have $t=2$.

If the union of these two Pasch configurations contains $7$ vertices (i.e., $v_{xy} \not = v_{zw}$), 
then $e_1,e_2,e_x$ form a fairly simple cycle for which $e_z$ is a handle. 
So we now suppose that the two Pasch configurations cover the same $6$ vertices.
If we do not have $\{e_x, e_y\}=\{e_z, e_w\}$ then $e_x, e_z, e_y$ form a bad tight path.
Hence we do have equality and the two Pasch configurations we found are identical.

Relabel the edges and vertices as in the definition of a Pasch configuration.
We observe that $H$ cannot be just these four edges: 
such a hypergraph is not Rado, e.g., colour $v_{12}, v_{13}, v_{34}$ red, and the remaining vertices blue.
Also, by definition of Rado minimal, this cannot be a component of $H$. 
That is, there is an edge $e_5$ in $H$, where $e_5 \not= e_i$ for $i \in [4]$, 
and $e_5$ contains $s$ vertices from inside the Pasch configuration, where $s \geq 1$.
If $s=1$ then w.l.o.g.\ $e_5$ contains $v_{1,2}$; 
then $e_5$, $e_1$, $e_3$ is a simple path of length $3$, 
a contradiction to the longest path in $H$ of length $2$ found earlier.
If $s=2$ then whichever two vertices of the Pasch configuration $e_5$ contains, 
taking any of the simple cycles of the Pasch configuration together with $e_5$ gives a (fairly) simple cycle with a handle.
If $s=3$, first suppose $e_5=\{v_{1,2}, v_{1,3}, v_{2,3}\}$. 
Then $e_1,e_5,e_2$ is a bad tight path. 
If $e_5=\{v_{1,2}, v_{1,3}, v_{2,4}\}$, then again $e_1, e_5, e_2$ is a bad tight path.
For all other $3$-sets of vertices $e_5$ could contain, a symmetrical argument shows that we find a bad tight path.
Since all three values of $s$ give a contradiction, this concludes the proof.\qed

\subsubsection{Proof of Lemma~\ref{lem:probclaim}}
Let $c>0$ be sufficiently small and $n \in \mathbb N$ be sufficiently large. Recall that $q_n=c\hat p_n(\mathcal H_n)$. 
Given an $|X_n|$-uniform ordered hypergraph $G$, 
we will obtain an upper bound on the expected number of copies of $G$ in $\mathcal{H}^v_{n,X_n,q_n}$ 
by obtaining upper bounds on the expected number of possible assignments of vertices $v \in V(\mathcal{H}^v_{n,X_n,q_n})$ to an edge in which some vertices are already fixed, and others are to be assigned.
To make this precise, we need some notation.
Given an edge order $e_1, \dots, e_t$ of $E(G)$, 
we call a vertex $v$ \emph{new in $e_i$} if $v \in e_i$ but $v \not \in e_j$ for all $j<i$.
Otherwise we call $v \in e_i$ \emph{old in $e_i$}. 
Clearly (provided $G$ has no isolated vertices) each vertex in $G$ is new in one edge, and old in any subsequent edge that it appears in. 
Now, for $0 \leq w \leq |X_n|-1$, we will shortly provide    an upper bound $P_w$ on the expected number of edges in $\mathcal{H}^v_{n,X_n,q_n}$
that contain a fixed set of 
 $w$ (old) vertices; crucially,   $P_w$ is independent of the exact set of $w$  vertices considered.
Then one can obtain an upper bound on the expected number of copies of $G$ in $\mathcal{H}^v_{n,X_n,q_n}$ directly as some function of the $P_w$'s (which will depend on $G$). We first obtain bounds for these $P_w$'s.

Given a fixed set of $w$  vertices in $\mathcal{H}^v_{n,X_n,q_n}$, 
there are  $\binom{|X_n|}{w}w!\leq |X_n|!\leq k!$ ways of selecting some $W\subseteq X_n$ with $|W|=w$ and picking an ordering $v_1,\dots,v_w$ of these $w$  vertices. Then, there are at most $\Delta_{W}(\mathcal{H}_{n,X_n})$ choices for an edge $e$ in $\mathcal H_{n,X_n}$ such that $e_W=(v_1,\dots,v_w)$. Each vertex in $\mathcal{H}_{n,X_n}$ is included with probability $q_n$ in $\mathcal{H}^v_{n,X_n,q_n}$,
thus we obtain the upper bound
 \begin{equation*}
P_w = (k!)\cdot\left(\max_{W\subseteq X_n, \,|W|=w}\Delta_{W}(\mathcal{H}_{n,X_n})\right) \cdot (q_n)^{|X_n|-w}.
\end{equation*}
For $w=0$, we will use the  bound 
\begin{equation*}
P_0\le e(\mathcal{H}_{n,X_n}) \cdot (q_n)^{|X_n|}\le n^{|X_n|}\cdot (q_n)^{|X_n|}\le N^k.
\end{equation*}

We now use conditions (P3), (P4) and (P5) to bound $P_w$ further for values of $w$ greater than $0$. First, condition (P4) implies that, 
$$P_1\le(k!)\cdot b\cdot\frac{e(\mathcal H_{n,X_n})}{v(\mathcal H_n)} \cdot (q_n)^{|X_n|-1}.$$
By~\eqref{eq:P4a} and since  $n \in \mathbb N$ is sufficiently large, we have that $f_{n,X_n}\ge\eta \hat p_n(\mathcal H_n)$ for some constant $\eta>0$. It follows from the above that
\begin{align*}
P_1  \stackrel{\phantom{\text{(P3)}}}{\le} & (k!)\cdot b\cdot \frac{e(\mathcal{H}_{n,X_n})}{v(\mathcal{H}_{n})} \cdot (c\hat p_n(\mathcal H_n))^{|X_n|-1} \\
\stackrel{\eqref{eq:P4a}}{\le} & (k!)\cdot b\cdot\frac{e(\mathcal{H}_{n,X_n})}{v(\mathcal{H}_{n})} \cdot(f_{n,X_n})^{|X_n|-1}\cdot(c/\eta)^{|X_n|-1}\\ 
\stackrel{\eqref{eq:keyprobability}}{=} & (k!)\cdot b\cdot(c/\eta)^{|X_n|-1} \le \sqrt{c},
\end{align*}
where the last inequality follows from $c>0$ being sufficiently small. 

Similar calculations hold for higher values of $w$. Recall we have $q_n=c\hat p_n\ge cf_{n,W}$ for every $W\subseteq [k]$ with $|W|\ge2$. Let $2\le w\le|X_n|-1$ and pick $W\subset X_n$ with $|W|=w$ maximising $\Delta_W(\mathcal H_{n,X_n})$. 
As $n \in \mathbb N$ is sufficiently large
we have
\begin{align*}
P_w\stackrel{\text{(P3)}}{\le} & (k!)\cdot b\cdot\frac{e(\mathcal{H}_{n,X_n})}{e(\mathcal{H}_{n,W})}\cdot (q_n)^{|X_n|-w}
= (k!)\cdot b\cdot\frac{e(\mathcal{H}_{n,X_n})}{v(\mathcal{H}_{n})}\cdot (q_n)^{|X_n|-1}\cdot\frac{v(\mathcal H_n)}{e(\mathcal H_{n,W})}\cdot(q_n)^{-(w-1)} \\
\stackrel{\phantom{\text{(P3)}}}{\le} & \left((k!)\cdot b\cdot(c/\eta)^{|X_n|-1}\right)\cdot\frac{v(\mathcal H_n)}{e(\mathcal H_{n,W})}\cdot(c f_{n,W})^{-(w-1)}\\ 
\stackrel{\eqref{eq:keyprobability}}{=} & \left((k!)\cdot b\cdot(c/\eta)^{|X_n|-1}\right)\cdot c^{-(w-1)} \le (k!)\cdot b\cdot c\cdot\eta^{-(|X_n|-1)} \le\sqrt{c},
\end{align*}
where the last two inequalities follow from $|X_n|>w$ and $c>0$ being sufficiently small.

Finally, for any $2\le w\le|X_n|-1$, we have that $P_0\cdot P_2\cdot P_w$ is at most
\begin{align*}
 (k!)^2 \cdot&e(\mathcal{H}_{n,X_n})\cdot\left(\max_{W\subseteq X_n,|W|=2}\Delta_{W}(\mathcal{H}_{n,X_n})\right)\cdot\left(\max_{W\subseteq X_n,|W|=w}\Delta_{W}(\mathcal{H}_{n,X_n})\right)\cdot (q_n)^{3|X_n|-2-w} \\
\stackrel{\eqref{eq:P4b}}{=} & \, o(1).
\end{align*}

For brevity, let $P_{\ge w}:=\max\limits_{i\ge w} P_i$. To recap, we have shown that
\begin{equation}\label{eq:bounds}
P_0\le N^k,\quad P_{\ge 1} \le \sqrt{c}\quad\text{and}\quad P_0\cdot P_2\cdot P_{\ge 2} = o(1).
\end{equation}

\medskip

To prove the lemma it suffices to show that,
for each set of hypergraphs stated in Lemma~\ref{lem:detclaim}, the expected number of such structures in $\mathcal{H}^v_{n,X_n,q_n}$
is $o(1)$. Indeed, then by Markov's inequality w.h.p. $\mathcal{H}^v_{n,X_n,q_n}$ does not contain any of these structures.
In each case, we will find an edge order and then use~(\ref{eq:bounds}) to bound the expected number of these structures.
Let $G$ be an ordered hypergraph.
Given an edge order $e_1,\dots,e_t$ of $E(G)$,
we call an edge $e_i$
\begin{itemize}
\item \emph{initial} if all its vertices are new;
\item \emph{normal} if it has precisely one old vertex;
\item \emph{2-bad} if it has precisely two old vertices;
\item \emph{bad} if it has at least two and at most $|X_n|-1$ old vertices.
\end{itemize}
\medskip
Now observe the bounds $P_0, P_1, P_2$ and $P_{\geq 2}$ correspond
to upper bounds on
the expected number of 
ways one can place 
  an initial, normal, $2$-bad and bad edge in $\mathcal{H}^v_{n,X_n,q_n}$ respectively.

\medskip

{\it Case (i)}.
Note that for this case it suffices to show that w.h.p. $\mathcal{H}^v_{n,X_n,q_n}$
contains no simple path of length exactly
 $s:=\lceil \log N \rceil$; thus we only need to consider the expected number of simple paths of length $s$.
Let $e_1,\dots,e_s$ be the simple path. 
Here, $e_1$ is initial, and all other edges are normal. Thus, we can bound the expected number of simple paths of length $s$ in $\mathcal{H}^v_{n,X_n,q_n}$ by
\begin{align*}
 P_0 \cdot P_1^{s-1} 
\stackrel{(\ref{eq:bounds})}{\leq}  N^{k} \cdot (\sqrt{c})^{s-1} 
\leq (\sqrt{c})^{(\log N)-1} \cdot N^{k} \stackrel{\text{(P1)}}{=} o(1),
\end{align*}
which follows since $c>0$ is sufficiently small. 

\medskip

{\it Case (ii)}.
Let $e_1,\dots,e_t$ be the simple path and let $e^*$ be the spoiling edge. 
Choose $i$ to be the smallest $i \geq 2$ such that $|e_i \cap e^*| \geq 1$.
Consider the edge order  $e^*, e_i, e_{i-1},\dots, e_1, e_{i+1},\dots,e_t$.
Here $e^*$ is initial.
If $|e_i \cap e^*| \geq 2$ then $e_i$ is bad, 
the $e_j$ for $2 \leq j \leq i-1$ are normal,
$e_1$ is $2$-bad,
and each $e_j$ for $i+1 \leq j \leq t$ is normal or bad.
If $|e_i \cap e^*| =1$ then $e_j$ for $2 \leq j \leq i$ are normal,   
$e_1$ is $2$-bad, 
and each $e_j$ for $i+1 \leq j \leq t$ is normal or bad.
In particular, for at least one such $j$, $e_j$ is bad, since $e^*$ must intersect at least one of these edges (as $|X_n|\ge3$).
In either case, we have found an edge order with one 
initial edge, one $2$-bad edge, at least one further 
bad edge, and all other edges normal or bad edges. We 
have $t \leq \log N$ and  there are at most $tk$ 
choices for the location on the simple path of each 
vertex of $e^*$ intersecting it. Overall, the 
expected number of spoiled paths of length at most $\log N$ is at most
\begin{align*}
\sum_{t=2}^{\log N} (tk)^k  \cdot P_0 \cdot P_{\ge 1}^{t-2} \cdot P_2 \cdot P_{\ge2}\stackrel{(\ref{eq:bounds})}{\leq}  \sum_{t=2}^{\log N} (tk)^k  \cdot (\sqrt{c})^{t-2} \cdot o(1) = o(1),
\end{align*}
where the last inequality holds as $c>0$ is sufficiently small.

\medskip

{\it Case (iii)}.
Let $e_1,\dots,e_t,e_0$ be the fairly simple cycle and let $e^*$ be the handle.
Consider the edge order $e_0,e_t,\dots,e_{1},e^*$.
Here $e_0$ is initial,
$e_t$ is normal or bad, the
$e_j$ for $2 \leq j \leq t-1$ are normal,
$e_{1}$ is $2$-bad
and $e^*$ is bad. We have $t \leq \log N$ and  there are at most $tk$ choices for the location on the fairly simple cycle for the choice of where each of the $u$ vertices of the handle $e^*$ intersect it, where $2 \leq u \leq k-1$. Overall, the expected number of fairly simple cycles of length at  most $\log N$ with a handle is at most
\begin{align*}
\sum_{t=2}^{\log N}  \sum_{u=2}^{k-1} (tk)^u \cdot P_0 \cdot P_{\ge1}^{t-1} \cdot P_2 \cdot P_{\ge2}\stackrel{(\ref{eq:bounds})}{\leq} \sum_{t=2}^{\log N}  \sum_{u=2}^{k-1} (tk)^u \cdot (\sqrt{c})^{t-1} \cdot o(1)  = o(1).
\end{align*}
\medskip

{\it Case (iv)}.
Let $e_1,\dots,e_t, e_x,e_z$ be the faulty simple path.
Consider the edge order $e_1,e_x,e_2,\dots,$ $e_{t-1},e_z,e_t$.
Here $e_1$ is initial,
$e_2$ and $e_t$ are $2$-bad, 
$e_z$ is normal or $2$-bad (depending whether $e_z$ intersects $e_x$), 
and all other edges are normal. We have $t \leq \log N$, and a choice of whether $e_x$ and $e_z$ intersect or not. Overall, we can bound the expected number of faulty simple paths of length at most $\log N$ by
\begin{align*}
\sum_{t=3}^{\log N} P_0 \cdot P_1^{t-1} \cdot P_2^2 + \sum_{t=3}^{\log N} P_0 \cdot P_1^{t-2} \cdot P_2^3
\stackrel{(\ref{eq:bounds})}{\leq} 
2\sum_{t=3}^{\log N} (\sqrt{c})^{t-1} \cdot o(1) = o(1).
\end{align*}
\medskip

{\it Case (v)}.
Let $e_1,e_x,e_y$ be the bad triple.
Consider the edge order $e_x, e_y, e_1$.
Here, $e_x$ is initial,
$e_y$ is bad
and $e_1$ is $2$-bad. We have a choice of the size of $t:=|e_x \cap e_y|$. Overall, the expected number of bad triples is at most
\begin{align*}
\sum_{t=2}^{k-1} P_0 \cdot P_2 \cdot P_{\ge2}\stackrel{(\ref{eq:bounds})}{=}  o(1).
\end{align*}
\medskip

{\it Case (vi)}.
Let $e_1,e_2,e_3$ be a bad tight path. In this edge order, $e_1$ is initial, and $e_2$ and $e_3$ are $2$-bad. Overall, we can bound the expected number of bad tight paths by
\begin{align*}
P_0 \cdot P_2^2 \stackrel{(\ref{eq:bounds})}{=}  o(1). 
\end{align*}
\qed

\section{Proof of the random Rado lemma}\label{sec:proofblackbox}

Before we deduce the random Rado lemma (Lemma~\ref{lem:randomRado}) from Theorem~\ref{thm:mainramsey},
 we next prove Lemma~\ref{lem:kdistinct}.

\begin{proof}[Proof of Lemma~\ref{lem:kdistinct}]
We first prove the second statement of the lemma.
\begin{claim}\label{claim:specialk}
$$\lim_{n\to\infty}\frac{|k\-Sol_{S_n}^A([k])|}{|\Sol_{S_n}^A([k])|}=1.$$
\end{claim}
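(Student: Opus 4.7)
The plan is to bound the count of solutions in $\Sol^A_{S_n}([k])$ that fail to be $k$-distinct, and show this is negligible compared to the total. A solution $x=(x_1,\dots,x_k)$ is not $k$-distinct precisely when there exist $i<j$ with $x_i=x_j$, so by a union bound the number of non-$k$-distinct solutions is at most
\begin{equation*}
\sum_{\{i,j\}\subseteq[k],\,i<j}\;\sum_{s\in S_n}\bigl|\Sol^A_{S_n}\bigl((s,s),\{i,j\},[k]\bigr)\bigr|.
\end{equation*}
There are only $\binom{k}{2}$ outer terms, so it suffices to control each pair $W=\{i,j\}$ separately and show the corresponding ratio to $|\Sol^A_{S_n}([k])|$ vanishes.

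For a fixed pair $W$ and a fixed $s\in S_n$, the $B$-extendability hypothesis (A3) (applied with $w_0=(s,s)$ and $Y=[k]$) yields that $|\Sol^A_{S_n}((s,s),W,[k])|\le B|\Sol^A_{S_n}([k])|/|\Sol^A_{S_n}(W)|$ for all sufficiently large $n$. Summing over $s\in S_n$ and dividing through by $|\Sol^A_{S_n}([k])|$ gives the key inequality
\begin{equation*}
\frac{1}{|\Sol^A_{S_n}([k])|}\sum_{s\in S_n}\bigl|\Sol^A_{S_n}\bigl((s,s),W,[k]\bigr)\bigr|\;\le\;B\cdot\frac{|S_n|}{|\Sol^A_{S_n}(W)|}.
\end{equation*}

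The right-hand side tends to $0$ as $n\to\infty$ by the weak compatibility assumption (A6), which is exactly the statement that $|S_n|/|\Sol^A_{S_n}(W)|\to0$ for every $W\subseteq[k]$ with $|W|=2$. Summing this bound over the $\binom{k}{2}$ pairs shows that the fraction of non-$k$-distinct solutions tends to $0$, which is equivalent to the desired limit $|k\-Sol^A_{S_n}([k])|/|\Sol^A_{S_n}([k])|\to1$.

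I do not expect any real obstacle: the argument is a one-line application of $B$-extendability to bound the fibre over each repeated pair, followed by a direct appeal to (A6). The only minor point to note is that $|\Sol^A_{S_n}((s,s),W,[k])|$ is simply $0$ when $(s,s)\notin\Sol^A_{S_n}(W)$, so no case distinction is needed.
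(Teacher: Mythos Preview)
Your proof is correct and essentially identical to the paper's: both bound the non-$k$-distinct solutions by a union over pairs $W=\{i,j\}$ and over $s\in S_n$ of $|\Sol^A_{S_n}((s,s),W,[k])|$, apply $B$-extendability (A3) with $Y=[k]$ to each term, and then invoke (A6) to make the ratio $|S_n|/|\Sol^A_{S_n}(W)|$ vanish.
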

\begin{proofclaim}
We bound from above the number of non $k$-distinct solutions in $\Sol_{S_n}^A([k])$. For each such solution, we have $|S_n|$ choices for the repeated entry.
Using conditions (A3) and (A6), for $n$ sufficiently large, we obtain
\begin{align*}
\sum_{z \in S_n} \sum_{\stackrel{W \subseteq [k]}{|W|=2}} |\Sol_{S_n}^A((z,z),W,[k])|
&\stackrel{\text{(A3)}}{\leq} 
B \sum_{z \in S_n}\sum_{\stackrel{W \subseteq [k]}{|W|=2}} \frac{|\Sol_{S_n}^A([k])|}{|\Sol_{S_n}^A(W)|}\\
&=B|S_n|\sum_{\stackrel{W \subseteq [k]}{|W|=2}} \frac{|\Sol_{S_n}^A([k])|}{|\Sol_{S_n}^A(W)|}\stackrel{\text{(A6)}}{=}o(|\Sol_{S_n}^A([k])|).
\end{align*}
In other words, $|\Sol_{S_n}^A([k])|-|k\-Sol_{S_n}^A([k])|=o(|\Sol_{S_n}^A([k])|)$ which immediately implies the claim.\qedclaim
\end{proofclaim}

Next, we prove the inequality stated in the lemma. Let $Y\subseteq[k]$. Note that $k\-Sol_{S_n}^A(Y)\subseteq\Sol_{S_n}^A(Y)$, so the upper bound follows trivially. By definition, if $y\in k\-Sol_{S_n}^A(Y)$ then there exists some $x\in k\-Sol_{S_n}^A([k])$ such that $x_Y=y$. Thus,
$$|k\-Sol_{S_n}^A(Y)|\cdot\max_{y_0\in S^{|Y|}}|\Sol_{S_n}^A(y_0,Y,[k])|\ge |k\-Sol_{S_n}^A([k])|.$$
As $(A,S_n)$ is $B$-extendable, for $n$ large the above implies
$$|k\-Sol_{S_n}^A(Y)|\cdot\frac{B|\Sol_{S_n}^A([k])|}{|\Sol_{S_n}^A(Y)|}\ge |k\-Sol_{S_n}^A([k])|.$$
Rearranging gives
$$\frac{|k\-Sol_{S_n}^A(Y)|}{|\Sol_{S_n}^A(Y)|}\ge \frac{|k\-Sol_{S_n}^A([k])|}{B|\Sol_{S_n}^A([k])|}\ge\frac{1}{2B},$$
where the last inequality holds for $n$ sufficiently large  by Claim~\ref{claim:specialk}.
\end{proof}

Lemma~\ref{lem:randomRado} follows easily from Theorem~\ref{thm:mainramsey}. We use Lemma~\ref{lem:kdistinct} to restrict our attention to $k$-distinct solutions.

\begin{proof}[Proof of Lemma~\ref{lem:randomRado}]
Let $r\ge2$, $k  \geq 3$ and $\ell\geq 1$ be integers. Let $(S_n)_{n \in \mathbb N}$ be a sequence of finite subsets of abelian groups and $A$ be an $\ell \times k$ integer matrix. Suppose that conditions (A1)--(A6) from Lemma~\ref{lem:randomRado} hold. Let $\mathcal{H}_n$ 
be the $k$-uniform ordered hypergraph with
 vertex set $S_n$, and  edge set consisting of all $k$-distinct solutions $x=(x_1,\dots,x_k)$ to $Ax=0$ in $S_n$. 

Let $Y\subseteq[k]$. Observe that $e(\mathcal H_{n,Y})=|k\-Sol_{S_n}^A(Y)|$. As (A3) and (A6) hold,  Lemma~\ref{lem:kdistinct} implies that
\begin{equation}\label{eq:ordermag1}
e(\mathcal H_{n,Y})=\Theta{(|\Sol_{S_n}^A(Y)|)}\quad\text{and}\quad e(\mathcal H_{n})\sim|\Sol_{S_n}^A([k])|.
\end{equation}
It follows that, for $|Y|\ge2$, 
\begin{equation}\label{eq:ordermag2}
f_{n,Y}=\left(\frac{e(\mathcal H_{n,Y})}{v(\mathcal H_n)}\right)^{-\frac{1}{|Y|-1}}=\Theta \left(\frac{|\Sol_{S_n}^A(Y)|}{|S_n|}\right)^{-\frac{1}{|Y|-1}}=\Theta(p_Y(A,S_n))
\end{equation}
and thus $\hat p(\mathcal H_n)=\Theta(\hat p(A,S_n))$. We now prove that conditions (P1)--(P5) hold.
\begin{itemize}
\item[(P1):] Since $\hat p_n(\mathcal H_n)=\Theta(\hat p_n(A,S_n))$ and $|S_n|=v(\mathcal H_n)$, (A1) implies that (P1) holds.
\item[(P2):] Since $e(\mathcal H_{n})=|k\-Sol_{S_n}^A([k])|\sim|\Sol_{S_n}^A([k])|$ by~\eqref{eq:ordermag1},  (A2) implies that  (P2) holds.
\item[(P3):] For any $W\subseteq Y\subseteq[k]$ we have 
$$\Delta_W(\mathcal H_{n,Y})\le\max_{w_0\in S_n^{|W|}}\{|\Sol_{S_n}^A(w_0,W,Y)|\}\stackrel{\text{(A3)}}{\le}\Theta\left(\frac{|\Sol_{S_n}^A(Y)|}{|\Sol_{S_n}^A(W)|}\right)\stackrel{\eqref{eq:ordermag1}}{\le}\Theta\left(\frac{|e(\mathcal H_{n,Y})|}{|e(\mathcal H_{n,W})|}\right),$$
and so (P3) holds.
\item[(P4):] Pick $W\subset Y\subseteq[k]$ with $|W|=1$. We have
$$\Delta_W(\mathcal H_{n,Y})\le\max_{w_0\in S_n}|\Sol_{S_n}^A(w_0,W,Y)|\stackrel{\text{(A4)}}{=}O\left(\frac{|\Sol_{S_n}^A(Y)|}{|S_n|}\right)\stackrel{\eqref{eq:ordermag1}}{=}O\left(\frac{e(\mathcal H_{n,Y})}{v(\mathcal H_n)}\right).$$
\item[(P5):] Pick a sequence $(X_n)_{n\in\mathbb N}$ of subsets $X_n\subseteq[k]$ with $|X_n|\ge3$ for all $n\in\mathbb N$ as in Definition~\ref{def:compatibility}. Note that
$$f_{n,X_n}=\left(\frac{e(\mathcal H_{n,X_n})}{v(\mathcal H_n)}\right)^{-\frac{1}{|X_n|-1}}\stackrel{\eqref{eq:ordermag2}}{=}\Theta\left(\frac{\Sol_{S_n}^A(X_n)}{|S_n|}\right)^{-\frac{1}{|X_n|-1}}=\Theta(\hat p(A,S_n))=\Theta(\hat p(\mathcal H_n)),$$
and so $f_{n,X_n}=\Omega(\hat p(\mathcal H_n))$. It remains to verify equation~\eqref{eq:P4b}. Pick sequences $(W_n)_{n\in\mathbb N}$ and $(W'_n)_{n\in\mathbb N}$ with $W_n,W'_n\subset X_n$, $|W_n|=2$ and $|W'_n|\ge2$. Then
\begin{align*}
& \Delta_{W_n}(\mathcal H_{n,X_n})\cdot\Delta_{W'_n}(\mathcal H_{n,X_n})\cdot e(\mathcal H_{n,X_n})\cdot\hat p(\mathcal H_n)^{3|X_n|-2-|W'_n|}\\
\stackrel{\text{(P3)},\eqref{eq:ordermag1}}{\le} & 
\Theta \left (
\frac{|\Sol_{S_n}^A(X_n)|}{|\Sol_{S_n}^A(W_n)|}\cdot\frac{|\Sol_{S_n}^A(X_n)|}{|\Sol_{S_n}^A(W'_n)|}\cdot|\Sol_{S_n}^A(X_n)|\cdot\left(\frac{|\Sol_{S_n}^A(X_n)|}{|S_n|}\right)^{-\frac{3|X_n|-2-|W'_n|}{|X_n|-1}}
\right )
\\
\stackrel{\phantom{(P3),\eqref{eq:ordermag1}}}{=} & \Theta \left (\frac{|S_n|^2}{|\Sol_{S_n}^A(W_n)|}\cdot\frac{|S_n|}{|\Sol_{S_n}^A(W'_n)|}\cdot\left(\frac{|\Sol_{S_n}^A(X_n)|}{|S_n|}\right)^{\frac{|W'_n|-1}{|X_n|-1}}\right )\\
\stackrel{\phantom{(P3),\eqref{eq:ordermag1}}}{=} & \Theta \left (\frac{|S_n|^2}{|\Sol_{S_n}^A(W_n)|}\cdot\left(\frac{p_{W'_n}(A,S_n)}{p_{X_n}(A,S_n)}\right)^{|W'_n|-1}\right )\stackrel{\text{(A5)}}{=}o(1).
\end{align*}

\end{itemize}

As conditions (P1)--(P5) are satisfied, by Theorem~\ref{thm:mainramsey} there exist constants $c,C>0$ such that 
\begin{align*}
\lim_{n \rightarrow \infty}  \mathbb{P}[\mathcal{H}^v_{n,q_n} \text{ is $r$-Ramsey}]
=\begin{cases}
0 &\text{ if } q_n\leq {c}\hat p(\mathcal H_n); \\
 1 &\text{ if } q_n\geq {C}\hat p(\mathcal H_n).\end{cases}
\end{align*}
Note that $\mathcal{H}^v_{n,q_n}$ is $r$-Ramsey if and only if $S_{n,q_n}$ is $(A,r)$-Rado. Furthermore, we have $\hat p(\mathcal H_n)=\Theta(\hat p(A,S_n))$ and so there exist constants $c_0,C_0$ such that
\begin{align*}
\lim_{n \rightarrow \infty}  \mathbb{P}[S_{n,q_n} \text{ is $(A,r)$-Rado}]
=\begin{cases}
0 &\text{ if } q_n\leq {c_0}\cdot\hat p(A,S_n); \\
 1 &\text{ if } q_n\geq {C_0}\cdot\hat p(A,S_n).\end{cases}
\end{align*}
\end{proof}

\section{Proof of two auxiliary results}\label{sec:aux}
In this section, we  prove two auxiliary results. Lemma~\ref{lem:qAPbound} gives an upper bound on the number of $k$-APs in $\mathbf P_n$ containing a fixed prime number and it is used in the proofs of Theorem~\ref{gt1} and Theorem~\ref{rvdwp}. Theorem~\ref{thm:supersat-d} is a supersaturation result for integer lattices which is used to prove Theorem~\ref{thm:ramseynd}. 

\subsection{Proof of Lemma~\ref{lem:qAPbound}}\label{aux1}
We make use of the following sieve result. 

\begin{lemma}[\cite{taoblog}]\label{lem:taosieve}
Let $k\in\mathbb N$ with $k\ge2$ and let $q$ be a prime number. For each prime $p\ge k$ with $p\not=q$, let $E_p$ be the union of $k-1$ residue classes modulo $p$. Then $|E(n)|=O(n/\log^{k-1} n)$ where
$$E(n):=[n]\setminus\bigcup_{\substack{\text{$p\not=q$ prime}\\ k\le p\le\sqrt{n}}} E_p.$$
\end{lemma}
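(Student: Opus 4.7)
The plan is to deduce this as a direct application of an upper bound sieve. The quantity $|E(n)|$ counts integers $m\in[n]$ avoiding $k-1$ prescribed residue classes modulo every prime $p\in[k,\sqrt n]\setminus\{q\}$; equivalently, for each such $p$, $m$ lies in one of only $p-(k-1)$ permitted classes. This is precisely the classical setup for sieving $[n]$ by a set of primes where each prime removes a bounded number of residue classes, with sifting function $\omega(p)=k-1$.

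First I would observe that we may shrink the sifting range: if $z:=n^{1/s}$ for a fixed large integer $s$ (to be chosen), then the set obtained by sifting $[n]$ using only primes $p\in\mathcal{P}:=\{p\text{ prime}:k\le p\le z,\,p\ne q\}$ contains $E(n)$, since removing primes from the sifting set can only enlarge the surviving set. Thus it suffices to prove the bound for this reduced problem. Next, applying the fundamental lemma of sieve theory (or equivalently Brun's pure sieve), together with the trivial observation that the local densities $\omega(p)/p=(k-1)/p$ are bounded, yields an estimate of the form
\[
|E(n)|\;\le\; Cn\prod_{p\in\mathcal{P}}\left(1-\frac{k-1}{p}\right)+O(z^{B}),
\]
where $C=C(s,k)$ and $B=B(k)$ are constants arising from the sieve. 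A Mertens-type estimate (a consequence of $\sum_{p\le z}1/p=\log\log z+O(1)$ combined with $\log(1-x)=-x+O(x^2)$) then gives
\[
\prod_{p\in\mathcal{P}}\left(1-\frac{k-1}{p}\right)=\Theta\!\left(\frac{1}{(\log z)^{k-1}}\right)=\Theta\!\left(\frac{s^{k-1}}{(\log n)^{k-1}}\right),
\]
since excluding the single prime $q$ alters the product only by a bounded factor. Taking $s$ large enough (any $s>B$ works) ensures that $z^{B}=n^{B/s}$ is of smaller order than $n/\log^{k-1}n$, and combining these two estimates yields $|E(n)|=O(n/\log^{k-1}n)$.

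The argument is entirely standard sieve theory, so the main obstacle is merely bookkeeping: checking that some off-the-shelf version of the fundamental lemma (as in Halberstam--Richert, Iwaniec--Kowalski, or Tao's blog post~\cite{taoblog}) applies with the parameters as stated, and verifying that the constants $C$ and $B$ depend only on $k$ (since $k$ is fixed). I expect no genuine difficulty, as the function $\omega(p)=k-1$ is bounded and the set being sieved is simply an interval, so the remainder terms are trivially controlled.
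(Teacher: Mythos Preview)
Your approach is correct and is essentially the standard sieve argument underlying the black-box result the paper cites. The paper proceeds differently: rather than running the sieve, it invokes a ready-made statement from~\cite{taoblog} (a quantitative upper-bound sieve with fixed $\omega(p)=k-1$ for $p\ge C$) and then does a small amount of bookkeeping to match the hypotheses. Specifically, the paper sets $E_p=\emptyset$ for primes $p<k$, and for the exceptional prime $q$ it \emph{chooses} $E_q$ to be a set of $k-1$ residue classes so that the further-sifted set $E'(n):=E(n)\setminus E_q$ satisfies $|E'(n)|\ge (1-(k-1)/q)|E(n)|\ge |E(n)|/k$; applying the black box to $E'(n)$ then yields the bound for $E(n)$. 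You instead absorb the missing prime $q$ directly into the Mertens product (a single bounded factor), and you replace the black box by a sketch of the fundamental lemma with a shrunken sifting level $z=n^{1/s}$. Both routes work; yours is more self-contained and exposes where the $\log^{k-1}n$ comes from, while the paper's is shorter given the cited theorem. The only point where you should be slightly more careful is the shape of the remainder: for the interval $[n]$ and a sieve with support level $D$, the error is $O\!\bigl(\sum_{d\le D}(k-1)^{\nu(d)}\bigr)=O(D\log^{k-2}D)$ rather than literally $O(z^B)$, but with $D=n^{1/2}$ (or any fixed power of $n$ below $1$) and $z=n^{1/s}$ for $s$ large this is harmless, exactly as you indicate.
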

Lemma~\ref{lem:taosieve} is a weakened version of Theorem~32 from~\cite{taoblog}. A precise explanation of how to deduce Lemma~\ref{lem:taosieve} from the original statement in~\cite{taoblog} can be found in Appendix~\ref{appendix:sieve}.

\begin{proof}[Proof of the Lemma~\ref{lem:qAPbound}]
Let $k,\ell, n$ and $q$ be as in the statement of Lemma~\ref{lem:qAPbound}. Let $\mathcal A(n)$ be the set of $k$-APs in $\mathbb Z$ such that the $\ell$th term of the progression is $q$ and the common difference is an integer between $1$ and $n$ inclusive. In particular, we have $|\mathcal A(n)|=n$. Furthermore, the collection of $k$-APs in $\mathbf P_n$ whose $\ell$th element is $q$ is a subset of $\mathcal A(n)$. Thus, it suffices to show that the number of $k$-APs in $\mathcal A(n)$ consisting entirely of primes is $O(n/\log^{k-1}n)$.

For every prime number $p\ne q$, let  $\mathcal A_p$ be the set of $k$-APs in $\mathcal A(n)$ with at least one term divisible by $p$. Consider the set
$$\mathcal E(n):=\mathcal A(n)\setminus\bigcup_{\substack{\text{$p\ne q$ prime}\\ k\le p\le\sqrt{n}}} \mathcal A_p.$$ 

In other words, $\mathcal E(n)$ is the set of all $k$-APs in $\mathcal A(n)$ such that no term of the progression is divisible by a prime between $k$ and $\sqrt{n}$ (except $q$). In particular, $\mathcal E(n)$ contains all $k$-APs in $\mathcal A(n)$ whose terms (other than perhaps $q$) are all primes greater than $\sqrt{n}$. Conversely, the number of $k$-APs in $\mathcal A(n)$ containing at least one prime $p' \leq \sqrt{n}$ where $p' \neq q$  is $o(n/\log^{k-1} n)$. This follows from (i) the fact that there are at most $k$ arithmetic progressions of length $k$ containing both $q$ (in the $\ell$th position) and a fixed prime $p\not=q$  and as (ii) $\mathbf P_{\sqrt{n}}\sim\frac{2\sqrt{n}}{\log n}$ by the prime number theorem. 

By the observations above, the number of $k$-APs in $\mathcal A(n)$ consisting entirely of primes is at most $o(n/\log^{k-1} n)+|\mathcal E(n)|$; so it suffices to prove that $|\mathcal E(n)|=O(n/\log^{k-1}n)$.

\smallskip

Now we reformulate the problem in terms of common differences. Since the $\ell$th term of all $k$-APs in $\mathcal A(n)$ is equal to $q$, each $k$-AP in $\mathcal A(n)$ is uniquely determined by its common difference. For every prime $p\not=q$, let $E_p$ be the set of all common differences of $k$-APs in $\mathcal A_p$. It follows immediately that $|E_p|=|\mathcal A_p|$. Furthermore, the set of common differences of $k$-APs in $\mathcal A(n)$ is exactly $[n]$ by definition. Thus we have $|\mathcal E(n)|=|E(n)|$ where
$$E(n):=[n]\setminus\bigcup_{\substack{\text{$p\ne q$ prime}\\ k\le p\le\sqrt{n}}} E_p.$$

Applying Theorem~\ref{lem:taosieve}  yields $|\mathcal E(n)|=|E(n)|=O(n/\log^{k-1}n)$, as required. The only assumption of Theorem~\ref{lem:taosieve} that needs to be checked is that, for every prime $p \ge k$ with $p\not=q$, $E_p$ is the union of $k-1$ residue classes modulo $p$.  

Let $a\in[n]$ and $b_i:=q+a(i-\ell)$ for every $i\in[k]$. Then, $a\in E_p$ if and only if $(b_1,\dots,b_k)$ is a $k$-AP in $\mathcal A_p$, i.e., $p$ divides some $b_i$. Now, $p$ divides $b_i=q+a(i-\ell)$ if and only if $i\not=\ell$ and $a\equiv -q(i-\ell)^{-1} \, (\text{mod} \ p)$. Note that $i-\ell\not=0$ is always invertible modulo $p$ since $|i-\ell|<k\le p$. It follows that $E_p=\{a\in[n]:a\equiv-q(i-\ell)^{-1}\text{ (mod $p$), }i\in[k],i\not=\ell\}$. Hence, $E_p$ is the set of precisely $k-1$ residue classes modulo $p$, as required.
\end{proof}

\subsection{Supersaturation in the integer lattice}\label{aux2}

To prove Theorem~\ref{thm:supersat-d}, we first need a few intermediate results. Throughout this subsection, we will repeatedly use the following fact. 

\begin{fact}\label{fact:solbound}
Let $d,\ell, k\in\mathbb N$ and $A$ be an $\ell \times k$ integer matrix of rank $\ell$. Let $b\in(\mathbb Z^d)^\ell$ and $S\subseteq\mathbb Z^d$ be a finite subset. The number of solutions to $Ax=b$ in $S$ is at most $|S|^{k-\ell}$.
\end{fact}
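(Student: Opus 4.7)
The plan is to prove this via a standard linear-algebra argument: since $A$ has full row rank $\ell$, we can select $\ell$ linearly independent columns and use them as the ``dependent'' variables, showing that once the remaining $k-\ell$ variables are specified, the dependent ones are determined at most uniquely.

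More concretely, I would first fix a set $I\subseteq[k]$ of size $\ell$ such that the submatrix $A_I$ (consisting of the columns of $A$ indexed by $I$) has rank $\ell$; such $I$ exists by the rank assumption. Then $A_I$ is an $\ell\times\ell$ integer matrix with non-zero determinant, hence invertible over $\mathbb{Q}$. Next, I would observe that since $A$ acts on $(\mathbb{Z}^d)^k$ coordinate-wise, the equation $Ax=b$ for $x=(x_1,\dots,x_k)\in(\mathbb{Z}^d)^k$ can be rewritten as
\begin{align*}
A_I\, x_I \;=\; b - A_{[k]\setminus I}\, x_{[k]\setminus I},
\end{align*}
where $x_I=(x_i)_{i\in I}$ and $x_{[k]\setminus I}=(x_j)_{j\in[k]\setminus I}$.

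The key step is then to enumerate solutions as follows. For each of the $|S|^{k-\ell}$ choices of $x_{[k]\setminus I}\in S^{k-\ell}$, the right-hand side of the above equation is a fixed element of $(\mathbb{Z}^d)^\ell$. Applying $A_I^{-1}$ (over $\mathbb{Q}$) coordinate-wise shows that $x_I$ is uniquely determined in $(\mathbb{Q}^d)^\ell$, and therefore there is at most one $x_I\in S^\ell$ extending the given $x_{[k]\setminus I}$ to a solution. Summing over all choices of $x_{[k]\setminus I}\in S^{k-\ell}$ yields the claimed bound of $|S|^{k-\ell}$.

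There is no substantive obstacle here; the argument is essentially the same as the well-known fact that an $\ell\times k$ system of full row rank has solution set of dimension $k-\ell$, adapted to counting integer-lattice points inside $S^k$. The only minor care needed is to make explicit that the $d$-dimensional ambient space does not interfere: the matrix $A$ acts diagonally on the $d$ coordinates of each $x_i\in\mathbb{Z}^d$, so invertibility of $A_I$ over $\mathbb{Q}$ still gives uniqueness of $x_I$ once $x_{[k]\setminus I}$ is fixed.
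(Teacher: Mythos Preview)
Your proposal is correct and follows essentially the same approach as the paper's proof: select $\ell$ linearly independent columns, freely choose the remaining $k-\ell$ coordinates in $S$, and observe that the invertibility of the corresponding $\ell\times\ell$ submatrix forces the remaining coordinates to be uniquely determined. Your explicit remark about the coordinate-wise action on $\mathbb{Z}^d$ is a useful clarification, but otherwise the arguments coincide.
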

\begin{proof}
As $A$ has rank $\ell$, there are $\ell$ linearly independent columns in $A$, w.l.o.g. the first $\ell$ columns. We bound the number of solutions $x=(x_1,\dots,x_k)$ to $Ax=0$ in $S$ as follows. There are at most $|S|^{k-\ell}$ choices for $x_{\ell+1},\dots,x_k$. For each such choice, $x':=(x_1,\dots, x_\ell)$ is a solution to $A'x'=b'$ in $S\subseteq \mathbb Z^d$ where $A'$ is the $\ell\times\ell$ matrix consisting of the first $\ell$ columns of $A$ and $b'$ is some vector in $(\mathbb Z^d)^\ell$. Since the columns of $A'$ are linearly independent, there is at most one solution to $A'x'=b'$ in $\mathbb Z^d$. Thus, there are at most $|S|^{k-\ell}$ solutions to $Ax=0$ in $S$, as required.
\end{proof}

The following  lemma  essentially states that if a supersaturation result holds for $S\subseteq\mathbb Z^d$ then it also holds for $S'\subseteq S$, provided that $S'$ contains almost all elements of $S$.

\begin{lemma}\label{lem:supersattruncation} 
Let $r,d, \ell , k \in\mathbb N$ and $\gamma>0$. Let $A$ be a partition regular $\ell\times k$ integer matrix of rank $\ell$ and let $S$ and $S'$ be finite sets such that $S'\subseteq S\subseteq\mathbb Z^d$ and $|S'|\geq (1-\gamma /(2k))|S|$.
Fix an $r$-colouring of $S$ (and thus $S'$). If there are at least $\gamma |S|^{k-\ell}$ monochromatic solutions to $Ax=0$ in $S$, then there are at least $(\gamma/2)|S|^{k-\ell}$ monochromatic solutions to $Ax=0$ in $S'$.
\end{lemma}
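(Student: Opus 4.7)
The plan is to show that few monochromatic solutions are lost when we pass from $S$ to $S'$. I will bound the number of solutions to $Ax=0$ in $S$ that use at least one element of $S\setminus S'$, and show that this number is at most $(\gamma/2)|S|^{k-\ell}$; subtracting from the $\gamma|S|^{k-\ell}$ monochromatic solutions in $S$ given by hypothesis will then yield the conclusion.

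First I would establish that for every $i\in[k]$ and every $s\in S$, the number $N_i(s)$ of solutions $x\in S^k$ to $Ax=0$ with $x_i=s$ satisfies $N_i(s)\leq|S|^{k-\ell-1}$. Setting $x_i=s$ reduces the system to $A_{-i}y=-c_i s$ in $S^{k-1}$, where $c_i$ denotes the $i$th column of $A$ and $A_{-i}$ denotes $A$ with the $i$th column removed. The crucial point is that $\rank(A_{-i})=\rank(A)=\ell$; granted this, Fact~\ref{fact:solbound} applied to $A_{-i}$ immediately gives the claimed bound. Summing over $i\in[k]$ and $s\in S\setminus S'$ via a union bound yields at most $k\cdot|S\setminus S'|\cdot|S|^{k-\ell-1}\leq(\gamma/2)|S|^{k-\ell}$ solutions to $Ax=0$ in $S^k$ that touch $S\setminus S'$. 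This bound applies in particular to the monochromatic such solutions, so at least $(\gamma/2)|S|^{k-\ell}$ monochromatic solutions must lie entirely inside $S'$.

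The main obstacle is verifying the rank claim that $\rank(A_{-i})=\ell$ for every $i\in[k]$, or equivalently that every column of $A$ lies in the $\mathbb{Q}$-span of the remaining columns. Since $A$ is partition regular, by Rado's theorem it satisfies the columns condition over $\mathbb{Q}$ with some partition $C_0,C_1,\dots,C_m$ of its columns. For $c_j\in C_0$ the identity $\sum_{c_{j'}\in C_0}c_{j'}=0$ rearranges to $c_j=-\sum_{c_{j'}\in C_0\setminus\{c_j\}}c_{j'}$, which is a rational combination of columns of $A$ other than $c_j$. For $c_j\in C_i$ with $i\geq 1$, the columns condition gives $\sum_{c_{j'}\in C_i}c_{j'}=\sum_{c_{j'}\in C_k,\, k<i}\alpha_{j'}c_{j'}$ for some rationals $\alpha_{j'}$; rearranging again expresses $c_j$ as a rational combination of the remaining columns of $A$. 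Hence $\rank(A_{-i})=\ell$ in every case, and the plan goes through.
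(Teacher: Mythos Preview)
Your proof is correct and follows essentially the same approach as the paper's: bound the number of solutions in $S$ that touch $S\setminus S'$ by $k\cdot|S\setminus S'|\cdot|S|^{k-\ell-1}\leq(\gamma/2)|S|^{k-\ell}$ using that each $A_{-i}$ still has rank $\ell$, then subtract. The only minor difference is in how the rank claim is justified: the paper observes directly that if $c_i$ were not in the span of the remaining columns then every solution would have $x_i=0$, contradicting partition regularity, whereas you invoke Rado's columns condition explicitly to exhibit $c_i$ as a combination of the other columns.
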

\begin{proof}
We determine an upper bound on the number of monochromatic solutions $(\roww{z}_1,\dots,\roww{z}_k)$ which lie in $S$ but not in $S'$. 
For one such solution, there must be some $\roww{z}_i$ which lies in $S$ but not in $S'$. We have $k$ choices for $i$ and $|S|-|S'|\leq \gamma|S|/(2k)$ choices for $\roww{z}_i$. The remaining elements $(\roww{z}_1,\dots,\roww{z}_{i-1},\roww{z}_{i+1},\dots,\roww{z}_k)$ form a solution to $A'x=\roww{b}$ in $S$ where $A'$ is obtained from $A$ by removing the $i$th column and $\roww{b}$ is some vector. Note that $A'$ has rank $\ell$. If not, then the $i$th column of $A$ cannot be expressed as a linear combination of the columns of $A'$. In particular, the $i$th entry of any solution to $Ax=0$ in $\mathbb Z$ must be $0$, contradicting the assumption that $A$ is partition regular. Since $A'$ has rank $\ell$, there are at most $|S|^{k-1-\ell}$ solutions to $A'x=\roww{b}$ in $S$ by Fact~\ref{fact:solbound}. Overall, there are at most $k\cdot \frac{\gamma |S|}{2k} \cdot |S|^{k-\ell-1}=\frac{\gamma}{2}|S|^{k-\ell}$ possible choices for $(\roww{z}_1,\dots,\roww{z}_k)$.

Therefore, the number of monochromatic solutions to $Ax=0$ in $S'$ is at least 
$$\gamma |S|^{k-\ell}-\frac{\gamma}{2} |S|^{k-\ell}= \frac{\gamma}{2} |S|^{k-\ell}.$$
\end{proof}

The following proposition asserts that if the statement of Theorem~\ref{thm:supersat-d} holds for $d\in\mathbb N$ then it also holds for $d-1$.

\begin{prop}\label{prop:reduction}
Let $r,d,n\in\mathbb N$ with $d \geq 2$ and $\delta>0$. Let $A$ be an $\ell \times k$ integer matrix of rank $\ell$.
If every $r$-colouring of $[n]^d$ yields at least $\delta n^{d(k-\ell)}$ monochromatic solutions to $Ax=0$ in $[n]^d$, then every $r$-colouring of $[n]^{d-1}$ yields at least $\delta n^{(d-1)(k-\ell)}$ monochromatic solutions to $Ax=0$ in $[n]^{d-1}$.
\end{prop}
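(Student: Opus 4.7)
The plan is to prove the contrapositive-style reduction by lifting a given $r$-colouring of $[n]^{d-1}$ to an $r$-colouring of $[n]^d$ in a structure-preserving way and then counting monochromatic solutions.

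Concretely, fix an $r$-colouring $\chi:[n]^{d-1}\to[r]$ of $[n]^{d-1}$. I would define the lifted colouring $\chi':[n]^d\to[r]$ by $\chi'(v,i):=\chi(v)$ for every $v\in[n]^{d-1}$ and $i\in[n]$, so that $\chi'$ only sees the first $d-1$ coordinates. Since $A$ has integer entries and acts coordinate-wise on $\mathbb Z^d$, any $k$-tuple $\roww{z}=(\roww{z}_1,\dots,\roww{z}_k)\in([n]^d)^k$ with $\roww{z}_j=(\roww{v}_j,i_j)$ satisfies $A\roww{z}=0$ if and only if $A\roww{v}=0$ in $[n]^{d-1}$ and $A\roww{i}=0$ in $[n]$. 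Moreover, such a solution is $\chi'$-monochromatic precisely when $(\roww{v}_1,\dots,\roww{v}_k)$ is $\chi$-monochromatic, independently of the choice of $(i_1,\dots,i_k)$.

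Let $M$ denote the number of $\chi$-monochromatic solutions to $Ax=0$ in $[n]^{d-1}$ and let $N$ denote the total number of solutions to $Ax=0$ in $[n]$. By the previous paragraph, the number of $\chi'$-monochromatic solutions to $Ax=0$ in $[n]^d$ is exactly $M\cdot N$. Applying the hypothesis to the colouring $\chi'$ yields
\begin{equation*}
M\cdot N\;\ge\;\delta n^{d(k-\ell)}.
\end{equation*}

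Finally, I would apply Fact~\ref{fact:solbound} with $S=[n]\subseteq\mathbb Z$ (so $d=1$ in the fact) and $b=0$ to conclude $N\le n^{k-\ell}$. Combining the two bounds gives
\begin{equation*}
M\;\ge\;\frac{\delta n^{d(k-\ell)}}{n^{k-\ell}}\;=\;\delta n^{(d-1)(k-\ell)},
\end{equation*}
which is exactly the desired lower bound on the number of $\chi$-monochromatic solutions in $[n]^{d-1}$. Since $\chi$ was an arbitrary $r$-colouring, the proposition follows. There is no real obstacle here: the only thing to be careful about is verifying that $A$ decouples across the $d$ coordinates of $\mathbb Z^d$ (which is immediate from $A$ being an integer matrix acting entrywise) and that Fact~\ref{fact:solbound} applies with $b=0$.
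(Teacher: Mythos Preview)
Your proof is correct and follows essentially the same approach as the paper: lift the $(d-1)$-dimensional colouring to $[n]^d$ by ignoring the last coordinate, apply the hypothesis, and divide out by (an upper bound on) the number of solutions in $[n]$. Your presentation is in fact slightly cleaner, since you identify the count of lifted monochromatic solutions exactly as $M\cdot N$, whereas the paper phrases the same step as a projection argument bounding how many $d$-dimensional solutions map to a fixed $(d-1)$-dimensional one.
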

\begin{proof}
Let $A,r,d,n$ and $\delta$ be as in the statement of the proposition. Let $f:[n]^d\to[n]^{d-1}$ be such that for $\roww{x}=(x_1,\dots,x_d)\in[n]^d$ we have $f(\roww{x}):=(x_1,\dots,x_{d-1})$, i.e., $f(\roww{x})$ is the vector consisting of the first $d-1$ entries of $\roww{x}$. We first prove the following property of $f$.  

\begin{claim}\label{claim:f}
Given a solution $(\roww{y}_1,\dots,\roww{y}_k)$ to $Ax=0$ in $[n]^{d-1}$, there are at most $n^{k-\ell}$ solutions $(\roww{x}_1,\dots,\roww{x}_k)$ to $Ax=0$ in $[n]^d$ such that $f(\roww{x}_i)=\roww{y}_i$ for every $i\in[k]$.
\end{claim}
\begin{proofclaim}
Let $S$ be the set of solutions $(\roww{x}_1,\dots,\roww{x}_k)$ in $[n]^d$ such that $f(\roww{x}_i)=\roww{y}_i$ for every $i\in[k]$. Let $q:S\to[n]^k$ with $q((\roww{x}_1,\dots,\roww{x}_k)):=(x_1,\dots,x_k)$ where $x_i$ is the $d$th entry of $\roww{x}_i$. Since $S$ is a set of solutions in $[n]^d$, it follows that the image $q(S)$ of $q$ is a set of solutions to $Ax=0$ in $[n]$. In particular, $|q(S)|\le n^{k-\ell}$ by Fact~\ref{fact:solbound}. Note that the assumption $f(\roww{x}_i)=\roww{y}_i$ for every $i\in[k]$ implies $q$ is injective, hence $|S|=|q(S)|\le n^{k-\ell},$ as required.\qedclaim
\end{proofclaim}

Pick an arbitrary $r$-colouring $\mathcal C:[n]^{d-1}\to[r]$ of $[n]^{d-1}$. Let $\mathcal C^*:[n]^d\to[r]$ be the $r$-colouring of $[n]^d$ where $\mathcal C^*(\roww{x}):=\mathcal C(f(\roww{x}))$. If $(\roww{x}_1,\dots,\roww{x}_k)$ is a monochromatic solution to $Ax=0$ in $[n]^d$ (with respect to $\mathcal C^*$) then $(f(\roww{x}_1),\dots,f(\roww{x}_k))$ is a monochromatic solution to $Ax=0$ in $[n]^{d-1}$ (with respect to $\mathcal C$). By assumption, there exist at least $\delta n^{d(k-\ell)}$ monochromatic solutions to $Ax=0$ in $[n]^d$ with respect to $\mathcal C^*$. Hence, by Claim~\ref{claim:f}, the number of monochromatic solutions in $[n]^{d-1}$ with respect to $\mathcal C$ is at least
$$\frac{\delta n^{d(k-\ell)}}{n^{k-\ell}}=\delta n^{(d-1)(k-\ell)},$$
as required.
\end{proof}

By Proposition~\ref{prop:reduction}, it is clear that if the statement of Theorem~\ref{thm:supersat-d} holds for an infinite sequence $d_1<d_2<\dots$ of positive integers then it holds for any $d\in\mathbb N$. Therefore, it suffices to prove the following relaxation of Theorem~\ref{thm:supersat-d}.

\begin{thm}\label{thm:supersat-c}
Let $r\in\mathbb N$, $c\in\mathbb N\cup\{0\}$ and let $A$ be a partition regular $\ell \times k$ integer matrix of rank $\ell$.
There exist $\delta,n_0>0$ such that for all $n \geq n_0$ the following holds: every $r$-colouring of $[n]^{2^c}$ yields at least $\delta n^{2^c(k-\ell)}$ monochromatic solutions to $Ax=0$ in $[n]^{2^c}$.
\end{thm}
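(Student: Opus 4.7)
I would prove this by induction on $c$. The base case $c=0$ is exactly the Frankl--Graham--R\"odl supersaturation result (Theorem~\ref{thm:fgrsupersat}) applied to $[n]=[n]^{2^0}$.

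For the inductive step, set $d:=2^c$ and exploit the factorisation $[n]^{2^{c+1}}=[n]^{2d}=[n]^d\times[n]^d$. The key structural observation is that a $k$-tuple $((\mathbf{x}_i,\mathbf{y}_i))_{i=1}^{k}$ with $\mathbf{x}_i,\mathbf{y}_i\in[n]^d$ is a solution to $Ax=0$ in $[n]^{2d}$ if and only if both $(\mathbf{x}_1,\dots,\mathbf{x}_k)$ and $(\mathbf{y}_1,\dots,\mathbf{y}_k)$ are solutions in $[n]^d$, so that solutions in $[n]^{2d}$ factor as pairs of solutions in $[n]^d$. Given an $r$-colouring $\chi:[n]^{2d}\to[r]$, for each $\mathbf{y}\in[n]^d$ the pigeonhole principle applied to the row-colouring $\mathbf{x}\mapsto\chi(\mathbf{x},\mathbf{y})$ yields a dominant colour $f(\mathbf{y})\in[r]$ such that $T(\mathbf{y}):=\{\mathbf{x}\in[n]^d:\chi(\mathbf{x},\mathbf{y})=f(\mathbf{y})\}$ has size at least $n^d/r$. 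Viewing $f:[n]^d\to[r]$ as an $r$-colouring and applying the inductive hypothesis, then pigeonholing over the $r$ possible dominant colours, produces a colour $j^*\in[r]$ and at least $(\delta_c/r)n^{d(k-\ell)}$ solutions $(\mathbf{y}_1,\dots,\mathbf{y}_k)$ to $Ax=0$ in $[n]^d$ with $f(\mathbf{y}_i)=j^*$ for every $i$. For each such tuple the sets $T_i:=T(\mathbf{y}_i)\subseteq[n]^d$ satisfy $|T_i|\ge n^d/r$ and any solution $(\mathbf{x}_1,\dots,\mathbf{x}_k)\in T_1\times\dots\times T_k$ to $Ax=0$ in $[n]^d$ gives rise to a $j^*$-monochromatic solution $((\mathbf{x}_i,\mathbf{y}_i))_{i=1}^{k}$ in $[n]^{2d}$.

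The proof therefore reduces to establishing a transversal density claim: for any $T_1,\dots,T_k\subseteq[n]^d$ each of density at least $1/r$, the number of solutions $(\mathbf{x}_1,\dots,\mathbf{x}_k)\in T_1\times\dots\times T_k$ to $Ax=0$ in $[n]^d$ is at least $\eta n^{d(k-\ell)}$, for some $\eta=\eta(r,k,\ell,A)>0$ independent of $n$ and of the particular tuple $(\mathbf{y}_1,\dots,\mathbf{y}_k)$. Given this claim, the two counts multiply to yield at least $(\eta\delta_c/r)n^{2d(k-\ell)}$ monochromatic solutions in $[n]^{2d}$, closing the induction with $\delta_{c+1}:=\eta\delta_c/r$. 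I expect the transversal step to be the main obstacle: it does \emph{not} follow immediately from monochromatic supersaturation on $[n]^d$, since forming an auxiliary $2^k$-colouring on $[n]^d$ by the membership pattern $\mathbf{x}\mapsto(\mathbf{1}_{T_i}(\mathbf{x}))_{i=1}^k$ only produces monochromatic solutions whose coordinates share a common membership pattern, which need not be transversal. To handle it I plan to argue coordinate by coordinate, exploiting the fact that for a partition regular matrix $A$ every column lies in the $\mathbb{Q}$-span of the remaining columns (so the submatrix obtained by deleting any one column still has rank $\ell$, and by Fact~\ref{fact:solbound} the fibre obtained by fixing a single coordinate has the expected size $O(n^{d(k-1-\ell)})$). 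A direct counting argument, combined with Lemma~\ref{lem:supersattruncation} to absorb small boundary-type losses, should then show that restricting one coordinate of a random solution to a subset of $[n]^d$ of density at least $1/r$ decreases the solution count by at most a constant factor depending only on $r$ and $A$; iterating this restriction $k$ times produces the desired lower bound on transversal solutions.
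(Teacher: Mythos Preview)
Your inductive framework has a fatal gap: the transversal density claim is simply false for partition regular matrices that are not density regular. Take $d=1$, $A=(1,1,-1)$ (the Schur matrix, which is partition regular of full rank), $r=2$, and let $T_1=T_2=T_3$ be the odd integers in $[n]$. Each $T_i$ has density $1/2$, yet there is not a single solution to $x_1+x_2=x_3$ with all coordinates odd. So the transversal count is zero, not $\eta n^{k-\ell}$. Your proposed proof of the transversal step also fails at the level of mechanics: the column-span condition for partition regular matrices gives only an \emph{upper} bound $O(n^{d(k-1-\ell)})$ on each fibre, which cannot be leveraged into a lower bound after restriction. Concretely, in the Schur example above, starting from $\Theta(n^2)$ solutions, restricting $x_1$ and then $x_2$ to odd numbers each loses only a constant factor, but the final restriction of $x_3$ to odd numbers collapses the count from $\Theta(n^2)$ to $0$.

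The underlying issue is that your reduction passes from an $r$-colouring problem to a density problem, and partition regularity does not imply density regularity. The paper's inductive step avoids this entirely: rather than factoring $[n]^{2d}=[n]^d\times[n]^d$ and seeking transversal solutions, it encodes points of $[\varepsilon n]^{2d}$ into points of $[(\varepsilon n)^2]^d$ via a base-$(\varepsilon n)$ expansion $\mathbf z=\mathbf x+(\varepsilon n)\mathbf y$, and pushes the $r$-colouring of the $2d$-dimensional cube forward to an auxiliary $R$-colouring of the $d$-dimensional cube with $R\gg r$ (the colour records both the colours of a small shifted neighbourhood and some modular data). The inductive hypothesis is then applied purely as a colouring statement in dimension $d$ with $R$ colours, and a shifting/decoding step turns each monochromatic solution back into one in $[n]^{2d}$. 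No density-type claim is needed anywhere.
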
 

\begin{proof}[Proof of Theorem~\ref{thm:supersat-d}]
Let $r,d\in\mathbb N$ and let $A$ be a partition regular $\ell \times k$ integer matrix of rank $\ell$. Choose $c\in\mathbb N$ such that $d\le 2^c$. Let $\delta,n_0$ be the constants obtained by applying Theorem~\ref{thm:supersat-c} with parameters $r,c$ and $A$. By Proposition~\ref{prop:reduction}, the statement of Theorem~\ref{thm:supersat-d} holds for $A,r,\delta,n_0$ and every $d'\in\mathbb N$ with $d'\le 2^c$. In particular, it holds for $A,r,d,\delta,n_0$. 
\end{proof}

We conclude this subsection with the proof of Theorem~\ref{thm:supersat-c}. Before proceeding with the proof, we give a rough intuition of the main idea by considering an easier case. For integers $a,b\in\mathbb Z$, we write $[a,b]:=\{m\in\mathbb Z:a\le m\le b\}$. Let $A=(1\quad 1\quad -1)$. Given an arbitrary $r$-colouring $\mathcal C$ of $[-n,n]^2$, our aim is to find $\Theta(n^4)$ monochromatic solutions to $Ax=0$. 

Define a colouring $\mathcal C^*$ of $[(\eps n)^2]$, where $\eps>0$ is a small constant, as follows. If $z\in[(\eps n)^2]$, then $z=x+(\eps n)y$ for a unique pair $(x,y)\in[\eps n]\times[0,\eps n-1]$. Then colour $z$ with $(c_{-2},c_{-1},c_0,c_1,c_2)$ where $c_\lambda$ is the colour of $(x-\lambda(\eps n),y+\lambda)\in[-n,n]^2$ with respect to $\mathcal C$. Note that if $z_1+z_2-z_3=0$ is a solution in $[(\eps n)^2]$, then $(x_1+x_2-x_3)+(\eps n)(y_1+y_2-y_3)=0$. Since the $x_i$'s lie in $[\eps n]$, $|y_1+y_2-y_3|\leq 2$. 
Thus, there exists $\lambda\in\mathbb Z$ with $|\lambda|\le 2$ such that $(y_1+\lambda)+(y_2+\lambda)-(y_3+\lambda)=0$, which implies $(x_1-(\eps n)\lambda)+(x_2-(\eps n)\lambda)-(x_3-(\eps n)\lambda)=0$. 
In particular, $\{( x_j-\lambda (\eps n), y_j+\lambda)\}_{j=1,2,3}$ 
is a solution to $Ax=0$ in $[-n,n]^2$. Furthermore, suppose $\{z_j\}_{j=1,2,3}$ is a monochromatic solution to $Ax=0$ in $[(\eps n)^2]$ with respect to $\mathcal C^*$; say the colour is $(c_{-2},c_{-1},c_0,c_1,c_2)$. Then there exists $\lambda\in\mathbb Z$ with $|\lambda|\le 2$ such that
$\{(x_j-\lambda (\eps n),
y_j+\lambda)\}_{j=1,2,3}$ is a monochromatic solution to $Ax=0$ in $[-n,n]^2$ with respect to $\mathcal C$, the colour being $c_\lambda$.

It is not  difficult to see that different monochromatic solutions in  $[(\eps n)^2]$ with respect to $\mathcal C^*$ give rise to different 
monochromatic solutions in $[-n,n]^2$ with respect to $\mathcal C$. That is, we have defined 
 an injection from monochromatic solutions in $[(\eps n)^2]$ with respect to $\mathcal C^*$ to monochromatic solutions in $[-n,n]^2$ with respect to $\mathcal C$. By Theorem~\ref{thm:fgrsupersat} there are $\Theta(n^4)$ many of the former, yielding the required result.

\smallskip

The proof of Theorem~\ref{thm:supersat-c} proceeds by induction on $c$ and follows a similar idea, where instead of Theorem~\ref{thm:fgrsupersat} we invoke the 
induction hypothesis. Essentially, we project a solution to $Ax=0$ in $[(\eps n)^2]^{2^c}$ to an `almost' solution in $[\eps n]^{2^{c+1}}$, and then apply a shifting procedure to obtain an exact solution in $[n]^{2^{c+1}}$. The colouring $\mathcal C^*$ we 
require is more involved than the one above due to two technicalities: (i) ensuring that the parameter $\roww{\lambda}$ (which is the analogue of $\lambda$ in the general case) is a vector with integer entries, thereby implying that the shifting procedure is well-defined; (ii)~ensuring that solutions generated by the shifting procedure lie in $[n]^{2^{c+1}}$ rather than $[-n,n]^{2^{c+1}}$.  

We will use the following notation. Given two vectors $\roww{x}$ and $\roww{y}$, we denote their concatenation as $\roww{x}*\roww{y}$. Furthermore, we let $abs[\roww{x}]$ be the vector obtained by taking the absolute value of all entries of $\roww{x}$. For example, if $\roww{x}=(1,-2)$ and $\roww{y}=(3,4)$ then $\roww{x}*\roww{y}=(1,-2,3,4)$ and $abs[\roww{x}]=(1,2)$. We write $\roww{x}\cdot \roww{y}$ for the scalar product of $\roww{x}$ and $\roww{y}$. Let 
$\roww{1}$ denote  the all one vector.

\begin{proof}[Proof of Theorem~\ref{thm:supersat-c}]
Let $r\in\mathbb N$ and let $A$ be a partition regular $\ell \times k$ integer matrix of rank $\ell$. We proceed by induction on $c$. The case $c=0$ follows immediately from Theorem~\ref{thm:fgrsupersat}. For the induction hypothesis, suppose that the statement of Theorem~\ref{thm:supersat-c} holds for some $c\in\mathbb N\cup\{0\}$.  

Denote the $i$th row of $A$ as $\roww{a}_i$. Let $C:=\max_{i\in[\ell]}\{|\roww{a}_i\cdot\roww{1}|,1\}$. By relabelling the rows of $A$ and changing their sign (these operations do not affect the set of solutions), we may assume that $C=\max\{\roww{a}_1\cdot\roww{1},1\}\ge 1$. Let $E\ge0$ be the largest entry of $A$ in absolute value. Fix constants $\eps,P$ such that
$$Ek\ll 1/\eps\ll P,$$
where $P$ is a prime number that does not divide $C$. Let $d:=2^c$ and $R:=r^{(2Ek+1)^d}\cdot (CP)^d$. 
By assumption, we can apply Theorem~\ref{thm:supersat-c} with input $A,R$ and $c$ to obtain $\gamma,n_0>0$. 
Finally, we take $n\in\mathbb N$ with 
$n \ge\max\{n_0/\eps ,2^{d+2}Ek^2d/(\eps\gamma)\}$.
 For simplicity, we will assume that $\eps n$ is an integer. 

Let $\mathcal C$ be an arbitrary $r$-colouring of $[0,n]^{2d}$. We define an $R$-colouring $\mathcal C^*$ of $[Ek(\eps n)+1,(\eps n)^2]^d$ as follows. 
For any $\roww{z}\in[Ek(\eps n)+1,(\eps n)^2]^d$, there exists a unique pair of vectors $\roww{x}\in[\eps n]^d$ and $\roww{y}\in[Ek,\eps n]^d$ such that $\roww{z}=\roww{x}+\roww{y}(\eps n)$. 
Let $I:=[-Ek,Ek]^d$. We colour $\roww{z}$ with $(\{c_{\roww{s}}\}_{\roww{s}\in I},\roww{t})$ where for each $\roww{s} \in I$, $c_{\roww{s}}$ is the colour of $abs[(\roww{y}+\roww{s})*(\roww{x}-\roww{s}(\eps n))]\in[0,n]^{2d}$ with respect to $\mathcal C$ and $\roww{t}\in[CP]^d$ is the unique vector such that $\roww{y}\equiv\roww{t}$ (mod $CP$). Note that all entries of $(\roww{y}+\roww{s})*(\roww{x}-\roww{s}(\eps n))$ have absolute value at most $(\eps n)(Ek+1)\ll n$, so $abs[(\roww{y}+\roww{s})*(\roww{x}-\roww{s}(\eps n))]$ indeed lies in $[0,n]^{2d}$.

Observe that $\mathcal C^*$ is a well-defined $R$-colouring of 
$[Ek(\eps n)+1,(\eps n)^2]^d$: $\roww{x},\roww{y}$ and $\roww{t}$ are unique, so the colour of $\roww{z}\in[Ek(\eps n)+1,(\eps n)^2]^d$ in $\mathcal C^*$ is uniquely determined. There are $r$ possible colours for each $c_{\roww{s}}$ and $(CP)^d$ possible values for $\roww{t}$; since $|I|=(2Ek+1)^d$, at most $R$ colours have been used. 

\medskip

Next, we describe a shifting procedure which generates a monochromatic solution in $[0,n]^{2d}$ with respect to $\mathcal C$ given a monochromatic solution in $[Ek(\eps n)+1,(\eps n)^2]^d$ with respect to $\mathcal C^*$.

Suppose $(\roww{z}_1,\dots,\roww{z}_k)$ is a monochromatic solution to $Ax=0$ in $[Ek(\eps n)+1,(\eps n)^2]^d$ with respect to $\mathcal C^*$. For every $1\le i\le k$ we have $\roww{z}_i=\roww{x}_i+\roww{y}_i(\eps n)$ for some unique $\roww{x}_i\in[\eps n]^d$ and $\roww{y}_i\in[Ek,\eps n]^d$. Let $\roww{z}_i:=(z_i^1,\dots,z_i^d)$, $\roww{x}_i:=(x_i^1,\dots,x_i^d)$ and $\roww{y}_i:=(y_i^1,\dots,y_i^d)$. Pick an arbitrary $j\in[d]$ and let $\roww{x}:=(x_1^j,\dots,x_k^j)$, $\roww{y}:=(y_1^j,\dots,y_k^j)$ and $\roww{z}:=(z_1^j,\dots,z_k^j)$. Then we have $A\roww{z}=0$ and $\roww{z}=\roww{x}+(\eps n)\roww{y}$. In particular, for every $i\in[\ell]$ we have 
\begin{equation}\label{eq:x+y}
\roww{a}_i\cdot\roww{z}=\roww{a}_i\cdot\roww{x}+(\eps n)\roww{a}_i\cdot\roww{y}=0.
\end{equation}

Note that the absolute value of every entry of $\roww{a}_i$ is at most $E$ and every entry of $\roww{x}$ lies in $[\eps n]$, thus we obtain 
\begin{equation}\label{eq:xbound}
|\roww{a}_i\cdot\roww{x}|\le \eps n Ek.
\end{equation}
Now \eqref{eq:x+y} and \eqref{eq:xbound} immediately imply
\begin{equation}\label{eq:ybound}
|\roww{a}_i\cdot\roww{y}|\le Ek.
\end{equation}
Next, we distinguish two cases.

\smallskip

{\bf Case 1.} Suppose that $C=\roww{a_1}\cdot\roww{1}\ge 1$. Since $(\roww{z}_1,\dots,\roww{z}_k)$ is monochromatic with respect to $\mathcal C^*$, there exists some $t\in[CP]$ such that $\roww{y}\equiv t\roww{1}$ (mod $CP$). Hence, we have 
$$\roww{a}_1\cdot\roww{y}\equiv\roww{a}_1\cdot(t\roww{1})\equiv t(\roww{a}_1\cdot\roww{1})\equiv Ct\equiv 0 \; (\text{mod } C).$$
Set $\lambda:=-(\roww{a}_1\cdot\roww{y})/C$. The previous equality implies that $\lambda$ is an integer, while by \eqref{eq:ybound} we have $|\lambda|\le Ek/C$. Observe that
\begin{align}\label{andrew}
\roww{a}_1\cdot(\roww{y}+\lambda\roww{1})=\roww{a}_1\cdot\roww{y}+\lambda C=0.
\end{align}
As 
$\roww{y}\equiv t\roww{1}$ (mod $CP$), we have
$$0\stackrel{(\ref{andrew})}{=}
\roww{a}_1\cdot(\roww{y}+\lambda\roww{1})\equiv (t+\lambda)\roww{a}_1\cdot\roww{1} \equiv C(t+\lambda) \; (\text{mod } P),$$
and so we must have that $t+\lambda\equiv 0$ (mod $P$) since $P$ is a prime number which does not divide $C$. It follows that
\begin{equation}\label{eq:ymod}
\roww{a}_i\cdot(\roww{y}+\lambda\roww{1})\equiv (t+\lambda)(\roww{a}_i\cdot\roww{1}) \equiv 0 \;(\text{mod } P)
\end{equation}
for every $i\in[\ell]$. Note that 
$$|\roww{a}_i\cdot(\roww{y}+\lambda\roww{1})|\le |\roww{a}_i\cdot \roww{y}|+|\lambda(\roww{a}_i\cdot\roww{1})|\stackrel{\eqref{eq:ybound}}{\le} Ek+|\lambda C| \le 2Ek.$$
As we picked $P$ so that $P\gg Ek$, the previous inequality and \eqref{eq:ymod} imply that
$$\roww{a}_i\cdot(\roww{y}+\lambda\roww{1})=0$$
for every $i\in[\ell]$.
Furthermore, this together with \eqref{eq:x+y}
implies that,
for every $i\in[\ell]$ we have 
$$\roww{a}_i\cdot(\roww{x}-\lambda(\eps n)\roww{1})
=0.$$

\smallskip

{\bf Case 2.} Suppose $\roww{a}_1\cdot\roww{1}=0$ (and so $\roww{a}_i\cdot\roww{1}=0$ for every $i\in[\ell]$). Then set $\lambda:=0$ and observe that 
$$\roww{a}_i\cdot(\roww{y}+\lambda\roww{1})=\roww{a}_i\cdot\roww{y}\equiv t(\roww{a}_i\cdot\roww{1})\equiv 0 \;(\text{mod } P).$$  
By  \eqref{eq:ybound} and as $P\gg Ek$, we deduce that $\roww{a}_i\cdot(\roww{y}+\lambda\roww{1})=0$ and so $\roww{a}_i\cdot(\roww{x}-\lambda(\eps n)\roww{1})=0$, for every $i \in [\ell]$.

\medskip

In both cases, we obtain two solutions $(y_i^j+\lambda)_{i\in[k]}$ and $(x_i^j-\lambda(\eps n))_{i\in[k]}$ to $Ax=0$ in $\mathbb Z$, where $|\lambda|\le Ek$. In general, since $j$ was arbitrarily chosen, for every $j\in[d]$ there exists $\lambda_j\in [-Ek,Ek]$ such that $(y_i^j+\lambda_j)_{i\in[k]}$ and $(x_i^j-\lambda_j(\eps n))_{i\in[k]}$ are two solutions to $Ax=0$ in $\mathbb Z$. 
By setting $\roww{\lambda}:=(\lambda_1,\dots,\lambda_d)$, we obtain that $(\roww{y}_i+\roww{\lambda})_{i \in [k]}$ and $(\roww{x}_i-\roww{\lambda}(\eps n))_{i\in[k]}$ are two solutions to $Ax=0$ in $\mathbb Z^d$.

Note that $\roww{y}_i+\roww{\lambda}\in[0,2\eps n]^d$ for every $i\in[k]$ since $\roww{y}_i\in[Ek,\eps n]^d$ and $\roww{\lambda}\in[-Ek,Ek]^d$. Furthermore, for every $m\in [d]$, the $m$th entries of the vectors $(\roww{x}_i-\roww{\lambda}(\eps n))_{i\in[k]}$ must have the same sign since $\roww{x}_i\in[\eps n ]^d$ and $\roww{\lambda}$ has integer entries.
 Hence $(abs[(\roww{y}_i+\roww{\lambda})*(\roww{x}_i-\roww{\lambda}(\eps n))])_{i\in[k]}$ is a solution to $Ax=0$ in $[0,n]^{2d}$.

Crucially, all vectors $(abs[(\roww{y}_i+\roww{\lambda})*(\roww{x}_i-\roww{\lambda}(\eps n))])_{i\in[k]}$ have the same colour in $\mathcal C$ by the construction of $\mathcal C^*$ and the fact that $\roww{\lambda}\in I=[-Ek,Ek]^d$.
Namely, if all $(\roww{z}_1,\dots,\roww{z}_k)$ are 
coloured with, say, $(\{c_{\roww{s}}\}_{\roww{s}\in I},\roww{t})$ in $\mathcal C ^*$ then all 
$(abs[(\roww{y}_i+\roww{\lambda})*(\roww{x}_i-\roww{\lambda}(\eps n))])_{i\in[k]}$ are coloured with 
$c_{\roww\lambda}$ in $\mathcal C$. Hence, 
$(abs[(\roww{y}_i+\roww{\lambda})*(\roww{x}_i-\roww{\lambda}(\eps n))])_{i\in[k]}$ is a monochromatic solution to $Ax=0$ in $[0,n]^{2d}$.

\medskip

Arbitrarily extend the $R$-colouring $\mathcal C^*$ of
$[Ek(\eps n)+1,(\eps n)^2]^d$ to an $R$-colouring
$\mathcal C^{**}$
of $[(\eps n)^2]^d$. As $(\eps n)^2\geq n_0$, the induction hypothesis implies that there are 
at least $\gamma(\eps n)^{2d(k-\ell)} $
monochromatic solutions in $[(\eps n)^2]^d$ with respect to $\mathcal C^{**}$.
Note that $[Ek(\eps n)+1,(\eps n)^2]^d$
contains all but at most
$dEk(\eps n)^{2d-1} \leq \gamma (\eps n)^{2d}/(2k)$ of the elements of 
$[(\eps n)^2]^d$. Thus, 
by Lemma~\ref{lem:supersattruncation}, there are at least $(\gamma/2) (\eps n)^{2d(k-\ell)}$ monochromatic solutions in $[Ek(\eps n)+1,(\eps n)^2]^d$ with respect to $\mathcal C^*$; moreover,
using the procedure described above, we can produce a monochromatic solution in $[n]^{2d}$ with respect to $\mathcal C$ from each one of them. It remains to check how many times we could have counted each solution  in $[n]^{2d}$.

Observe that a different monochromatic solution $(\roww{z_1},\dots,\roww{z}_k)$ in $[(Ek(\eps n)+1,(\eps n)^2]^d$ would have generated different $k$-tuples $(y_i^j+\lambda)_{i\in[k]}$ and $(x_i^j-\lambda(\eps n))_{i\in[k]}$, since $\roww{x}_i$ and $\roww{y}_i$ are unique and by \eqref{eq:x+y}.  On the other hand, the preimage of a singleton in $abs:[-n,n]^d\to[0,n]^d$ has size at most $2^d$. Hence, a monochromatic solution in $[n]^{2d}$ with respect to $\mathcal C$ could be counted at most $2^d$ times. 

In conclusion, there are at least $(\gamma/2^{d+1}) (\eps n)^{2d(k-\ell)}$ monochromatic solutions to $Ax=0$ in $[0,n]^{2d}$ with respect to $\mathcal C$. Using Lemma~\ref{lem:supersattruncation}, there are at least $(\gamma/2^{d+2}) (\eps n)^{2d(k-\ell)}$ monochromatic solutions to $Ax=0$ in $[n]^{2d}$. Taking $\delta:=(\gamma/2^{d+2}) \eps^{2d(k-\ell)}$, the statement of Theorem~\ref{thm:supersat-c} holds for $c+1$ since $2d=2^{c+1}$. This concludes the inductive step and the proof.
\end{proof}

\section{Proof of the applications of the random Rado lemma}\label{sec:proofapplications}

In this section, we prove the applications of the random Rado lemma presented in Sections~\ref{sec:primes} and~\ref{sec:easyapplications},
that is Theorems~\ref{rvdwp},~\ref{thm:ramseynd},~\ref{thm:sec4ex2},~\ref{thm:exponent}, ~\ref{thm:fields} and~\ref{thm:fields2}.
The first uses the full version of the random Rado lemma, Lemma~\ref{lem:randomRado}.
The others all use the simplified version, Lemma~\ref{lem:easyRado}.

\begin{proof}[Proof of Theorem~\ref{rvdwp}]
Let $r,k\in\mathbb N$ with $r\ge2$ and $k\ge3$. Let  $A=(a_{ij})$ be the $(k-2)\times k$ matrix where
$$a_{ij}:=\begin{cases}
 1 & \text{ if $i=j$;} \\
-2 & \text{ if $i+1=j$;} \\
 1 & \text{ if $i+2=j$;} \\
 0 & \text{ otherwise.}
\end{cases}$$
Hence $k\-Sol_{\mathbf{P}_n}^A([k])$ corresponds to the set of all $k$-APs in $\mathbf{P}_n$. 

The prime number theorem yields
\begin{align}
|\mathbf{P}_n| &= \Theta \left( \frac{n}{\log n} \right). \label{eq:p1}   
\end{align}
Note that any solution to $Ax=0$ that does not correspond to a $k$-AP must be a trivial solution (i.e., all entries in $x$ are the same). Moreover, 
given fixed integers $a,b$ and $1\leq i< j \leq k$,
there is at most one $k$-AP that has $a$ in its $i$th position and $b$ in its $j$th position. Thus, for any 
$Y \subseteq [k]$ with $2 \leq |Y| \leq k$,
each element of $\Sol_{\mathbf{P}_n}^A(Y)$ corresponds
 to either a trivial solution to $Ax=0$ or 
a single $k$-AP. Hence, by Theorem~\ref{gt}
we have that 
\begin{align}
|\Sol_{\mathbf{P}_n}^A(Y)| &= \Theta \left( \frac{n^2}{\log^kn} \right) \quad \text{for each $Y \subseteq [k]$ with $2 \leq |Y| \leq k$}. \label{eq:p2} 
\end{align}
Therefore, (\ref{eq:p1}) and~(\ref{eq:p2}) imply that
\begin{align}
p_Y(A,\mathbf{P}_n) &= \Theta \left( \left(\frac{n}{\log^{k-1}n} \right)^{-\frac{1}{|Y|-1}} \right)
\quad \text{for each $Y \subseteq [k]$ with $2 \leq |Y| \leq k$}. \label{eq:p3} 
\end{align}
Note  that $[k]$ maximises $p_Y(A,\mathbf{P}_n)$ over all $Y \subseteq [k]$ with $|Y| \geq 2$, and so
\begin{equation}\label{eq:hatp}
\hat p_n:=\hat p(A,\mathbf P_n)=
\Theta \left( 
n^{-\frac{1}{k-1}} \log n\right).
\end{equation}
We now verify the conditions of Lemma~\ref{lem:randomRado}.
\begin{itemize}
\item[(A1):] By~(\ref{eq:p1}) and~(\ref{eq:hatp}), we have $|\mathbf{P}_n| \hat{p}_n = \Theta (n^{\frac{k-2}{k-1}}) \to \infty$
and $\hat{p}_n \to 0$
as $n \to \infty$, since $k\ge3$.

\item[(A2):] Theorem~\ref{thm:supersatprimes} immediately implies that $\mathbf{P}_n$ is $(A,r)$-supersaturated.

\item[(A3):] 
Let $W\subseteq Y\subseteq[k]$. First, consider the case $|W|=1$. 
Fix a prime $q \in \mathbf{P}_n$. If $|Y|=1$ then $|\Sol_{\mathbf{P}_n}^A(q,W,Y)|\le1$. If $|Y|\ge2$, we have
\begin{align}\label{eq:A3primes}
|\Sol_{\mathbf{P}_n}^A(q,W,Y)| 
  =
O\left(\frac{n}{\log^{k-1} n}\right)  
\stackrel{(\ref{eq:p1}),(\ref{eq:p2})}{=}O \left( \frac{|\Sol_{\mathbf{P}_n}^A(Y)|}{|\mathbf{P}_n|} \right) = O \left( \frac{|\Sol_{\mathbf{P}_n}^A(Y)|}{|\Sol_{\mathbf{P}_n}^A(W)|} \right),
\end{align}
where we used 
Lemma~\ref{lem:qAPbound} for the first equality
and $|\Sol_{\mathbf{P}_n}^A(W)| \leq |\mathbf{P}_n|$ for the third.

Now suppose $|W|\geq 2$. Since specifying two elements (and their position) of a $k$-AP uniquely determines the latter,
 for any $z_W \in S^{|W|}$, we have 
\begin{align*}
|\Sol_{\mathbf{P}_n}^A(z_W,W,Y)| \leq 1 \stackrel{\eqref{eq:p2}}{=} O\left( \frac{|\Sol_{\mathbf{P}_n}^A(Y)|}{|\Sol_{\mathbf{P}_n}^A(W)|} \right).
\end{align*}
Therefore, there exists $B\geq 1$ such that $(A,\mathbf{P}_n)$ is $B$-extendable for every $n$ sufficiently large, as required.

\item[(A4):] Let $W\subset Y\subseteq[k]$ with $|W|=1$. Since $|Y|\ge2$, equation~\eqref{eq:A3primes} holds. The first two equalities of~\eqref{eq:A3primes}  immediately imply condition (A4).

\item[(A5):] 
We take $X_n=[k]$ for all $n\in\mathbb N$. We have $|X_n|=k\ge3$ and $p_{X_n}(A,\mathbf{P}_n)=\hat p_n$. Let $W,W'\subset[k]$ with $|W|=2$ and $|W'|\ge2$.
As $|W'| \leq k-1$ we have
\begin{align*}
\frac{|\mathbf{P}_n|^2}{|\Sol_{\mathbf{P}_n}^A(W)|} \left( \frac{p_{W'}(A,\mathbf{P}_n)}{p_{[k]}(A,\mathbf{P}_n)}\right)^{|W'|-1} 
\stackrel{(\ref{eq:p1}),(\ref{eq:p2}),(\ref{eq:p3})}{=} O\left( \log^{k-2}n \left(\frac{n}{\log^{k-1}n}\right)^{\frac{|W'|-k}{k-1}}\right) \to 0,    
\end{align*}
as $n \to \infty$, as required.

\item[(A6):] Let $W \subseteq [k]$ with $|W|=2$. Then~(\ref{eq:p1}) and~(\ref{eq:p2}) imply
\begin{align*}
\frac{|\mathbf{P}_n|}{|\Sol_{\mathbf{P}_n}^A(W)|} = O \left( \frac{\log^{k-1}n}{n} \right) \to 0,
\end{align*}
as $n \to \infty$, as required.
\end{itemize}

\smallskip

Since conditions (A1)--(A6) hold, we can apply Lemma~\ref{lem:randomRado}. Note that $\mathbf P_{n,p}$ is $(A,r)$-Rado if and only if $\mathbf P_{n,p}$ is $(r,k)$-van der Waerden. Thus Lemma~~\ref{lem:randomRado} yields  Theorem~\ref{rvdwp}. 
\end{proof}

\begin{proof}[Proof of Theorem~\ref{thm:ramseynd}]
Let $r,d\in\mathbb N$ with $r\ge2$. Let $A$ satisfy the assumptions of Theorem~\ref{thm:ramseynd}. We will verify that the conditions of Lemma~\ref{lem:easyRado} hold, where we take $S_n:=[n]^d$. Note that $[n]$ is a finite subset of the field $\mathbb Q$ and $m_{S_n}(A)=m(A)$. 

\begin{itemize}

\item[(C2):] Theorem~\ref{thm:supersat-d} implies that $([n]^d)_{n \in \mathbb{N}}$ is $(A,r)$-supersaturated.

\item[(C3):] An immediate consequence of Theorem~\ref{thm:supersat-d} is that there exists some $\delta>0$ such that the number of solutions to $Ax=0$ in $[n]^d$ is at least $\delta n^{d(k-\ell)}$, for all $n$ sufficiently large. Since there are at most $n^{d(k-\ell)}$ solutions to $Ax=0$ in $[n]^d$ (by e.g., Lemma~\ref{lem:keycounting}), $(A,[n]^d)$ is $\delta$-rich for $n$  sufficiently large.

\item[(C4):] Since $A$ is irredundant, $A$ is irredundant with respect to $\mathbb{Q}^d$. Furthermore, by Theorem~\ref{thm:supersat-d}, $A$ is $3$-partition regular in $[n]^d$ for $n$ sufficiently large. 
Thus, we can apply Lemma~\ref{lem:primpliesabundant} with $\mathbb F:=\mathbb{Q}$ and $S:=[n]^d$ to obtain that $(A,[n]^d)$ is abundant for $n$ sufficiently large.

\item[(C1):] 
By (C4), $m(A)$ is well-defined (see Remark~\ref{rmk:m(a)abb}). As $A$ is full rank, considering $W=[k]$ yields $m(A) \geq \frac{k-1}{k-1-\ell}>1$. 
Since (C3) holds, Lemma~\ref{lem:pSA} yields $\hat{p}_n= \Theta(n^{-d/m(A)})$.
Therefore, we have $|[n]^d| \hat {p}_n \to \infty$ and $\hat{p}_n \to 0$ as $n \to \infty$.

\item[(C5):] Note that $\rank_{S_n}(A)>0$ for every $n$ since $S_n\not=\{0\}$. As (C3) and (C4) are satisfied, we can apply Lemma~\ref{lem:pWXtozero}. Lemma~\ref{lem:pWXtozero} and (C1) imply that (C5) holds. 

\end{itemize}

\smallskip

As conditions (C1)--(C5) hold, we can apply Lemma~\ref{lem:easyRado} to obtain Theorem~\ref{thm:ramseynd}.
\end{proof}

\begin{proof}[Proof of Theorem~\ref{thm:exponent}]
Let $r\in\mathbb N$ with $r\ge2$ and let $A$ and $G$ be as in the statement of Theorem~\ref{thm:exponent}. Note that $G^n$ is a finite abelian group. We will verify that the hypothesis of Lemma~\ref{lem:easyRado} holds, where we take $S_n:=G^n$.  
We will repeatedly use the fact that $\rank_{G^n}(A_W)=\rank_G(A_W)$ for any $W\subseteq [k]$; this follows easily from Definition~\ref{def:rankfinite}. 
Thus, $m_{G^n}(A)=m_{G}(A)$.
\begin{itemize}

\item[(C2):] Since $G$ is a finite abelian group with exponent $s$ and $A$ satisfies the $s$-columns condition, Theorem~\ref{thm:exponentsupersat} implies $(G^n)_{n \in \mathbb{N}}$ is $(A,r)$-supersaturated.

\item[(C3):] Equation~\eqref{eq:rank1} implies $(A, G^n)$ is $1$-rich.

\item[(C4):] Let $W\subseteq[k]$ with $|W|=2$. By assumption, $(A,G)$ is abundant, and so $\rank_G(A_{\overline W})=\rank_G(A)$. This implies $\rank_{G^n}(A_{\overline W})=\rank_{G^n}(A)$. 
As $W$ is arbitrary, $(A,G^n)$ is abundant.

\item[(C1):] By (C4), $m_{G^n}(A)$ is well-defined and positive (see Remark~\ref{rmk:m(a)abb}). 
Since (C3) holds, 
Lemma~\ref{lem:pSA} yields 
$\hat{p}_n :=\hat p(A,G^n)= \Theta((|G|^n)^{-1/m_{G^n}(A)})$.
As $\rank_{G^n}(A)=\rank_G(A)>0$ and by using $W=[k]$, we have $m_{G^n}(A) \geq \frac{k-1}{k-1-\rank_{G^n}(A)}>1$.
Therefore we have $|G|^n \hat{p}_n \to \infty$ and $\hat{p}_n \to 0$ as $n \to \infty$.

\item[(C5):] Let $Z\subseteq[k]$ and 
$$D(Z):=\frac{|Z|-1-\rank_G(A)+\rank_G(A_{\overline Z})}{|Z|-1}.$$
As $(A, G^n)$ is $1$-rich,~\eqref{eq:key2} implies  that $p_Z(A,G^n)=|G|^{-n \cdot D(Z)}$ for every $n \in \mathbb{N}$. Pick $X\subseteq[k]$, $|X|\ge2$ so that the function $D(X)$ is minimised;
additionally choose $X$ so
that $|X|$ is as small as possible under this assumption.
 It follows that $p_X(A,G^n)=\hat p(A,G^n)$.
 For every $|Z|=2$ we have $D(Z)=1$, by (C4), while $D([k])<1$ since $\rank_G(A)>0$. Hence, $|X|\ge3$.

Let $W'\subset X$ with $|W'|\ge2$. We have
$$\frac{p_{W'}(A,G^n)}{p_X(A,G^n)}=(|G|^n)^{-D(W')+D(X)}.$$

Since $X$ is minimal and there are finitely many subsets $Z\subset X$, there exists $\eps>0$ such that we have $D(Z)-D(X)\ge\eps$ for every $Z\subset X$. It follows that 
$$\frac{p_{W'}(A,G^n)}{p_X(A,G^n)}\le|G|^{-\eps n}\to0$$
as $n \to \infty$.

\end{itemize}

\smallskip

As conditions (C1)--(C5) hold, we can apply Lemma~\ref{lem:easyRado} to obtain  Theorem~\ref{thm:exponent}.

\end{proof}

\begin{proof}[Proof of Theorem~\ref{thm:sec4ex2}]
Since $m_{\mathbb Z_4}(A)=4/3$ when $A=(2\;\; 2\; -2)$ (see Example~\ref{examp:mZ4}), to prove the theorem
it suffices to verify that the hypothesis of Theorem~\ref{thm:exponent} holds in our setting.  

Firstly, $\mathbb Z_4$ is an abelian group with exponent $4$. The matrix $A$ satisfies the $4$-columns condition: 
the last two entries of $A$ sum to zero  and they span the first entry.

Next, we check $(A,\mathbb Z_4)$ is abundant and $\rank_{\mathbb Z_4}(A)>0$.
The total number of solutions to $2x_1+2x_2-2x_3=0$ in $\mathbb Z_4$ is $4\cdot4\cdot2=4^{3-1/2}$; so
 $\rank_{\mathbb Z_4}(A)=1/2>0$.
For any $W\subseteq[3]$ with $|W|=2$, the equation $A_{\overline W}x=0$ in $\mathbb Z_4$ is the same as $2x=0$ in $\mathbb Z_4$. We have $2=4^{1-1/2}$ solutions to $2x=0$ in $\mathbb Z_4$, thus $\rank_{\mathbb Z_4}(A_{\overline W})=1/2=\rank_{\mathbb Z_4}(A)$. 
In particular, $(A,\mathbb Z_4)$ is abundant.
\end{proof}

\begin{proof}[Proof of Theorem~\ref{thm:fields}]
Consider any full rank $\ell \times k$
matrix $A$  with entries from $\mathbb F$ that satisfies the hypothesis of the theorem.
By Definition~\ref{def:rankPID}, 
$\rank _{\mathbb F}(A)=\rank _{\mathbb F^n}(A) $.
Moreover, as $\mathbb F^n$ is an abelian group, one
can also define $\rank _{\mathbb F^n}(A)$ analogously to Definition~\ref{def:rankfinite}. In particular, both these notions of $\rank _{\mathbb F^n}(A)$ are equivalent. Thus, 
(\ref{eq:rank1}) holds in our setting (with $S={\mathbb F}^n$).

Similarly, one can define $m_{\mathbb F^n}(A)$ analogously to Definition~\ref{def:mGA}. Further, as
$\rank _{\mathbb F}$ is the same as $\rank _{\mathbb F^n} $ we have that 
$m_{\mathbb F^n}(A)=m_{\mathbb F}(A)$, where the latter parameter was defined in (\ref{m(A)deffield}).

As (\ref{eq:rank1}) holds,
one can check that  all the results used to prove
Lemma~\ref{lem:easyRado} extend to this setting of matrices with entries from $\mathbb F$.
Thus, the statement of Lemma~\ref{lem:easyRado} holds
for $S_n:=\mathbb F^n $ and full rank matrices $A$ 
with entries from $\mathbb F$. In particular, the notions of $(A,r)$-saturated, abundant and $\eps$-rich can be analogously defined in this setting. (For a formal justification of all of this, see the PhD thesis of the first author~\cite{phdandrea}.)

Since $m_{\mathbb F^n}(A)=m_{\mathbb F}(A)$, to prove the theorem it suffices to show that (C1)--(C5) hold with respect to $S_n=\mathbb F^n$.
\begin{itemize}

\item[(C2):] Theorem~\ref{thm:fieldsupersat} implies that $(\mathbb{F}^n)_{n \in \mathbb{N}}$ is $(A,r)$-supersaturated.

\item[(C3):] Equation~\eqref{eq:rank1} implies that $(A,\mathbb F^n)$ is $1$-rich.

\item[(C4):] Theorem~\ref{thm:bdh} implies that $A$ is $3$-partition regular in $\mathbb{F}^{n}\setminus\{0\}^n$ (for $n$ sufficiently large). By assumption $A$ is irredundant with respect to $\mathbb F^n$. Arguing precisely as in the proof of  Lemma~\ref{lem:primpliesabundant}, one can conclude that $(A,\mathbb{F}^n)$ is abundant for all sufficiently large $n$.

\item[(C1):] 
As $(A,\mathbb{F}^n)$ is abundant for sufficiently large $n$, $(A,\mathbb{F})$ is abundant.
Remark~\ref{rmk:m(a)abb} implies that
$m_{\mathbb F}(A)$ is well-defined and positive. As $A$ is full rank, considering $W=[k]$ yields $m_{\mathbb F}(A) \geq \frac{k-1}{k-1-\ell}>1$. 
Since (C3) holds, (the analogue of) Lemma~\ref{lem:pSA} yields $\hat{p}_n = 
\Theta(|\mathbb{F}^n|^{-1/m_{\mathbb F}(A)})$.
Therefore we have $|\mathbb{F}^n|\hat {p}_n \to \infty$ and $\hat{p}_n \to 0$ as $n \to \infty$.

\item[(C5):] Since (C3) and (C4) are satisfied, we can apply (the analogue of) Lemma~\ref{lem:pWXtozero}. This together with (C1) implies that (C5) holds.
\end{itemize}
\end{proof}

\begin{proof}[Proof of 
Theorem~\ref{thm:fields2}]
Let $A$ be as in the theorem. Recall that as $A$ is an integer matrix, we can view it as 
a matrix with entries from $\mathbb F$ (i.e., an entry $a_{ij}$ corresponds to $a_{ij} \cdot 1_{\mathbb F}\in \mathbb F$).
Thus, we show that the theorem follows directly from Theorem~\ref{thm:fields}. 

Note that the  finite field $\mathbb{F}$ of order $q^k$ has exponent $q$ (in particular, $\langle 1_{\mathbb F}\rangle\cong\mathbb Z_q$) so the fact that 
$A$ satisfies the 
$q$-columns condition immediately implies it also satisfies the columns condition over $\mathbb F$. We can therefore apply 
Theorem~\ref{thm:fields} to obtain the theorem.
\end{proof}

\section{Further applications and concluding remarks}\label{sec:conclusion}
We have seen a number of applications of the (simplified) random Rado lemma. Of course, there are many other potential applications; we give a couple more  in the next subsection.
In Section~\ref{sec:res} we consider a resilience result. 
In the final subsection, we consider some possible future research directions.
All proofs omitted in this section appear in the PhD thesis of the first author~\cite{phdandrea}.

\subsection{Further applications of the random Rado lemma}
The next result is an analogue of the random Rado theorem for $\mathbb Z_n$. 
\begin{thm}\label{thm:Zn}
For all irredundant partition regular full rank integer matrices $A$, and all $r \geq 2$, there exist constants $C,c>0$ such that the following holds.
$$\lim _{n \rightarrow \infty} \mathbb P [\mathbb Z_{n,p} \text{ is } (A,r)\text{-Rado}]=\begin{cases}
0 &\text{ if } p \leq cn^{-1/m(A)}; \\
 1 &\text{ if } p \geq Cn^{-1/m(A)}.\end{cases} $$\qed 
\end{thm}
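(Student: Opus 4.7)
The plan is to derive Theorem~\ref{thm:Zn} as a corollary of the simplified random Rado lemma (Lemma~\ref{lem:easyRado}) applied to the sequence of finite abelian groups $S_n := \mathbb{Z}_n$, very much in the spirit of the proofs of Theorems~\ref{thm:ramseynd} and~\ref{thm:exponent}. Since $\mathbb{Z}_n$ is a finite abelian group, we are in case~(i) of Lemma~\ref{lem:easyRado}, and the task splits into two parts: (a)~reconciling the threshold $|S_n|^{-1/m_{S_n}(A)}$ output by Lemma~\ref{lem:easyRado} with the target $n^{-1/m(A)}$, and (b)~verifying the five conditions (C1)--(C5).

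For part~(a), a Smith normal form argument shows that $\rank_{\mathbb{Z}_n}(A_W) = \rank(A_W)$ for every $W \subseteq [k]$ whenever $n$ is coprime to a finite set $\Pi$ of primes determined by the elementary divisors of the submatrices of $A$; for such $n$ we have $m_{\mathbb{Z}_n}(A) = m(A)$ and the two thresholds match up to constants. For part~(b), condition (C3) is automatic for finite abelian groups via~\eqref{eq:rank1}. For (C4), one shows that any irredundant partition regular full rank integer matrix is abundant over $\mathbb{Q}$ (a standard fact implicit in the original random Rado theorem, Theorems~\ref{radores0} and~\ref{r3}), and this abundance lifts to $\mathbb{Z}_n$ for $n$ coprime to $\Pi$. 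Condition (C2) follows from Theorem~\ref{thm:exponentsupersat}: since the exponent of $\mathbb{Z}_n$ is $n$, we need the $n$-columns condition, which we obtain by clearing denominators in the rational columns partition provided by partition regularity and reducing modulo $n$, valid as long as the denominators are coprime to $n$ (absorbing them into $\Pi$). Condition (C1) follows from $\hat{p}_n = \Theta(n^{-1/m(A)})$ combined with $m(A) > 1$, the latter coming from $A$ being full rank with $\ell \geq 1$. Finally, (C5) is verified exactly as in the proof of Theorem~\ref{thm:exponent}: take $X_n$ to be a smallest subset of $[k]$ that minimises $D(Z) := (|Z|-1-\rank(A) + \rank(A_{\overline Z}))/(|Z|-1)$ among $Z$ with $|Z| \geq 2$; abundance forces $D(W) = 1$ for $|W|=2$ while $\rank(A) > 0$ gives $D([k]) < 1$, so $|X_n| \geq 3$ and the required ratios decay polynomially in $n$.

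The hardest part is handling those $n$ that share factors with the primes in $\Pi$. For such $n$, the rank over $\mathbb{Z}_n$ may strictly decrease, $m_{\mathbb{Z}_n}(A)$ may differ from $m(A)$, and Lemma~\ref{lem:easyRado} would deliver a different threshold. Restricting attention to the density-one set of $n$ coprime to $\Pi$ yields the statement as $n \to \infty$ along that subsequence; to cover every sufficiently large $n$ one would need a separate case analysis showing the threshold still equals $\Theta(n^{-1/m(A)})$ for the remaining $n$, which I expect to go through by carefully tracking how the rank drops propagate through the definitions of $m_{S_n}(A)$ and the probability parameters $p_W(A,S_n)$, absorbing any multiplicative discrepancies into the constants $c$ and $C$.
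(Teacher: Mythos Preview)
Your approach via Lemma~\ref{lem:easyRado} runs into a genuine obstruction that the paper explicitly flags: in the footnote following Theorem~\ref{thm:Zn}, the authors note that one cannot apply Lemma~\ref{lem:easyRado} here because not all choices of $A$ ensure $(A,\mathbb{Z}_n)$ is abundant, and instead derive the result from the full random Rado lemma, Lemma~\ref{lem:randomRado}. Your proposed fix for $n$ sharing a factor with $\Pi$ misidentifies the issue: the problem is not merely that $m_{\mathbb{Z}_n}(A)$ may drift from $m(A)$ (indeed the paper observes that $n^{-1/m(A)} = \Theta(n^{-1/m_{\mathbb{Z}_n}(A)})$ regardless), but that condition~(C4) is a \emph{hypothesis} of Lemma~\ref{lem:easyRado}. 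When abundance fails for infinitely many $n$ (e.g.\ all even $n$ if $2 \in \Pi$), you simply cannot invoke that lemma on the full sequence, and no amount of threshold-matching or constant-absorbing restores it. The paper's route via Lemma~\ref{lem:randomRado} verifies (A1)--(A6) directly, bypassing the abundance requirement altogether.

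A secondary issue: your supersaturation argument via Theorem~\ref{thm:exponentsupersat} does not apply as stated, since that theorem concerns the sequence $(G^n)_{n \in \mathbb{N}}$ for a \emph{fixed} group $G$, not $(\mathbb{Z}_n)_{n \in \mathbb{N}}$ with $n$ varying. A natural supersaturation route here is instead to observe that solutions to $Ax=0$ in $[cn]$ (for suitably small $c>0$ depending on $A$) are genuine solutions in $\mathbb{Z}_n$ without wraparound and to invoke Theorem~\ref{thm:fgrsupersat}. The paper also remarks that the $1$-statement follows immediately from Theorem~\ref{r3} by this same embedding.
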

Theorem~\ref{thm:Zn}  follows from Lemma~\ref{lem:randomRado},\footnote{In particular, one cannot apply Lemma~\ref{lem:easyRado}  here as not all choices of $A$ ensure $(A,\mathbb Z_n$) is abundant; see~\cite{phdandrea}.}
though actually
 the $1$-statement of Theorem~\ref{thm:Zn}
is also an immediate consequence of Theorem~\ref{r3}.
The reader may wonder why the parameter $m(A)$ rather than $m_{\mathbb Z_n}(A)$
appears in the statement of Theorem~\ref{thm:Zn}.
In particular,  $m_{\mathbb Z_n}(A)$ may not equal $m(A)$ and it can be the case that 
 $m_{\mathbb Z_n}(A)$ may not be the same for all values of $n$. 
For example, for $A=\begin{pmatrix} 2 & 2 & -2 \end{pmatrix}$ and even $n$ we have $m_{\mathbb Z_n}(A)=2/(1+\log_n2)$. 
However, in this case we have $n^{-1/m_{\mathbb Z_n}(A)}=\frac{1}{\sqrt{2}}n^{-1/m(A)}$.
More generally,  
we have that $n^{-1/m(A)}=\Theta (n^{-1/m_{\mathbb Z_n}(A)})$ and so we can use
$m(A)$ in the statement of Theorem~\ref{thm:Zn}
instead of $m_{\mathbb Z_n}(A)$.

\medskip

In general, our machinery ensures that, given any `well behaved' sequence $(S_n)_{n\in\mathbb N}$ of finite abelian groups, one can obtain a corresponding random Ramsey-type theorem if one can prove a supersaturation result.
It would therefore be interesting to prove other supersaturation results for abelian groups \`a la Theorem~\ref{thm:exponentsupersat}. Furthermore,
it is likely that  `projection' type
arguments such as  in the proof of 
Theorem~\ref{thm:supersat-d} can be employed to obtain
new supersaturation results from existing results (such as Theorems~\ref{thm:exponentsupersat} and~\ref{thm:supersat-d}).

As noted in Section~\ref{subsec:supersat}, sometimes supersaturation results are easy to obtain, for example, for matrices $A$ that are translation-invariant. 
In such cases there are fewer conditions to check in order to prove a random Ramsey-type result.
By applying Lemma~\ref{lem:easyRado} we obtain the following.

\begin{thm}\label{thm:transl-inv}
Let $(S_n)_{n\in\mathbb N}$ be a sequence of finite abelian groups and $A$ be an integer matrix that is translation-invariant with
respect to $S_n$ for all sufficiently large
$n \in \mathbb N$.
If (C1), (C4) and (C5) are satisfied then
there exist constants $C,c>0$ such that the following holds.
$$\lim _{n \rightarrow \infty} \mathbb P [S_{n,p} \text{ is } (A,r)\text{-Rado}]=\begin{cases}
0 &\text{ if } p \leq cn^{-1/m_{S_n}(A)}; \\
 1 &\text{ if } p \geq Cn^{-1/m_{S_n}(A)}.\end{cases}$$ \qed
\end{thm}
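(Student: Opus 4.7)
The plan is to apply the simplified random Rado lemma (Lemma~\ref{lem:easyRado}) as a black box. Since conditions (C1), (C4) and (C5) are given in the hypothesis, it remains only to verify (C2) and (C3). Condition (C3) is immediate and requires no work: each $S_n$ is a finite abelian group, so by equation~\eqref{eq:rank1} the pair $(A,S_n)$ is $1$-rich, as noted in Remark~\ref{rmk:easierRado}. Thus the main content of the proof is verifying supersaturation (C2).

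For (C2), the key input is the observation of Saad and Wolf discussed after Theorem~\ref{thm:fgrsupersat}: if $A$ is translation-invariant with respect to a finite abelian group $G$, then a generalisation of Szemer\'edi's theorem for finite abelian groups due to Green implies that, provided $|G|$ is sufficiently large, every subset $T \subseteq G$ with $|T| \geq |G|/r$ contains a constant (depending on $r$, $A$, and the ambient group) proportion of $|\Sol_G^A([k])|$ of all solutions to $Ax=0$ in $G$. Observe first that condition (C1) (together with $\hat p_n \leq 1$) forces $|S_n| \to \infty$ as $n \to \infty$, so for all sufficiently large $n$ the matrix $A$ is translation-invariant with respect to $S_n$ and $|S_n|$ is large enough to apply the Saad--Wolf argument. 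Given any $r$-colouring of $S_n$, pick the largest colour class $T$, which has $|T| \geq |S_n|/r$; then $T$ contains $\Omega(|\Sol_{S_n}^A([k])|)$ monochromatic solutions to $Ax=0$, which yields (C2).

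Once (C1)--(C5) have been established, Lemma~\ref{lem:easyRado} immediately gives constants $c, C > 0$ delivering the required $0$- and $1$-statements at threshold $|S_n|^{-1/m_{S_n}(A)}$. The only real obstacle is invoking the supersaturation for $(S_n)_{n\in\mathbb N}$; this is not intrinsic to our framework but is imported from the Saad--Wolf observation, which in turn rests on Green's removal/Szemer\'edi-type theorem for abelian groups. A minor bookkeeping point is to ensure that the constant $\eps$ produced by the Saad--Wolf argument can be taken uniform in $n$ for the tail of the sequence; this is standard because the generalised Szemer\'edi bound depends only on $r$ and $A$ once $|S_n|$ is large enough, so no uniformity issue arises.
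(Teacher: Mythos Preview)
Your proposal is correct and follows essentially the same approach as the paper: the paper explicitly states that Theorem~\ref{thm:transl-inv} is obtained by applying Lemma~\ref{lem:easyRado}, with (C3) automatic for finite abelian groups (Remark~\ref{rmk:easierRado}) and (C2) supplied by the Saad--Wolf/Green supersaturation discussed in Section~\ref{subsec:supersat}. Your bookkeeping on $|S_n|\to\infty$ and on the uniformity of the supersaturation constant is in line with how the paper uses these facts.
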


\subsection{Resilience in the primes}\label{sec:res}
The following is an immediate consequence of Theorem~\ref{gt}. 
\begin{lemma}\label{lem:gtsupersat}
Let $r \geq 1$ and $k \geq 3$.
Given any $\gamma>0$ there exist $n_0,\eps>0$ such that for all $n \geq n_0$, the following holds.
Suppose $X \subseteq \mathbf{P}_n$ with $|X| \geq \gamma |\mathbf{P}_n|$. Then  for any $r$-colouring of $X$
there are at least $\eps n^2/\log^kn$ monochromatic $k$-APs in $X$.\qed
\end{lemma}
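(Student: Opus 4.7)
The plan is to deduce the lemma by a direct pigeonhole argument combined with the moreover part of the Green--Tao theorem (Theorem~\ref{gt}).

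Fix $r \geq 1$, $k \geq 3$ and $\gamma>0$. Consider any $r$-colouring of $X \subseteq \mathbf{P}_n$ with $|X| \geq \gamma|\mathbf{P}_n|$. By the pigeonhole principle, at least one colour class $X_i \subseteq X$ has size
\[
|X_i| \geq \frac{|X|}{r} \geq \frac{\gamma}{r}|\mathbf{P}_n|.
\]
Thus $X_i$ is a subset of $\mathbf{P}_n$ of size $|X_i| = \Omega(|\mathbf{P}_n|)$, where the implicit constant depends on $\gamma$ and $r$ only. The moreover part of Theorem~\ref{gt} then yields constants $\eps, n_0>0$ (depending only on $\gamma$, $r$ and $k$) such that, for all $n \geq n_0$, the set $X_i$ contains at least $\eps n^2/\log^k n$ $k$-APs. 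Every such $k$-AP lies inside a single colour class and is therefore monochromatic, which proves the lemma.

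There is no serious obstacle here: the entire content is a one-line pigeonhole reduction to the quantitative lower bound supplied by Theorem~\ref{gt}. The only point to be mildly careful about is that the lower bound in Theorem~\ref{gt} is stated for sets of density $\Omega(|\mathbf{P}_n|)$, so one has to note that the implicit constant produced by Green--Tao depends on the density parameter $\gamma/r$; this determines the final value of $\eps$.
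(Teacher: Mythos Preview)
Your proof is correct and is exactly the immediate deduction the paper has in mind: the paper gives no explicit argument beyond noting the lemma is an immediate consequence of Theorem~\ref{gt}, and your pigeonhole-plus-Green--Tao reasoning is precisely that deduction.
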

Using Lemma~\ref{lem:gtsupersat} one can obtain the following resilience-type result.
\begin{thm}\label{bls-strengthening}
Let $r \geq 1$, $k \geq 3$ and $\delta>0$. There exists a constant $C>0$ such that if $p \geq Cn^{-1/(k-1)}\log n$, then w.h.p. $\mathbf{P}_{n,p}$ has the property that for any subset $X \subseteq \mathbf{P}_{n,p}$ with $|X| \geq \delta |\mathbf{P}_{n,p}|$, whenever $X$ is $r$-coloured, there is a monochromatic $k$-AP.\qed
\end{thm}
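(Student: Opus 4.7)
The plan is to run a hypergraph container argument parallel to the proof of the $1$-statement of Theorem~\ref{rvdwp}, but replace the supersaturation input Theorem~\ref{thm:supersatprimes} with the resilient strengthening Lemma~\ref{lem:gtsupersat}, so as to force containers small enough to rule out dense bad subsets of $\mathbf{P}_{n,p}$. Let $\mathcal{H}_n$ be the $k$-uniform hypergraph with $V(\mathcal{H}_n) = \mathbf{P}_n$ whose edges are the $k$-APs in $\mathbf{P}_n$. The failure of Theorem~\ref{bls-strengthening} for $\mathbf{P}_{n,p}$ is equivalent to the existence of a disjoint $r$-tuple $(I_1, \ldots, I_r)$ of independent sets in $\mathcal{H}_n$ with $\bigcup_i I_i \subseteq \mathbf{P}_{n,p}$ and $|\bigcup_i I_i| \geq \delta|\mathbf{P}_{n,p}|$, where the $I_j$ are the colour classes of a bad colouring of the offending subset. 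Apply Proposition~\ref{prop:rcontainers} to $\mathcal{H}_n$ with the increasing family $\mathcal{F}_n := \{F \subseteq \mathbf{P}_n : e(\mathcal{H}_n[F]) \geq \eta\, e(\mathcal{H}_n)\}$ for a small $\eta = \eta(\delta, r) > 0$ to be fixed later, and with probability parameter $p_n := n^{-1/(k-1)}\log n$. The required degree bounds $\Delta_\ell(\mathcal{H}_n) \leq c \, p_n^{\ell-1} e(\mathcal{H}_n)/v(\mathcal{H}_n)$ are exactly those verified in the proof of Theorem~\ref{rvdwp}, using Lemma~\ref{lem:qAPbound} for $\ell=1$ and the observation that any two primes lie in at most $O(1)$ common $k$-APs for $\ell \geq 2$. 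The proposition yields a family $\mathcal{S}_r$ and maps $f, g$ such that every disjoint independent $r$-tuple is contained in $S \cup f(S)$ for $S \in \mathcal{S}_r$, with $|X(S)| := |\bigcup_i S_i| \leq Dp_n|\mathbf{P}_n|$ and each $f(S_i)$ inducing fewer than $\eta\,e(\mathcal{H}_n)$ edges of $\mathcal{H}_n$.

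The central step is to show $|Z(S)| := |\bigcup_i (S_i \cup f(S_i))| \leq (\delta/2)|\mathbf{P}_n|$ for every $S \in \mathcal{S}_r$. Apply Lemma~\ref{lem:gtsupersat} with density parameter $\gamma := \delta/2$ to obtain $\eps > 0$ such that every $r$-colouring of every $Y \subseteq \mathbf{P}_n$ with $|Y| \geq (\delta/2)|\mathbf{P}_n|$ yields at least $\eps n^2/\log^k n = \Theta(\eps)\,e(\mathcal{H}_n)$ monochromatic $k$-APs. Suppose for contradiction that $|Z(S)| \geq (\delta/2)|\mathbf{P}_n|$ and $r$-colour $Z(S)$ by assigning each vertex $v$ to the smallest $i$ with $v \in S_i \cup f(S_i)$. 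Each colour class $C_i \subseteq S_i \cup f(S_i)$ then induces at most
\[
e(\mathcal{H}_n[f(S_i)]) + |S_i|\Delta_1(\mathcal{H}_n) < \eta\,e(\mathcal{H}_n) + O\!\left(p_n|\mathbf{P}_n| \cdot n/\log^{k-1}n\right) = (\eta + o(1))\,e(\mathcal{H}_n)
\]
edges, using Lemma~\ref{lem:qAPbound} and $p_n|\mathbf{P}_n| \cdot n/\log^{k-1}n = o(e(\mathcal{H}_n))$. Choosing $\eta$ sufficiently small relative to $\eps/r$ makes the total number of monochromatic $k$-APs fall short of the guarantee in Lemma~\ref{lem:gtsupersat}, a contradiction. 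Hence $|Z(S)| < (\delta/2)|\mathbf{P}_n|$.

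Given this container bound, the final step is a routine probabilistic calculation. Writing $q_n := p \geq Cp_n$, and exploiting that $X(S)$ and $Z(S) \setminus X(S)$ are disjoint, the events $\{X(S) \subseteq \mathbf{P}_{n,p}\}$ and $\{|(Z(S) \setminus X(S)) \cap \mathbf{P}_{n,p}| \geq t\}$ are independent. Combining Chernoff for $|\mathbf{P}_{n,p}|$ (concentrated around its mean $q_n|\mathbf{P}_n|$) with Chernoff for $|(Z(S) \setminus X(S)) \cap \mathbf{P}_{n,p}|$ (whose mean is at most $(\delta/2)q_n|\mathbf{P}_n|$) yields
\[
\mathbb{P}\!\left[X(S) \subseteq \mathbf{P}_{n,p}, \; |Z(S) \cap \mathbf{P}_{n,p}| \geq \delta|\mathbf{P}_{n,p}|\right] \leq q_n^{|X(S)|} \cdot e^{-\Omega(q_n|\mathbf{P}_n|)}.
\]
Summing over $S \in \mathcal{S}_r$ via the counting estimate in display~(\ref{eq:concalc}) of the proof of Theorem~\ref{thm:general1statement} gives a failure probability of $e^{-\Omega(q_n|\mathbf{P}_n|)} = o(1)$, since $q_n|\mathbf{P}_n| \gtrsim n^{(k-2)/(k-1)} \to \infty$. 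The main technical point is the nesting of constants: one chooses $\eta$ small compared with $\eps/r$ in the key step above, and then $C$ large compared with $D$ from Proposition~\ref{prop:rcontainers} so that the Chernoff margin comfortably outweighs the entropy cost of the union bound over $\mathcal{S}_r$; once these parameters are set, the rest of the argument is a transcription of the calculations from the proofs of Theorem~\ref{thm:general1statement} and Theorem~\ref{rvdwp}.
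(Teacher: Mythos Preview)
Your proposal is correct and follows essentially the same route the paper indicates: adapt the container proof of the $1$-statement (Theorem~\ref{thm:general1statement} / Proposition~\ref{prop:rcontainers}) to the primes hypergraph, replace ordinary supersaturation by the resilient version (Lemma~\ref{lem:gtsupersat}) so that each container union $Z(S)$ has size below $(\delta/2)|\mathbf P_n|$, and then finish with a Chernoff bound in place of the $(1-q_n)^{\delta v(\mathcal H_n)}$ factor. This is precisely the combination the paper alludes to (``an easy adaption of the proof of the $1$-statement of Theorem~\ref{thm:mainramsey}, using ideas from the proof of a resilience-type result in~\cite{hst}'').

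Two small points worth tightening. First, the resilience property is not monotone in $p$, so you cannot simply invoke display~(\ref{eq:concalc}) verbatim (there $q_n=Cp_n$ was assumed); you must redo that estimate for arbitrary $q_n\ge Cp_n$, checking that $(D/\beta)\log(er\beta/D)\le c'/2$ for all $\beta:=q_n/p_n\ge C$, which does hold once $C$ is large enough. Second, fix the parameter order explicitly: first obtain $\eps$ from Lemma~\ref{lem:gtsupersat} with $\gamma=\delta/2$, then choose $\eta$ small relative to $\eps/r$, then apply Proposition~\ref{prop:rcontainers} (obtaining $D=D(\eta)$), and only then pick $C$ large relative to $D$ and the Chernoff constant.
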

By setting $r=1$ in Theorem~\ref{bls-strengthening}, we obtain a sharpening of Theorem~\ref{thm:bls}, since the property considered is that of being $(\delta,k)$-Szemer\'edi.
For $r \geq 2$, Theorem~\ref{bls-strengthening} strengthens the $1$-statement of Theorem~\ref{rvdwp} as it states that w.h.p.\ not only can we find a monochromatic $k$-AP in $\mathbf{P}_{n,p}$ whenever $r$-coloured, but we can afford to delete a $(1-\delta)$-proportion of $\mathbf{P}_{n,p}$ and  still guarantee such a monochromatic $k$-AP.

The proof of Theorem~\ref{bls-strengthening}  is an easy adaption of the proof of the $1$-statement of Theorem~\ref{thm:mainramsey}, using ideas from the proof of a  resilience-type result in~\cite{hst}.

\subsection{Further directions and examples}\label{sec:ring}
Another direction in which one can generalise the random Rado lemma is by removing the requirement of matrices having integer entries.
Given a matrix $A$ with entries in a ring $R$ one can ask whether a finite colouring of an $R$-module yields a monochromatic solution to $Ax=0$ or not.
If one can in fact prove a suitable supersaturation result, then our machinery could yield a random Ramsey-type theorem for this setting. 
Theorem~\ref{thm:fields}
already provides one such result in this direction.

\smallskip

Condition (P5) in the statement of Theorem~\ref{thm:mainramsey} arises as an artifact of our proof. It would be interesting to investigate possible  relaxations of this condition; this could in turn lead to a relaxation of condition (A5) from 
Lemma~\ref{lem:randomRado}. 

Condition (C4) in Theorem~\ref{lem:easyRado}
states that  $(A,S_n)$ is abundant for every sufficiently large $n \in \mathbb N$. This property is used to verify, e.g., (A5) when deducing Lemma~\ref{lem:easyRado} from 
Lemma~\ref{lem:randomRado}.
However, (as briefly touched upon when discussing Theorem~\ref{thm:Zn}) just because a sequence $(S_n)_{n \in \mathbb N}$ of finite abelian groups does not satisfy (C4) does not necessarily mean one cannot apply Lemma~\ref{lem:randomRado} instead.

The following is another example of this.
Let $A:=\begin{pmatrix} 2 & -2 & 63 & 65 \end{pmatrix}$ and $S_n:=\mathbb{Z}_{128}^n$ for every $n\in\mathbb N$.
The pair $(A,S_n)$ is not abundant since $\rank_{S_n}(A)=1$, whereas $\rank_{S_n}(A_{\{1,2\}})=6/7$, so we cannot apply Lemma~\ref{lem:easyRado}.
However, conditions (A1)--(A6) do hold for $(S_n)_{n\in\mathbb N}$ and $A$, and so we can apply Lemma~\ref{lem:randomRado} to deduce the following theorem.

\begin{thm}\label{newthm}
Let $A=\begin{pmatrix} 2 & -2 & 63 & 65 \end{pmatrix}$ and $r\ge2$. 
There exist constants $c,C>0$ such that 
$$\lim _{n \rightarrow \infty} \mathbb P [(\mathbb Z_{128})^n_p \text{ is } (A,r)\text{-Rado}]=\begin{cases}
0 &\text{ if } p \leq c\,128^{-2n/3}; \\
 1 &\text{ if } p \geq C\,128^{-2n/3}.\end{cases}$$
 \qed
\end{thm}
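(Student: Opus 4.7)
\medskip

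\noindent
The plan is to apply the full random Rado lemma (Lemma~\ref{lem:randomRado}) with $S_n := (\mathbb Z_{128})^n$. Since each $S_n$ is a finite abelian group, several of the conditions can be verified via the shortcuts established in Section~\ref{sec:6.3}. The first task is to compute the relevant ranks. Note that $\rank_{S_n}(A)=1$ because $\gcd(63,128)=\gcd(65,128)=1$, so the image of $f_A$ is all of $\mathbb Z_{128}^n$. Similarly, any $A_{\overline{W}}$ containing a column with entry $63$ or $65$ also has rank $1$; the only exceptions are $\overline{W}=\{1,2\}$ (i.e.\ $W=\{3,4\}$), where the image is $2\mathbb Z_{128}^n$ and $\rank_{S_n}(A_{\overline W})=\log_{128}64=6/7$, and similarly $\overline{W}=\{1\}$ or $\overline{W}=\{2\}$ where the rank is $6/7$ as well. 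Combined with the fact that $(A,S_n)$ is $1$-rich by~\eqref{eq:rank1}, Corollary~\ref{col:keycounting} then pins down $p_W(A,S_n)=\Theta(|S_n|^{-D(W)})$ for each $|W|\ge 2$, where $D(W):=(|W|-1-\rank_{S_n}(A)+\rank_{S_n}(A_{\overline W}))/(|W|-1)$. A direct check shows that $D([4])=2/3$ is the minimum value of $D(W)$ over all $W\subseteq[4]$ with $|W|\ge 2$, so $\hat p_n:=\hat p(A,S_n)=\Theta(|S_n|^{-2/3})=\Theta(128^{-2n/3})$, which matches the claimed threshold.

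\medskip

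\noindent
Conditions (A1), (A3) and (A4) are then easy. For (A1), $\hat p_n\to 0$ and $|S_n|\hat p_n=128^{n/3}\to\infty$. For (A3), Corollary~\ref{col:simplifiedA3} gives that $(A,S_n)$ is $1$-extendable as $S_n$ is a finite abelian group. For (A4), since $\rank_{S_n}(A_{\overline W})=\rank_{S_n}(A)=1$ for every $W\subseteq[4]$ with $|W|=1$, Lemma~\ref{lem:keycounting} (together with $1$-richness) immediately yields $|\Sol_{S_n}^A(w_0,W,Y)|\le|\Sol_{S_n}^A(Y)|/|S_n|$. Condition (A2) is supersaturation: the columns of $A$ sum to $2-2+63+65=128\equiv 0\pmod{128}$, so $A$ satisfies the $128$-columns condition (with the trivial partition $C_0:=\{c_1,c_2,c_3,c_4\}$), and Theorem~\ref{thm:exponentsupersat} applies since $\mathbb Z_{128}$ has exponent $128$. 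Finally (A6) is immediate: for each $W\subseteq[4]$ with $|W|=2$, Corollary~\ref{col:keycounting} gives $|\Sol^A_{S_n}(W)|=\Theta(|S_n|^{13/7})$ (when $W=\{3,4\}$) or $\Theta(|S_n|^2)$ (otherwise), and in either case $|S_n|/|\Sol^A_{S_n}(W)|\to 0$.

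\medskip

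\noindent
The only non-trivial verification is compatibility (A5). I will take the constant sequence $X_n:=[4]$, so $|X_n|=4\ge 3$ and $p_{X_n}(A,S_n)=\hat p_n$ automatically. Then I need to show that, for every $W\subset[4]$ with $|W|=2$ and every $W'\subsetneq[4]$ with $|W'|\ge 2$,
\[
\frac{|S_n|^2}{|\Sol^A_{S_n}(W)|}\left(\frac{p_{W'}(A,S_n)}{p_{[4]}(A,S_n)}\right)^{|W'|-1}\longrightarrow 0.
\]
The first factor is $\Theta(|S_n|^\alpha)$ with $\alpha\in\{0,1/7\}$ (the largest, $1/7$, being achieved at $W=\{3,4\}$). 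For the second factor, using $p_{[4]}(A,S_n)=\Theta(|S_n|^{-2/3})$ and the explicit values of $D(W')$, one computes $p_{W'}/p_{[4]}=\Theta(|S_n|^{-\beta(W')})$ with $\beta(W')\ge 4/21$ across all proper subsets $W'\subsetneq[4]$ of size at least $2$. The smallest exponent in the product is thus at most $1/7-4/21=-1/21<0$, giving the desired decay. This is the most calculation-intensive step, but it reduces to the finite case check above.

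\medskip

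\noindent
With (A1)--(A6) all verified, Lemma~\ref{lem:randomRado} yields constants $c,C>0$ so that $S_{n,p}$ is $(A,r)$-Rado w.h.p.\ when $p\ge C\hat p_n$ and is not $(A,r)$-Rado w.h.p.\ when $p\le c\hat p_n$; since $\hat p_n=\Theta(128^{-2n/3})$, this is the conclusion of Theorem~\ref{newthm}. I note that Lemma~\ref{lem:easyRado} cannot be applied directly: condition (C4) (abundancy) fails because $\rank_{S_n}(A_{\{1,2\}})=6/7\ne 1=\rank_{S_n}(A)$, which is precisely the reason we must route through the more general random Rado lemma. The heart of the argument is just the bookkeeping of which rank drops occur, and once these are recorded the checks of (A3)--(A6) are essentially mechanical applications of Lemma~\ref{lem:keycounting} and Corollary~\ref{col:keycounting}.
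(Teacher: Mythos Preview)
Your proposal is correct and follows exactly the approach the paper indicates: the paper explicitly states that $(A,S_n)$ is not abundant (so Lemma~\ref{lem:easyRado} fails) but that conditions (A1)--(A6) hold, allowing Lemma~\ref{lem:randomRado} to be applied, with the details deferred to~\cite{phdandrea}. Your rank computations, the determination of $\hat p_n=\Theta(128^{-2n/3})$, and in particular the compatibility check with $X_n=[4]$ and worst-case exponent $1/7-4/21=-1/21$ are all correct and constitute precisely the verification the paper omits.
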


Conversely, consider $B:=\begin{pmatrix} 4 & -4 & 63 & 65 \end{pmatrix}$. 
The sequence $(S_n)_{n\in\mathbb N}$ is $(B,r)$-supersaturated for every $r\in\mathbb N$: 
this follows from Theorem~\ref{thm:exponentsupersat}, as $\mathbb Z_{128}$ is an abelian group with exponent $128$ and $B$ satisfies the $128$-columns condition as $4-4+63+65\equiv 0 \  (\text{mod} \ 128)$.  
However, we cannot apply Lemma~\ref{lem:randomRado}, as condition (A5) does not hold. Indeed,
it is easy to check that, for $W\subseteq[4]$, 
\begin{equation*}
|\Sol_{S_n}^B(W)|=\begin{cases}
|S_n|^3          &\text{ if $|W|=[4]$;} \\
|S_n|^{|W|-2/7}      &\text{ if $\{3,4\}\subseteq W\neq[4]$;} \\
|S_n|^{|W|}           &\text{ otherwise.}
\end{cases}
\end{equation*}
It immediately follows that, for $|W|\ge2$,
\begin{equation*}
p_W(B,S_n)=\left(\frac{|\Sol_{S_n}^B(W)|}{|S_n|}\right)^{-\frac{1}{|W|-1}}=\begin{cases}
|S_n|^{-2/3}          &\text{ if $|W|=[4]$;} \\
|S_n|^{-(|W|-9/7)/(|W|-1)}      &\text{ if $\{3,4\}\subseteq W\neq[4]$;} \\
|S_n|^{-1}           &\text{ otherwise.}
\end{cases}
\end{equation*}
Note that if $(X_n)_{n \in \mathbb N}$ is a sequence such that 
$X_n \subseteq [4]$ and
$p_{X_n}(B,S_n)=\Omega(\hat p(B,S_n))$, then $X_n=[4]$ for all sufficiently large $n \in \mathbb N$. In this case,
for $W_n:=\{3,4\}\subset [4]$ we have
$$\frac{|S_n|^2}{|\Sol_{S_n}^B(W_n)|}\left(\frac{p_{W_n}(B,S_n)}{p_{[4]}(B,S_n)}\right)=|S_n|^{5/21}=(128)^{5n/21}\not\to 0$$
as $n\to\infty$. So  $(S_n)_{n\in\mathbb N}$ is not compatible with respect to $B$.

\smallskip

{\noindent \bf Open access statement.}
	This research was funded in part by  EPSRC grant EP/V048287/1. For the purpose of open access, a CC BY public copyright licence is applied to any Author Accepted Manuscript arising from this submission.

\smallskip

{\noindent \bf Data availability statement.}
The proofs omitted in this paper can be found in the PhD thesis of the first author~\cite{phdandrea}.

\section*{Appendix A}
\renewcommand{\thesubsection}{A.\arabic{subsection}}
\setcounter{subsection}{0}

\subsection{Proof of Lemma~\ref{lem:taosieve}}\label{appendix:sieve} 
Recall Lemma~\ref{lem:taosieve} is the sieve result we used in the proof of Lemma~\ref{lem:qAPbound} in Section~\ref{sec:aux}. Lemma~\ref{lem:taosieve} is a corollary of the following result from~\cite{taoblog}. 

\begin{lemma}[\protect{\cite[Theorem 32]{taoblog}}]\label{lem:originaltaosieve}
Let $t$ and $C$ be fixed natural numbers and let $n\in\mathbb N$ with $n\ge2$. For each prime number $p\leq\sqrt{n}$, let $E_p$ be the union of $\omega(p)$ residue classes modulo $p$, where $\omega(p) = t$ for all $p\ge C$, and $\omega(p)<p$ for all $p$. Then for any $\varepsilon > 0$, one has
$$|[n]\setminus\bigcup_{\substack{\text{$p$ prime}\\ p\le\sqrt{n}}} E_p| \leq (2^t t! + \varepsilon) {\mathfrak S} \frac{n}{\log^t n}$$
whenever $n$ is sufficiently large depending on $k,C,\varepsilon$, and where $\mathfrak S$ is the singular series
$${\mathfrak S} := \prod_{\text{prime $p$}}\left(1-\frac{1}{p}\right)^{-t} \left(1 - \frac{\omega(p)}{p}\right).$$
\end{lemma}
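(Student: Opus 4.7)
I would prove this via the classical Selberg upper bound sieve of dimension $t$. Fix the sifting parameter $z := n^{1/2}$ and set $P := \prod_{p \le z} p$. For each squarefree $d \mid P$, the Chinese Remainder Theorem yields $\omega(d) := \prod_{p \mid d} \omega(p)$ combined forbidden residue classes modulo $d$, and the number of $m \in [n]$ lying in any of them is $n\omega(d)/d + O(\omega(d))$. Introduce real Selberg weights $(\lambda_d)$ supported on squarefree $d \le D$ with $\lambda_1 = 1$, where $D$ is a parameter slightly below $n^{1/2}$ to be optimised later. The square-positivity inequality $\mathbf{1}_{\text{sifted}}(m) \le (\sum_{d \in \mathcal{D}(m)} \lambda_d)^2$, valid because $\lambda_1 = 1$ (with $\mathcal{D}(m) := \{d \mid P : m \in \bigcap_{p \mid d} E_p\}$), summed over $m \in [n]$ and expanded, produces a quadratic form $n \sum_{d_1, d_2} \lambda_{d_1} \lambda_{d_2}\, g([d_1,d_2])$, where $g(d) := \omega(d)/d$, plus an error of size $O(\sum_{d \le D^2} \omega(d) \tau_3(d)) = O(D^2 \log^{O(1)} D)$.

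The standard Selberg diagonalisation (the change of variables $\lambda_d \leftrightarrow y_m$ governed by the multiplicative function $h(p) := \omega(p)/(p-\omega(p))$) minimises the quadratic form subject to $\lambda_1 = 1$ and yields the bound
$$\bigl|[n] \setminus \bigcup_{p \le z} E_p\bigr| \;\le\; \frac{n}{G(D)} + O\bigl(D^2 \log^{O(1)} D\bigr), \qquad G(D) := \sum_{\substack{d \le D,\, d \mid P \\ d \text{ squarefree}}} h(d).$$
The key analytic step is then the sharp asymptotic
$$G(D) \;=\; \frac{\mathfrak{S}^{-1}}{t!}\, (\log D)^t\, \bigl(1 + o(1)\bigr),$$
obtained from a Selberg--Delange / Tauberian analysis of $F(s) := \sum_{d \text{ sqfree}} h(d)/d^s = \prod_p (1 + h(p)/p^s)$. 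Since $h(p) = t/(p-t)$ for all $p \ge C$, one factors $F(s) = \zeta(s+1)^t H(s)$ where $H$ is holomorphic and nonzero at $s = 0$; unwinding the Euler product (and absorbing the finitely many primes $p < C$ into $H$) identifies $H(0)$ with the singular series $\mathfrak{S}^{-1}$.

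Finally, choosing $D := n^{1/2}/(\log n)^K$ with $K$ sufficiently large makes the sieve error $o(n/\log^t n)$, and $\log D = \tfrac12 \log n + O(\log \log n)$ then gives
$$\bigl|[n] \setminus \bigcup_{p \le z} E_p\bigr| \;\le\; \frac{2^t\, t!\, \mathfrak{S}\, n}{\log^t n}\bigl(1 + o(1)\bigr),$$
which is at most $(2^t t! + \varepsilon)\mathfrak{S}\, n/\log^t n$ once $n$ is sufficiently large in terms of $t, C, \varepsilon$. The main obstacle is extracting the \emph{sharp} leading constant in the asymptotic for $G(D)$: the Selberg machinery itself is mechanical, but the explicit constant $2^t t!$ requires one to pin down the exact residue $\mathfrak{S}^{-1}/t!$ of $F(s)$ at its order-$t$ pole at $s=0$, and to verify that the contribution of the small primes (where $\omega(p) < p$ is allowed to differ from $t$) collapses precisely into the singular-series factor $\mathfrak{S}^{-1}$. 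An order-of-magnitude estimate for $G(D)$ would recover the right power of $\log n$ but miss the numerical constant $2^t t!$, which is what makes the sharp form of the sieve genuinely delicate.
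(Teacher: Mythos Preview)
The paper does not prove this lemma at all: it is quoted verbatim as Theorem~32 from Tao's blog notes~\cite{taoblog} and used as a black box to deduce the weaker Lemma~\ref{lem:taosieve}. So there is no ``paper's own proof'' to compare against.

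That said, your sketch is the standard Selberg upper-bound sieve argument in dimension $t$, and it is essentially correct in outline. The steps --- setting up the Selberg weights, diagonalising to get $n/G(D)$ plus a divisor-bounded error, extracting the asymptotic $G(D) \sim \mathfrak{S}^{-1}(\log D)^t/t!$ by comparing the Dirichlet series to $\zeta(s+1)^t$, and finally choosing $D$ just below $n^{1/2}$ to pick up the factor $2^t$ --- are exactly how this is done in the sieve literature (and presumably in Tao's notes). Your closing remark correctly identifies where the work lies: the sharp constant $2^t t!$ requires the precise leading term for $G(D)$, not just an order-of-magnitude bound, and one must check that the finitely many exceptional primes $p<C$ (where $\omega(p)$ may differ from $t$) contribute exactly the right local factors to $\mathfrak{S}$. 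None of this is a gap in your argument, just an honest acknowledgment that the Tauberian step needs care.
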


We now explain how to deduce Lemma~\ref{lem:taosieve} from Lemma~\ref{lem:originaltaosieve}.

\begin{proof}[Proof of Lemma~\ref{lem:taosieve}] 
Let $k\in\mathbb N$ with $k\ge2$ and let $q$ be a prime number. For each prime $p\ge k$ with $p\not=q$, let $E_p$ be the union of $k-1$ residue classes modulo $p$. We want to prove that $|E(n)|=O(n/\log^{k-1}n)$ where
$$E(n):=[n]\setminus\bigcup_{\substack{\text{$p\not=q$ prime}\\ k\le p\le\sqrt{n}}} E_p.$$

Fix $C:=k$, $t:=k-1$, $\varepsilon:=1$ and pick $n\in\mathbb N$ sufficiently large so that Lemma~\ref{lem:originaltaosieve} holds. For every prime $p<k$, let $\omega(p):=0$ and $E_p:=\emptyset$. We have
$$E(n)=[n]\setminus\bigcup_{\substack{\text{$p\not=q$ prime}\\ p\le\sqrt{n}}} E_p.$$
Note that if $q<k$ then $E_q=\emptyset$ by definition. In this case set $E'(n):=E(n)\setminus E_q=E(n)$.

If $q\ge k$, let $E_q$ be the union of $k-1$ residue classes modulo $q$ that maximises $|E'(n)|$ where 
$$E'(n):=E(n)\setminus E_q=[n]\setminus\bigcup_{\substack{\text{$p$ prime}\\ p\le\sqrt{n}}} E_p.$$
Since the residue classes modulo $q$ induce a partition of $E(n)$, by the maximality of $E_q$ we obtain
\begin{equation*}
|E'(n)|=|E(n)\setminus E_q|\ge\left(1-\frac{k-1}{q}\right)|E(n)|\ge\frac{1}{k}|E(n)|.
\end{equation*}

\smallskip

Applying Lemma~\ref{lem:originaltaosieve} with respect to parameters $t,C,\varepsilon$ and the $E_p$'s, we obtain $|E'(n)|\le Tn/\log^{k-1}n$ where $T$ is a constant depending uniquely on $k$. Thus, $|E(n)|\le kTn/\log^{k-1}n$ and so indeed $|E(n)|=O(n/\log^{k-1}n)$, as required.
\end{proof}

\subsection{Proof of Theorem~\ref{gt1}}\label{appendix:2ndmoment}
The proof of Theorem~\ref{gt1} is an easy application of the second moment method and it is nearly identical to the  analogous proof for arithmetic progressions in the integers (see~\cite[Example 3.2]{jlr}). We include it here for completeness.

\begin{proof}[Proof of Theorem~\ref{gt1}]
Let $X$ be the number of $k$-APs in $\mathbf P_{n,p}$. By Theorem~\ref{gt} there are $\Theta(n^2/\log^k n)$ $k$-APs in $\mathbf P_n$ and so we have $\mathbb{E}(X)=\Theta(p^kn^2/\log^kn)$. 
If $p =o( n^{-2/k}\log n)$ then $\mathbb{E}(X)=o(1)$. By Markov's inequality, $\mathbb{P}(X\ge1)\le\mathbb{E}(X)=o(1)$ and in particular $\mathbb P(X=0)=1-o(1)$, as required.

For the $1$-statement, pick an arbitrary ordering of the $k$-APs in $\mathbf P_n$ and let $X_i$ be the indicator variable for the $i$th $k$-AP to appear in $\mathbf{P}_{n,p}$. Formally,
$$X_i=\begin{cases}
1 &\text{ if the $i$th $k$-AP lies in $\mathbf P_{n,p}$ and}\\
0 &\text{ otherwise}.
\end{cases}$$
Now we bound the number of pairs of $k$-APs in $\mathbf P_n$ that share exactly one element: first fix a prime $q\in\mathbf P_n$ ($O(n/\log n)$ choices) and then pick two $k$-APs in $\mathbf P_n$ that contain $q$ ($O(n^2/\log^{2k-2}n)$ choices by Lemma~\ref{lem:qAPbound}). Thus, there are $O(n^3/\log^{2k-1}n)$ such pairs. Furthermore, there are $O(n^2/\log^k n)$ pairs of $k$-APs in $\mathbf P_n$ that share at least two elements (since a $k$-AP is uniquely determined by two elements and their positions within the $k$-AP). 

Note that $X=\sum\limits_i X_i$. It follows that
\begin{align*}
\Var(X)=\sum_{i,j} \Cov(X_i,X_j)\le
\sum_{i,j}
\mathbb E(X_iX_j)=O\left(\frac{n^3}{\log^{2k-1}n}p^{2k-1}+\sum_{t=2}^{k}\frac{n^2}{\log^{k}n}p^{2k-t}\right),
\end{align*}
where here the sums are only for $i,j$ (including $i=j$) such that the
$i$th and $j$th $k$-APs in $\mathbf P_n$ intersect.
Chebyshev's inequality yields 
\begin{align*}
\mathbb P(X=0) \leq \frac{\Var(X)}{\mathbb{E}(X)^2}=O\left(\frac{\log n}{np}+\sum_{t=2}^{k}\frac{\log^kn}{n^2p^t}\right).
\end{align*}
If  $p =\omega( n^{-2/k}\log n )$ the RHS of the equality above is $o(1)$ and so $\mathbb P(X=0)=o(1)$, as required.
\end{proof}

\subsection{Deducing the $1$-statement of the random Ramsey theorem from Theorem~\ref{thm:mainramsey}}\label{seca2}

In this section we show that Theorem~\ref{thm:mainramsey} implies the $1$-statement of Theorem~\ref{randomramsey}.

Let~$H$ be as in the statement of Theorem~\ref{randomramsey}. Write $E(H)=\{h_1,\dots,h_{e(H)}\}$ and let $r\geq 2$. Without loss of generality we may assume that $H$ has no isolated vertices. 
For $n\in\mathbb N$, let~$\mathcal H_n$ be the $e(H)$-uniform  ordered hypergraph with vertex set $V(\mathcal H_n):=E(K_n)$ where an ordered $e(H)$-tuple $(e_1,\dots,e_{e(H)})\in (E(K_n))^{e(H)}$ is an edge of $\mathcal H_n$ if and only if there is an injective graph homomorphism $g:H\to K_n$ such that $g(h_i)=e_i$ for every $i\in[e(H)]$;
in other words, there is a copy of $H$ in $K_n$ with edges $e_1,\dots,e_{e(H)}$ where $e_i$ plays the role of $h_i$ for every $i\in[e(H)]$. 

For brevity, we write $\mathcal H:=\mathcal H_n$.
For $W\subseteq[e(H)]$, let $H_W$ be the subgraph of $H$ spanned by the set of edges $\{h_i:i\in W\}\subseteq E(H)$; so $H_W$ contains no isolated vertices. Recall that the restriction $\mathcal H_W$ of $\mathcal H$ was defined just after Remark~\ref{remark:restriction}.

\begin{claim}\label{claim:eHw}
We have $e(\mathcal H_W)=\Theta(n^{v(H_W)})$ for every $W\subseteq[e(H)]$.    
\end{claim}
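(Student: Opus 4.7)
The plan is to relate edges of $\mathcal H_W$ to injective graph homomorphisms from $H_W$ into $K_n$, for which we have a clean count of order $n^{v(H_W)}$.

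More precisely, I would define a map
\[
\phi : \mathrm{Inj}(H_W, K_n) \to (E(K_n))^{|W|}, \qquad g_0 \mapsto (g_0(h_i))_{i \in W},
\]
where $\mathrm{Inj}(H_W, K_n)$ denotes the set of injective graph homomorphisms from $H_W$ to $K_n$. A standard count gives $|\mathrm{Inj}(H_W, K_n)| = n(n-1)\cdots(n - v(H_W)+1) = \Theta(n^{v(H_W)})$, so to conclude it suffices to show two things: that the image of $\phi$ is precisely $E(\mathcal H_W)$ (at least for $n$ large), and that the fibres of $\phi$ have size bounded by a constant depending only on $H$.

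For the image being $E(\mathcal H_W)$: one inclusion is immediate, since any edge of $\mathcal H_W$ arises from the restriction of some injective homomorphism $g : H \to K_n$, whose further restriction to $V(H_W)$ is an injective homomorphism $g_0 := g|_{V(H_W)} : H_W \to K_n$ with $\phi(g_0) = (e_i)_{i \in W}$. For the reverse inclusion, given any $g_0 \in \mathrm{Inj}(H_W, K_n)$, for all $n \geq v(H)$ one can extend $g_0$ to an injective homomorphism $g : H \to K_n$ by sending the (bounded number of) vertices in $V(H) \setminus V(H_W)$ to distinct, previously unused vertices of $K_n$; since $K_n$ is complete, all edges of $H$ incident to these vertices are automatically present. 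This shows $\phi(g_0) \in E(\mathcal H_W)$.

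For the bound on the fibres: fix $(e_i)_{i \in W} \in E(\mathcal H_W)$ and consider the preimage under $\phi$. For each connected component $C$ of $H_W$, once we decide the image of a single vertex $v \in V(C)$ — which must be one of the (at most two) endpoints of some $e_i$ containing $v$ — the images of all remaining vertices of $C$ are forced by walking along edges and pinning down endpoints. Thus the fibre size is at most $2^{c(H_W)} \le 2^{v(H)}$, a constant depending only on $H$. Combining the two observations gives
\[
\frac{|\mathrm{Inj}(H_W, K_n)|}{2^{v(H)}} \; \leq \; e(\mathcal H_W) \; \leq \; |\mathrm{Inj}(H_W, K_n)|,
\]
hence $e(\mathcal H_W) = \Theta(n^{v(H_W)})$. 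The main (very minor) obstacle is carefully ruling out degenerate identifications when writing out the fibre-size bound; otherwise the argument is essentially a double-count.
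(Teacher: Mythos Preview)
Your proof is correct and follows essentially the same approach as the paper: both identify edges of $\mathcal H_W$ with injective graph homomorphisms $H_W \to K_n$ (using that any such homomorphism extends to one on $H$ when $n \ge v(H)$), and then bound the number of homomorphisms mapping to the same ordered tuple by a constant depending only on $H$. Your fibre bound $2^{c(H_W)}$ via connected components is slightly sharper than the paper's $2^{|W|}$, but this makes no difference for the $\Theta$-statement.
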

\begin{proofclaim}
Say $W=\{{t_1},\dots,{t_{|W|}}\}$ where $t_1<\dots<t_{|W|}$. Then, provided $n\geq v(H)$, an ordered $|W|$-tuple $(e_{1},\dots,e_{{|W|}})\in (E(K_n))^{|W|}$ is an edge of $\mathcal H_W$ if and only if there is an injective graph homomorphism $g:H_W\to K_n$ such that $g(h_{t_i})=e_i$ for every $i\in[|W|]$. 
There are $\Theta\left(n^{v(H_W)}\right)$  such homomorphisms. 
Conversely, at most $2^{|W|}$ injective homomorphisms $g:H_W\to K_n$ correspond to the same edge in $\mathcal H_W$.\footnote{More precisely, there are exactly $2^a$ injective homomorphisms $g:H_W\to K_n$ that corresponds to the same edge in $\mathcal H_W$, where $a$ is the number of isolated edges in $H$ which lie in $H_W$.} 
Hence $e(\mathcal H_W)=\Theta\left(n^{v(H_W)}\right)$, as claimed. \qedclaim
\end{proofclaim}

We can now compute $\hat p(\mathcal H)$. We have $v(\mathcal H)=e(K_n)={n \choose 2}$. This and Claim~\ref{claim:eHw} yield 
\begin{equation}\label{eq:fWm2}
f_{n,W}=\left(\frac{e(\mathcal H_W)}{v(\mathcal H)}\right)^{-\frac{1}{|W|-1}}=\Theta\left(\left(\frac{n^{v(H_W)}}{n^2}\right)^{-\frac{1}{e(H_W)-1}}\right)=\Theta\left(n^{-\frac{v(H_W)-2}{e(H_W)-1}}\right).
\end{equation}
Note that $\frac{e(H_W)-1}{v(H_W)-2}=d_2(H_W)$ when $|W|\ge 2$. Thus, we have
\begin{equation}\label{eq:m2-a}
\max_{\stackrel{W \subseteq [k]}{|W| \geq 2}}\frac{e(H_W)-1}{v(H_W)-2}\le\max_{H'\subseteq H}d_2(H')=m_2(H).
\end{equation}

By assumption, $H$ contains a connected component $K$ with at least two edges. We have $d_2(K)=\frac{e(K)-1}{v(K)-2}\ge1$ since $K$ is connected. 
Suppose the value of $m_2(H)$ is achieved by $H'\subseteq H$. 
Then $H'$ contains at least two edges, since $d_2(H')\ge d_2(K)\ge1$. 
Furthermore, $H'$ has no isolated vertices. Indeed, deleting an isolated vertex from $H'$ would yield a subgraph $H''$ of $H$ with $d_2(H'')>d_2(H')$, a contradiction. 
Set $W:=\{i:h_i\in E(H')\}$. 
Since $H'$ has no isolated vertices, it follows that $H_W=H'$. 
Furthermore, $|W|\ge2$ since $H'$ has at least two edges. We have $d_2(H_W)=d_2(H')=m_2(H)$, and so equality holds in~\eqref{eq:m2-a}. That is,
\begin{equation*}\label{eq:m2-b}
\max_{\stackrel{W \subseteq [k]}{|W| \geq 2}} \frac{e(H_W)-1}{v(H_W)-2}=m_2(H).
\end{equation*}
Using the equality above, taking the maximum over all $W\subseteq[e(H)]$ of size at least $2$ in~\eqref{eq:fWm2} yields 
$$\hat p(\mathcal H)=\Theta\left(n^{-\frac{1}{m_2(H)}}\right).$$

Next, we verify conditions (P1)--(P4).

\begin{itemize}
\item[(P1):] 
Recall we proved that $m_2(H)\ge1$. 
This inequality implies $n^{-\frac{1}{m_2(H)}}\to0$ and $n^{2-\frac{1}{m_2(H)}}\to\infty$ as $n\to\infty$. 
In particular, $\hat p(\mathcal H)\to0$ and $\hat p(\mathcal H)v(\mathcal H)\to\infty$ as $n\to\infty$. 

\item[(P2):] 
By  Ramsey's theorem, there exists an $m\in\mathbb N$ such that any $r$-edge-colouring of $K_m$ yields a monochromatic copy of $H$. For $n \geq m$, 
pick an $r$-edge-colouring of $K_n$. 
Every subset of $V(K_n)$ of size $m$ contains a monochromatic copy of $H$, and each copy of $H$ belongs to ${n-v(H)}\choose{m-v(H)}$ subsets of $V(K_n)$ of size $m$. 
Thus, the number of monochromatic copies of $H$ in $K_n$ is at least
$$\frac{{n\choose m}}{{{n-v(H)}\choose{m-v(H)}}}\ge\left(\frac{n}{m}\right)^{v(H)}.$$

Equivalently, given any $r$-colouring of the vertices of $\mathcal H$, there are at least $(n/m)^{v(H)}$ monochromatic edges in $\mathcal H$. 
By Claim~\ref{claim:eHw} we have $e(\mathcal H)=e(\mathcal H_{[e(H)]})=\Theta(n^{v( H)})$.
Together, this implies that 
 the sequence of ordered hypergraphs $(\mathcal H_n)_{n\in\mathbb N}$ is $r$-supersaturated.

\item[(P3):] 
Let $W\subseteq Y\subseteq[e(H)]$. 
Given $(e_1,\dots,e_{|W|})\in \mathcal H_W$, there are at most $n^{v(H_Y)-v(H_W)}$ edges in $\mathcal H_Y$ that restrict to $(e_1,\dots,e_{|W|})$. 
This is because each such edge corresponds to an injective graph homomorphism $g:H_Y\to K_n$ with $g(h_i)=e_i$ for every $i\in W$. 
For all such homomorphisms, the images of the edges in $E(H_W)$ are fixed, and we have at most $n^{v(H_Y)-v(H_W)}$ choices for the image of $V(H_Y)\setminus V(H_W)$. 
Thus,
$$\Delta_W(\mathcal H_Y)\le n^{v(H_Y)-v(H_W)}.$$
By Claim~\ref{claim:eHw}, we have $e(\mathcal H_W)=\Theta(n^{v(H_W)})$ and $e(\mathcal H_Y)=\Theta(n^{v(H_Y)})$. 
Combining this with the previous inequality yields
$$\Delta_W(\mathcal H_Y)\le \Theta\left(\frac{e(\mathcal H_Y)}{e(\mathcal H_W)}\right),$$
and so condition (P3) holds.

\item[(P4):] 
Let $W\subset Y\subseteq[e(H)]$ with $|W|=1$. 
Note that $E(\mathcal H_W)=V(\mathcal H)$. 
This combined with condition (P3) gives
$$\Delta_W(\mathcal H_Y)\le\Theta\left(\frac{e(\mathcal H_Y)}{e(\mathcal H_W)}\right)=\Theta\left(\frac{e(\mathcal H_Y)}{v(\mathcal H_W)}\right),$$
and so condition (P4) holds.
\end{itemize}

\smallskip

Since conditions (P1)--(P4) hold, by Theorem~\ref{thm:mainramsey} we obtain the following $1$-statement. 
There exists some $C>0$ such that 
$$\lim_{n \rightarrow \infty} \mathbb{P}\left[\text{$\mathcal H_{n,q}^v$ is $r$-Ramsey}\right]=1 \quad\text{ if } q \geq  Cn^{-\frac{1}{m_2(H)}}.$$
This is equivalent to
$$\lim_{n \rightarrow \infty} \mathbb{P}\left[\text{$G_{n,p}$ is $(H,r)$-Ramsey}\right]=1 \quad\text{ if } p \geq  Cn^{-\frac{1}{m_2(H)}},$$
which is precisely the $1$-statement of Theorem~\ref{randomramsey}.

\end{document}